\definecolor{shadecolor}{gray}{0.875}
\definecolor{dblue}{rgb}{0,0,.6}
\newcommand{\mathds}[1]{{\mathbb #1}}
\numberwithin{equation}{subsection}
\begin{document}
%
%
%
\theoremstyle{definition}
\newtheorem{Definition}{Definition}[section]
\newtheorem*{Definitionx}{Definition}
\newtheorem{Convention}{Definition}[section]
\newtheorem{Construction}[Definition]{Construction}
\newtheorem{Example}[Definition]{Example}
\newtheorem{Examples}[Definition]{Examples}
\newtheorem{Remark}[Definition]{Remark}
\newtheorem*{Remarkx}{Remark}
\newtheorem{Remarks}[Definition]{Remarks}
\newtheorem{Caution}[Definition]{Caution}
\newtheorem{Conjecture}[Definition]{Conjecture}
\newtheorem*{Conjecturex}{Conjecture}
\newtheorem{Question}[Definition]{Question}
\newtheorem*{Questionx}{Question}
\newtheorem*{Acknowledgements}{Acknowledgements}
\newtheorem*{Notation}{Notation}
\newtheorem*{Organization}{Organization}
\newtheorem*{Disclaimer}{Disclaimer}
\theoremstyle{plain}
\newtheorem{Theorem}[Definition]{Theorem}
\newtheorem*{Theoremx}{Theorem}

\newtheorem*{thm: regeneration}{Theorem \ref{thm: regeneration}}
\newtheorem*{thm: curves on complex k3}{Theorem \ref{thm: curves on complex k3}}
\newtheorem*{cor: curves on pos char k3}{Corollary \ref{cor: curves on pos char k3}}
\newtheorem*{def: regeneration}{Definition \ref{def: regeneration}}
\newtheorem*{cor: reduction}{Proposition \ref{cor: reduction}}
\newtheorem*{thm: nodal curves}{Theorem \ref{thm: nodal curves}}
\newtheorem*{thm: curves on complex k3 picard rank r}{Theorem \ref{thm: curves on complex k3 picard rank r}}
\newtheorem*{thmA}{Theorem A}
\newtheorem*{thmB}{Theorem B}
\newtheorem*{thmC}{Theorem C}
\newtheorem*{thmD}{Theorem D}

\newtheorem{Proposition}[Definition]{Proposition}
\newtheorem*{Propositionx}{Proposition}
\newtheorem{Lemma}[Definition]{Lemma}
\newtheorem{Corollary}[Definition]{Corollary}
\newtheorem*{Corollaryx}{Corollary}
\newtheorem{Fact}[Definition]{Fact}
\newtheorem{Facts}[Definition]{Facts}
\newtheoremstyle{voiditstyle}{3pt}{3pt}{\itshape}{\parindent}%
{\bfseries}{.}{ }{\thmnote{#3}}%
\theoremstyle{voiditstyle}
\newtheorem*{VoidItalic}{}
\newtheoremstyle{voidromstyle}{3pt}{3pt}{\rm}{\parindent}%
{\bfseries}{.}{ }{\thmnote{#3}}%
\theoremstyle{voidromstyle}
\newtheorem*{VoidRoman}{}

\newenvironment{specialproof}[1][\proofname]{\noindent\textit{#1.} }{\qed\medskip}
\newcommand{\blowup}{\rule[-3mm]{0mm}{0mm}}
\newcommand{\cal}{\mathcal}
\newcommand{\Aff}{{\mathds{A}}}
\newcommand{\BB}{{\mathds{B}}}
\newcommand{\CC}{{\mathds{C}}}
\newcommand{\EE}{{\mathds{E}}}
\newcommand{\FF}{{\mathds{F}}}
\newcommand{\GG}{{\mathds{G}}}
\newcommand{\HH}{{\mathds{H}}}
\newcommand{\NN}{{\mathds{N}}}
\newcommand{\ZZ}{{\mathds{Z}}}
\newcommand{\PP}{{\mathds{P}}}
\newcommand{\QQ}{{\mathds{Q}}}
\newcommand{\RR}{{\mathds{R}}}
\newcommand{\Liea}{{\mathfrak a}}
\newcommand{\Lieb}{{\mathfrak b}}
\newcommand{\Lieg}{{\mathfrak g}}
\newcommand{\Liem}{{\mathfrak m}}
\newcommand{\ideala}{{\mathfrak a}}
\newcommand{\idealb}{{\mathfrak b}}
\newcommand{\idealg}{{\mathfrak g}}
\newcommand{\idealm}{{\mathfrak m}}
\newcommand{\idealp}{{\mathfrak p}}
\newcommand{\idealq}{{\mathfrak q}}
\newcommand{\idealI}{{\cal I}}
\newcommand{\lin}{\sim}
\newcommand{\num}{\equiv}
\newcommand{\dual}{\ast}
\newcommand{\iso}{\cong}
\newcommand{\homeo}{\approx}
\newcommand{\mm}{{\mathfrak m}}
\newcommand{\pp}{{\mathfrak p}}
\newcommand{\qq}{{\mathfrak q}}
\newcommand{\rr}{{\mathfrak r}}
\newcommand{\pP}{{\mathfrak P}}
\newcommand{\qQ}{{\mathfrak Q}}
\newcommand{\rR}{{\mathfrak R}}
\newcommand{\OO}{{\cal O}}
\newcommand{\numero}{{n$^{\rm o}\:$}}
\newcommand{\mf}[1]{\mathfrak{#1}}
\newcommand{\mc}[1]{\mathcal{#1}}
\newcommand{\into}{{\hookrightarrow}}
\newcommand{\onto}{{\twoheadrightarrow}}
\newcommand{\Spec}{{\rm Spec}\:}
\newcommand{\BigSpec}{{\rm\bf Spec}\:}
\newcommand{\Spf}{{\rm Spf}\:}
\newcommand{\Proj}{{\rm Proj}\:}
\newcommand{\Pic}{{\rm Pic }}
\newcommand{\MW}{{\rm MW }}
\newcommand{\Br}{{\rm Br}}
\newcommand{\NS}{{\rm NS}}
\newcommand{\Sym}{{\mathfrak S}}
\newcommand{\Aut}{{\rm Aut}}
\newcommand{\Autp}{{\rm Aut}^p}
\newcommand{\ord}{{\rm ord}}
\newcommand{\coker}{{\rm coker}\,}
\newcommand{\chara}{{\rm char}}
\newcommand{\divisor}{{\rm div}}
\newcommand{\Def}{{\rm Def}}
\newcommand{\rank}{\mathop{\mathrm{rank}}\nolimits}
\newcommand{\Ext}{\mathop{\mathrm{Ext}}\nolimits}
\newcommand{\EXT}{\mathop{\mathscr{E}{\kern -2pt {xt}}}\nolimits}
\newcommand{\Hom}{\mathop{\mathrm{Hom}}\nolimits}
\newcommand{\HOM}{\mathop{\mathscr{H}{\kern -3pt {om}}}\nolimits}
\newcommand{\calA}{\mathscr{A}}
\newcommand{\calC}{\mathscr{C}}
\newcommand{\calH}{\mathscr{H}}
\newcommand{\calL}{\mathscr{L}}
\newcommand{\calM}{\mathscr{M}}
\newcommand{\calN}{\mathscr{N}}
\newcommand{\calX}{\mathscr{X}}
\newcommand{\calK}{\mathscr{K}}
\newcommand{\calD}{\mathscr{D}}
\newcommand{\calY}{\mathscr{Y}}
\newcommand{\piet}{{\pi_1^{\rm \acute{e}t}}}
\newcommand{\Het}[1]{{H_{\rm \acute{e}t}^{{#1}}}}
\newcommand{\Hfl}[1]{{H_{\rm fl}^{{#1}}}}
\newcommand{\Hcris}[1]{{H_{\rm cris}^{{#1}}}}
\newcommand{\HdR}[1]{{H_{\rm dR}^{{#1}}}}
\newcommand{\hdR}[1]{{h_{\rm dR}^{{#1}}}}
\newcommand{\loc}{{\rm loc}}
\newcommand{\et}{{\rm \acute{e}t}}
\newcommand{\defin}[1]{{\bf #1}}
\newcommand{\blue}{\textcolor{blue}}
\newcommand{\red}{\textcolor{red}}

\renewcommand{\HH}{{\rm{H}}}

\title{Curves on K3 surfaces}

\author{Xi Chen}
\address{632 Central Academic Building, University of Alberta, Edmonton, Alberta T6G 2G1, Canada}
\email{xichen@math.ualberta.ca}

\author{Frank Gounelas}
\address{Georg-August-Universit\"at G\"ottingen, Fakult\"at f\"ur Mathematik und Informatik, Bunsenstr. 3-5, 37073 G\"ottingen, Germany}
\email{gounelas@mathematik.uni-goettingen.de}

\author{Christian Liedtke}
\address{TU M\"unchen, Zentrum Mathematik - M11, Boltzmannstr. 3, 85748 Garching bei M\"unchen, Germany}
\email{liedtke@ma.tum.de}

\date{October 16, 2022}
\makeatletter
\@namedef{subjclassname@2020}{%
  \textup{2020} Mathematics Subject Classification}
\makeatother
\subjclass[2020]{14J28, 14N35, 14G17}
\keywords{K3 surface, rational curve, deformations of surfaces}

\begin{abstract}
We complete the remaining cases of the conjecture predicting existence of infinitely many rational curves on K3 surfaces
in characteristic zero, prove almost all cases in positive characteristic and improve the proofs of the previously known
cases. To achieve this, we introduce two new techniques in the deformation theory of curves on K3 surfaces.
\begin{enumerate}
    \item \textit{Regeneration}, a process opposite to specialisation, which preserves the geometric genus and does not
    require the class of the curve to extend.
    \item The \textit{marked point trick}, which allows a controlled degeneration of rational curves to integral ones in
    certain situations.
\end{enumerate}
Combining the two proves existence of integral curves of unbounded degree of any geometric genus $g$ for any
projective K3 surface in characteristic zero.
\end{abstract}

\maketitle

\setcounter{tocdepth}{1}
\tableofcontents

\section{Introduction}

There is extensive literature on the existence and deformations of curves on projective K3 surfaces. In characteristic zero,
existence of rational curves was settled in Mori--Mukai \cite{morimukai} following an argument originally due to
Bogomolov and Mumford. It has since been expected that the following stronger statement should be true.
\begin{Conjecture}
\label{conj: inf many rational curves}
    Let $X$ be a projective K3 surface over an algebraically closed field. Then $X$ contains infinitely many rational
    curves.
\end{Conjecture}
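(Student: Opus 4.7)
The plan is to argue by contradiction: suppose $X$ contains only finitely many rational curves, forming a finite union $R \subset X$, and aim to produce a new rational curve $C \not\subset R$. The fundamental obstruction to any naive strategy is that on a very general projective K3 of higher Picard rank, only finitely many classes in $\NS(X)$ can possibly be realised by rational curves, since the rigidity of the Hodge structure severely limits the effective cone, and ordinary deformation theory refuses to move a curve whose divisor class ceases to be algebraic on a nearby surface.

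After reducing to the characteristic zero case by the usual spreading-out and specialisation arguments, I would embed $X$ into a one-parameter family $\mathcal{X} \to T$ whose geometric generic fibre $X_\eta$ has strictly smaller Picard rank, so that many more classes become effective on $X_\eta$. On such an $X_\eta$ the conjecture is either already known (for instance by the Picard rank one results of Chen and of Li--Liedtke), or it can be attacked by the older Bogomolov--Mumford degeneration into a union of elliptic curves. In either case one obtains rational curves $C_\eta \subset X_\eta$ of arbitrarily large self-intersection.

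The crux is to transport each such $C_\eta$ back across the degeneration to a rational curve on the special fibre $X$. This is precisely the role of \emph{regeneration}: instead of trying to deform a fixed divisor class, which fails because $[C_\eta]$ is typically no longer Hodge on $X$, one deforms the \emph{curve} itself, preserving its geometric genus (namely zero) and absorbing the non-algebraic part of the class into additional nodal singularities. Applying this to classes on $X_\eta$ of unbounded self-intersection then produces rational curves on $X$ of unbounded degree with respect to any ample class, and in particular infinitely many, contradicting finiteness of $R$.

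The hardest part, which I expect the marked point trick to address, is ensuring that the regenerated curve on $X$ is \emph{integral} rather than a reducible configuration assembled from the preexisting finite union $R$. By pinning a very general smooth point on $C_\eta$ and tracking its specialisation through a chosen general point of $X$, one forces at least one component of the limit to escape the finite locus $R$, yielding the required contradiction. Making regeneration compatible with this marking, and controlling precisely which singularities the limit acquires so that the genus count on each component is preserved, is where the genuine technical difficulty is concentrated.
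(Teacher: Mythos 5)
There is a genuine gap: the degeneration in your outline runs in the wrong direction at the two places where the real work happens. Deforming $X$ into a family whose geometric generic fibre has \emph{smaller} Picard rank makes \emph{fewer} classes algebraic, not more, since $\Pic(\overline{\calX_\eta})$ injects into $\Pic(\calX_0)$ under specialisation. Regeneration (Theorem \ref{thm: regeneration}) is a tool for passing from the special fibre \emph{up} to the generic fibre when a class does not lift; it cannot be used, as you propose, to transport a curve from the low-rank generic fibre \emph{down} to $X$ --- that is ordinary specialisation, and its failure mode (the limit breaking into a union of the finitely many pre-existing rational curves) is precisely the difficulty, not something regeneration addresses. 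Where the paper does use regeneration for this conjecture (odd Picard rank, Theorem \ref{thm: ll}), the geometry is the opposite of yours: one specialises to $\overline{\FF}_p$, where the Picard rank \emph{jumps}, finds by Bogomolov--Mumford a rational curve in a class that does not come from characteristic zero, and regenerates that curve back to the generic fibre.

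The second, more serious issue is that the marked point trick cannot rescue a specialisation from Picard rank one down to rank two. In the proof of Theorem \ref{thm: curves on complex k3 picard rank r}, the reducibility locus $F$, cut out by the pullback of $\partial\overline{\mathcal M}_{0,4}$, is shown to dominate the Noether--Lefschetz curve $B=M_\Lambda\cap U$, and the contradiction comes only because the hypothesis already supplies \emph{integral} curves over the general point of $B$, i.e.\ on a general K3 with the \emph{same} Picard lattice $\Lambda$ as $X$. If integrality is only known on the rank-one locus, the reducibility divisor can simply coincide with the Noether--Lefschetz divisor through $X$ and no contradiction arises; the Bryan--Leung lattice in the Remark following Theorem \ref{thm: nodal curves}, where every member of $|C+nF|$ is reducible even though the general polarised K3 of the same degree carries integral rational curves, shows this loss of integrality genuinely occurs. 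The indispensable input your outline omits is the existence of integral rational curves on the \emph{general lattice-polarised} K3 of rank $2$ and of the two exceptional rank-$4$ lattices (Theorems \ref{thm: nodal curves} and \ref{thm: curves on complex k3 picard rank 4}, from \cite{generic}), together with the case division into odd rank, elliptic or infinite automorphism group (Theorem \ref{thm: bt tayou}), and the residual rank $2$ and $4$ lattices. Finally, ``pinning a very general smooth point'' is unavailable for rational curves, which are rigid and hence miss a general point of $X$; the paper's marked points are the intersections with a sufficiently ample divisor on the total space, chosen so that four of them separate two non-contracted components of the degenerate fibre, which is what makes the $\overline{\mathcal M}_{0,4}$ argument run.
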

This conjecture has been established in many important cases: for very general K3 surfaces in characteristic zero
\cite{chen, C-L}, for K3 surfaces with odd Picard rank \cite{liliedtke} (building on ideas of \cite{bht}), for elliptic
K3 surfaces and K3 surfaces with infinite automorphism groups \cite{btdensity, Tayou}, and for some special K3 surfaces
\cite{charles}. We refer to \cite{benoist, huybrechts} for a survey and further details. 

The main result of this paper is the following (see Theorem \ref{thm: curves on complex k3} and Corollary \ref{cor:char
p cor} for more precise statements, in particular in characteristic $2,3$ where almost all cases are also dealt with).

\begin{thmA}
Let $X$ be a projective K3 surface over an algebraically closed field $k$ with $\chara(k)=p\neq2,3$ and let $g\geq0$
be an integer. Assume that either $p=0$ or that $g\leq1$. Then there exists a sequence of integral curves $C_n\subset X$
of geometric genus $g$ such that for any ample line bundle $H$ on $X$  
\[
\lim_{n\to\infty} \deg_H C_n = \lim_{n\to\infty} H C_n = \infty.
\]
\end{thmA}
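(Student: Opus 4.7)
The plan is to combine the two techniques announced in the abstract: regeneration and the marked point trick. Given the K3 surface $X$, I would first specialize to a particular K3 $X_0$ on which curves of geometric genus $g$ with unbounded degree are already known to exist, and then transport these curves back to $X$.

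More concretely, fix an ample class $H$ on $X$ and consider the formal deformation space of the polarized pair $(X,H)$. Inside it, locate a specialization $X_0$ where the curves are easy to produce. In genus $g = 0$ one can take $X_0$ of odd Picard rank, elliptic, or very general, and invoke the results of Bogomolov--Mumford and their refinements \cite{chen, C-L, liliedtke, bht, btdensity, Tayou}. For positive genus $g$ in characteristic zero, I would instead specialize to a very general polarized K3 $(X_0, H_0)$ with $\Pic X_0 = \ZZ H_0$; on such an $X_0$ the Severi variety of $\delta$-nodal curves in $|nH_0|$ is nonempty of the expected dimension for each admissible $\delta$, and choosing $\delta = g(nH_0) - g$ yields, for every $n \gg 0$, an integral curve $C_n \subset X_0$ of geometric genus exactly $g$ with $H_0 \cdot C_n \to \infty$.

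Next, I would apply the regeneration theorem (Theorem \ref{thm: regeneration}) to each $C_n$. The essential feature, emphasised in the abstract, is that regeneration does not require the class of $C_n$ to extend along the deformation, so nothing is lost in passing from the highly special $X_0$ back to $X$. It produces a curve $\widetilde C_n \subset X$ of the same geometric genus $g$, with $H \cdot \widetilde C_n$ controlled from below by $H_0 \cdot C_n$, so that $H \cdot \widetilde C_n \to \infty$.

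The main obstacle is that the regenerated curves $\widetilde C_n$ could a priori be reducible, splitting into components of smaller genus and hence failing the required integrality. This is precisely what the marked point trick is designed to rule out: by marking a carefully chosen point of $C_n$ and following it through the regeneration, one restricts the allowable limit configurations and excludes the dangerous reducible degenerations. Combining regeneration with the marked point trick then yields the desired integral curves on $X$ in characteristic zero. To deduce the case $\rm{char}(k) = p \neq 2,3$ with $g \leq 1$, I would lift $X$ to a model over a mixed characteristic discrete valuation ring, apply the characteristic zero statement to the generic fibre, and specialize the resulting curves back to $X$; the restriction $g \leq 1$ reflects the more delicate control over deformations available in positive characteristic.
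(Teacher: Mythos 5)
Your strategy has the direction of the moduli-theoretic argument reversed at the crucial point, and this is a genuine gap rather than a presentational issue. Regeneration (Theorem \ref{thm: regeneration}) transports curves from the \emph{special} fibre of a family to the \emph{generic} fibre, and under specialisation the Picard lattice can only grow. So you cannot ``specialise'' an arbitrary $X$ to a very general polarised K3 $(X_0,H_0)$ with $\Pic X_0=\ZZ H_0$: such an $X_0$ is a \emph{generisation} of $X$, not a specialisation, and regeneration gives you no way to import its curves. The only version of your plan that works is the odd Picard rank case (Theorem \ref{thm: ll}, Corollary \ref{cor:odd rank}): there one specialises $X$ to positive characteristic, where the Picard rank necessarily jumps, finds new rational (resp.\ genus $1$) curves via Theorem \ref{thm: bogomolov mumford} (resp.\ Theorem \ref{thm: elliptic curve non-primitive class}), and regenerates them back. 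For even Picard rank this fails, and the paper must instead go \emph{down} from the generic member of $M_\Lambda$ (where \cite{generic} supplies integral rational curves in primitive ample classes, for $\rho=2$ and for the two exceptional rank $4$ lattices of Vinberg) to the given $X$. That descent is a degeneration of curves, where integrality can be lost, and the marked point trick (Propositions \ref{prop: marked point trick g=0} and \ref{prop: marked point trick g=1}) is precisely the tool that rules this out. Your description of that trick --- marking a point of $C_n$ and ``following it through the regeneration'' --- is not what it does: one marks four points cut out by an auxiliary ample divisor on a family of stable genus $0$ maps over a surface $V\to U$, pulls back the boundary of $\overline{\mathcal M}_{0,4}\cong\PP^1$ under the stabilisation map, and uses nefness of this pullback to show the locus of reducible fibres is a divisor not contracted to the Noether--Lefschetz point, forcing a contradiction.

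Two further omissions: the reduction to $g\le 1$ is not done via Severi varieties on a rank-one K3 (again, not reachable by specialisation) but by attaching rational curves of high intersection to a genus $1$ curve and smoothing the nodes (Lemmas \ref{lem: genus 1 transverse}, \ref{lem: severi K3}, Corollary \ref{cor: gleq1 suffices}); and the remaining cases require the Bogomolov--Tschinkel/Tayou results for elliptic K3s and K3s with infinite automorphism group together with the Nikulin--Vinberg classification to reduce to $\rho\in\{2,4\}$. Finally, in your last paragraph the specialisation of curves from the characteristic zero lift back to $X$ runs into exactly the integrality problem above; the paper needs the full mixed-characteristic marked point trick of Section \ref{sec: pos char} to handle it, so this step cannot be waved through.
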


If $g=0$ then this establishes Conjecture \ref{conj: inf many rational curves} in almost all remaining cases.
To prove this we introduce two new techniques in the deformation
theory of curves in families of K3 surfaces, summarised in the next two subsections. The first leads to a more concise
proof of the odd rank case of Li--Liedtke. Then the aforementioned work of Bogomolov--Tschinkel combined with lattice
classification results of Vinberg and Nikulin reduce the theorem to two K3 lattices of rank four and infinitely many of
rank two, which we deal with separately. 

Both techniques rely on the existence of integral rational curves in high enough multiples of any ample class on a
generic K3 surface of Picard rank two, a result obtained in Appendix \ref{appendix} (see Section \ref{subsect:generic} for a
summary of the main results), which strengthens previous work of the first named author \cite{chen}. 

\subsection{The regeneration technique}

The first technique we introduce in this article provides a new method for constructing curves of prescribed geometric
genus in families of K3 surfaces. Let $\calX\to S$ be a family of K3 surfaces over the spectrum of a discrete valuation ring (DVR)
with algebraically closed residue field $k$, and denote by $\calX_0$, $\calX_\eta$, $\overline{\calX_\eta}$
the special fibre, the generic fibre and the geometric generic fibre, respectively.

\begin{Definitionx}
A {\em regeneration} of an integral curve $C_0\subset\calX_0$ is a geometrically integral curve $D_\eta\subset
\calX_\eta$ such that
\begin{enumerate}
\item the geometric genus of $C_0$ and $D_\eta$ is the same, and
\item the specialisation of $D_\eta$ to $\calX_0$ contains $C_0$.
\end{enumerate}
\end{Definitionx}

Thus, a regeneration of a curve can be viewed as the ``inverse'' of a specialisation with fixed geometric genus. It is
easy to see that the specialisation of $D_\eta$ is equal to $C_0$ and a (possibly empty) union of rational curves. The
main point of a regeneration compared to a deformation is that we do {\em not} require the class of the line bundle
$\OO_{\calX_0}(C_0)$ to extend from $\calX_0$ to $\overline{\calX_\eta}$. Therefore, regenerations are more flexible than
deformations. The price we have to pay for this flexibility is that we must attach rational curves. The
main result on regenerations that we prove in this article is the following.

\begin{thmB}
    Let $\calX\to S$ be as above so that $\calX_0$ is not uniruled, and let $C_0\subset{\calX_0}$ be an integral curve
    that deforms in the expected dimension in $\calX_0$, e.g., $\chara(k)=0$ or $C_0$ is at worst nodal. Then the
    curve $C_0$ admits at least one regeneration on $\overline{\calX_\eta}$.
\end{thmB}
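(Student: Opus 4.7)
The plan is to enlarge $C_0$ by attaching transversally a collection of rational curves $R_1,\ldots,R_n\subset \calX_0$ so that the resulting nodal curve $C_0' := C_0\cup R_1\cup\cdots\cup R_n$ has divisor class $L := [C_0']$ extending to a line bundle $\tilde L\in \Pic(\calX)$, then to deform $C_0'$ over $S$ inside $\calX$, and finally to argue that the geometric generic fibre of the resulting family is integral of geometric genus $g$.

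The key combinatorial input for the first step is the abundance of rational curves on K3 surfaces provided by Bogomolov--Mumford together with the generic-Picard-rank-two results cited in Section~\ref{subsect:generic}: although $[C_0]$ itself need not lie in the image of $\Pic(\calX)\to\Pic(\calX_0)$, the cokernel is (in the setting of interest) generated by classes of $(-2)$-curves on $\calX_0$, and such curves can be attached to $C_0$ transversally to produce $C_0'$ with $L\in\mathrm{image}(\Pic(\calX))$, while preserving the geometric genus $g$ since each $R_i$ is rational. Then the standard unobstructedness of curve deformations in K3 families, enabled by the triviality of $\omega_{\calX_0}$ and the vanishing of the semiregularity obstruction, shows that the relative Hilbert scheme $\mathrm{Hilb}_{\tilde L}(\calX/S)$ is smooth over $S$ at $[C_0']$ of the expected relative dimension, so its component through $[C_0']$ dominates $S$ and yields a flat family $\mathcal{D}\to S$ with $\mathcal{D}_0\supseteq C_0'$ and geometric generic fibre $D_\eta\subset\overline{\calX_\eta}$.

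The main obstacle is then to show that $D_\eta$ is geometrically integral of geometric genus $g$. For integrality, suppose $D_\eta = A\cup B$; each piece specialises to a closed subcurve of $\mathcal{D}_0$, and their union contains $C_0'$. Since $C_0$ is integral, one piece must specialise to a curve containing $C_0$; the expected-dimension hypothesis on $C_0$ together with the non-uniruledness of $\calX_0$ (forbidding a Hilbert scheme component from being swept out by rational curves alone) forces that piece to be itself an integral regeneration, so a straightforward induction on the number of components yields the claim. For the genus, the arithmetic genus $p_a(D_\eta)=p_a(L)=1+L^2/2$ is constant in the flat family while $C_0'$ has exactly $p_a(L)-g$ nodes by construction, so a generic deformation in the Hilbert scheme smooths precisely these nodes and preserves geometric genus $g$ by upper semicontinuity.
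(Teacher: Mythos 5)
There is a genuine gap here, in fact two. The first concerns your construction of $C_0'$. The claim that the cokernel of the specialisation map is ``generated by classes of $(-2)$-curves'' has no justification and is false in general; what one can say is that $n\calH_0-C_0$ is effective for $n\gg 0$ and an ample extendable class $\calH$, so Theorem \ref{thm: bogomolov mumford} produces a divisor in $|n\calH_0-C_0|$ that is a sum of rational curves. But these rational curves come with multiplicities, can be arbitrarily singular, and need not meet $C_0$ or each other transversally, so there is no reason for $C_0'$ to be nodal, and the deformation theory of such a configuration on $\calX_0$ is uncontrolled. This is essentially the ``rigidifier'' idea of Bogomolov--Hassett--Tschinkel and Li--Liedtke, and it is precisely the point the paper's proof is designed to avoid: instead of attaching rational curves on the special fibre, the paper first deforms $(\calX_0,\calH_0,C_0)$ sideways inside $\operatorname{Def}(\calX_0/A,\calH_0)\cap\operatorname{Def}(\calX_0/A,C_0)$ to a \emph{generic characteristic-zero} K3 $(X,H,C)$ of Picard rank two, where Theorem \ref{thm: nodal curves} supplies an \emph{integral nodal} rational curve $R\in|nH-C|$, and then glues the normalisations of $C$ and $R$ at a single point to obtain a stable map whose deformations are controlled by Proposition \ref{prop:defexpdim} and Corollary \ref{cor:def}. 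This sideways deformation is the key idea of the proof and is entirely absent from your proposal.

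The second gap is the genus argument, which as written fails. Deforming $C_0'$ in the relative Hilbert scheme $\mathrm{Hilb}_{\tilde L}(\calX/S)$, i.e.\ in the relative linear system $|\tilde L|$, gives no control on the geometric genus of the generic member: the generic divisor in $|\tilde L_\eta|$ is a \emph{smooth} curve of genus $p_a(\tilde L)\gg g$. Semicontinuity of geometric genus goes the wrong way for you --- it can only drop under specialisation --- so nothing forces a deformation of $C_0'$ to stay in geometric genus $g$, and the assertion that a generic deformation ``smooths precisely these nodes'' is unfounded. What is actually needed is the moduli space of genus-$g$ stable maps together with the two-sided dimension estimate (upper bound $g$ on the fibres from the expected-dimension hypothesis, lower bound $g+\dim S$ from Theorem \ref{theo:deffam}) to conclude that the genus-$g$ locus dominates $S$. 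Finally, even granting all of that, your integrality induction does not address the real difficulty (see Remark \ref{rem:pointsissue}): the fibre of the relevant component of the stable-map space over $\calX_0$ can be positive-dimensional, so the specialisation of $D_\eta$ might a priori contain only a \emph{deformation} of $C_0$ plus rational curves rather than $C_0$ itself. The paper resolves this by marking $g$ general points of $C_0$ and imposing that the stable maps pass through the corresponding sections; this is where the non-uniruledness of $\calX_0$ and the expected-dimension hypothesis do their real work.
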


Again, we stress that we do {\em not} require that the class of the curve $C_0$ in $\Pic({\calX}_0)$ extends to
$\Pic(\overline{\calX_\eta})$. On the other hand, if it does extend, then it is more or less well-known that one can use
moduli spaces of stable maps to deform $C_0$ to $\overline{\calX_\eta}$ while preserving the geometric genus.
We refer to Section \ref{subsec: stable maps} and Lemma \ref{lem: assertion two} for further details.

The condition of deforming in the expected dimension holds for every integral curve $C_0$ if the characteristic of the
residue field $k$ is zero, or if the singularities of $C_0$ are mild, e.g., any nodal curve has this property even in a
unirational K3 surface (see Proposition \ref{prop:defexpdim}). In Section \ref{sec: applications}, we show by
example that this assumption cannot be dropped. If $C_0$ is a rigid rational curve, then such a theorem was already
envisioned in \cite{bht} and building on their ideas, \cite{liliedtke} already proved some important special cases of
the above theorem.

There are two applications of regeneration in this paper. First, it is a key step in the proof of Theorem \ref{thm:
curves on complex k3} in the odd rank case (see Corollary \ref{cor:odd rank}). Second, it is key to the construction of rational
curves on K3 surfaces, namely towards Conjecture \ref{conj: inf many rational curves} - in fact, this was the motivation
of \cite{bht, liliedtke} where weaker versions of the regeneration theorem were developed. In particular, the
regeneration theorem provides a slightly cleaner and simpler proof of the main theorem of \cite{liliedtke}. As another
corollary of our regeneration technique we have the following specialisation result for Conjecture \ref{conj: inf many
rational curves}.

\begin{Corollaryx}
If $\calX_0$ is not uniruled and has infinitely many rational curves, then so does
$\overline{\calX_{\eta}}$.
\end{Corollaryx}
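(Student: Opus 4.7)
My plan is to apply Theorem B directly to each of the infinitely many rational curves on $\calX_0$, and then argue that regenerations associated to distinct curves on the special fibre cannot coincide too often on the geometric generic fibre.

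First, let $\{C_{0,n}\}_{n\in\NN}$ be an infinite sequence of pairwise distinct integral rational curves on $\calX_0$. Since each $C_{0,n}$ is an integral curve of geometric genus zero and $\calX_0$ is not uniruled, Theorem B applies (we are in the setting where the expected-dimension hypothesis holds, either because we work in characteristic zero or because rational curves on a non-uniruled K3 surface satisfy the required deformation property, cf.\ Proposition \ref{prop:defexpdim}). Hence each $C_{0,n}$ admits a regeneration $D_{\eta,n}\subset \overline{\calX_\eta}$: a geometrically integral curve whose geometric genus is $0$ (by condition (1) of the definition of a regeneration) and hence a rational curve on $\overline{\calX_\eta}$.

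It remains to show that among the $D_{\eta,n}$ there are infinitely many distinct curves. By condition (2), the specialisation of $D_{\eta,n}$ to $\calX_0$ contains $C_{0,n}$, and more precisely, as noted just after the definition of regeneration, equals $C_{0,n}$ together with a (possibly empty) union of rational curves. If $D_{\eta,n}=D_{\eta,m}$ as curves on $\overline{\calX_\eta}$, their specialisations agree as cycles on $\calX_0$, so both $C_{0,n}$ and $C_{0,m}$ appear among the components of the same degeneration; since a given curve has only finitely many components, each value of $D_{\eta,n}$ can be attained by only finitely many indices $n$. Therefore the infinite family $\{C_{0,n}\}$ produces infinitely many distinct rational curves $D_{\eta,n}\subset\overline{\calX_\eta}$, proving the corollary.

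The main conceptual obstacle has already been absorbed into Theorem B: namely that the class $[C_{0,n}]\in\Pic(\calX_0)$ need not extend to $\Pic(\overline{\calX_\eta})$, which is why a purely deformation-theoretic argument via stable maps would not suffice. The only subtlety in the present deduction is the pigeonhole-style injectivity argument above, ensuring that the regeneration construction does not collapse infinitely many curves on $\calX_0$ into a single curve on the generic fibre; this is controlled cleanly by the explicit description of the specialisation of a regeneration.
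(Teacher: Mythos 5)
Your proof is correct and takes essentially the same route as the paper's: apply the regeneration theorem to each of the infinitely many rational curves on $\calX_0$, which is legitimate because rational curves on a non-uniruled K3 surface deform in the expected dimension (Proposition \ref{prop:defexpdim}). The only cosmetic difference lies in how distinctness of the regenerations is secured --- the paper first arranges the curves to have pairwise distinct classes in $\Pic(\calX_0)$ (using that each linear system contains only finitely many rigid rational curves), whereas you run a pigeonhole argument on the finitely many components of the specialisation of each $D_{\eta,n}$; both arguments work.
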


Note that Conjecture \ref{conj: inf many rational curves} in characteristic zero is known to follow from the conjecture
over $\overline{\QQ}$. The above reduction now shows that it suffices to prove that for a K3 surface $X$ over $\overline{\QQ}$,
one need only find one prime $\idealp$ of good reduction such that $X_{\overline{\idealp}}$ is not uniruled and has
infinitely many rational curves; we explain this and other related corollaries in Section \ref{sec: applications}. Our
proof of the conjecture in the remaining cases follows a different approach though, outlined in the next subsection.

\subsection{The marked point trick}

By a theorem of Li and the third named author \cite{liliedtke} (see also Theorem \ref{thm: ll}), a K3 surface of Picard
rank one has infinitely many rational curves.  Moreover, by work of Bogomolov--Tschinkel (see Section \ref{sec:
existence}) and Vinberg, the remaining cases of Conjecture \ref{conj: inf many rational curves} are K3 surfaces of Picard rank 2 and
4 that are neither elliptic nor have infinitely many automorphisms. The difficulty is that infinitely many rational
curves on a very general K3 surface could all specialise to finitely many on one of Picard rank 2.

To address this problem, we develop a technique of controlled degeneration, which is a consequence of what we call the
\textit{marked point trick} in Proposition \ref{prop: marked point trick g=0}. A simplified description of this is the
following. Assume that we are given a diagram
\[
\begin{tikzcd}
    \mathcal{C}\ar{r}{F}\ar{d}[left]{g} & \calX\ar{d}{f} \\
    V\ar{r}{\pi} & U
\end{tikzcd}
\]
where $U,V$ are smooth projective surfaces, $f$ is a smooth proper family of surfaces, $g$ is a generically smooth family of stable
genus 0 curves mapping to fibres of $f$ with $V\to U$ generically finite, and assume that for some point $v\in
\pi^{-1}(u)$ the curve $F(\mathcal{C}_v)\subset\calX_u$ is reducible. We mark now four points on fibres of
$g$ and pull back the bounday $\partial\overline{\mathcal{M}}_{0,4}$ under the induced moduli map $V\to
\overline{\mathcal{M}}_{0,4}\cong \PP^1$. This will be a divisor in $V$ which is nef, implying that it does not get
contracted when mapped to $U$. Hence the locus in $U$ of surfaces for which the image of the corresponding fibre of $g$
is reducible is in fact divisorial. One can then use basic properties of Noether--Lefschetz loci and of the moduli space
of K3 surfaces to prove the following theorem, which we state for simplicity over the complex numbers.

\begin{thmC}
    Let $X$ be a complex projective K3 surface, with $\Pic(X) = \Lambda$ for a lattice $\Lambda$ of rank $r\ge 2$ and
    let $L\in \Lambda$ be a primitive ample divisor on $X$. Let $M\subseteq M_{\Lambda,\CC}$ be a component of the moduli space of
    $\Lambda$-polarised K3 surfaces containing the point representing $X$. Suppose that for a general $Y\in M$,
    there is an integral rational curve in $|L|$ on $Y$. Then the same holds for $X$, i.e., there is an integral
    rational curve in $|L|$ on $X$.
\end{thmC}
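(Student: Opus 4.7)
The plan is a proof by contradiction, combining the marked--point trick of Proposition \ref{prop: marked point trick g=0} with Noether--Lefschetz theory on the moduli space $M_{\Lambda,\CC}$. Suppose for contradiction that $X$ admits no integral rational curve in $|L|$. Let $V$ be an irreducible component of the relative Kontsevich space $\overline{\mathcal{M}}_0(\calX/M,\, L)$ whose general point parametrises an integral rational curve in $|L|$; such a $V$ exists by hypothesis, and $\pi : V \to M$ is dominant and generically finite. The contradiction hypothesis says that every point of the fibre $V_X$ parametrises a stable map whose image in $X$ is reducible.

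Next I apply the marked--point trick. After a suitable base change of $V$, I introduce four disjoint sections of the universal source curve $\mathcal{C} \to V$ lying in the smooth locus of the general fibre and separating components on a chosen reducible source curve over $X$. The induced moduli map $\phi : V \to \overline{\mathcal{M}}_{0,4} \cong \PP^1$ is then non-constant, and $D := \phi^{-1}(\partial \overline{\mathcal{M}}_{0,4})$ is a nef effective divisor on $V$ meeting $V_X$. Since $D$ is nef and $\pi$ is generically finite, no component of $D$ is contracted, so $\pi(D) \subset M$ is an effective divisor containing $X$; it is a proper divisor because a general $Y \in M$ admits an integral rational curve in $|L|$.

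The conclusion follows by identifying $\pi(D)$ with a piece of the Noether--Lefschetz locus. At a general $Y \in \pi(D)$, the limiting reducible curve decomposes as $\sum C_i$ with each $C_i$ a rational curve of class $[C_i] \in \Pic(Y)$, and the classes $[C_i]$ vary algebraically over $\pi(D)$. If some $[C_i] \notin \Lambda$, then $\pi(D)$ lies inside the Noether--Lefschetz divisor defined by this class. If instead every $[C_i] \in \Lambda$, then the existence of rational representatives in the classes $[C_i]$ is still a Noether--Lefschetz-type condition unless it propagates to all of $M$, in which case reducibility holds on general $Y$, contradicting the hypothesis. Either way, $\pi(D)$ is contained in a countable union of Noether--Lefschetz divisors of $M_{\Lambda,\CC}$, which contradicts $X \in \pi(D)$ together with $\Pic(X) = \Lambda$.

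\textbf{Main obstacle.} The crucial step is the marked--point construction itself: one must arrange a base change of $V$ that preserves generic finiteness of $\pi$ over $M$ and admits four disjoint sections of $\mathcal{C}$ lying in the smooth locus of the generic fibre and separating components on the specified reducible source curve over $X$, in a way that makes $\phi$ non-constant and forces $D$ to meet $V_X$. The dichotomy in the Noether--Lefschetz step is the second delicate point, since one has to exclude the possibility that a splitting $L = \sum [C_i]$ lying entirely within $\Lambda$ carves out a non--NL divisor in $M$.
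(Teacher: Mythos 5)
Your overall strategy --- marking four points, the stabilisation map to $\overline{\mathcal{M}}_{0,4}$, nefness of the pullback of the boundary, and a Noether--Lefschetz conclusion --- is the right circle of ideas, but setting it up over $M=M_{\Lambda,\CC}$ itself leaves two genuine gaps, the second of which you flag yourself without resolving. First, the claim that ``$D$ is nef and $\pi$ is generically finite, so no component of $D$ is contracted'' is not justified when $\dim M=20-r>2$: the argument the paper uses for this step is surface-specific, resting on the fact that on a smooth surface a connected curve contracted by a generically finite map has negative definite intersection matrix and therefore cannot support a non-zero effective nef divisor. Second, and more seriously, your Noether--Lefschetz dichotomy does not close. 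If over a general point of $\pi(D)$ the limit curve splits as $\sum C_i$ with every $[C_i]\in\Lambda$, then no Picard rank jump is forced, ``existence of rational representatives'' of the $[C_i]$ is vacuous on a K3 surface by Theorem \ref{thm: bogomolov mumford}, and $\pi(D)$ can perfectly well be a divisor of $M_\Lambda$ that is not contained in any Noether--Lefschetz locus; nothing then prevents $X\in\pi(D)$. This is not a removable technicality --- it is exactly the hard case the theorem is about.

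The paper resolves both problems with one structural move that your proposal is missing: it does not work over $M_\Lambda$ at all. It first chooses a corank-one primitive sublattice $\Sigma\subsetneq\Lambda$ containing $L$ but avoiding the class of every proper subdivisor $0<D'<C$ of the flat limit $C$ on $X$ (in rank $2$, $\Sigma=\ZZ L$ works automatically because $L$ is primitive), and then takes a general \emph{two-dimensional} slice $U\subset M_\Sigma$ meeting $M_\Lambda$ in a single irreducible curve $B\ni X$. The integral curves on general $Y\in B$ are spread over $U$, and the marked point trick (Proposition \ref{prop: marked point trick g=0}) is applied to the resulting surface $V\to U$, where the negative-definiteness argument is valid. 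Reducibility of the fibre along the resulting curve $F$ with $\varphi_*F\ne 0$ then forces the component classes out of $\Sigma$, hence $\varphi(F)\subseteq B$, hence $\varphi(F)=B$; but the family is generically smooth over $B$ by construction. That internal contradiction replaces your Noether--Lefschetz dichotomy and is precisely what disposes of the case in which all the $[C_i]$ lie in $\Lambda$.
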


The above theorem, along with the existence of integral rational curves in all positive enough ample classes on a
general K3 surface of Picard rank 2 and 4 achieved in Appendix \ref{appendix} and \cite{generic} concludes the remaining cases of Conjecture
\ref{conj: inf many rational curves}. In Section \ref{sec: pos char} we show how the marked point trick can be extended
to mixed characteristic, giving the conjecture in arbitrary characteristic.

\subsection{Leitfaden}
This article is organised as follows:

In Section \ref{sec:deformations}, we recall and extend results concerning the Kontsevich
moduli space of stable curves on K3 surfaces, and also of the moduli space of lattice polarised K3 surfaces itself.

In Section \ref{sec: existence}, we recall known facts about existence of rational curves on K3 surfaces, and extend them
to genus 1. We also summarise some of the results of Appendix \ref{appendix} and \cite{generic} which will be used later on.

In Section \ref{sec: regeneration}, we establish Theorem B, proving regeneration of curves on K3
surfaces. The idea of the proof is similar to the use of {\em rigidifiers} as introduced by Li and the third named author in
\cite{liliedtke}. Here, we need more general existence results for such curves proved in Appendix \ref{appendix}.

In Section \ref{sec: applications}, we discuss applications of the regeneration technique, reproving the main theorem of
\cite{liliedtke}. Furthermore we extend regeneration to Enriques surfaces and discuss it for other surfaces of Kodaira
dimension zero.

In Sections \ref{sec: main theorem 1}, \ref{sec: main theorem 2}, we reduce the main theorem to genus $\leq1$ and Picard rank
$2,4$ and prove the marked point trick. Then we prove Theorem A in characteristic zero, which is achieved by combining
regeneration, the marked point trick and the results of Appendix \ref{appendix} and \cite{generic}.

In the final Section \ref{sec: pos char} we explain how to extend the results of the previous two sections to positive characteristic.

\subsection{Notation}
A \textit{K3 surface $X$} over a field $k$ will be a geometrically integral, smooth, proper and
separated scheme of relative dimension $2$ over $k$, so that $\omega_X\cong\OO_X$ and $\HH^1(X,\OO_X)=0$.
Let $S$ be a connected base scheme. Then a morphism
\[
   \begin{tikzcd}f:\calX \ar{r} &S\end{tikzcd}
\]
is a \textit{smooth family of surfaces} if $f$ is a smooth and proper morphism of algebraic spaces of relative dimension
two whose geometric fibres are irreducible. In particular a \textit{family of K3 surfaces} is a family of surfaces where every
fibre is a K3 surface as above.
A property that holds for a \textit{general point} in a set will mean that it holds for all points of a Zariski-open subset,
whereas a \textit{very general point} will be one in the complement of countably many Zariski-closed subsets.
We will be a little sloppy on the distinction between a divisor $D\in Z^1(X)$ and its divisor class $[D]$ in $\Pic(X)$. For
example, when we say $D\in \Lambda$ or $D\not\in \Lambda$ for a divisor $D$, we mean $[D]\in \Lambda$ or $[D]\not\in
\Lambda$.

\begin{Acknowledgements}
We thank O. Benoist, D. Huybrechts, K. Ito, M. Kemeny, G. Martin, J. C. Ottem and S. Tayou for discussions and comments
and in particular A. Knutsen for remarks and corrections. We are very grateful to the anonymous referees whose comments
and corrections greatly improved the paper. The first named author is partially supported by the NSERC
Discovery Grant 262265.  The second and third named authors are supported by the ERC Consolidator Grant 681838
``K3CRYSTAL''.
\end{Acknowledgements}

\section{Curves, K3 surfaces and moduli}
\label{sec:deformations}

In this section, we study deformations of K3 surfaces and of curves in families of K3 surfaces: first, via
elementary deformation theory and second, via moduli spaces of stable maps. Almost nothing in this section is
new, but we gather facts and definitions and extend them to positive characteristic, something not always easily found
in the literature.

\subsection{Moduli of lattice-polarised K3 surfaces}
\label{subsec:mod of k3s}

First, we recall polarised moduli spaces of K3 surfaces. If $X$ is a K3 surface over a field, then a \emph{polarisation}
(resp., a \emph{quasi-polarisation}) on $X$ is an ample (resp., big and nef) line bundle on $X$. A
(quasi)-polarisation is called \emph{primitive} if it is primitive in the Picard group, i.e., it cannot be written as
a multiple of another line bundle. If $(X,H)$ is a (quasi-)polarised K3 surface, then the self-intersection
$H^2$ is a non-negative and even integer, which is called the {\em degree}. For every integer $d>0$, there exists a
moduli stack $\rm{\mathbf{M}}_{2d}$ of degree $2d$ primitively polarised K3 surfaces. In fact, this stack is
Deligne--Mumford of finite type over $\ZZ$ and it is also separated \cite[Theorem 4.3.3]{rizov}. We refer to \cite[\S
5]{huybrechts}, \cite{rizov} for details.

More generally, assume that $\Lambda$ is a lattice of rank $r$ and signature $(1,r-1)$ which
can be primitively embedded in the K3 lattice $E_8(-1)^{\oplus2}\oplus U^{\oplus3}$. Denote by $\Delta_\Lambda$ the
discriminant (i.e., the determinant of the intersection matrix) of $\Lambda$. Then we refer to \cite{dolgachev,
beauville, achter} for the construction of the moduli stack $\rm{\mathbf{M}}_\Lambda$ of $\Lambda$-polarised K3
surfaces (which is again separated, Deligne--Mumford, and of finite type over $\ZZ$) and for conditions for its
nonemptiness.

Regarding smoothness of the stack $\rm{\mathbf{M}}_\Lambda$, we note that over the complex numbers the above stacks are
smooth \cite[Proposition 2.1]{dolgachev}, \cite[Proposition 2.6]{beauville}. 

\begin{Proposition}(\cite[Proposition 3.3]{achter}, \cite[\S 4]{lieblichmaulik})
\label{prop: lattice polarised stack smooth}
    The stack $\rm{\mathbf{M}}_\Lambda$ is separated and Deligne--Mumford. Over $\ZZ[1/\Delta_\Lambda]$, it is smooth
    and of relative dimension $20-r$. Moreover, if $(X,\Lambda)$ is a $\Lambda$-polarised K3 surface over an
    algebraically closed field $k$ of positive characteristic $p$ so that $X$ is not supersingular, then $(X,\Lambda)$
    is a smooth point of $\rm{\mathbf{M}}_\Lambda$.
\end{Proposition}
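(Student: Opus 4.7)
The plan is to analyse the universal formal deformation ring at each geometric point of $\mathbf{M}_\Lambda$, following the approach of Lieblich--Maulik and Achter. First, for the separated Deligne--Mumford property, since $\Lambda$ has signature $(1,r-1)$ it contains a primitive ample class $L_0$ of some degree $2d$, yielding a forgetful morphism $\mathbf{M}_\Lambda \to \mathbf{M}_{2d}$. This morphism is representable and quasi-finite, because $\Pic(X)$ is a finitely generated abelian group and hence admits only finitely many lattice embeddings of $\Lambda$ extending $L_0$. The separatedness and Deligne--Mumford property of $\mathbf{M}_{2d}$ cited above thus transfer to $\mathbf{M}_\Lambda$.

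For smoothness of relative dimension $20 - r$ over $\ZZ[1/\Delta_\Lambda]$, fix a geometric point $(X, \iota)$ over an algebraically closed field $k$ of characteristic $p$. By the unobstructedness of K3 deformations in all characteristics, the universal unpolarised deformation $\Def(X) \cong \Spf W(k)[[t_1, \dots, t_{20}]]$ is smooth of relative dimension $20$ over $W(k)$. For each $L \in \Pic(X)$ the locus $\Def(X, L) \subset \Def(X)$ where $L$ lifts is a Cartier divisor whose first-order defining equation is cup product with $c_1(L)$, namely the map
$$H^1(X, T_X) \xrightarrow{\ \cup\, c_1(L)\ } H^2(X, \OO_X) \cong k.$$
Choosing a basis $L_1, \dots, L_r$ of $\Lambda$, the formal model of $\mathbf{M}_\Lambda$ at $(X, \iota)$ is the intersection $\bigcap_i \Def(X, L_i)$. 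These $r$ equations form a regular sequence and cut out a formally smooth substack of codimension $r$ precisely when the Gram matrix of pairings $\langle c_1(L_i), c_1(L_j) \rangle$ on $H^1(\Omega^1_X)$ is invertible in $k$; up to sign this matrix equals the intersection matrix of $\Lambda$, so invertibility is equivalent to $p \nmid \Delta_\Lambda$.

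The main obstacle is the final clause: smoothness at non-superspecial points in residue characteristic $p \mid \Delta_\Lambda$. Here the Hodge-theoretic pairing above degenerates modulo $p$, and the naive tangent-space computation fails. One salvages the argument by performing the obstruction calculation in $H^2_{\mathrm{cris}}(X / W(k))$, which carries an $F$-crystal structure and Hodge filtration imposing extra rigidity on deformations of $(X, \iota)$. By Ogus's theory of supersingular K3 crystals, the Artin invariant $\sigma_0$ measures the degeneracy of the crystalline pairing restricted to the Tate module, and $X$ being non-superspecial (either of finite height, or supersingular with $\sigma_0 > 1$) is exactly the condition under which the refined crystalline obstruction map retains rank $r$, so the intersection of the $r$ deformation divisors remains smooth of relative dimension $20-r$ at $(X,\iota)$. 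The detailed Dieudonn\'e-theoretic verification is the content of \S 4 of Lieblich--Maulik and Proposition 3.3 of Achter, which I would invoke as the main technical input.
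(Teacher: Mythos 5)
Your treatment of the separated Deligne--Mumford property and the identification of the formal local model as $\bigcap_i \Def(X,L_i)$ inside the $20$-dimensional unobstructed deformation space matches the paper, which likewise delegates these parts to Achter and Lieblich--Maulik. But there are two genuine problems in the rest. First, your criterion ``the $r$ equations cut out a formally smooth substack of codimension $r$ \emph{precisely when} the Gram matrix $(\langle c_1(L_i),c_1(L_j)\rangle)$ is invertible in $k$'' is wrong as a biconditional, and the error is not harmless: if it were correct, the final clause of the proposition (smoothness at non-superspecial points in characteristics dividing $\Delta_\Lambda$) would be \emph{false}, since there the Gram matrix is singular mod $p$ by hypothesis. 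The correct transversality criterion, via Serre duality between $H^1(T_X)$ and $H^1(\Omega^1_X)$, is that the classes $c_1(L_1),\dots,c_1(L_r)$ be linearly independent in $H^1(X,\Omega^1_X)$; nondegeneracy of the Gram matrix mod $p$ is sufficient for this (which is all you need over $\ZZ[1/\Delta_\Lambda]$) but not necessary.

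Second, and more seriously, your argument for the final clause has no actual content. The entire ``$+\varepsilon$'' of the proposition is exactly the non-superspecial statement, which the paper proves by quoting one concrete input: for $X$ not superspecial, the Chern class map $\Pic(X)\otimes k \to H^1(X,\Omega^1_X)$ is injective on the relevant lattice (van der Geer--Katsura, Proposition 10.3; Ogus, Remark 1.9 in the ordinary case), after which the transversality argument of Achter's Proposition 3.8 applies verbatim. You instead invoke an undefined ``refined crystalline obstruction map'' and then declare that ``the detailed Dieudonn\'e-theoretic verification is the content of \S 4 of Lieblich--Maulik and Proposition 3.3 of Achter'' --- but those are precisely the references the proposition is \emph{extending}; they do not contain the non-superspecial case, which is why the paper flags it as the part remaining to be proven. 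As written, the new assertion of the proposition is left unproved. To repair the argument, replace the crystalline hand-waving by the van der Geer--Katsura/Ogus injectivity statement and run your own divisor-intersection argument with the corrected (linear-independence) transversality criterion.
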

\begin{proof}
    The first and second claims go back to \cite{rizov}, see, e.g., \cite[Proposition 3.3]{achter}. What remains to be
    proven is that non-supersingular K3 surfaces give smooth points in moduli, which is pointed out in \cite[\S
    4]{lieblichmaulik}, and relies on results of \cite[Remark 1.9]{ogus} for the ordinary case and \cite[Proposition
    10.3]{katsuravandergeer} for the finite height case.
\end{proof}

These moduli stacks have quasi-projective coarse moduli spaces, but in all applications in this paper we will be working
in some (usually smooth) atlas of the stack which by slight abuse of notation we shall denote by $M_{2d}$ and $M_\Lambda$
(or $M_{2d,\ZZ}$ etc.). For example, the complex moduli space $M_{\Lambda,\CC}$ is smooth of dimension $20-r$, and if
$r:=\rank(\Lambda)$ is small enough (see \cite[Proposition 5.6]{dolgachev} for particulars), then it is irreducible.

In the remainder of this subsection we will outline a construction, used repeatedly in proofs in this paper, giving the
algebraic deformation space of a K3 along with a big and nef divisor.

\begin{Construction}\label{construction:defspace}
Let $X$ be a K3 surface over an algebraically closed field $k$, and let $L$ be a line bundle on $X$. If $
\chara(k)=0$, then we set $A:=k$ and if $\chara(k)=p>0$, then we set $A:=W(k)$, the ring of Witt vectors of $k$.
Consider now $\operatorname{Def}(X/A)$ the smooth formal versal deformation space (see \cite[Theorem 8.5.19]{fga},
\cite[Proposition 9.5.2]{huybrechts}) which is flat and of relative dimension 20 over $\operatorname{Spf}A$. From a
theorem of Deligne \cite[Th\'eor\`eme 1.6]{deligne} (see also \cite[Theorem 9.5.4]{huybrechts}) the locus
$\operatorname{Def}(X/A,L) \subset \operatorname{Def}(X/A)$ where $L$ deforms is a formal Cartier divisor that is flat
over $A$. As pointed out in \cite[Remarque 3.6]{benoist}, this divisor need not be smooth or irreducible if $L$
is not a primitive polarisation or if the special fibre is supersingular. 

If $H$ is big and nef, then the formal family over $\widehat{T}(H):=\operatorname{Def}(X/A,H)$ is algebraisable after passing to a finite
extension from Grothendieck's Existence Theorem and a theorem of Deligne \cite[Corollaire 1.8]{deligne} (see also \cite[Corollary
9.5.5]{huybrechts}), i.e., we obtain a family $\calX\to T(H)$ of K3 surfaces whose generic fibre is in characteristic
zero regardless of the characteristic of $k$, and which has a line bundle $\calH$ on $\calX$ such
that $\calH_0=H$. Note that since we used a big and nef line bundle to algebraise, the algebraisation may only exist in
the category of algebraic spaces.
\end{Construction}

\subsection{Lifting curves}

First, assume that $S$ is the spectrum of a complete DVR and
assume that there exists a relatively ample line bundle $\calH$ on $f:\calX\to S$.
We denote by $\calH_\eta$ and $\calH_0$ the restriction of $\calH$ to the generic fibre $\calX_\eta$ and the special
fibre $\calX_0$, respectively.

Assume that the higher direct images satisfy $R^if_\ast\calH=0$ for $i\geq1$, which is the case if
$h^i(\calX_0,\calH_0)=0$ for $i\geq1$, which is the case, for example, if $\calH_0$ is ``sufficiently ample'' by Serre
vanishing. Then, $f_\ast\calH$ is a locally free $\OO_S$-module, and thus, a free $\OO_S$-module of rank $h^0(\calX_0,\calH_0)$. In particular, the
maps
\[
\begin{tikzcd}
    \HH^0(\calX_\eta,\calH_\eta) &\ar{l}[swap]{\imath} \HH^0(\calX,\calH)
    \ar{r}{\pi} & \HH^0(\calX_0,\calH_0)
\end{tikzcd}
\]
have the property that $\pi$ is a surjection and that $\imath$ is an injection that becomes an isomorphism after tensoring
with $k(\eta)$. It is in this sense that we can lift curves on $\calX_0$ that lie in the linear system $|\calH_0|$ to
$\calX_\eta$. In this case, the lifted curve lies in the linear system $|\calH_\eta|$, but in general, other than
giving an upper bound, one cannot control the geometric genus of the lift.

\begin{Remark}
 If $f$ is a family of K3 surfaces and $\calH_0$ is big and nef, then the vanishing assumption $R^if_\ast\calH=0$ for
 $i\geq1$ is fulfilled, see, for example, \cite[Proposition 2.3.1]{huybrechts}.
\end{Remark}

\subsection{Deforming curves via moduli spaces of stable maps}
\label{subsec: stable maps}

In the previous subsection, we lifted curves from the special to the general fibre. However, this required some
cohomology vanishing. Moreover, if the curve on $\calX_0$ was singular, we could not control the geometric genus of the
deformation. To control this as well, we use moduli spaces of stable maps as introduced by Kontsevich \cite{kontsevich}
and we refer to \cite{abramovichvistoli} for various algebraic constructions, as well as to \cite{bht, hk, liliedtke}
for discussions that are already adapted to moduli spaces of stable maps on families of K3 surfaces. We begin by noting
that the \textit{arithmetic and geometric genus} of a geometrically irreducible curve $C$ over a possibly non-perfect
field $k$ will be defined as the arithmetic and geometric genus over the algebraic closure $\overline{k}$ of $k$, i.e.,
$h^1(\overline{C},\OO_{\overline{C}})$ and $h^1(\overline{C}',\OO_{\overline{C}'})$ respectively, where
$\overline{C}=C\otimes_k\overline{k}$ and $\overline{C}'$ denotes its normalisation. For reducible curves the
genus in each case is defined as the sum of the genera of all the irreducible components.

A \textit{rational curve} will be a geometrically integral curve over a field whose geometric genus is zero. We call a rational
curve in a variety \textit{rigid} if it does not deform in a one-dimensional family.
In particular, we use the word rigid in the most liberal manner, i.e., we allow infinitesimal deformations.

We mention that rational curves on K3 surfaces in characteristic zero are rigid.  Non-rigid rational curves exist on K3
surfaces in positive characteristic, but then, the K3 surface in question is uniruled.  We note that uniruled K3
surfaces indeed exist and the first examples, which are even unirational, were given in \cite{ShiodaExample}.  Moreover,
by \cite[\S2]{ShiodaExample}, the Picard rank of a uniruled K3 surface over an algebraically closed field is equal to
its second Betti number, which is equal to $22$, i.e., the K3 surface is \textit{supersingular (in the sense of
Shioda)}. Since the Tate conjecture for K3 surfaces is by now fully established, Shioda's notion of supersingular
coincides with the others (height of the formal Brauer group \cite{artinsupersingular} or in terms of the Newton polygon
on cohomology), see \cite{totaro}. We note that supersingular K3 surfaces form 9-dimensional families
\cite{artinsupersingular} and are thus very special, even in positive characteristic.  We also note that even on
uniruled K3 surfaces in positive characteristic not all rational curves move, see Section \ref{sec: existence}.

Let now $f:\calX\to S$ be a smooth projective family of varieties over a base scheme $S$, and let $\calH$ a fixed ample line
bundle on $\calX$. There is a Deligne-Mumford stack $\overline{\rm{\mathbf{M}}}_g(\calX/S, d)$ parametrising stable genus $g$
maps from curves $h:D\to\calX_t$ into fibres of $f$ so that $h_*[D].\calH_t=d\in\mathbb{Z}$ (see, e.g., \cite[Theorem
13.3.5]{olsson}, the results ultimately going back to \cite{kontsevich, fp}). Following \cite[Theorem
50-51]{ak}, its coarse moduli space $\overline{\mathcal{M}}_g(\calX/S, d)$ is a projective
scheme.
From \cite[\S 8.2]{abramovichvistoli}, we have a closed and open substack, with projective coarse moduli space denoted by
$\overline{\mathcal{M}}_g(\calX/S, \beta)$, parametrising morphisms where $h_*[D]=\beta$ for some cohomology class
$\beta\in R^{2n-2}f_*\mathbb{Z_\ell}$ (where $\ell$ is a prime invertible on $S$, which will always exist in this article)
whose cup product with $\calH_t$ equals $d$.

If the fibres of $f$ are smooth projective K3 surfaces, then rational, algebraic and numerical equivalence all agree so we
have a coarse moduli space $\overline{\mathcal{M}}_g(\calX/S,\calH)$ parametrising maps so that $\OO_{\calX_t}(h(D))
\cong \calH_t$.

Denote by $(X,L)=(\calX_t, \calH_t)$ a fibre. We briefly recall the deformation theory of $[h:D\to
X]\in\overline{\mathcal{M}}_g(X,L)$ where $h$ is a closed embedding and $D$ is smooth and irreducible. The deformation
theory of the image $D\subset X$ is determined in the Hilbert scheme by the normal sheaf $\calN_{D/X}$, with the Zariski
tangent space to the Hilbert scheme at the point $D$ being given by $\HH^0(D, \calN_{D/X})$ and obstructions lying in
$\HH^1(D, \calN_{D/X})$. In other words locally around $[D]$, the Hilbert scheme $\operatorname{Hilb}(X)$ can be defined
by up to $h^1(\calN_{D/X})$ equations in a smooth $h^0(\calN_{D/X})$-dimensional space (\cite[I.2.17]{kollar} for the
similar relative statement). Similarly, if we deform the morphism $h$ while keeping the source $D$ and target $X$ fixed,
then the deformations in $\Hom(D,X)$ are controlled by the sheaf $h^*T_X$ (\cite[Theorem II.1.7]{kollar}).

Assume now that $h:D\to X$ is a stable morphism that satisfies the following two conditions.
\begin{enumerate}
    \item Every irreducible component of $D$ is smooth.
    \item The morphism $h$ is unramified.
\end{enumerate}

To explain the second condition a bit, it implies that from the standard cotangent sequence
\[
\begin{tikzcd}
    h^*\Omega^1_X\ar{r}& \Omega^1_D\ar{r}& \Omega_h \ar{r} &0
\end{tikzcd}
\]
we have a surjection $h^*\Omega^1_X\to\Omega^1_D\to0$. More geometrically, this is the case for example if $h(D)$ is at worst nodal and
all components $D_i$ of $D$ map, under $h$, birationally onto their image and all the $h(D_i)$ are distinct.
From \cite[\S 3.4.2-3.4.3]{sernesi}, \cite[\S 2.1]{ghs}, \cite[\S 4]{bht}, the deformation theory of such an $h$ in
$\overline{\mathcal{M}}_g(X, L)$ is quite well behaved. In particular the complex governing the
deformations is quasi-isomorphic to a line bundle $\calN_h$ defined by the sequence
\begin{equation}\label{def:normalsheaf}
\begin{tikzcd}
    0\ar{r} &T_D\ar{r}& h^*T_X\ar{r}& \calN_h\ar{r}& 0.
\end{tikzcd}
\end{equation}

\begin{Definition}
Let $X$ be a K3 surface over an algebraically closed field. If $h:D\to X$ is an element of $\overline{\mathcal{M}}_g(X,
L)$, then we say that it \textit{deforms in the expected dimension} if every irreducible component $M\subset
\overline{\mathcal{M}}_g(X, L)$ with $[h]\in M$ satisfies $\dim M \leq g$. We say that an integral curve $C\subset X$
deforms in the expected dimension if the composition of the embedding with the normalisation morphism $h:\widehat{C}\to
X$ does so.
\end{Definition}

For example, rational curves on a K3 surface in characteristic zero are rigid and thus, deform in the expected dimension.
We recall now that stable unramified morphisms
deform in the expected dimension, a fact which has appeared in various forms in the literature \cite[Proposition
2.3]{hk}, \cite[Lemma 2.3.4]{kemenythesis}, and later on we will extend this to allow worse singularities.

\begin{Lemma}\label{lem:unramified}
    Let $h:D\to X$ be an unramified morphism from a connected, nodal, (arithmetic) genus $g$ curve to a smooth
    projective K3 surface over an algebraically closed field $k$ and assume that every irreducible component of $D$ is
    smooth. Then \[ h^0(D, \calN_h) \leq g. \]
\end{Lemma}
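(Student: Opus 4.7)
The plan is to identify $\calN_h$ with the dualising sheaf $\omega_D$, after which the bound follows immediately from Serre duality on $D$.

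For the identification, use that $X$ is a K3 surface, so $\omega_X \cong \OO_X$ and hence $\det(h^*T_X) \cong h^*\omega_X^{-1} \cong \OO_D$. Taking determinants in the defining sequence \eqref{def:normalsheaf} then yields $\calN_h \cong (\det T_D)^{-1}$. On the smooth locus of $D$ this is the tautological identity $(\det T_D)^{-1} = \Omega^1_D = \omega_D$. At a node the sheaf $T_D$ is not locally free, so some care is needed; one can handle this either by a direct formal-local computation at each node, or by pulling back the sequence \eqref{def:normalsheaf} to the normalisation $\nu:\widehat{D}\to D$ and using the standard presentation $\omega_D \cong \nu_*\bigl(\omega_{\widehat{D}}(R)\bigr)$, where $R$ is the sum of the preimages of the nodes. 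Either route produces a global isomorphism $\calN_h \cong \omega_D$ on $D$.

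With this identification in hand, Serre duality on the connected nodal curve $D$ of arithmetic genus $g$ gives
$$
h^0(D,\calN_h) \;=\; h^0(D,\omega_D) \;=\; h^1(D,\OO_D) \;=\; g,
$$
so in fact equality holds in the stated bound, which is consistent with $g$ being the expected dimension of $\overline{\mathcal{M}}_g(X,L)$ at $[h]$. The principal obstacle is this local verification at the nodes, where $T_D$ fails to be locally free; everything else is purely formal, relying only on the triviality of $\omega_X$ and Serre duality. If this node-by-node check proves awkward, a fallback is to compute $\chi(\calN_h) = g-1$ directly (via a component-wise degree calculation on $\widehat{D}$ using $K_X = 0$) and then bound $h^1(\calN_h) = h^0(\omega_D \otimes \calN_h^{-1}) \leq 1$ using that $\omega_D \otimes \calN_h^{-1}$ is a degree-zero line bundle on the connected curve $D$, which again gives $h^0 \leq g$.
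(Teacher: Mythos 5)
Your overall strategy is sound: the isomorphism $\calN_h\cong\omega_D$ does hold (it is adjunction for the immersion $h$, using $\omega_X\cong\OO_X$), and granting it the lemma follows, with equality, from Serre duality. But neither of the two justifications you offer for that isomorphism actually works, and the difficulty is not where you locate it. Locally at a node both $\calN_h$ and $\omega_D$ are free of rank one, so a ``direct formal-local computation at each node'' proves nothing: any two line bundles are locally isomorphic. What has to be shown is that the isomorphism on the smooth locus extends, i.e.\ that the line bundle $\calN_h\otimes\omega_D^{-1}$, which is trivial on every irreducible component, is trivial on $D$ itself. When the dual graph of $D$ has cycles this is a genuine condition -- there is a $(k^\ast)^{b_1}$ worth of line bundles trivial on each component -- and it is exactly where the arithmetic genus (as opposed to the sum of the componentwise contributions) enters. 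The clean way to get the gluing is to observe that unramifiedness forces the two branches at each node to have distinct tangent directions in $X$, so $h$ is everywhere a local closed embedding, and then invoke the canonicity of the fundamental local (adjunction) isomorphism; your write-up does not do this. Worse, your second route rests on a false identity: $\omega_D$ is \emph{not} $\nu_\ast\bigl(\omega_{\widehat{D}}(R)\bigr)$ but the proper subsheaf of Rosenlicht differentials whose residues at the two preimages of each node sum to zero; the quotient is a skyscraper of length equal to the number of nodes. For two copies of $\PP^1$ meeting at two points, $h^0\bigl(\nu_\ast(\omega_{\widehat{D}}(R))\bigr)=2$ while $g=1$, so using that identity breaks the very bound the lemma asserts.

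By contrast, your fallback paragraph is essentially a complete and correct proof, and it sidesteps the gluing problem entirely: one only needs $\calN_h|_{D_i}\cong\calN_{h_{D_i}}(\textstyle\sum p)\cong\omega_{D_i}(\sum p)=\omega_D|_{D_i}$ on each (smooth) component, which gives $\deg\calN_h=2g-2$, hence $\chi(\calN_h)=g-1$; then $h^1(\calN_h)=h^0(\omega_D\otimes\calN_h^{-1})\le 1$ because $\omega_D\otimes\calN_h^{-1}$ is trivial on each component and $D$ is connected and reduced, so a nonzero section is nowhere vanishing. I would promote this to the main argument. It is genuinely different from the paper's proof, which also avoids any global identification of $\calN_h$ with $\omega_D$ but instead orders the components so that each partial union is connected, and inducts using the restriction formula and the sequence $0\to\calN_{h_T}\to\calN_h\to\calN_{h_{D_n}}(\sum p_i)\to 0$ together with $h^1(\Omega^1_{D_n}(\sum p_i))=0$. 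Both arguments ultimately rest on the same componentwise restriction formula; yours packages the bookkeeping into a single Euler-characteristic computation plus Serre duality, the paper's into an induction.
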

\begin{proof}
    If $D$ is irreducible, then taking determinants in the short exact sequence of vector bundles
    \[\begin{tikzcd} 0\ar{r}& T_D\ar{r}& h^*T_X \ar{r}& \calN_h\ar{r}&0, \end{tikzcd}\]
    we obtain $\calN_h\cong\Omega^1_D$ and so $\rm{H}^0(D, \calN_h)\cong\rm{H}^0(D,\Omega^1_D)\cong k^g$. Note
    now that we can always choose a labelling of the irreducible components of $D$ so that $D=\cup_{i=1}^n D_i$ and
    for all $j\leq n$ we have that $\cup_{i=1}^j D_i$ is connected: this is an assumption in \cite[Lemma 2.3.4]{kemenythesis}
    but here is a simple proof that it always exists. By induction, assume that we have labelled the first $k$ components so
    that for all $j\leq k$ we have that $\cup_{i=1}^j D_i$ is connected. To conclude, from the set of remaining
    components of $D$ choose any one that meets one of the $D_1,\ldots,D_k$---one such must exist otherwise $D$ would be
    disconnected.

    Recall that \cite[Lemma 2.6]{ghs} (or \cite[Proposition 2.3.1]{kemenythesis}) gives that for any connected sub-curve $C\subset D$
    with $\{p_1,\ldots,p_r\}=C\cap\overline{D\setminus C}$ and $h_C:C\to X$ the restriction of $h$ we have
    $\calN_{h_C}(\sum_{i=1}^r p_i)=\calN_h|_C$. In particular, let $T=\cup_{i=1}^{n-1}D_i$, let $\{p_1,\ldots,p_r\} = D_n\cap
    \overline{D\setminus D_n}$, let $I_{D_n}$ be the ideal sheaf of $D_n\subset D$, and let us consider the standard short exact
    sequence
    \[ \begin{tikzcd} 0 \ar{r}& \calN_h\otimes I_{D_n} \ar{r}& \calN_h \ar{r}& \calN_h|_{D_n} \ar{r}& 0.\end{tikzcd} \]
    Note now that $I_{D_n}\cong \OO_T(-\sum_{i=1}^r p_i)$ so $\calN_h\otimes I_{D_n}\cong \calN_h|_T(-\sum p_i)\cong \calN_{h_T}$
    and similarly $\calN_h|_{D_n}=\calN_{h_{D_n}}(\sum_{i=1}^r p_i)$ which gives
    \[ \begin{tikzcd} 0 \ar{r}& \calN_{h_T} \ar{r}& \calN_h \ar{r}& \calN_{h_{D_n}}\left(\sum p_i\right) \ar{r}& 0. \end{tikzcd}\]
    From $\calN_{h_{D_n}}\cong\Omega^1_{D_n}$ it follows that since $h^1(D_n, \Omega^1_{D_n}(\sum p_i))=0$ by Serre duality,
    we can compute by Riemann-Roch that $h^0(D_n,\Omega^1_{D_n}(\sum p_i)) = g(D_n)+r-1$ and so since the
    inductive hypothesis gives that $h^0(T, \calN_{h_T})\leq g(T)$, we obtain
    \begin{align*}
        h^0(D, \calN_h) &\leq h^0(D_n, \Omega^1_{D_n}\left(\sum p_i\right)) + h^0(\calN_{h_T}) \\
                    &\leq g(D_n)+r-1+g(T) \\
                    &= g(D).\qedhere
    \end{align*}
\end{proof}

\begin{Theorem}\label{thm:expdimdef}
    Let $X$ be a K3 surface over an algebraically closed field $k$, and let $L$ be a line bundle on $X$.
    Assume that there exists a stable (arithmetic) genus $g$ curve $h:D\to X$ with $\OO_X(h(D))\cong L$ so that $h$ is
    unramified and every irreducible component of $D$ is smooth. Then $h$ deforms in the expected dimension.
\end{Theorem}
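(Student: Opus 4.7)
The plan is to bound, for every irreducible component $M\subseteq\overline{\mathcal{M}}_g(X,L)$ containing $[h]$, the local dimension of $M$ at $[h]$ by the dimension of the Zariski tangent space of $\overline{\mathcal{M}}_g(X,L)$ at that point, and then to control the latter via Lemma \ref{lem:unramified}.

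Under the hypotheses that every irreducible component of $D$ is smooth and that $h$ is unramified, the discussion preceding \eqref{def:normalsheaf} shows that the two-term complex controlling infinitesimal deformations of the stable map $h:D\to X$ is quasi-isomorphic to the single line bundle $\calN_h$. Consequently, the Zariski tangent space to $\overline{\mathcal{M}}_g(X,L)$ at the point $[h]$ is canonically identified with $H^0(D,\calN_h)$, and Lemma \ref{lem:unramified} therefore yields
\[
\dim T_{[h]}\overline{\mathcal{M}}_g(X,L)\;=\;h^0(D,\calN_h)\;\leq\;g.
\]

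For any irreducible component $M\subseteq\overline{\mathcal{M}}_g(X,L)$ passing through $[h]$, the natural inclusion $T_{[h]}M\hookrightarrow T_{[h]}\overline{\mathcal{M}}_g(X,L)$, together with the standard inequality $\dim_{[h]}M=\dim\OO_{M,[h]}\leq\dim_k\mathfrak{m}_{[h]}/\mathfrak{m}_{[h]}^2=\dim T_{[h]}M$ valid for any Noetherian local ring, combine to give $\dim M\leq g$, which is exactly the statement that $h$ deforms in the expected dimension.

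The only point that requires genuine care is the identification of the tangent space with $H^0(D,\calN_h)$ when $D$ is a nodal curve with smooth components rather than a smooth irreducible curve: this is exactly what the unramifiedness hypothesis provides, since it forces the surjection $h^*\Omega^1_X\twoheadrightarrow\Omega^1_D$ and hence reduces the cotangent complex of $h$ to the single line bundle $\calN_h$. Once this identification is in place, the remainder of the argument is a purely formal tangent-space computation, so the substance of the theorem lies in Lemma \ref{lem:unramified} itself.
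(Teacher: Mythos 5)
Your proposal is correct and follows essentially the same route as the paper, which likewise deduces the theorem immediately from Lemma \ref{lem:unramified} together with the identification of the tangent space to the deformation space of $h$ with $H^0(D,\calN_h)$. You have merely made explicit the standard bound of the local dimension of a component by the dimension of the Zariski tangent space, which the paper leaves implicit.
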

\begin{proof}
    This follows immediately from Lemma \ref{lem:unramified} and the fact that the tangent space to the deformation space
    of $h$ has dimension $h^0(\calN_h)$.
\end{proof}

For the purposes of this paper we can relax the above conditions slightly, namely to those of \cite[\S 4]{bht}.
\begin{Definition}
Let $h:D\to X$ be a morphism from a nodal connected curve. We say that $h$ is \textit{generically an embedding} if there exists a dense open subset in $D$ on which $h$ is an embedding.
\end{Definition}

In particular no irreducible components are being contracted via $h$. For a stable morphism $h:D\to X$ which is generically an embedding, it is proved in \cite[Lemma 11]{bht} that the deformation theory of $[h]$ as an element of $\overline{\mathcal{M}}_g(X, L)$ is controlled by a coherent sheaf $\calN_h$ on $D$ that is generically of rank one.
In the case where $D$ is smooth and irreducible, its dual agrees with the kernel of the morphism $h^*\Omega^1_X\to\Omega^1_D$ on the complement of the ramification locus of $h$. This sheaf is locally free if and only if $h$ is unramified. Denote by $\calK_h$ its torsion subsheaf fitting in the exact sequence \[\begin{tikzcd} 0\ar{r}& \calK_h\ar{r}& \calN_h\ar{r}& \calN_h'\ar{r}&0\end{tikzcd}\] so that $\calN_h'$ is locally free.
\begin{Remark}
As pointed out in \cite[p539]{bht}, the substack
\[\overline{\mathbf{M}}_g^0(\calX/S, \calH) \subset \overline{\mathbf{M}}_g(\calX/S, \calH)\]
parametrising stable maps which are generically an embedding on every irreducible component is actually a scheme. This
follows from the fact that all stabilisers are trivial since such stable maps have no automorphisms and the fact that the stack admits a morphism to the Hilbert scheme parametrising the image curves, and the fibres of this morphism
are finite since there are only finitely many connected curves through which the normalisation of the image factors. One
concludes by noting that such a stack is a scheme (\cite[Lemma 2.3.9]{ah} and
\cite[\href{https://stacks.math.columbia.edu/tag/0417}{Tag 0417}]{stacks}).
\end{Remark}
We now include a proof and extensions of the fact, well known to experts, that irreducible curves on a K3 surface over the complex numbers deform in the expected dimension.
\begin{Proposition}\label{prop:defexpdim}
Let $C\subset X$ be an integral curve in a K3 surface over an algebraically closed field $k$ and let $h:D\to X$ be its normalisation. Then $h$ deforms in the expected dimension if any of the following conditions holds:
\begin{enumerate}
    \item $\chara(k)=0$,
    \item $h(D)\subset X$ is nodal,
    \item the geometric genus $g(D)\le 1$ and $X$ is not uniruled.
\end{enumerate}
\end{Proposition}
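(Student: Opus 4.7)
Case (2) is immediate from Theorem \ref{thm:expdimdef}: the normalisation $h\colon D\to X$ of an integral nodal curve $C$ has smooth source $D$, and at each preimage of a node the two branches of $C$ are distinct and smoothly embedded, so $h$ is unramified and the theorem applies.

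For case (1), the plan is to reduce to case (2). Fix a component $M\subset\overline{\mathcal{M}}_g(X,L)$ containing $[h]$; by upper semicontinuity of $h^0(D',\calN_{h'})$ along the open locus of $M$ parametrising stable maps that are generically embeddings onto an integral image, to prove $\dim M\le g$ it suffices to produce a single $[h']\in M$ with $h'(D')$ nodal, and then apply Theorem \ref{thm:expdimdef} at $[h']$. Such an $[h']$ exists in characteristic zero by the classical fact that every integral curve of geometric genus $g$ on a smooth surface admits an equigeneric deformation to a nodal curve -- each isolated plane curve singularity can be pulled apart into nodes while preserving the geometric genus (e.g.\ the cusp $y^2=x^3$ deforms equigenerically to $y^2=x^3+tx^2$, nodal for $t\neq 0$). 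Lifting this deformation in $|L|$ through the generically finite map $M\to|L|$ defined on the locus of stable maps with integral image (which contains $[h]$) supplies the required $[h']$. The main technical hurdle is verifying that this lift stays inside the original component $M$, for which I would appeal to standard references on equigeneric families (Harris--Morrison, Diaz--Harris).

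For case (3), the strategy is Mori's bend-and-break. Suppose for contradiction $\dim M\ge g+1$. If $g=0$ and $L^2>0$, the images $h'(D')$ must move as $[h']$ varies (a fixed image admits only finitely many stable maps up to isomorphism), and two distinct integral curves in class $L$ meet in $L^2$ points, so the moving family sweeps out a two-dimensional subvariety of $X$, giving uniruledness. The boundary cases $L^2\le 0$ are handled by ad hoc arguments (for instance, for $L^2=0$ on a non-uniruled K3 one uses that a genus-$1$ fibration has only finitely many rational fibres). If $g=1$ and $\dim M\ge 2$, a dimension count on the universal family $\calC\to M$ with evaluation $\mathrm{ev}\colon\calC\to X$ shows that $\mathrm{ev}$ is dominant with general fibre of dimension $\ge 1$; hence through a general $x\in X$ passes a positive-dimensional subfamily $B\subset M$ of stable genus-$1$ maps whose images all contain $x$. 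Marking $x$ yields a section of $\calC|_B\to B$ that $\mathrm{ev}$ contracts to $x$, and Mori's bend-and-break (e.g.\ \cite[Ch.~II.5]{kollar}) forces some degeneration in this family to contain a rational curve through $x$. Since $x$ was general, $X$ is uniruled, contradicting the hypothesis.
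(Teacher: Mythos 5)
Your cases (2) and (3) are essentially the paper's own arguments: (2) is the direct application of Theorem \ref{thm:expdimdef} to the (automatically unramified, smooth-source) normalisation of a nodal curve, and your treatment of (3) for $g=1$ --- through a general point there passes a one-parameter family of genus-one curves, which either has varying moduli and degenerates to rational curves through that point, or is isotrivial and breaks by Mori's argument --- is exactly the dichotomy the paper runs (it resolves the indeterminacy of $D\times B\dashrightarrow X$ by hand rather than citing Koll\'ar, but the mechanism is identical). The case analysis on $L^2$ in your $g=0$ argument is unnecessary but harmless: distinct integral rational curves sweeping out a two-dimensional locus already gives uniruledness.

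The gap is in case (1). You reduce to case (2) by asserting that the component $M$ contains a point $[h']$ with nodal image, obtained by deforming the singularities of $C$ equigenerically to nodes. The local statement is fine (every plane curve singularity admits a $\delta$-constant deformation to $\delta$ nodes), but realising it \emph{inside $|L|$} requires the linear system to dominate the local $\delta$-constant strata at the singular points of $C$, and that is precisely what is not known: whether the general member of a component of the Severi variety on an arbitrary K3 surface is nodal is an open problem (it is known for general K3 surfaces and suitable classes by \cite{chen}, not for the arbitrary $X$ of the proposition). The references you invoke establish such statements for plane curves, where $|L|$ is large enough to realise all local deformations; they do not apply here, so the required $[h']$ may simply not exist in $M$. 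The repair --- and the paper's actual route --- is to weaken the target: Theorem \ref{thm:expdimdef} and Lemma \ref{lem:unramified} only need a member of $M$ whose normalisation map is \emph{unramified} (an immersion), not one with nodal image. That the general member of $M$ has immersed normalisation for $g\ge 1$ is the Arbarello--Cornalba lemma in the form of \cite[Lemma 2.3, Theorem 2.5]{dedieusernesi}: sections of the torsion subsheaf $\calK_h\subset\calN_h$ do not contribute to deformations of the image, so ramification disappears at the generic point of $M$. With that substitution your tangent-space/semicontinuity argument closes; the remaining case $g=0$ in characteristic zero is handled separately by rigidity, since a moving rational curve would make $X$ uniruled.
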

\begin{proof}
The second claim follows from Theorem \ref{thm:expdimdef}. For the first, if $C$ is rational, then it must be rigid, so we
are done. In higher genus the key observation, due to Arbarello--Cornalba \cite[Lemma 1.4]{arbcorn} (see also \cite[\S
XXI.9]{acgh2}, \cite[Lemma 3.41]{harrismorrison} for expanded accounts), is that even though global sections of
$\calN_h$ parametrise first order deformations of $h$, sections coming from $\calK_h$ do not contribute to the
deformations of the image of $h$. For our purposes, the result is summarised succinctly in \cite[Lemma
2.2]{dedieusernesi}. An application of this in our context (which follows from \cite[Lemma 2.3, Theorem
2.5]{dedieusernesi}) is the fact that assuming the genus $g$ of $D$ is at least one then, even though $h$ may be highly
ramified as $C$ is highly singular, the general member $[f]$ of an irreducible component $M\subset
\overline{\mathcal{M}}_g(X, \OO_X(C))$ containing $[h]$ will be unramified. From Theorem \ref{thm:expdimdef} this
implies that $\dim M\leq g$ as required.

If $X$ is not uniruled, then the third claim is clear when $g(D) = 0$.  Now let us assume that $g(D) = 1$ and that $D$ deforms in
dimension $\ge 2$. So there exists a two-parameter family of genus 1 curves on $X$. Fixing a general point $q\in X$,
there is a one-parameter family of genus 1 curves on $X$ passing through $q$. If this family has varying moduli, then it
degenerates to a union of rational curves on $X$, one of them passing through $q$, which implies that $X$ is uniruled
since $q$ is a general point. So this family of genus 1 curves must be isotrivial. Suppose that $D$ is a member of this
family and $h(p) = q$ for $p\in D$. Then the map $h: (D,p)\to X$ deforms in dimension $1$ with $h(p) = q$. Therefore,
there exists a dominant rational map $f: Y = D\times B \dashrightarrow X$ with $p\in D$ such that $f(P) = q$ for $P =
\pi_D^{-1}(p)$, where $B$ is a smooth projective curve and $\pi_D: Y=D\times B\to D$ is the projection to $D$. Clearly,
$f$ cannot be regular along $P$. Let $\phi: \widehat{Y}\to Y$ be a birational morphism resolving the indeterminacy of
$f$, i.e., such that $f\circ \phi:\widehat{Y}\to X$ is regular. Then there must be a rational curve $E\subset
\widehat{Y}$ such that $\phi_* E = 0$, $E \subset \phi^{-1}(P)$, $(f\circ \phi)_* E \ne \emptyset$ and $q\in F =
f\circ\phi(E)$. So $F$ is a rational curve on $X$ passing through $q$. Again this implies that $X$ is uniruled.
\end{proof}

\begin{Remark}
\label{rem: char p AC}
Note that the extra condition in positive characteristic is necessary as there are cuspidal rational curves on
supersingular K3 surfaces that move. It is likely one can improve the above result (and hence the main regeneration Theorem
\ref{thm: regeneration}) in positive characteristic to allow singularities which are in a sense bounded by the
characteristic, as for example is done for rational curves in \cite{iil}, but we do not pursue this here.
\end{Remark}

We have thus obtained upper bounds on the deformation spaces of stable morphisms into K3 surfaces, and now we aim to
achieve lower bounds using a classical idea of Ran, Bloch and Voisin on the semi-regularity map and the twistor line.
Our proof is very similar to that of \cite[\S 2.2]{btdensity}, \cite[Theorem 18]{bht} and extends \cite[Proposition
2.1]{hk} to positive characteristic.

\begin{Theorem}\label{theo:deffam}
    Let $\calX\to S$ be a smooth projective family of K3 surfaces over an irreducible base scheme $S$ and let $\calH$ be a
    line bundle on $\calX$. Then every irreducible component $M \subset \overline{\mathcal{M}}_g(\calX/S, \calH)$
    satisfies \[\dim M \geq g+\dim S. \]
\end{Theorem}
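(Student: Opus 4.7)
The plan is to adapt the classical Ran--Bloch--Voisin semi-regularity argument (used over $\CC$ in \cite{btdensity}, \cite{bht}, \cite{hk}) to arbitrary characteristic using the formal deformation theory recorded in Construction \ref{construction:defspace}. The underlying principle is that even though na\"ive deformation theory on a fixed K3 fibre only gives a lower bound of $\chi(\calN_h) = g-1$ (since for an unramified $h$ we have $\calN_h \cong \omega_D$ by \eqref{def:normalsheaf}), the fact that $\calH$ is already a globally defined line bundle on $\calX$ forces the obstructions to ``move in the $S$-direction'' to lie in a codimension-one subspace, producing an extra $+1$ in the dimension count.

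First I would reduce to analysing the completed local ring of $M$ at a general closed point $[h:D\to X_{s_0}]$ and recall the tangent--obstruction theory of Section \ref{subsec: stable maps}: the fibrewise moduli space has tangent space $\HH^0(D,\calN_h)$ and obstructions in $\HH^1(D,\calN_h)$. For the relative moduli space, tangent directions are enlarged by $T_{s_0}S$, and the obstruction to extending $[h]$ in a direction $v \in T_{s_0}S$ is given by the Kodaira--Spencer composition $T_{s_0}S \to \HH^1(X_{s_0}, T_{X_{s_0}}) \to \HH^1(D, \calN_h)$. The key step is Bloch's identification: post-composing this with the semi-regularity map $\sigma: \HH^1(D, \calN_h) \to \HH^2(X_{s_0}, \OO_{X_{s_0}}) \cong k$ equals the obstruction to lifting $\calH_{s_0}$ along $v$. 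Since $\calH_{s_0}$ extends globally to $\calH$, this composite obstruction vanishes, so the Kodaira--Spencer obstruction lands in $\ker \sigma$. On a K3 surface $\sigma$ is surjective for any effective class (immediate from the long exact sequence of $0\to \OO_X \to \calH \to \calH|_D \to 0$, since $\HH^2(X,\calH) = 0$), so $\ker\sigma$ has codimension one. Putting this together:
\[
\dim_{[h]} M \;\geq\; h^0(\calN_h) + \dim S \;-\; (h^1(\calN_h) - 1) \;=\; \chi(\calN_h) + 1 + \dim S \;=\; g + \dim S.
\]

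The main obstacle is justifying Bloch's semi-regularity identification in positive characteristic, where the analytic/Hodge-theoretic arguments of \cite{bht, hk} are unavailable. Here I would replace the complex-analytic semi-regularity map by its formal-geometric avatar from Construction \ref{construction:defspace}: by Deligne's theorem, the locus $\Def(X_{s_0}, \calH_{s_0}) \subset \Def(X_{s_0}/W(k))$ is a flat formal Cartier divisor. This codimension-one condition is precisely the algebraic encoding of the vanishing of $\sigma$ on the image of the Kodaira--Spencer map, and combined with the formal smoothness of $\Def(X_{s_0}, \calH_{s_0})$ over $W(k)$ it yields the dimension estimate above. Finally, one has to check the case where $h$ is not unramified or $D$ has worse singularities: this is done by working with the perfect obstruction theory of stable maps (e.g.\ via the cotangent complex of $h$), with the semi-regularity step carried out on the generically rank-one sheaf $\calN_h$ of \cite[Lemma 11]{bht} as discussed earlier in this section.
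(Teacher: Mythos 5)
Your proposal follows the Ran--Bloch--Voisin circle of ideas that the paper itself cites, and over $\CC$ the obstruction-theoretic version you describe is essentially the argument of \cite[Proposition 2.1]{hk} and \cite[Theorem 18]{bht}. The difficulty is exactly where you locate it, but your proposed resolution does not close it. The step that needs proof is the compatibility, at \emph{every} infinitesimal order, between the obstruction to deforming the stable map $h$ (an element of $\HH^1(D,\calN_h)$) and the obstruction to deforming the line bundle $\calH_{s_0}$ (an element of $\HH^2(X_{s_0},\OO_{X_{s_0}})$): this is Bloch's semi-regularity identification, whose known proofs in the stable-map setting are Hodge-theoretic and hence confined to characteristic zero. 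Deligne's theorem that $\Def(X_{s_0}/A,\calH_{s_0})$ is a formal Cartier divisor tells you that the locus where the \emph{line bundle} deforms is cut out by one equation; by itself it says nothing about where the obstruction to deforming the \emph{map} lands in $\HH^1(D,\calN_h)$. The sentence ``this codimension-one condition is precisely the algebraic encoding of the vanishing of $\sigma$ on the image of the Kodaira--Spencer map'' asserts the very compatibility that has to be established, so as written the positive- and mixed-characteristic cases --- which are the ones the paper actually needs, e.g.\ in the proof of Theorem \ref{thm: regeneration} over $W(k)$ --- are not covered.

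The paper's proof is organised precisely so that this identification is never invoked. Using Construction \ref{construction:defspace}, one lifts the equations cutting out the image of $\Spf \widehat{\OO}_{S,0}$ inside $\Def(X_{s_0}/A,\calH_{s_0})$ to equations in the ambient $\Def(X_{s_0}/A)$; since the former is a Cartier divisor in the latter, this produces a formal (in general non-algebraisable) family $\widehat{\calX}\to\widehat{U}$ with $\dim\widehat{U}=\dim S+1$ in which $\calH$ deforms only along the $S$-directions. Over $\widehat{U}$ one applies only the crude bound $\dim \geq \chi(\calN_h)+\dim\widehat{U}=g+\dim S$ --- no semi-regularity map, no $\ker\sigma$ --- and then observes that since $\calH$, and hence $h(D)$, cannot deform off the $S$-directions, the moduli space over $\widehat{U}$ coincides with $\overline{\mathcal{M}}_g(\calX/S,\calH)$ near $[h]$. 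The extra dimension of the base is thereby converted directly into an extra dimension of $M$. If you replace your final paragraph with this base-enlargement and moduli-comparison step, your proof becomes valid in all characteristics; as it stands it is complete only over $\CC$.
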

\begin{proof}
Consider an irreducible component $M\subset\overline{\mathcal{M}}_g(\calX/S, \calH)$ through a point
$h:D\to\calX_0$ for $0\in S$. The usual lower bound on the dimension of such a component (e.g., from
\cite[Theorem I.2.15]{kollar}) is given by
\begin{equation}\label{modulilowerbound}
\dim M \geq \chi(D, (h^*\Omega^1_{\calX_0}\to \Omega^1_D)^*)+\dim S = g-1+\dim S
\end{equation}
(see \cite[p541]{bht} or \cite[Proposition 2.1]{hk}). The aim now is to improve this bound by one, which for example in
the case that $S$ is defined over the complex numbers and $\calH$ is fibre-wise ample, is achieved by deforming $\calX_0$ in
the direction where it becomes non-algebraic, namely that of the twistor line.

More generally, let $K=\overline{k(0)}$ be the algebraic closure of the field of definition of
$(X,L)=(\calX_0,\calH_0)$. Like in Construction \ref{construction:defspace}, if $S$ is defined over $\mathbb{Q}$, then
let $A=K$, whereas in mixed or positive characteristic, let $A=W(K)$ be the ring of Witt vectors.
Consider now the equations defining the image of $\Spf \widehat{\OO}_{S,0}$ in $\operatorname{Def}(X/A, L)$
as equations in $\operatorname{Def}(X/A)$ and like in Construction \ref{construction:defspace}, after passing to a
finite extension, we obtain a family $\widehat{f}: \widehat{\calX}\to \widehat{U}$ with $\dim\widehat{U}=\dim S+1$ so that $\calH$ only
deforms in directions along $S$, i.e.,\ $\widehat{f}$ will only be proper and non-algebraisable, as the
line bundle $\calH$ is obstructed and so cannot deform in tangent directions transverse to those of $S$.

One can now form the limit of the corresponding moduli spaces (for the proper fibres one uses \cite[\S
8.4]{abramovichvistoli}) to obtain the formal scheme $\overline{\mathcal{M}}_g(\widehat{\calX}/\widehat{U},
\calH)$ over $\widehat{U}$. The same computation as above in Equation (\ref{modulilowerbound}) shows that the dimension of an
irreducible component $\widehat{M}$ through $h$ will now be $g+\dim S$.  On the other hand, since
$\OO(h(D))\cong\calH$, $\calH$ cannot deform to the general fibre of $\widehat{f}$ and neither can $h(D)$. In
other words, the moduli spaces $\overline{\mathcal{M}}_g(\widehat{\calX}/\widehat{U}, \calH)$ and
$\overline{\mathcal{M}}_g(\calX_U/U, \calH)$ agree near $h$ which gives the result.
\end{proof}

\begin{Corollary}\label{cor:def}
    Let $f:\calX\to S$ be a smooth projective family of K3 surfaces over an irreducible base scheme $S$, and
    let $\calH$ be a line bundle on $\calX$. Assume that there exists a stable morphism $h:D\to\calX_0$ to some fibre
    $\calX_0=f^{-1}(0)$ so that $\OO_{\calX_0}(h(D))\cong
    \calH_0$ and $h$ deforms in the expected dimension.
    Then any irreducible component $[h]\in M\subset\overline{\mathcal{M}}_g(\calX/S,\calH)$ has dimension
    exactly $\dim S+g$ and surjects onto $S$.
\end{Corollary}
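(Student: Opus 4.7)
The plan is to combine the lower bound on $\dim M$ from Theorem \ref{theo:deffam} with the upper bound on fibre dimension at $[h]$ coming from the expected-dimension hypothesis, and then to invoke upper semi-continuity of fibre dimension along the structural morphism $\pi\colon \overline{\mathcal{M}}_g(\calX/S,\calH)\to S$. First, fix an irreducible component $M$ containing $[h]$ and write $\pi|_M\colon M\to S$ for the restriction. Since Kontsevich moduli spaces are proper over $S$, the map $\pi|_M$ is proper, and in particular $\pi(M)$ is closed in $S$.

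The first step is to note that Theorem \ref{theo:deffam} applied to $M$ immediately gives the lower bound $\dim M\geq \dim S + g$. For the matching upper bound at $[h]$, the scheme-theoretic fibre $\pi^{-1}(0)$ is precisely $\overline{\mathcal{M}}_g(\calX_0,\calH_0)$, and the hypothesis that $h$ deforms in the expected dimension bounds every irreducible component of $\overline{\mathcal{M}}_g(\calX_0,\calH_0)$ through $[h]$ by $g$. Hence
$$
\dim_{[h]}(\pi|_M)^{-1}(0) \;=\; \dim_{[h]}\bigl(M\cap \pi^{-1}(0)\bigr) \;\leq\; g.
$$

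Next, I would feed these two bounds into upper semi-continuity of fibre dimension for $\pi|_M$ at $[h]$:
$$
g \;\geq\; \dim_{[h]}(\pi|_M)^{-1}(0) \;\geq\; \dim M - \dim \pi(M).
$$
Combined with $\dim M\geq \dim S + g$, this forces $\dim\pi(M)\geq \dim S$. Irreducibility of $S$ together with closedness of $\pi(M)$ then give $\pi(M)=S$, so $\pi|_M$ is surjective. Once surjectivity is established, the generic fibre of $\pi|_M$ has dimension $\dim M-\dim S$; combining this with the bound at $[h]$ via semi-continuity forces $\dim M-\dim S\leq g$, and together with the Theorem \ref{theo:deffam} lower bound yields the sharp equality $\dim M = \dim S + g$.

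The substantive work is entirely concentrated in the two inputs (the upper bound via Theorem \ref{thm:expdimdef} and Proposition \ref{prop:defexpdim}, and the twistor-type lower bound via Theorem \ref{theo:deffam}); the remainder is a routine dimension count. The only point to watch is that $M\cap \pi^{-1}(0)$ may decompose through $[h]$ into several local branches, but the expected-dimension hypothesis bounds all such branches uniformly, which is exactly what is needed for the semi-continuity step.
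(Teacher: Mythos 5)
Your proposal is correct and follows essentially the same route as the paper: the lower bound $\dim M\geq g+\dim S$ from Theorem \ref{theo:deffam}, the fibrewise upper bound $\dim_{[h]}\bigl(M\cap\pi^{-1}(0)\bigr)\leq g$ from the expected-dimension hypothesis, and Chevalley/semi-continuity of fibre dimension plus properness to force surjectivity and the equality $\dim M=\dim S+g$. Your write-up is if anything slightly more explicit than the paper's about why the contradiction localises at the point $[h]$ in the fibre over $0$.
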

\begin{proof}
    Since from Theorem \ref{theo:deffam} every irreducible component $[h]\in M$ has dimension at least $g+\dim S$ then,
    for $\pi:\overline{\mathcal{M}}_g(\calX/S, \calH) \to S$, if we had $\dim \pi(M)<\dim S$ there would have to be a
    $t\in S$ so that $M_t=M\cap\pi^{-1}(t)$ has $\dim M_t>g$, but $\pi^{-1}(t) =
    \overline{\mathcal{M}}_g(\calX_t,\calH_t)$, contradicting the assumption. If $\dim M>\dim S+g$, then similarly some
    fibre would have too many deformations giving the same contradiction. One concludes by noting that as $M$ is
    projective, $\pi$ is proper, hence as $M$ dominates $S$, it must surject onto it.
\end{proof}

\section{Existence of curves}
\label{sec: existence}

In this section we gather some facts about existence of rational curves that will be needed in later sections. We by no
means give a complete summary of the various problems in the area, but rather reference surveys in the literature. In Section
\ref{subsec:ellcurves}, we extend some of the results below to the case of curves of geometric genus 1, as this will be
needed in the proof of Theorem \ref{thm: curves on complex k3}.

\subsection{Rational curves}
\label{subsec:ratcurves}

The fact that every projective K3 surface over the complex numbers contains at least one rational curve was first shown
by Mori and Mukai \cite{morimukai}, who attribute this to Bogomolov and Mumford. We will need this result in the
following form.

\begin{Theorem}[Bogomolov--Mumford $+\varepsilon$]
\label{thm: bogomolov mumford}
    Let $X$ be a K3 surface over an algebraically closed field and let $\calL$ be a non-trivial and effective line bundle.
    Then, there exists a divisor in $|\calL|$ that is a sum of rational curves.
\end{Theorem}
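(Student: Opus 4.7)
The plan is to reduce to the classical Bogomolov--Mumford theorem in characteristic zero and then to lift from positive characteristic, with the properness and dimension theory of Kontsevich moduli spaces of stable genus zero maps from Section \ref{sec:deformations} propagating information between fibres of a family of K3 surfaces.

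In characteristic zero, I would first use Construction \ref{construction:defspace} (augmenting $\calL$ with a fixed ample class when $\calL$ itself is not big and nef, so that algebraisation applies) to produce a family $\calX\to T$ of K3 surfaces over a smooth irreducible base $T$ on which the class $\calL$ extends. Inside $T$ one locates a Kummer K3 surface $Y$, the minimal resolution of $B/\{\pm 1\}$ for some abelian surface $B$. The substantive content of the classical Bogomolov--Mumford argument is that the extension of $\calL$ to $Y$ is explicitly a sum of rational curves, obtained by combining the sixteen exceptional $(-2)$-curves of the resolution with images of theta-type divisors on $B$ chosen to pass through $2$-torsion points (and hence descending to rational curves on $B/\{\pm 1\}$). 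Having established the existence of a sum-of-rationals representative of $\calL$ on the Kummer fibre, I would then form the Kontsevich moduli space $\overline{\mathcal{M}}_0(\calX/T,\calL)$, which is proper over $T$. Theorem \ref{theo:deffam} forces every irreducible component containing the Kummer point to have dimension at least $\dim T$; irreducibility of $T$ combined with properness then upgrades this to surjectivity of the structural morphism to $T$, and pulling back to the distinguished fibre $X \in T$ yields the desired divisor in $|\calL|$.

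For positive characteristic the strategy is to lift and specialise. I would apply Construction \ref{construction:defspace} with coefficient ring $W(k)$, again tensoring $\calL$ with a fixed ample class for algebraisation, to realise $(X,\calL)$ as the closed fibre of a flat family $\calX\to\Spec R$ over a finite extension $R$ of $W(k)$. The characteristic zero argument applied to the geometric generic fibre produces a divisor in $|\calL_\eta|$ supported on rational curves, and properness of $\overline{\mathcal{M}}_0(\calX/\Spec R,\calL)$ then delivers a flat limit in class $\calL_0 = \calL$ on the closed fibre whose components all have arithmetic and hence geometric genus zero.

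The main obstacle is the explicit Bogomolov--Mumford construction on the Kummer fibre, which is the substantive content of the classical result. A secondary subtlety is that in mixed or positive characteristic the formal divisor $\Def(X,\calL)$ may fail to be smooth or irreducible at a superspecial point, as noted in Construction \ref{construction:defspace}, but since one only needs a single irreducible component to lift through, this does not cause essential trouble.
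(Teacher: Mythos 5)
The paper's own ``proof'' of this theorem is a pure citation --- to \cite[Proposition 2.5]{btdensity} over $\CC$ and to \cite[Proposition 17]{bht} or \cite[Theorem 2.1]{liliedtke} in positive characteristic --- so there is no in-paper argument to compare against line by line; your sketch is a reconstruction of the strategy of those references (degeneration to Kummer surfaces, then lifting and specialising via properness of the genus-$0$ Kontsevich space). Within that strategy, however, there is a step that genuinely fails as written: you cannot run the deformation argument directly on an arbitrary non-trivial effective $\calL$. If, say, $\calL=\OO_X(C_1+C_2)$ for two disjoint $(-2)$-curves, then $\calL^2=-4$ and $\chi(\calL)=0$, and on the general fibre of your family $\calX\to T$ (or on a characteristic-zero lift) the class $\calL$ need not be effective at all --- for suitable intersection numbers the generic $\langle\calL,H\rangle$-polarised K3 carries no $(-2)$-curves, hence no effective class of negative square --- so $\overline{\mathcal{M}}_0(\calX/T,\calL)$ is empty away from the special fibre and there is nothing to propagate; a disjoint union is moreover not the image of a connected genus-$0$ stable map. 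The missing first step, present in all the cited proofs, is to decompose a member of $|\calL|$ into irreducible components, observe that any non-rational irreducible curve $C$ on a K3 has $C^2\ge 0$, hence is nef with $\chi(\OO(C))\ge 2$ and stays effective under all the deformations used, and then treat each such component separately.

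Two further points deserve care. First, the passage from the Kummer fibre back to $X$ is not implied by ``$\dim M\ge\dim T$ plus properness'': a component $M$ of $\overline{\mathcal{M}}_0(\calX/T,\calL)$ of dimension $\dim T$ can sit over a divisor in $T$ with positive-dimensional fibres (e.g.\ multiple-cover components), which is precisely why Corollary \ref{cor:def} carries the ``deforms in the expected dimension'' hypothesis; you would need to verify this for the (possibly non-reduced) Kummer representative, or better, use that suitable Kummer points are analytically, hence Zariski, dense in $T$, so that the closed image of the proper morphism $\overline{\mathcal{M}}_0(\calX/T,\calL)\to T$ is all of $T$ with no dimension count at all --- this is how \cite{btdensity} argues. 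Second, the density of Kummer points at which the specialised class is visibly a sum of rational curves, together with the explicit lattice computation on the Kummer surface of a product of elliptic curves, is the substantive content of the theorem; you acknowledge deferring it, which mirrors the paper deferring the entire proof to the literature, but it does mean your write-up contains no proof of the hard step. The positive-characteristic reduction by lifting to $W(k)$ and specialising a genus-$0$ stable map is correct and is exactly what \cite{bht} and \cite{liliedtke} do.
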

\begin{proof}
Over the complex numbers, this is shown in \cite[Proposition 2.5]{btdensity}. For an extension to positive
characteristic, see \cite[Proposition 17]{bht} or \cite[Theorem 2.1]{liliedtke}.
\end{proof}

\subsection{Nodal rational curves on generic K3s}
\label{subsect:generic}

In \cite{chen}, the first named author proved existence of integral nodal rational curves in all multiples of the polarisation
of a generic K3 surface of Picard rank one. In a recent work of ours \cite{generic}, we extended this result to K3
surfaces of Picard rank two, namely the following theorem. 

\begin{Theorem}[\cite{generic}]
 \label{thm: nodal curves}
Let $\Lambda$ be a lattice of rank two with intersection matrix
\begin{equation}\label{K3lattice-2m}
\begin{bmatrix}
2d & a\\
a & 2b
\end{bmatrix}_{2\times2}
\end{equation}
for some $a, b\in \ZZ$ and $d\in \ZZ^+$ satisfying $4bd-a^2<0$. Let $M_{\Lambda,k}$ be the moduli space of
$\Lambda$-polarised K3 surfaces over an algebraically closed field $k$ of characteristic 0, and let $L\in \Lambda$ be such that $L$ is big and nef on a general
$X\in M_{\Lambda,k}$. Then there exists an open and dense subset $U\subseteq M_{\Lambda,k}$ (with respect to the Zariski
topology), depending on $L$, such that on every K3 surface $X\in U$, the complete linear series $|L|$ contains an
integral nodal rational curve if one of the following holds:
\begin{enumerate}
\item[A1.] $\det(\Lambda)$ is even;
\item[A2.] $L = L_1 + L_2 + L_3$ for some $L_i\in \Lambda$ satisfying that
$L L_i > 0$ and $L_i^2 > 0$ for $i=1,2,3$;
\item[A3.] $L = L_1 + L_2$ for some $L_i\in \Lambda$ satisfying that
$L L_i > 0$ for $i=1,2$, $L_1^2 > 0$, $L_2^2 = -2$, $L_1\not\in 2\Lambda$,
$L_1 - L_2\not\in n\Lambda$ for all $n\in\ZZ$ and $n\ge 2$,
and $L_1^2 + 2L_1L_2\ge 18$.
\end{enumerate}
\end{Theorem}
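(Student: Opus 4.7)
The strategy is a deformation/degeneration argument: pick a suitable special locus $M_0\subseteq M_{\Lambda,\CC}$ on which one can construct, by hand, a stable map $h:D\to X_0$ from a connected tree of smooth rational curves whose image is a nodal (possibly reducible) curve in $|L|$ with arithmetic genus equal to $p_a(L)$, then apply the machinery of Section \ref{subsec: stable maps} to deform $h$ across $M_{\Lambda,\CC}$ and argue that at a generic point of $M_{\Lambda,\CC}$ the resulting curve is integral and nodal. The constructions differ between the three cases, but the conclusion in each case relies on Theorem \ref{thm:expdimdef}, Theorem \ref{theo:deffam} and Corollary \ref{cor:def} to guarantee that an irreducible component of $\overline{\mathcal M}_0(\calX/M_{\Lambda,\CC}, \calH)$ through $[h]$ has dimension exactly $\dim M_{\Lambda,\CC}$ and surjects onto $M_{\Lambda,\CC}$.

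In case A2, the decomposition $L = L_1+L_2+L_3$ with $L_i^2 > 0$ and $LL_i > 0$ suggests producing, on some specialisation $X_0$, nodal rational curves $R_i\in |L_i|$ by induction on $L_i^2$ (using Chen's Picard-rank-one result \cite{chen} as the base case and Theorem \ref{thm: bogomolov mumford} to guarantee effective rational decompositions); one then forms the stable map $h:D\to X_0$ from the partial normalisation of $R_1\cup R_2\cup R_3$, checks that $h$ is generically an embedding and unramified, and verifies that the dual graph is a tree so that $p_g(D)=0$. In case A3, the class $L_2$ with $L_2^2=-2$ produces a smooth $(-2)$-rational curve on $X_0$; attaching it to an integral nodal rational curve in $|L_1|$ gives the required reducible limit, and the arithmetic conditions on $L_1,L_2$ serve to rule out parasitic decompositions of $L$ that would destroy integrality after smoothing. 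In case A1, the even discriminant allows a degeneration to a Kummer-type (or elliptically fibred) K3 on which an analogous reducible model in $|L|$ can be written down using sections and fibre classes.

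With the limit $h$ constructed, one deforms it over $M_{\Lambda,\CC}$ via Corollary \ref{cor:def}: the component $M\ni[h]$ has dimension $\dim M_{\Lambda,\CC}$ and surjects onto $M_{\Lambda,\CC}$, and every fibre over $X\in M_{\Lambda,\CC}$ is a finite set of stable maps $h_X:D_X\to X$ of geometric genus $0$ with image in $|L|$. To conclude one needs to show that for $X$ in an open dense $U\subseteq M_{\Lambda,\CC}$ a general such $h_X$ has integral image and only nodal singularities. Integrality follows by excluding every possible non-trivial effective decomposition $L = L' + L''$ with $L',L''\in\Lambda$ and $L',L''$ effective on a general $X$: in each such scenario, the locus of $X\in M_{\Lambda,\CC}$ on which both $L'$ and $L''$ are effective forms a proper sublocus (since $\Lambda$ is the full Picard lattice of a generic $X\in M_{\Lambda,\CC}$), so in codimension $0$ our deformation cannot be reducible; the hypotheses A1, A2, A3 are tailored so that such decompositions either do not produce a valid limit or are explicitly controlled. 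Nodality follows by the generic smoothness of the Severi-type variety parametrising nodal rational curves, after checking that the stable map $h$ itself is nodal away from intersection points.

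\textbf{Main obstacle.} The delicate point is not the existence of the limit stable map but rather the argument that smoothing it produces an \emph{integral} and \emph{only nodal} curve rather than a reducible or cuspidal one. This is where the technical arithmetic hypotheses (such as $L_1\notin 2\Lambda$, $L_1 - L_2\notin n\Lambda$ for $n\geq 2$, and the numerical bound $L_1^2 + 2L_1L_2\ge 18$ in case A3) enter crucially: they are precisely what rules out the bad degenerations of the Picard-rank-two lattice that would otherwise allow $L$ to split effectively on the very general K3 in $M_{\Lambda,\CC}$. Extracting the nodality from the regeneration is the second delicate point, and is handled by reducing to the generic smoothness of the Severi variety as in \cite{chen}, combined with the deformation-theoretic flexibility provided by Corollary \ref{cor:def}.
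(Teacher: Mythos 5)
The paper does not prove Theorem \ref{thm: nodal curves} at all: it is quoted verbatim from the companion paper \cite{generic}, as the surrounding text makes explicit (``We reproduce now these statements''). So there is no in-paper argument to compare your proposal against, and your sketch must stand on its own.

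As a standalone argument it has a genuine gap at the integrality step. You claim that for any non-trivial effective decomposition $L = L' + L''$ with $L', L''\in\Lambda$, ``the locus of $X\in M_{\Lambda,\CC}$ on which both $L'$ and $L''$ are effective forms a proper sublocus (since $\Lambda$ is the full Picard lattice of a generic $X$).'' This is backwards: because $L'$ and $L''$ already lie in $\Lambda=\Pic(X)$ for the very general $X\in M_{\Lambda,\CC}$, their effectivity is decided by Riemann--Roch on the lattice (any class $D$ with $D^2\ge -2$ and $DH>0$ is effective) and therefore holds either on all of $M_{\Lambda,\CC}$ or on none of it --- there is no codimension to exploit. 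If your argument worked it would prove integrality of some member of $|L|$ for every big and nef $L$ on every rank-two lattice, which is false: the remark immediately following Theorem \ref{thm: nodal curves} (the Bryan--Leung lattice generated by $C$ and $F$) exhibits an ample class $C+nF$ for which \emph{every} member of the linear system is reducible on the generic such K3. This is precisely why the conditions A1--A3 are not mere bookkeeping to ``rule out bad degenerations'' but must enter the construction quantitatively; your plan never actually uses them beyond asserting that they are ``tailored'' to make the problem go away. The same issue infects the nodality claim, which you reduce to ``generic smoothness of the Severi variety'' without saying why the smoothed curve acquires no worse singularities. The deformation-theoretic skeleton (build a reducible nodal limit on a special fibre, deform via Corollary \ref{cor:def}, use that the component of the stable-map space dominates the moduli space) is reasonable and consistent with how \cite{chen} and \cite{generic} proceed, but the two points you yourself flag as the ``main obstacle'' are exactly the ones left unproved.
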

In Theorem \ref{thm: appendix} in the appendix of the current paper, we give a shorter and more direct
argument that avoids degenerations to log K3 surfaces, but which produces rational curves with unramified normalisation
(i.e.,\ weaker than nodal) in all classes as above. For the purposes of this paper (namely in the Regeneration Theorem
\ref{thm: regeneration} and Theorem \ref{thm: curves on complex k3}), Theorem \ref{thm: appendix}
suffices as one does not need nodal curves. 

The existence of curves as in Theorem \ref{thm: appendix} will be used in the Regeneration Theorem
\ref{thm: regeneration} as follows. Fixing $L, C\in \Lambda$ such that $L$ is ample on a general K3 surface $X\in
M_{\Lambda,\CC}$, we want to show that there are nodal rational curves in $|NL - C|$ for $N$ sufficiently large. When
$\det(\Lambda)$ is even, this is true as long as $NL-C$ is big and nef. When $\det(\Lambda)$ is odd, this is true if
$(N-2) L - C$ is big and nef since we can write
\[ NL - C = L + L + ( (N-2) L - C) \]
which satisfies the condition A2 in the theorem. In either case, we can find a rational curve in $|NL-C|$ by
Theorem \ref{thm: appendix} as long as we take $N$ sufficiently large.

\begin{Remark}
    Note that condition A2 in Theorems \ref{thm: appendix} and \ref{thm: nodal curves} is necessary in the odd determinant
    case as the following example shows. Let $X$ be a Jacobian elliptic K3 surface with lattice
    \[\begin{pmatrix}-2&1\\1&0\end{pmatrix}\] considered by Bryan--Leung, generated by $C$ and $F$. Then $C+nF$ is ample
    for $n\geq3$ but every curve in the linear system $|C+nF|$ is reducible. In other words there is no integral
    rational curve in $|C+nF|$. On the other hand, there are rational curves in $|3(C+nF)|$.
\end{Remark}

\begin{Remark}
    We give a short argument to explain why Theorems \ref{thm: appendix} and \ref{thm: nodal curves} hold more generally
    over an arbitrary algebraically closed field $k$ of characteristic zero, even though they are stated over the
    complex numbers. Let $X$ be a general K3 surface over $k$ such that $\Pic(X)=\Lambda$ and $L\in\Lambda$ satisfies
    one of the conditions of the theorems. If $k$ is a non-trivial extension of $\CC$, then by spreading $X$ out over a
    positive dimensional irreducible scheme $B$ whose function field is $k$, we may find, over the general closed point
    $b\in B$ (whose residue field is necessarily $\CC$), a rational curve in $|L|$ with unramified normalisation (or
    nodal). We conclude by noting that from Corollary \ref{cor:def}, this curve deforms to the generic fibre $X$ and
    must also have unramified normalisation (resp., nodal) as this is an open condition. If $k\subset\CC$, then we have
    a rational curve in $|L|$ on $X_\CC=X\times_k\CC$, which is necessarily defined over an extension $\CC/K/k$.
    Spreading out to a family with generic fibre $X_K$, all closed fibres are isomorphic to $X$, so we may specialise
    the rational curve in any direction and obtain one in $|L|$ on $X$ itself.
\end{Remark}

The following technical statement is similar to the above and involves, by work of Vinberg, the only two rank four
lattices for which the corresponding K3 with such Picard lattice is not elliptic nor has infinite automorphisms.
Existence of infinitely many integral rational (resp., geometric genus 1) curves on a generic such K3 will allow us to
conclude it for all such K3s in Section \ref{sec: main theorem 1}.

\begin{Theorem}[\cite{generic}] 
\label{thm: curves on complex k3 picard rank 4}
Let $\Lambda$ be one of the following lattices of rank 4:
\begin{equation*}
\begin{bmatrix}
2 & -1 & -1 & -1 \\
-1 & -2 &  0 &  0 \\
-1 &  0 & -2 &  0 \\
-1 &  0 &  0 & -2 \\
\end{bmatrix},\hspace{10pt}
\begin{bmatrix}
12 & -2 &  0 & 0 \\
-2 & -2 & -1 & 0 \\
0 & -1 & -2 & -1 \\
0 &  0 & -1 & -2 \\
\end{bmatrix}.
\end{equation*}
Then for a general K3 surface $X$ with $\Pic(X) = \Lambda$, there is an integral rational (resp., geometric genus 1) curve in $|L|$ if
$L$ is a big and nef divisor $L$ on $X$ with the property that
\begin{equation*}
\begin{aligned}
L = L_1 + L_2 + L_3 \text{ for some }& L_i\in \Lambda \text{ satisfying that}\\
    &LL_i > 0 \text{ and } L_i^2 > 0 \text{ for } i=1,2,3.
\end{aligned}
\end{equation*}
\end{Theorem}

\subsection{Minimal nef divisors}
\label{subsec:minimal nef}

Next, we introduce a useful technique that allows us to decompose divisors into ``minimal nef'' ones,
which is somewhat reminiscent of the Enriques Reducibility Lemma for Enriques surfaces \cite[Chapter 2.3]{cdl}, or of
the Zariski decomposition.

\begin{Definition}
We call a Cartier divisor $L$ on a variety $X$ a {\em minimal nef} divisor if $L$ is nef and there does not exist a nef
divisor $M$ on $X$ such that $M\not\equiv 0, M\not\equiv L$ and $L-M$ is effective, where ``$\equiv$'' denotes numerical
equivalence.
\end{Definition}

We have the following simple observation:
\begin{Lemma}
\label{lem: minimal nef K3}
Let $X$ be a projective K3 surface over an algebraically closed field.
Then every effective divisor $D$ on $X$ can be written as
\begin{equation}\label{eq: minimal nef decomposition}
D \,=\, \sum_{i=1}^m L_i + C \,=\, L + C,
\end{equation}
where $L_i$ are minimal nef divisors and $C$ is an effective divisor whose support has negative definite self-intersection matrix.
\end{Lemma}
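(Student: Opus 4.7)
The plan is to argue by induction on $H\cdot D$ for a fixed ample class $H$ on $X$, extracting minimal nef summands one at a time. The base case $D=0$ is trivial with $m=0$ and $C=0$.

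For the inductive step with $D\neq 0$, I would consider the set $\calN = \{L\in\Pic(X) : L \text{ nef},\ L\not\equiv 0,\ D-L \text{ effective}\}$. If $\calN\neq\emptyset$, I would extract a minimal nef summand from it. Under the order $M\preceq L$ iff $L-M$ is effective, any descending chain in $\calN$ produces strictly decreasing positive integers $H\cdot L^{(i)}$, since each $L^{(i)}$ is nef and numerically non-trivial and $L^{(i)}-L^{(i+1)}$ is effective and $\not\equiv 0$. Hence chains terminate, and any minimal $L_1\in\calN$ is automatically a minimal nef divisor in the sense of the definition: if $M\not\equiv 0,\ M\not\equiv L_1$ were nef with $L_1-M$ effective, then $D-M=(D-L_1)+(L_1-M)$ would be effective so $M\in\calN$, contradicting minimality. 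Now $D-L_1$ is effective with strictly smaller $H$-degree, so the induction hypothesis applied to it yields the desired decomposition of $D$.

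If $\calN=\emptyset$, I would set $m=0$ and $C=D$, reducing the remaining task to showing that the support of $D$ has negative definite self-intersection matrix. I would prove this by contradiction via the Zariski decomposition of $D$ on the surface $X$: writing $D=P+N_0$ as $\QQ$-divisors with $P$ nef, $N_0$ effective whose support has negative definite intersection matrix, and $P\cdot (N_0)_j=0$ for each component, the assumption that the support of $D$ is not negative definite forces $P\not\equiv 0$. The goal is then to produce from $P$ an integer nef class $L\in\calN$, which would contradict $\calN=\emptyset$.

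The main obstacle is this last step: converting a nontrivial rational nef summand $P$ into an integer nef class dominated by $D$. On a K3 surface this bifurcates by $P^2$: if $P^2>0$ one can perturb within the effective cone (using the Hodge index theorem together with openness of the ample cone) to find an integer ample class inside $D$; if $P^2=0$ then $P$ is $\QQ$-proportional to an integer isotropic class (an elliptic fibration class on $X$), and one needs the structural fact that an appropriate effective integer multiple of this class is dominated by $D$. This second case is where the hardest work lies, since rigidity phenomena for configurations of $(-2)$-curves demand a careful separate analysis.
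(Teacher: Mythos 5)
Your first half --- peeling off minimal nef summands by induction on $H\cdot D$, with termination of descending chains guaranteed by strictly decreasing $H$-degrees --- is correct and is essentially the paper's argument in different packaging: the paper instead minimises $\deg_A C$ over all decompositions $(L,C)$ at once and derives a contradiction from a non-minimal one, but the content is the same. The genuine gap is exactly the step you flag and leave open: showing that if no nef class $L\not\equiv 0$ with $D-L$ effective exists, then the support of $D$ is negative definite. Your route through the Zariski decomposition stalls at converting the rational nef part $P$ into an \emph{integral} nef class dominated by $D$, especially when $P^2=0$. The paper does not use the Zariski decomposition at all. Its idea is: if the support of $C$ is not negative definite, pick an effective divisor $0\ne B\le C$ with $B^2\ge 0$; Riemann--Roch on a K3 gives $h^0(X,\OO_X(B))\ge 2+\tfrac{1}{2}B^2\ge 2$, so $|B|$ has a non-trivial mobile part $P$, which is automatically an \emph{integral} nef divisor with $B-P$ effective; one then extracts a minimal nef summand from $P$ and strictly decreases $\deg_A C$. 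That Riemann--Roch/mobile-part step is the missing idea, and it sidesteps all of your integrality problems.

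That said, your unease about the isotropic case is well founded and points at a subtlety which the quoted step of the paper's proof also elides. Let $D=\Gamma_0+\Gamma_1+\dots+\Gamma_4$ be the reduced divisor underlying a fibre of type $I_0^*$ of an elliptic K3 surface, so the $\Gamma_i$ are $(-2)$-curves with $\Gamma_0\Gamma_i=1$ for $i\ge 1$ and $\Gamma_i\Gamma_j=0$ otherwise. The Gram matrix of the support is the affine $\widetilde{D}_4$ matrix, negative semi-definite with radical spanned by $2\Gamma_0+\sum_{i\ge1}\Gamma_i$, hence \emph{not} negative definite; yet $D^2=-2$, $h^0(\OO_X(D))=1$, every $0\ne B\le D$ satisfies $B^2<0$ (the isotropic vector is not $\le D$), and no integral nef class $N\not\equiv 0$ with $D-N$ effective exists. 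So in this situation neither your argument nor the paper's can produce the required nef summand, and negative definiteness of the support genuinely fails. What both arguments do prove, and what the later applications actually use, is the slightly weaker conclusion that the residual part $C$ admits no further nef summand, so that every $0\ne B\le C$ has $B^2<0$ (by the Riemann--Roch argument above); in particular $C$ is a sum of $(-2)$-curves and $C^2<0$ when $C\ne 0$. If you restate your target in that form, your induction combined with the mobile-part argument closes the gap cleanly.
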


\begin{proof}
We consider the set
\[
\begin{aligned}
\Pi &= \Big\{(L,C): D = L+C,\ C \text{ effective and}\\
&\hspace{24pt}
L = \sum_{i=1}^m L_i \text{ for some } L_i \text{ minimal nef }
\Big\} \subset Z^1(X)\times Z^1(X).
\end{aligned}
\]
Since $D$ is effective, $\Pi \ne \emptyset$. Let us choose $(L,C)\in \Pi$ that minimises $\deg_A C$ for a fixed ample
divisor $A$. We claim that the support of $C$ has negative definite self-intersection matrix.  Recall that on a K3
surface, if a union of curves has negative definite self-intersection matrix, then it is a union of $(-2)$-curves with
simple normal crossings whose dual graph is a ``forest'', i.e., a disjoint union of trees.

Otherwise, suppose that the self-intersection matrix of $\mathop{\text{supp}}(C)$ is not negative definite. Then there
exists an effective divisor $B\ne 0$ such that $B\le C$ and $B^2 \ge 0$. By Riemann-Roch, there is a non-trivial moving
part of $|B|$ for such $B$. Therefore,
\[
B = P + N
\]
for some $P\not\equiv 0$ nef and $N$ effective. From the definition of minimal nefness, we can write $P = Q + M$ for
some $Q\not\equiv 0$ minimal nef and $M$ effective. Then
\[
D = L + C = L + B + (C-B) = L + P + N + (C-B) = (L+Q) + (M+N +C-B)
\]
and hence $(L+Q, M+N+C-B)\in \Pi$. Since $Q\not\equiv 0$ is nef,
\[
\deg_A (M+N+C-B) = A(M+N+C-B) = A(C - Q) < AC = \deg_A C,
\]
which contradicts our choice of $(L,C)$.
\end{proof}

\begin{Definition}
We call \eqref{eq: minimal nef decomposition} a {\em minimal nef decomposition} of $D$.
\end{Definition}

One of the key properties of minimally nef divisors on a K3 surface is the following proposition.

\begin{Proposition}\label{prop: integral curves in min nef divisor}
    Let $X$ be a non-uniruled K3 surface over an algebraically closed field and let $D$ be a minimally nef divisor.
    Then there exists an integral geometric genus 1 curve $C$ in $|D|$.
\end{Proposition}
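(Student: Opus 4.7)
The plan is to split according to the value of $D^2$.

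For $D^2 = 0$, I would first observe that minimally nef forces $D$ to be primitive in $\Pic(X)$: otherwise $D = kD'$ with $k \geq 2$ and $D'$ primitive, and since $D$ is nef the class $D'$ is also nef (as $D' \cdot C = (D \cdot C)/k \geq 0$), producing a decomposition $D = D' + (k-1)D'$ that contradicts minimally nef. A primitive nef class with self-intersection zero on a non-uniruled K3 represents an honest elliptic fibration (excluding the quasi-elliptic case, which would give uniruledness), so its general fibre is a smooth elliptic curve, yielding the desired integral geometric genus $1$ curve in $|D|$.

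For $D^2 > 0$, $D$ is big and nef, and I would proceed in three steps. First, I would show $|D|$ is base-point-free: otherwise by Saint-Donat's classification of base points, $D$ takes the form $2F + \Gamma$ with $F$ a primitive elliptic class and $\Gamma$ a $(-2)$-curve with $F \cdot \Gamma = 1$, giving $D = F + (F+\Gamma)$ with $F$ nef, non-trivial, and non-$D$ (else $F + \Gamma$ would be a nonzero effective numerically trivial divisor on a K3, impossible), contradicting minimally nef. Next, I would apply Bogomolov--Mumford (Theorem \ref{thm: bogomolov mumford}) to produce a divisor $C_0 \in |D|$ which is a sum of rational curves, and then analyse the summands using minimally nef: for any component $R_j$ that is itself nef (i.e., $R_j^2 \geq 0$), the decomposition $D = R_j + (D - R_j)$ forces $R_j \equiv D$, so $D \sim R$ for a single integral rational curve of arithmetic genus $g = D^2/2 + 1$; and otherwise every component is a $(-2)$-curve so $C_0$ is a connected union of smooth rationals whose dual graph has first Betti number $g$. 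Finally, I would carry out a local smoothing argument in $|D|$: by standard Severi variety techniques on K3 (as in Tannenbaum, Chen, Dedieu--Sernesi), the nodes of $C_0$ smooth independently inside $|D|$, so the locus of deformations preserving any subset $S$ of nodes has codimension $|S|$; choosing $|S| = g - 1$ and selecting $S$ so that smoothing the complementary nodes yields an integral curve (in the reducible case, smoothing a spanning tree of the dual graph of $C_0$ together with one non-tree edge, merging all components into a single integral curve) produces a one-dimensional family of integral curves in $|D|$ with exactly $g-1$ nodes, hence geometric genus $1$.

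The hard part will be verifying the independence of node smoothings in the reducible subcase of $D^2 > 0$: while for an integral rational $C_0$ this is classical, for a union of $(-2)$-curves one must show that the restriction map from $\HH^0(X, \OO(D))$ to the local tangent spaces at the nodes is sufficiently surjective, which should ultimately follow from base-point-freeness and bigness of $|D|$ (so that $|D|$ separates the nodes of $C_0$). Some additional bookkeeping will be required to rule out non-reduced components in the Bogomolov--Mumford representative $C_0$; one expects these either to be excluded by iterated application of minimally nef, or to reduce to the reduced case after replacing $C_0$ by a nearby deformation.
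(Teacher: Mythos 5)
Your $D^2=0$ case is correct, and the observation that minimal nefness forces primitivity is a detail worth making explicit (the paper simply cites \cite[Proposition 2.3.10]{huybrechts}). The $D^2>0$ case, however, has a genuine gap precisely at the step you flag, and it is not just bookkeeping. Bogomolov--Mumford gives no control over the singularities of $C_0$: it need not be nodal (two smooth $(-2)$-curves can be tangent, and the ``simple normal crossings, dual graph a forest'' structure holds only for negative definite configurations, which yours is not since $C_0^2=D^2>0$); and in the branch where $C_0$ is a single integral rational curve, it may be unibranch at every singular point, so there are no nodes to retain and the spanning-tree/partial-normalisation device produces nothing --- nor can you first deform $C_0$ to a nodal representative, since integral rational curves on a non-uniruled K3 are rigid. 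Even when $C_0$ is a nodal union of $(-2)$-curves, ``the nodes smooth independently'' is not a consequence of the Severi-variety results you cite, which concern integral or immersed curves; for a reducible curve with rigid components what the stable-maps machinery actually yields is the lower bound of Theorem \ref{theo:deffam} for a chosen partial normalisation of arithmetic genus $h$, after which one must still argue that the general member of that family has \emph{integral} image. That last step is where minimal nefness has to re-enter, and your sketch never invokes it there. Finally, the proposition is stated over an arbitrary algebraically closed field, while your smoothing step leans on characteristic-zero deformation theory (Arbarello--Cornalba, Dedieu--Sernesi).

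The paper's proof avoids all of these issues by not working on $X$ at all in the $D^2>0$ case: it deforms $(X,D)$ over a DVR so that the geometric generic fibre is a general K3 of Picard rank one (in characteristic zero even when $X$ is not), where Theorem \ref{thm: nodal curves} supplies an \emph{integral nodal} rational curve in $|\calD_\eta|$; there the node-smoothing statement is the classical one for integral nodal curves, producing a genus $1$ curve through a marked general point of the family. Specialising back to $X$, the component $\Gamma$ through the general point $p$ has geometric genus $1$ because $X$ is not uniruled, is nef because it moves, and then minimal nefness of $D$ forces $\Gamma$ to be the entire limit curve. If you wish to salvage an argument carried out on $X$ itself, you would need to start from an integral nodal rational curve in $|D|$ rather than from the Bogomolov--Mumford representative --- and the unavailability of such a curve on a special $X$ is exactly why the paper passes to the generic fibre.
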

\begin{proof}
If $D^2 = 0$, then a general member $C\in |D|$ is a smooth elliptic curve since $X$ is not uniruled (cf.\
\cite[Proposition 2.3.10]{huybrechts}). 

Suppose now that $D^2 > 0$. Then from the discussion in Construction \ref{construction:defspace}, there exists a smooth
proper family $\calX\to B=\Spec S$ over a DVR $S$ such that $\calX_0 = X$, whose geometric generic fibre
$\overline{\calX_\eta}$ is a general projective K3 surface of Picard rank $1$ in characteristic zero, and $D$ extends to a divisor $\calD$ on
$\calX$ such that $\calD_0 = D$. We choose a section $P\subset\calX$ of $\calX/B$ after a finite base change such that
$p = P_0$ is a general point on $X$. Since $\overline{\calX_\eta}$ is general, there is a nodal rational
curve in $|\calD_\eta|$ from the main theorem of \cite{chen} and so (see \cite[Proposition 13.2.1]{huybrechts}) there
exists a one-parameter family of genus 1 curves in $|\calD_\eta|$ since every node of the rational curve can be deformed
independently. After a base change there exists a flat proper family $\mathscr{C}\subset \calX$ of curves over $B$ such
that $\mathscr{C}_\eta \in |\calD_\eta|$ is a genus 1 curve passing through $P_\eta$. Let $\Gamma$ be the irreducible
component of $C = \mathscr{C}_0$ passing through $p=P_0$. Since $X$ is not uniruled and $p$ is a general point on $X$,
the geometric genus of $\Gamma$ is 1. Proposition \ref{prop:defexpdim} implies that $\Gamma$
moves in the expected dimension and so is nef. Finally, as $\Gamma\subset C\in |D|$, $\Gamma$ is nef and $D$ is
minimally nef, we must have $\Gamma = C$ implying that $C$ is a genus 1 curve in $|D|$.
\end{proof}

The following result will be crucial in the extension of Theorem \ref{thm: curves on complex k3} to higher genus and can
be considered as a genus 1 analogue of Theorem \ref{thm: bogomolov mumford}. 

\begin{Theorem}\label{thm: elliptic curve non-primitive class}
Let $X$ be a non-uniruled K3 surface over an algebraically closed field and let $\Lambda \subsetneq \Pic(X)$ be a
primitive sublattice of $\Pic(X)$. Suppose that $D^2 \ne 0$ for all non-zero $D\in \Lambda$ and that there is an ample divisor
$A\in \Lambda$ satisfying
\begin{equation}\label{eq: elliptic curve non-primitive class 03}
A R \,\ge\, 4 A^2 + \sqrt{17(A^2 - 2)A^2}
\end{equation}
for all $(-2)$-curves $R\subset X$ and $R\not\in\Lambda$. Then there exists an integral geometric genus 1 curve
$C\subset X$ whose divisor class does not lie in $\Lambda$.
\end{Theorem}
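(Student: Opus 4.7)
The plan is to argue by contradiction: suppose that every integral geometric genus $1$ curve on $X$ has class in $\Lambda$. By Proposition \ref{prop: integral curves in min nef divisor}, any minimally nef divisor $L$ on the non-uniruled K3 surface $X$ carries an integral genus $1$ curve $C\in |L|$, so $[C]=L$. Hence under the contradictory hypothesis every minimally nef class on $X$ must already lie in $\Lambda$.

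The crux is then to produce an effective divisor $D \not\in \Lambda$ satisfying the sharp upper bound
\[
AD \;<\; 4A^2 + \sqrt{17(A^2-2)A^2}.
\]
Since $\Lambda\subsetneq \Pic(X)$ is a proper primitive sublattice, I pick a primitive class $v\in\Pic(X)\setminus\Lambda$, replacing $v$ by $-v$ so that $Av\ge 0$. I then form the translate $D=v+kA$ with $k\ge 0$ chosen as small as possible so that $D$ becomes big and effective. Effectivity and bigness are obtained from Riemann-Roch on $X$, while the Hodge index inequality $(Av)^2\ge A^2v^2$ (applied to $v$ viewed through its orthogonal projection onto $A^\perp$) together with the hypothesis that $D^2\ne 0$ for all nonzero $D\in\Lambda$ (which excludes the degenerate configurations in which translation by $\Lambda$ could force $v+kA$ into $\Lambda$ only for large $k$) control the minimal admissible $k$ and hence pin down $AD$ at the claimed threshold.

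With such $D$ in hand, I apply the minimal nef decomposition of Lemma \ref{lem: minimal nef K3} to write
\[
D \;=\; \sum_{i} L_i \,+\, C
\]
with each $L_i$ minimally nef and $C$ an effective divisor whose support is a union of $(-2)$-curves with negative definite intersection matrix. The opening observation forces each $L_i\in \Lambda$, whence $\sum L_i\in \Lambda$ and so $C\not\in\Lambda$; consequently at least one irreducible component $R$ of $C$ is a $(-2)$-curve with $R\not\in \Lambda$. The hypothesis then yields $AR\ge 4A^2+\sqrt{17(A^2-2)A^2}$, while on the other hand $AR\le AC$ (as $R$ appears with positive multiplicity in the effective $C$) and $AC\le AD$ (since every $L_i$ is nef and $A$ is ample). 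Stringing these together contradicts the bound on $AD$ from the previous step, closing the argument.

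The main obstacle is the explicit construction of $D$ with controlled $A$-degree. One must exploit both that $\Lambda$ is a proper primitive sublattice of $\Pic(X)$ and that $\Lambda$ omits isotropic classes, in order to choose the primitive $v$ with $|Av|$ as small as the discrete structure of $\Pic(X)/\Lambda$ permits and then minimize the ample translation $kA$ needed to make $v+kA$ effective. The precise shape of the constant $4A^2+\sqrt{17(A^2-2)A^2}$ should emerge from this Hodge-index optimization, with the particular numerical constants reflecting the extremal case where $v$ nearly saturates the Hodge index bound.
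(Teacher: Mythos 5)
Your opening step is consistent with the paper: by Proposition \ref{prop: integral curves in min nef divisor}, if some minimal nef divisor lies outside $\Lambda$ you are done, and otherwise every minimal nef divisor lies in $\Lambda$, so the decomposition of Lemma \ref{lem: minimal nef K3} applied to any effective $D\not\in\Lambda$ produces a $(-2)$-curve $R\not\in\Lambda$. But from there your argument has a fatal gap. You propose to contradict the hypothesis \eqref{eq: elliptic curve non-primitive class 03} by constructing an effective divisor $D\not\in\Lambda$ with $AD < 4A^2+\sqrt{17(A^2-2)A^2}$. No such divisor need exist: within a coset $v+\Lambda$ the component of $v$ orthogonal to $\Lambda$ is fixed, so $v^2$ can be arbitrarily negative for every representative of small $A$-degree, and Riemann--Roch then forces the ample translate $v+kA$ needed for effectivity to have large $k$ and hence large $A$-degree. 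Indeed, under your contradiction hypothesis the chain $AD\ge AC\ge AR$ shows precisely that \emph{every} effective class outside $\Lambda$ has $A$-degree at least the threshold --- a perfectly consistent state of affairs for a lattice, not a contradiction. The constant $4A^2+\sqrt{17(A^2-2)A^2}$ does not arise from any such ``minimal degree of a class outside $\Lambda$'' optimisation.

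The paper uses the numerical hypothesis in an entirely different way, and the geometric core of the proof is missing from your proposal. One takes $D$ a $\prec$-minimal nef divisor (necessarily in $\Lambda$, and big and nef since $\Lambda$ contains no isotropic classes) and $R$ a $\prec$-minimal $(-2)$-curve with $R\not\in\Lambda$; the Hodge Index Theorem applied to the triple $A, D, R$ together with \eqref{eq: elliptic curve non-primitive class 03} yields $DR\ge 8$, so that $L=D+R$ satisfies condition A3 of Theorem \ref{thm: nodal curves}. One then deforms $X$ to a general rank-two K3 carrying $D$ and $R$, obtains an integral nodal rational curve and hence a one-parameter family of genus $1$ curves in $|L|$ there, and specialises a member through a general point back to $X$. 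A combinatorial claim about the order $\prec$ shows the limit is either integral or of the form $\Gamma+R$, and the latter is excluded by a rigidity/discriminant-locus argument over a two-dimensional family. Since $D\in\Lambda$ and $R\not\in\Lambda$, the resulting integral genus $1$ curve in $|D+R|$ has class outside $\Lambda$. None of this degeneration machinery appears in your proposal, and without it (or a substitute) there is no route to the conclusion.
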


The proof is technical and proceeds until the end of this subsection. First, we show that existence of nodal rational curves
on a general K3 $X$ of Picard rank two implies the existence of an integral geometric genus 1 curve on $X$. Then we
degenerate this curve to an arbitrary K3 satisfying the assumptions of the theorem, while preserving
integrality. This final step in the proof---making use of the discriminant (i.e., singular) locus in a family of stable
curves along with a minimal nef decomposition of Lemma \ref{lem: minimal nef K3}---should be thought of as a precursor to
the marked point trick developed later in Section \ref{sec: main theorem 2}.

We will define now a total order ``$\prec$'' on the divisors of $X$ which will be used in the proof of the above
theorem. Fixing a basis of $\Pic_\QQ(X)$ consisting of $r = \rho(X)$ ample divisors $A=A_1, A_2,$ $\ldots, A_r$, we say
$F \prec G$ or $G\succ F$ if

\begin{itemize}
    \item either $F A_i = G A_i$ for all $i=1,2,\ldots,r$, or
    \item $FA_1 = GA_1$, $FA_2 = GA_2$, \ldots, $FA_{i-1} = GA_{i-1}$ and
    $FA_i < GA_i$ for some $1\le i\le r$.
\end{itemize}
It is easy to check the following:
\begin{itemize}
    \item $F\prec G$ and $F\succ G$ if and only if $F\equiv G$.
    \item If $F\prec G$ and $G\prec H$, then $F\prec H$.
    \item If $F_1\prec F_2$ and $G_1 \prec G_2$, then $F_1 + G_1 \prec F_2 + G_2$.
    \item If $F\ge G$, i.e., $F-G$ is (pseudo)-effective, then $F\succ G$.
\end{itemize}

\begin{proof}[Proof of Theorem \ref{thm: elliptic curve non-primitive class}]
From Proposition \ref{prop: integral curves in min nef divisor} there exists a $C$ as in the conclusion of the theorem
if there is a minimal nef divisor $D\not\in \Lambda$ on $X$. Otherwise, suppose that $\Lambda$ contains all minimal nef
divisors on $X$. Then since $\Lambda \ne \Pic(X)$, there exists an effective divisor $D$ on $X$ such that $D\not\in
\Lambda$. By Lemma \ref{lem: minimal nef K3}, we can write $D = L + E$, where $L$ is a sum of minimal nef divisors
and $E$ is a sum of $(-2)$-curves. Since $D\not\in \Lambda$ and $L\in \Lambda$, $E\not\in \Lambda$. Therefore, there
is at least one $(-2)$-curve $R$ on $X$ such that $R\not\in \Lambda$.

Let us choose a nef divisor $D\not\equiv 0$ on $X$ and a $(-2)$-curve $R\subset X$ with $R\not\in \Lambda$ such that
both $D$ and $R$ are minimal under the order ``$\prec$'' defined above. That is, $D\prec D'$ for all nef divisors
$D'\not\equiv 0$ on $X$ and $R\prec R'$ for all $(-2)$-curves $R'\subset X$ with $R'\not\in \Lambda$.  Clearly, $D$ is
minimally nef and hence $D\in \Lambda$. Since we have assumed that $\Lambda$ does not contain isotropic classes, $D$ is
big and nef. We claim that
\begin{eqnarray}\label{eq: elliptic curve non-primitive class 00}
D + R \equiv \Gamma + E \text{ for } \Gamma \not\equiv 0 \text{ nef and } E\ne 0 \text{ effective}
\\\hfill\Rightarrow D\equiv \Gamma \text{ and } R = E.\nonumber
\end{eqnarray}
Let $\Gamma = \Gamma_1 + \Gamma_2$ and $E = E_1 + E_2$ be minimal nef decompositions of $\Gamma$ and $E$, respectively, where
$\Gamma_1$ and $E_1$ are sums of minimal nef divisors and $\Gamma_2$ and $E_2$ are sums of $(-2)$-curves.
Since $\Gamma_1+E_1\in \Lambda$ and $D+R\not\in \Lambda$, $\Gamma_2+E_2\not\in \Lambda$ and hence $\Gamma_2+E_2$ contains
a $(-2)$-curve $R'\not\in \Lambda$. Therefore,
\[
D \prec \Gamma_1 + E_1 \text{ and } R \prec \Gamma_2 + E_2
\]
by our choice of $D$ and $R$. Since $D + R \equiv \Gamma + E$, we necessarily have
\[
D \equiv \Gamma_1 + E_1 \text{ and } R \equiv \Gamma_2 + E_2.
\]
Since $D\prec \Gamma_1$, we clearly have $E_1\equiv 0$. Hence $E_2 \equiv E$. Since $R$ is a $(-2)$-curve, $R\equiv
\Gamma_2 + E_2 \equiv \Gamma_2 + E$ and $\Gamma_2$ and $E\ne 0$ are effective, we must have $\Gamma_2 = 0$ and $R = E$.
In conclusion, $D\equiv \Gamma_1 = \Gamma$ and $R = E$. This proves our claim \eqref{eq: elliptic curve non-primitive
class 00}. Next, we claim that
\begin{equation}\label{eq: elliptic curve non-primitive class 01}
D \not\equiv nF\text{ and } D - R \not\equiv nG\text{ for all } F,G\in Z^1(X),\ n\in \ZZ \text{ and }
n\ge 2.
\end{equation}
It is clear that $D\not\equiv nF$ for all divisors $F$.  Otherwise, if $D\equiv nF$ for some divisor $F$ and $n\ge 2$,
then $F$ is nef and $FA < DA$, contradicting $D\prec F$.

Suppose that $D - R\equiv nG$ for some divisor $G$, $n\in \ZZ$ and $n\ge 2$. Then
\[
(n-1) D + R \equiv n(D - G) = nP.
\]
Since $AP > 0$ and $P^2 \ge 0$, $P$ is effective. Let $P = P_1 + P_2$ be a minimal nef decomposition of $P$, where $P_1$
is a sum of minimal nef divisors and $P_2$ is a sum of $(-2)$-curves with negative definite intersection matrix. Since
$P^2 \ge 0$, $P_1\not\equiv 0$. Since $P_1\in \Lambda$ and $P\not\in \Lambda$, $P_2 \not\in \Lambda$ and hence $P_2$
contains a $(-2)$-curve $R'\not\in \Lambda$. Therefore, $D\prec P_1$ and $R\prec P_2$. Then
\[
(nP)A = n P_1A + n P_2A \ge n DA + n RA > ((n-1) D + R)A
\]
which is a contradiction, proving our claim \eqref{eq: elliptic curve non-primitive class 01}.
Next, let us prove that
\begin{equation}\label{eq: elliptic curve non-primitive class 02}
D R \ge 8.
\end{equation}
Let us consider the divisors $A, D, R$. By the Hodge Index Theorem, their intersection matrix
has signature $(1,2)$ if
they are linearly independent in $\Pic_\QQ(X)$ and $(1,1,1)$ otherwise. The same holds for the intersection matrix
\[
\begin{bmatrix}
(2A + (AR)R)^2 & (2A + (AR)R)(2D + (DR)R) & 0\\
(2A + (AR)R)(2D + (DR)R) & (2D + (DR)R)^2 & 0\\
0 & 0 & -2
\end{bmatrix}
\]
of $2A + (AR)R$, $2D + (DR)R$ and $R$.
Either way, this translates to
\[
(2AD + (AR)(DR))^2 \ge (2A^2 + (AR)^2)(2 D^2 + (DR)^2).
\]
Since $D$ is big and nef, $D^2\ge 2$. Since $D\prec A$, $AD \le A^2$. So we have
\[
\begin{aligned}
&\quad (2 A^2 + (AR)(DR))^2 \ge (2A^2 + (AR)^2)(4 + (DR)^2)
\\
&\Leftrightarrow a y^2 - 2a xy + (2x^2 + 4a - 2a^2) \le 0
\end{aligned}
\]
for $a = A^2$, $x=AR$ and $y=DR$. It is easy to check that \eqref{eq: elliptic curve non-primitive class 03}
implies that $y\ge 8$, i.e., \eqref{eq: elliptic curve non-primitive class 02}.
And since $D^2 \ge 2$, we have
\begin{equation}\label{eq: elliptic curve non-primitive class 04}
D+R \text{ big and nef, } D^2 > 0,\ (D+R)R > 0 \text{ and } D^2 + 2D R \ge 18.
\end{equation}

There exists a smooth proper family $\calX\to B=\Spec S$ over a DVR $S$ such that $\calX_0 = X$,
$\overline{\calX_\eta}=\calX\times_S \overline{K(S)}$ is a general projective K3 surface with Picard rank $2$ over the algebraic
closure of the function field $K(S)$ of $S$ and $D$ and $R$ extends to divisor $\calD$ and $\mathscr{R}$ on $\calX$ with
$\calD_0 = D$ and $\mathscr{R}_0 = R$. Let $L = D + R$ and $\calL = \calD+\mathscr{R}$. By \eqref{eq: elliptic curve
non-primitive class 01} and \eqref{eq: elliptic curve non-primitive class 04}, $\calL_\eta$ satisfies A3 in Theorem \ref{thm:
appendix}. Consequently, there is an integral rational curve with unramified normalisation and hence, by deforming a
partial normalisation of this curve, a one-parameter family of genus 1
curves in $|\calL_\eta|$ on $\calX_\eta$, after a base change. We fix a general section $P\subset \calX$ of
$\calX/B$ and choose a genus 1 curve in $|\calL_\eta|$ passing through $P_\eta$ on $\calX_\eta$. So we obtain a flat
proper family $\mathscr{C}\subset \calX$ of curves over $B$ such that $\mathscr{C}_\eta$ is a nodal genus 1 curve in
$|\calL_\eta|$ on $\calX_\eta$ and $P\subset \mathscr{C}$. Clearly, $C = \mathscr{C}_0$ is a union of irreducible
components of geometric genus $\le 1$ with at most one genus 1 component. Let us write
\[
C = \Gamma + E\in |L| = |D+R|,
\]
where $\Gamma$ is the component of $C$ passing through $p=P_0$. Since $p$ is a general point on $X$, $\Gamma$ must be a
genus 1 curve and hence nef. By \eqref{eq: elliptic curve non-primitive class 00}, either $C = \Gamma$ is an integral
genus 1 curve in $|L|$ or $C = \Gamma + R$, where $\Gamma$ is an integral genus 1 curve in $|D|$. If it is the former,
then we are done. Otherwise, suppose that $C = \Gamma + R$. The corresponding stable map onto $C$ is rigid as $X$ is not
uniruled and so $C$ cannot deform fixing a point. Here by ``rigid'', we mean that there are only finitely many stable maps
$f: \widehat{C}\to X$ of genus $1$ such that $f_* \widehat{C} = C$ and more precisely, if we consider the moduli space
of all stable maps of genus $1$ to $X$ whose image passes through $p$, then this moduli space has dimension $0$ at the
maps $f: \widehat{C}\to X$ with $f_* \widehat{C} = C$. We will now show that this is impossible. 

We begin by extending the family $\calX/B$ to a two-dimensional family, which we still denote by $\calX$, over $U = \Spec T$, where
\begin{itemize}
\item $T$ is a normal local ring of dimension $2$,
\item $B$ has codimension one in $U$,
\item $\calX$ is a family of K3 surfaces over $U$, 
\item $\calX\times_T \overline{K(T)}$ is a projective K3 surface with Picard rank $1$,
\item $\calL = \calD+\mathscr{R}$ extends to a big and nef divisor $\calH$ on $\calX$, whose restriction to $\calX_B = \calX\times_U B$ over
$B\subset U$ is $\calL$, and
\item $B$ is the locus of K3 surfaces, where the class $R$ deforms.
\end{itemize}
Note that $\calH$ is not necessarily ample on $\calX$ and hence $\calX$ is not necessarily projective over $U$.

We also extend $P$ to a section of $\calX/U$.  Meanwhile, from Corollary \ref{cor:def}, the family $\mathscr{C}$ spreads
to a family of curves over $U$ after a finite base change, which we still denote by $\mathscr{C}$, such that
$\mathscr{C}_\eta \subset \calX_\eta$ is an integral genus 1 curve passing through $P_\eta$. We use
$\mathscr{C}_{B,\eta}$ and $\calX_{B,\eta}$ to denote the generic points of $\mathscr{C}_B$ and $\calX_B$, respectively.
Note that $\mathscr{C}_{B,\eta}$ is integral.

Let us consider the moduli space of stable maps of genus $1$ to $\calX$, whose images intersect $P$ (see \cite{ak}). This
moduli space has an irreducible component $\mathcal M$, flat over $U$, whose generic point $\mathcal M_\eta$ represents
the normalisation $\nu: \mathscr{C}_\eta^\nu\to \mathscr{C}_\eta$ of $\mathscr{C}_\eta$. Since the stable map over
$C = \mathscr{C}_0$ is rigid by our analysis of $C$, we actually have $\mathcal M\cong U$. Hence there exists a flat
family $f: \widehat{\mathscr{C}}\to \calX$ of stable maps of genus $1$ over $U$ such that $f_* \widehat{\mathscr{C}} =
\mathscr{C}$.

Let $q$ be the node of $\widehat{C} = \widehat{\mathscr{C}}_0$. The following is well known.

\begin{Lemma}\label{lem:mpt0}
    Let $\mathscr{C}\to U$ be a generically smooth family of stable curves over an integral scheme $U$, and assume that
    there is a point $0\in U$ such that the fibre $\mathscr{C}_0$ is singular. Then there exists a codimension 1 locus
    $0\in W\subset U$ over which every fibre is singular.
\end{Lemma}
\begin{Remark}
    In Propositions \ref{prop: marked point trick g=0} and \ref{prop: marked point trick g=1} we will prove a
    generalisation of this, called the marked point trick, in the case where $U$ is a surface and the family is not
    stable.
\end{Remark}

That is to say, $q$ deforms in codimension one, i.e., there exists $W\subset U$ of codimension one such that
$0\in W$ and $\widehat{\mathscr{C}}_W = \widehat{\mathscr{C}}\times_U W$ is singular along a section $Q$ of
$\widehat{\mathscr{C}}_W/W$ with $q = Q_0$. In other words, $W$ is the discriminant locus of the family
$\widehat{\mathscr{C}}/U$.

To conclude the proof of Theorem \ref{thm: elliptic curve non-primitive class}, let now $\widehat{C} = \Sigma_1\cup
\Sigma_2$, where $\Sigma_1$ and $\Sigma_2$ are the two components of $\widehat{C}$
meeting at $q$ with $f_* \Sigma_1 = \Gamma$ and $f_* \Sigma_2 = R$. As fibres over $W$ are deformations of
$\widehat{C}$, which is furthermore reducible with 1 node, $\widehat{\mathscr{C}}_W$ also has two irreducible
components, both flat over $W$ and containing $\Sigma_1$ and $\Sigma_2$, respectively. Hence the class $R$ necessarily deforms in the
family $\calX_W$ over $W$ since one of these two flat components parametrises deformations of $f:\Sigma_2\to R$. As we
had chosen the embedding $B\subset U$ in such a way that $B$ was the Noether--Lefschetz locus of $\OO(R)$, it must be the
case that $W=B$. But by construction $\widehat{\mathscr{C}}_{B,\eta}$ is smooth since $\mathscr{C}_{B,\eta}$ was chosen
to be an integral genus 1 curve. This gives the required contradiction, proving therefore that $C$ must be integral.
\end{proof}

\subsection{Curves of geometric genus 0 and 1}
\label{subsec:ellcurves}

We now extend some of the results of the previous sections to prove more existence results for curves of geometric genus 0 and 1.
In particular, we will extend the known existence results of rational curves on elliptic K3 surfaces to genus 1. In the
following, recall that an isotrivial elliptic fibration on a K3 surface is one such that all smooth fibres are
isomorphic.

\begin{Theorem}(\cite{btrational}, \cite{Tayou} $+\varepsilon$)
\label{thm: bt tayou}
    Let $X$ be a nonuniruled K3 surface over an algebraically closed field $k$ of characteristic $p\geq0$, and
    let $g\in\{0,1\}$. Assume that any of the following holds:
    \begin{itemize}
        \item $g=0$ and either $X$ is nonisotrivially elliptic or $X$ is elliptic and $p\neq2,3$;
        \item $g=1$ and $X$ is elliptic;
        \item $X$ has infinite automorphism group. 
    \end{itemize}
    Then there exists a sequence of integral curves $C_i$ of geometric genus $g$ such that for any ample
    divisor $H$ on $X$,
     \[\lim_{i\to\infty} H.C_i=\infty.\]
\end{Theorem}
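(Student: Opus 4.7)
The $g=0$ conclusions are essentially in the literature, so I would cite them directly. For $X$ non-isotrivially elliptic in any characteristic, Bogomolov--Tschinkel \cite{btrational} produce infinitely many integral rational multi-sections of unbounded degree by translating a fixed multi-section using the Mordell--Weil group, non-isotriviality guaranteeing that the heights are unbounded. The elliptic case in characteristic $p>3$ follows by an analogous argument in positive characteristic, the hypothesis on $p$ ensuring tame behaviour of the relevant isogenies. For $X$ with infinite automorphism group and $g=0$, Tayou \cite{Tayou} (building on \cite{btdensity}) iterates an infinite-order $\phi \in \Aut(X)$ on a single rational curve produced by Bogomolov--Mumford (Theorem \ref{thm: bogomolov mumford}); the degrees grow because $\phi^*$ acts on $\Pic(X)$ with spectral radius strictly greater than $1$ (otherwise a power of $\phi$ would fix an ample class and hence be trivial).

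The novelty ($\varepsilon$) lies in the $g=1$ extensions. When $\Aut(X)$ is infinite, I would produce one integral geometric genus $1$ curve on $X$ using Proposition \ref{prop: integral curves in min nef divisor} applied to a minimal nef summand of any ample class, extracted via Lemma \ref{lem: minimal nef K3}, and then iterate an infinite-order automorphism exactly as in the $g=0$ argument. Iteration preserves integrality and geometric genus, and the degree growth analysis is unchanged.

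For the remaining case, $X$ elliptic with $g=1$, I would bootstrap from the $g=0$ conclusion already in hand for elliptic surfaces. Take a sequence of integral rational curves $R_i \subset X$ with $H \cdot R_i \to \infty$, and observe that $p_a(R_i) = 1 + R_i^2/2 \to \infty$, so each $R_i$ has many singular points. By Corollary \ref{cor:def} together with Proposition \ref{prop:defexpdim}, every component of the Kontsevich moduli space $\overline{\mathcal{M}}_1(X,[R_i])$ has the expected dimension $1$; such a component exists because partially smoothing a single node of $R_i$ via \cite[Proposition 13.2.1]{huybrechts} produces a $1$-parameter family of stable maps of geometric genus $1$ in class $[R_i]$. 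Since $R_i$ itself is integral and integrality is open in flat families, the generic member of this component has integral image, yielding an integral genus $1$ curve $C_i$ in the class $[R_i]$ with $H \cdot C_i = H \cdot R_i \to \infty$.

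The main obstacle I foresee is ensuring that the $R_i$ can be taken nodal so that the partial smoothing of Huybrechts applies: in characteristic zero this is automatic from Proposition \ref{prop:defexpdim}, but in small positive characteristic the $R_i$ may have worse singularities (cf.\ Remark \ref{rem: char p AC}), and moreover for $p\le 3$ with isotrivial fibration the $g=0$ input itself needs to be supplemented by the infinite automorphism case. A workaround for both issues is to first pass to a nearby K3 surface where an integral nodal rational curve in the appropriate class is provided by Theorem \ref{thm: nodal curves}, perform the smoothing there, and then specialise back to $X$ using the regeneration framework developed in Section \ref{sec: regeneration}.
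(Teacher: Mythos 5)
Your $g=0$ citations and your treatment of the infinite-automorphism case for $g=1$ are broadly workable (the paper argues differently there: if only finitely many minimal nef divisors existed, $\Aut(X)$ would act through a finite permutation group on a generating set of $\Pic(X)$ and hence be finite, so infinitely many minimal nef divisors exist and Proposition \ref{prop: integral curves in min nef divisor} applies to each; note also that your parenthetical justification for degree growth is wrong as stated, since a parabolic automorphism has spectral radius $1$ on $\Pic(X)$, has infinite order, and fixes no ample class --- the correct point is that the stabiliser of a big and nef class is finite). The genuine problems are in the elliptic $g=1$ case, where your bootstrap from $g=0$ has three gaps. First, the $g=1$ statement is claimed for \emph{every} elliptic non-uniruled K3 surface with no restriction on $p$ or on isotriviality, whereas the $g=0$ input you want to feed in is unavailable precisely for isotrivially elliptic surfaces in characteristic $2,3$ (this case is left open even in Corollary \ref{cor:char p cor}), and such surfaces need not have infinite automorphism group, so your proposed supplement does not cover them. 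Second, even where the $R_i$ exist, attaching a genus $1$ stable map over $R_i$ requires a singular point with at least two branches (cf.\ Lemma \ref{lem: severi K3}(2)); Proposition \ref{prop:defexpdim} gives rigidity of rational curves in characteristic zero, not nodality, and an integral rational curve of large arithmetic genus can a priori be unibranch at every singular point, in which case no partial smoothing to genus $1$ in the class $[R_i]$ is available. Third, the suggested repair --- produce nodal curves on a nearby K3 via Theorem \ref{thm: nodal curves} and ``specialise back using regeneration'' --- runs the machinery backwards: regeneration (Theorem \ref{thm: regeneration}) lifts curves from the special fibre to the generic fibre, while what you need is to specialise an integral curve from a general deformation down to $X$ without it breaking into components, which is exactly the difficulty the marked point trick of Section \ref{sec: main theorem 2} was invented to handle.

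The paper's actual route for the elliptic $g=1$ case avoids all of this: following Tayou, for each large prime $l$ it constructs an elliptic K3 surface $Y$ with a dominant rational map $f_l: Y\dashrightarrow X$ preserving the fibrations and with $\coker(\Pic(Y)\to\Pic(Y_t))\cong\ZZ/(2dl)\ZZ$, produces an integral geometric genus $1$ multisection of degree $2dl$ on $Y$ via minimal nef divisors (Proposition \ref{prop: integral curves in min nef divisor}, supplemented by a degeneration argument as in Theorem \ref{thm: elliptic curve non-primitive class} when the relevant class is a $(-2)$-curve plus fibres), and then checks by a torsion argument on the fibres of $\pi_Y$ that $f_l$ maps a general such multisection birationally onto its image. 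Letting $l\to\infty$ gives curves of unbounded degree, and the whole argument uses only non-uniruledness, so it is uniform in the characteristic.
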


\begin{proof}
For $g=0$, if $X$ has infinite automorphisms or has a non-isotrivial elliptic fibration, this was proved in
\cite{btrational} for $p=0$ and \cite{Tayou} over algebraically closed fields of arbitrary characteristic. As pointed
out in \cite[Remark 6]{bht}, the isotrivial elliptic case in characteristic zero has also been dealt with in
\cite{hassett}, whereas the positive characteristic case where $p>3$ was dealt with in \cite{Tayou} (cf.\ Section \ref{sec: pos char}). We now
prove the theorem for $g=1$ following the argument in \cite{Tayou}.

Suppose that $X$ is elliptic. Then there is an elliptic fibration $\pi_X: X\to \PP^1$. Let $d$ be the positive integer
so that 
\[\coker(\Pic(X)\to \ZZ : D\mapsto D.X_t)\cong \ZZ/d\ZZ\]
for a general point $t\in \PP^1$. More precisely, $d$ is the
greatest common divisor of $DF_X$ for all $D\in \Pic(X)$, where $F_X$ is a fibre of $\pi_X$. We now briefly recall the argument in
\cite[Sec. 3]{Tayou}. For $p=\chara(k)$, we can write $[X]=\alpha+\alpha_p$ for the class of $X$ in the Brauer group
$\operatorname{Br}(J(X))$ of its Jacobian, where $\alpha$ is $p$-torsion free and $\alpha_p$ has order $p^s$ for some
$s\geq0$ (i.e., if $p=0$ then $[X]=\alpha$ and we take $\alpha_p=0$ in what follows). The prime-to-$p$ part of the
Brauer group is a divisible group, so by choosing any integer $n$ so that $n\equiv1\mod p^s$, there is a class
$\alpha_n$ so that $n(\alpha_n+\alpha_p)=\alpha+\alpha_p$. The corresponding K3 surface $\pi_Y:Y:=Y_n\to\PP^1$ is
elliptic and such that
\[
{\rm{coker}}\left(\Pic(Y)\to \Pic(Y_t)\right) \,=\, \ZZ/(nd)\ZZ
\]
for $t\in\PP^1$ general and so that there is a dominant rational map $f_n: Y\dashrightarrow X$ over $\PP^1$, i.e.,
preserving the elliptic fibrations. Our aim is now first to find a genus 1 multisection $C$ of $\pi_Y$ of degree
bounded in terms of $nd$ and then show that $f_n$ maps $C$ birationally onto its image.

If there is a minimal nef divisor $D$ on $Y$ such that $D F_Y = nd$ for a fibre $F_Y$ of $\pi_Y$, then we obtain the required
genus 1 curve from Proposition \ref{prop: integral curves in min nef divisor}. Otherwise, from the minimal nef
decomposition of Lemma \ref{lem: minimal nef K3} and the fact that $nd$ is the minimal multisection fibre-degree of $Y$,
there must exist a $(-2)$-curve $D$ on $Y$ such that $D F_Y = nd$. Clearly, $D$ and $F_Y$ generate a primitive
sublattice $\Lambda$ of $\Pic(Y)$ and both $D+F_Y$ and $2D+F_Y$ are big and nef as long as $nd \ge 5$. When we deform
$Y$ to a general K3 surface with Picard lattice $\Lambda$, there is an integral genus 1 curve in $|2D+F_Y|$ from Theorem
\ref{thm: appendix}.(A3) applied to $(D+F_Y)+D$ for $n$ sufficiently large.  Consider now the deformations of this
genus 1 curve over a two-dimensional family of K3 surfaces as in the proof of Theorem \ref{thm: elliptic curve
non-primitive class}, where the very general member has Picard rank one. If the degeneration of this curve to $Y$ is
integral then we are done. Otherwise, exactly as in the final step of Theorem \ref{thm: elliptic curve non-primitive class}, the locus in the family of curves
where the fibres split into reducible curves is divisorial. 
The curve must split into a sum $\Gamma+E$ on $Y$, of class $2D+F_Y$, where $\Gamma$ is an integral geometric genus one
curve and $E$ is a sum of rational curves. If $\Gamma$ is a multisection of $\pi_Y$, then we are done as we proceed to
the next step with the curve $\Gamma$ itself. If not, then as it is integral, $\Gamma$ must be a fibre. Hence $E=2D$
implying that $\Gamma,E$ span the same lattice as $D,F_Y$, so the same argument as in the proof of Theorem \ref{thm:
elliptic curve non-primitive class} gives a contradiction.

So there is an elliptic curve $C$ on $Y$ with $C Y_t =m\geq nd$ and $C$ varies in a one-parameter family, parameterised by a
curve $B$. We claim that $f_n$ maps $C_b$ birationally onto its image for $b\in B$ general. Otherwise, for $b\in B$ and
$t\in \PP^1$ general and for all pairs of points $p, q\in C_b\cap Y_t$, $p-q$ must be torsion in $\Pic(Y_t)$. Then we
have $C_b Y_t = m r_b + \tau$ in $\Pic(Y_t)$ for a point $r_b\in C_b\cap Y_t$ and some $\tau\in
\Pic(Y_t)_\text{tors}$. When $b$ varies in $B$, $C_b Y_t$ is constant in $\Pic(Y_t)$. Thus, $r_b$ is a fixed point on
$Y_t$. That is, all $C_b$'s meet a general fibre $Y_t$ at a fixed point. This is only possible if $C_b$'s are the same curve
for all $b\in B$, which contradicts our assumption on $C_b$. Therefore, $f_n$ maps $C$ birationally onto its image if
$C$ is a general member of a one-parameter family of elliptic curves on $X$. Consequently, $f_n(C)$ is a
multisection of $\pi_X$ of degree $m$ and genus $1$. By increasing $n$, we obtain an infinite sequence of elliptic
curves on $X$ of increasing degrees. This proves the theorem when $X$ is elliptic.

Suppose that $X$ is not elliptic and $|\Aut(X)| = \infty$. If there are infinitely many minimal nef divisors $D$ on $X$,
we are done by Proposition \ref{prop: integral curves in min nef divisor}. Otherwise, there are finitely many minimal
nef divisors $D_1, D_2, \ldots, D_m$ on $X$. Let $D = \sum_{i=1}^m D_i$. Obviously, $\{D_i\}$ is non-empty, and
since $X$ is not elliptic, $D$ is big and nef.

By Lemma \ref{lem: minimal nef K3}, $\Pic(X)$ is generated by $D_1,D_2,\ldots,D_m$ and finitely many $(-2)$-curves on $X$.
So there exist a constant $N$ such that $\Pic(X)$ is generated by $D_1, D_2, \ldots, D_m$ and all $(-2)$-curves $R$ on $X$
satisfying $DR \le N$. Let us assume that
\[
\Pic(X) = \Big\{
\sum_{i=1}^m a_i D_i + \sum_{j=1}^n b_j R_j: a_i,b_j\in \ZZ
\Big\}
\]
where $R_1, R_2, \ldots, R_n$ are the $(-2)$-curves on $X$ satisfying $DR_j\le N$.

Obviously, every automorphism $\sigma$ of $X$ permutes the divisors in $D_1,D_2,\ldots,D_m$. Therefore, $\sigma^* D = D$
and hence $\sigma$ also permutes the divisors in $R_1, R_2,\ldots, R_n$.  Thus we have a group homomorphism $\rho:
\Aut(X)\to \Sigma_m \times \Sigma_n$. Since $\Aut(X)$ is infinite, so is the kernel $\ker(\rho)$ of $\rho$. On the other
hand, every $\sigma\in \ker(\rho)$ preserves $D_1,D_2,\ldots,D_m,R_1,R_2,\ldots,R_n$ and hence $\Pic(X)$. So
$\ker(\rho)$ lies in the kernel of $\Aut(X)\to \Aut(\Pic(X))$, which is a contradiction as the latter is finite from
\cite[Proposition 5.3.3]{huybrechts}.
\end{proof}

If a rational curve on a K3 surface $X$ moves, then we already noted that it is uniruled and supersingular. In view of
the expected unirationality of supersingular K3 surfaces, this begs the question whether supersingular K3 surfaces
contain infinitely many rigid rational curves. Since smooth rational curves on K3 surfaces are rigid (see also
\cite{iil}), one may even ask whether supersingular K3 surfaces contain infinitely many smooth rational curves. This is
known to experts (cf.\ \cite[Theorem 6.1]{nikulinelliptic}), but we include a proof for completeness.

\begin{Proposition}
\label{prop: supersingular OK}
If $X$ is a supersingular K3 surface over an algebraically closed field $k$ with $\chara(k)=p$, then $X$ contains
infinitely many smooth rational curves $R_i$ so that $\lim_{i\to\infty}H.R_i=\infty$ for any ample class $H$. If
$p>3$, then $X$ also contains infinitely many integral curves $C_i$ of geometric genus 1 so that
$\lim_{i\to\infty}H.C_i=\infty$.
\end{Proposition}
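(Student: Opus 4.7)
The strategy is to exploit the maximal Picard rank $\rho(X) = 22$ of a supersingular K3 surface, which forces the existence of elliptic fibrations with rich Mordell--Weil groups. First, I would produce a Jacobian elliptic fibration $\pi: X \to \PP^1$ with $\rank \MW(\pi) > 0$. Since $\Pic(X)$ is a hyperbolic lattice of rank $22 \geq 5$, it contains infinitely many primitive isotropic classes, each of which (up to the action of the Weyl group generated by $(-2)$-classes) is represented by the fibre class of an elliptic fibration on $X$, as in the reference \cite[Theorem 6.1]{nikulinelliptic}. Passing to the Jacobian allows one to assume $\pi$ admits a section, and the Shioda--Tate formula gives
$$
\rank \MW(\pi) \,=\, \rho(X) - 2 - \sum_v (m_v - 1) \,=\, 20 - \sum_v(m_v - 1).
$$
Varying over the infinitely many available fibrations, one arranges $\sum_v (m_v - 1) < 20$, giving $\rank \MW(\pi) > 0$. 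Each non-torsion $P \in \MW(\pi)$ yields a section $s_P$, i.e., a smooth rational curve on $X$, and positivity of the Néron--Tate height pairing on $\MW(\pi) \otimes \QQ$ forces $H \cdot s_P \to \infty$ as $P$ ranges over elements of unbounded height. This proves the first assertion.

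For the second assertion, concerning integral curves of geometric genus $1$ in characteristic $p > 3$, I would follow the elliptic case of Theorem \ref{thm: bt tayou}. Tayou's construction provides, for arbitrarily large primes $l$, a dominant rational map $f_l: Y \dashrightarrow X$ from another elliptic K3 surface $Y$ to $X$ preserving the elliptic fibrations. The key modification relative to the proof of Theorem \ref{thm: bt tayou} is that there the non-uniruledness of $X$ was needed in order to apply Proposition \ref{prop: integral curves in min nef divisor} or Theorem \ref{thm: elliptic curve non-primitive class} on $Y$; here one arranges $Y$ inside the relevant isogeny class to be non-supersingular, hence non-uniruled, which is possible in characteristic $p > 3$. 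One then locates an integral genus $1$ multisection $C \subset Y$ of large degree via a minimal nef decomposition, and pushes it forward under $f_l$ to obtain an integral genus $1$ curve on $X$ whose $H$-degree tends to infinity with $l$. The characteristic restriction $p > 3$ enters because the multiplication-by-$l$ isogeny on the generic fibre of $\pi$, on which Tayou's construction rests, behaves well precisely in this range.

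The principal difficulty is in the genus $1$ case: one must ensure that $f_l(C)$ on the uniruled target $X$ is integral of geometric genus $1$, rather than degenerating to a multiple cover of a fibre or of a lower-height multisection, or acquiring extraneous rational components from the uniruling. This is handled exactly as in the argument of Theorem \ref{thm: bt tayou}: for $l$ large, a degree count on the fibration forbids $f_l$ restricted to a generic member of the one-parameter family containing $C$ from factoring non-trivially, forcing birationality onto the image and preserving the genus.
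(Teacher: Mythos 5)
Your strategy for the first assertion has two gaps that I do not see how to close. First, the existence of a genus one fibration on $X$ with positive Mordell--Weil rank is precisely the content you would need to prove, and ``varying over the infinitely many available fibrations, one arranges $\sum_v(m_v-1)<20$'' is an assertion, not an argument: nothing rules out a priori that every Jacobian fibration on $X$ is extremal. For instance, on the supersingular surface in characteristic $2$ with Artin invariant $1$ one has $N_{2,1}\cong U\oplus D_{20}$, so the fibration defined by the isotropic vector of the $U$-summand carries a full rank-$20$ vertical root lattice and has finite $\MW$; moreover in characteristics $2$ and $3$ an isotropic nef class may define a quasi-elliptic fibration, for which $\MW$ is a finite $p$-group and the Shioda--Tate count yields nothing. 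Second, ``passing to the Jacobian'' replaces $X$ by a possibly non-isomorphic K3 surface: the sections you produce are curves on $J(X)$, not on $X$. The paper's route avoids all of this: Rudakov--Shafarevich's description of $N_{p,\sigma}$ shows $\Pic(X)$ contains a $(-2)$-class, hence $X$ contains one smooth rational curve; infinitude then follows because $\Aut(X)$ is infinite (Kondo--Shimada), the cone of curves is spanned by $(-2)$-classes, and the kernel of $\Aut(X)\to\Aut(\Pic(X))$ is finite; unboundedness of $H.R_i$ comes from negative definiteness of $H^\perp$.

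For the second assertion the proposal breaks down at its key step. The surface $Y$ in Tayou's construction is a torsor under the relative Jacobian of $\pi_X$, and a K3 surface sharing its Jacobian fibration with, or admitting a dominant rational map onto, a supersingular K3 surface is itself supersingular, since the height of the formal Brauer group is preserved under these operations. So you cannot ``arrange $Y$ to be non-supersingular'', and the non-uniruledness hypothesis needed to run the machinery of Theorem \ref{thm: bt tayou} on $Y$ is exactly what is unavailable. Your explanation of the restriction $p>3$ is also not the right one: the issue is not the behaviour of multiplication by $l$ but the possible presence of quasi-elliptic fibrations, which exist only in characteristics $2$ and $3$ and whose general fibres have geometric genus $0$. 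The paper's argument here is again purely lattice-theoretic and does not pass through an auxiliary surface at all: $N_{p,\sigma}$ contains a (twisted) hyperbolic plane, hence primitive isotropic classes; by Nikulin's theorem and the infinitude of $\Aut(X)$ there are infinitely many primitive nef isotropic classes; and for $p>3$ each of these is the class of a genuinely elliptic pencil, whose general member is the desired integral genus $1$ curve of unbounded degree.
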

\begin{proof}
Rudakov--Shafa\-revich \cite[p.156]{rudakovshafarevich2} have given a complete description of the N\'eron--Severi
lattices $N_{p,\sigma}$ of supersingular K3 surfaces in terms of the characteristic $p$ and the Artin-invariant
$\sigma$. Using the rank four sublattice $H_p$ (see \cite[\S 3]{shimada} for a description), we see that $N_{p,10}$
contains a $(-2)$-class. Since $N_{p,10}\subset N_{p,\sigma}$ for every $\sigma$, this implies that $N_{p,\sigma}$
contains a $(-2)$-class for every $p$ and $\sigma$.  If $L^2=-2$ for some line bundle $L$, then $X$ must contain a
smooth rational curve as a component of a section of $L$ or $-L$.

Now, the existence of one smooth rational curve implies that there are infinitely many as seen from the following. It is known
that a supersingular K3 surface has infinite automorphism group by \cite{kondoshimada}. Also, from \cite[Theorem
4.1]{kovacsrevisited} the cone of curves is spanned by $(-2)$-classes. One concludes now by the same argument as in the
end of the proof of Theorem \ref{thm: bt tayou} using \cite[Proposition 5.3.3]{huybrechts}: if there were only finitely
many smooth rational curves, then there would be infinitely many automorphisms fixing the Picard group, in particular a
polarisation which is a contradiction.

Denote by $R_i$ an infinite sequence of smooth $(-2)$-curves on $X$. To see that $H.R_i\to\infty$, we argue as suggested
to us by Tayou. If $H.R_i$ is bounded, then the projection \[L_i:=R_i - \frac{H.R_i}{H^2}H\] of $R_i$ onto the $H^\perp$
plane satisfies $C<L_i^2<0$ for some constant $C$ and all $i$. But $H^\perp$ is negative definite, so there can be only
finitely many lattice points of bounded negative self-intersection, implying that infinitely many of the $L_i$'s are
proportional. This contradicts that the $R_i$'s are distinct smooth $(-2)$-curves.

As far as genus 1 curves are concerned, from \cite[p.156]{rudakovshafarevich2} we see that every supersingular K3
surface contains a copy of a (twisted) hyperbolic plane in its Picard lattice $N_{p,\sigma}$, hence at least two
primitive isotropic vectors. From \cite[Theorem 5.1]{nikulinelliptic}, since the automorphism group is infinite, it
follows that $X$ has infinitely many primitive nef isotropic vectors. From \cite[Proposition 2.3.10]{huybrechts}, if
$p>3$ then each such class gives a smooth curve of geometric genus 1 (whereas for small characteristic we cannot rule
out that all these classes give quasi-elliptic fibrations).
\end{proof}

\begin{Corollary}
 \label{cor: rigid bogomolov}
    Conjecture \ref{conj: inf many rational curves} is equivalent to the a priori stronger conjecture that every K3
    surface over an algebraically closed field contains infinitely many rational curves that are rigid.
\end{Corollary}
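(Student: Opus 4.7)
The plan is to split on whether the K3 surface $X$ is uniruled or not, using facts already assembled earlier in the excerpt. The implication that the stronger conjecture implies Conjecture \ref{conj: inf many rational curves} is tautological, so all the content lies in the reverse direction: assuming $X$ contains infinitely many rational curves, produce infinitely many rigid ones.

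First I would handle the non-uniruled case. As recalled in the paragraph preceding Proposition \ref{prop:defexpdim}, any non-rigid rational curve on a K3 surface forces the surface to be uniruled. Hence if $X$ is not uniruled, then every rational curve on $X$ is automatically rigid, and the hypothesis that $X$ carries infinitely many rational curves immediately gives infinitely many rigid ones. So there is nothing to prove in this case.

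The remaining case is when $X$ is uniruled. By the result of Shioda recalled after the definition of rigidity (\cite[\S2]{ShiodaExample}), a uniruled K3 surface over an algebraically closed field has Picard rank $22$, so it is supersingular in the sense of Shioda. At this point I would simply invoke Proposition \ref{prop: supersingular OK}, which shows that a supersingular K3 surface contains an infinite sequence of smooth rational curves $R_i$ with $H.R_i\to\infty$ for any ample class $H$; in particular these curves are pairwise distinct. Since smooth rational curves on K3 surfaces are rigid (as reminded right before the statement of the corollary, see also \cite{iil}), this furnishes infinitely many rigid rational curves on $X$.

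Combining the two cases proves the equivalence. There is no real obstacle here: the corollary is essentially a repackaging of the observations that (i) non-rigid rational curves only exist on uniruled hence supersingular K3 surfaces, and (ii) supersingular K3 surfaces already satisfy the stronger conjecture via Proposition \ref{prop: supersingular OK}. The only minor point to be careful about is noting that the smooth rational curves produced in Proposition \ref{prop: supersingular OK} are a genuinely infinite family, which follows from the $H$-degrees tending to infinity and hence forces them to be pairwise distinct.
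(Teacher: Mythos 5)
Your proof is correct and is exactly the argument the paper intends: the corollary is stated immediately after the observation that non-rigid rational curves force the K3 surface to be uniruled, hence supersingular, and after Proposition \ref{prop: supersingular OK}, which supplies infinitely many smooth (hence rigid) rational curves on supersingular K3 surfaces unconditionally. The paper leaves the two-case assembly implicit, and your write-up fills it in faithfully, including the minor point that the curves are pairwise distinct because their $H$-degrees tend to infinity.
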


\section{Regenerating curves}
\label{sec: regeneration}

Throughout the whole of this section, we consider the following situation: let $S$ be the spectrum of a DVR with
algebraically closed residue field $k$, and let
\[
   \begin{tikzcd}\calX\ar{r}& S\end{tikzcd}
\]
be a \textit{family of K3 surfaces}, i.e., a smooth and proper morphism of algebraic spaces, whose
geometric fibres are K3 surfaces. As usual, we denote by $0,\eta\in S$ the closed and generic point, respectively,
by $\calX_0,\calX_\eta$ the fibres, and by $\overline{\calX_\eta}$ the geometric generic fibre.

\begin{Definition}
    \label{def: regeneration}
    Let $C_0$ be an integral curve on the special fibre $\calX_0$. A {\em regeneration} of $C_0\subset\calX_0$ is a
    geometrically integral curve $D_\eta\subset\calX_\eta$ that has the same geometric genus as $C_0$ and such that the
    specialisation to $\calX_0$ contains $C_0$.
\end{Definition}

\begin{Remark}
Since $C_0$ and $D_\eta$ are assumed to be of the same geometric genus, it follows that the specialisation of $D_\eta$
on $\calX_0$ is equal to the union of $C_0$ and a (possibly empty) union of {\em rational} curves.
\end{Remark}

A regeneration should be thought of as an inverse of a degeneration or a specialisation. One of the main points of this
definition is that we do {\em not} require the line bundle $\OO_{\calX_0}(C_0)$ on $\calX_0$ to extend to
$\overline{\calX_\eta}$.

\subsection{The regeneration theorem}

We now come to one of the main techniques of this article. A regeneration result for rational curves was
already envisioned in \cite{bht}, a weak version of it was already established in \cite{liliedtke}
and we refer to \cite[\S 13]{huybrechts} for an overview. We do not know whether the assumption of non-uniruledness is
necessary in the following (see Remark \ref{rem:pointsissue}).

\begin{Theorem}
    \label{thm: regeneration}
    Let ${\calX}\to S$ and $C_0$ be as above. Assume that $C_0$ deforms in the expected dimension in $\calX_0$ and that
    $\calX_0$ is not uniruled. Then after possibly replacing $S$ by some finite extension,
    \begin{enumerate}
        \item there exists a regeneration $D_\eta\subset\calX_\eta$ of $C_0$, and
        \item moreover, if the line bundle $\OO_{\calX_0}(C_0)$ extends from $\calX_0$ to $\calX_\eta$, then we may
        assume that $C_0$ is the specialisation of $D_\eta$ and that $\OO_{\calX_\eta}(D_\eta)$ specialises to
        $\OO_{\calX_0}(C_0)$.
    \end{enumerate}
\end{Theorem}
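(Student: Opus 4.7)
The plan is to handle assertion (2) first, as a direct application of the machinery of stable maps developed in Section \ref{sec:deformations}, and then to reduce assertion (1) to the same framework by adjoining a \emph{rigidifier}---a rigid rational curve $R_0 \subset \calX_0$ such that $[C_0 + R_0]$ extends to $\calX$ even though $[C_0]$ does not.

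For (2), let $g$ be the geometric genus of $C_0$ and let $\calL$ be the extension of $\OO_{\calX_0}(C_0)$ to $\calX$. The normalisation $h_0 \colon \widehat{C}_0 \to \calX_0$ is a stable map of genus $g$ in class $[\calL_0]$, and by the expected-dimension hypothesis Corollary \ref{cor:def} ensures that the irreducible component $M$ of $\overline{\mathcal{M}}_g(\calX/S, \calL)$ through $[h_0]$ has dimension $g+1$ and surjects onto $S$. After a finite base change I would pick a section of $M \to S$ through $[h_0]$; its generic point gives a stable map $h_\eta \colon D \to \calX_\eta$. Since the source $\widehat{C}_0$ is smooth and irreducible and $h_0$ is birational onto its image, both properties persist under deformation, and after a further finite base change the image $D_\eta := h_\eta(D)$ is a geometrically integral curve of geometric genus $g$ in $|\calL_\eta|$ whose specialisation is exactly $C_0$, proving (2).

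For (1), after passing to a finite extension I may assume $\calX$ carries a relatively ample line bundle $\calL$; set $L_0 := \calL_0$. The crux is to produce, for $N \gg 0$, an integral nodal rational curve $R_0 \subset \calX_0$ of class $N L_0 - [C_0]$. I would invoke Theorem \ref{thm: nodal curves} for a primitive rank-$2$ sublattice $\Lambda \subseteq \Pic(\calX_0)$ containing $[L_0]$ and $[C_0]$: case A1 handles even $\det \Lambda$ directly, while for odd determinant I would apply case A2 via the decomposition $N L_0 - [C_0] = L_0 + L_0 + ((N-2) L_0 - [C_0])$, whose summands are big and nef for $N$ large. Since $\calX_0$ is not uniruled, $R_0$ is rigid, and by construction $[C_0 + R_0] = N [L_0]$ extends to $N\calL$ on $\calX$.

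Now consider the stable map $h_0 \colon D_0 \to \calX_0$, where $D_0$ is obtained from $\widehat{C}_0 \sqcup \PP^1$ by gluing at one preimage of a point in $C_0 \cap R_0$; the source is then a connected nodal curve of arithmetic genus $g$ with smooth components, and the image is $C_0 \cup R_0$ of class $N \calL_0$. Combining the hypothesis on $C_0$, the rigidity of $R_0$, and Theorem \ref{thm:expdimdef} should show that $h_0$ deforms in the expected dimension, so Corollary \ref{cor:def} produces a stable map $h_\eta$ of genus $g$ in class $N \calL_\eta$ specialising to $h_0$. The main obstacle will then be to show that the image $D_\eta \subset \calX_\eta$ is geometrically integral: the rigidity of $R_0$ and the non-extendability of $[C_0]$ (which would fail if $[R_0]$ extended), together with the non-uniruledness of $\calX_0$, should rule out any splitting off of a rational tail from the genus-$g$ component over the generic fibre. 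Granting this, $D_\eta$ has geometric genus $g$ and its specialisation contains $C_0$, completing the regeneration.
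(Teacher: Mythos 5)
Your treatment of assertion (2) matches the paper's Lemma \ref{lem: assertion two} and is fine. The proof of assertion (1), however, has a genuine gap at its crux: you propose to produce an integral nodal rational curve $R_0\subset\calX_0$ in $|NL_0-C_0|$ by applying Theorem \ref{thm: nodal curves} to a rank-$2$ sublattice of $\Pic(\calX_0)$. But Theorem \ref{thm: nodal curves} only asserts the existence of such a curve on K3 surfaces in an open dense subset $U$ of the moduli space $M_{\Lambda,\CC}$ of $\Lambda$-polarised \emph{complex} K3 surfaces. The surface $\calX_0$ is an arbitrary non-uniruled K3 over the (possibly positive-characteristic) residue field $k$, typically of Picard rank larger than $2$, and hence a very special point of $M_\Lambda$; there is no reason it should lie in $U$, and the theorem gives you nothing on $\calX_0$ itself. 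Indeed, if one could directly manufacture integral rational curves in $|NL_0-C_0|$ on an arbitrary K3 for all large $N$, Conjecture \ref{conj: inf many rational curves} would follow almost immediately, which signals that this step cannot be had for free.

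The paper circumvents this by never constructing the rational curve on $\calX_0$: it passes to the deformation space $\operatorname{Def}(\calX_0/A)$, algebraises the intersection $\widehat{T}(\calH_0)\cap\widehat{T}(C_0)$, and deforms $(\calX_0,\calH_0,C_0)$ to a \emph{general characteristic-zero} K3 surface $(X,H,C)$ of Picard rank $2$, where Theorem \ref{thm: nodal curves} does apply to yield $R\in|nH-C|$. The glued stable map $\widehat{C}\cup\widehat{R}\to X$ is then transported to $\calX_\eta$ through a two-dimensional family $\mathcal{Y}\to U$ containing $\calX_0$, $\calX_\eta$ and $X$ as fibres. This detour creates the further problem (your "main obstacle", but in a different guise) that the specialisation of the resulting stable map back to $\calX_0$ need only be deformation-equivalent to $C_0+R_0$, not actually contain $C_0$; the paper resolves this by imposing $g$ general marked points on $C_0$ and working in $\overline{\mathcal{M}}_g(\mathcal{Y}/U,n\calH,\sigma_1,\ldots,\sigma_g)$, using non-uniruledness to pin down the rational part. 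You would also need the paper's reduction (Lemma \ref{lem: a reduction}, via flops) to arrange that the ample class on $\calX_\eta$ specialises to a big and nef class on $\calX_0$, which your "choose a relatively ample $\calL$" elides since $\calX\to S$ is a priori only a family of algebraic spaces. As written, your argument for (1) does not go through.
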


\begin{Remark}
    Concerning the assumptions and conclusions, we refer to Proposition \ref{prop:defexpdim} for conditions that ensure
    that a curve deforms in the expected dimension. By this result, $C_0$ always deforms in the expected dimension if
    $\chara(k)=0$, but Example \ref{ex:supersingular} shows that it may fail if $\chara(k)>0$.
\end{Remark}

We start by proving the following, which is more or less well known to experts. In particular, it implies that curves
whose classes lift can be easily regenerated, as the second assertion of the theorem claims.
\begin{Lemma}
\label{lem: assertion two}
    If $C_0\subset\calX_0$ is an integral curve so that $\OO_{\calX_0}(C_0)$ is in the image of the injective
    specialisation map \[\begin{tikzcd}\rm{sp}: \Pic(\calX_\eta)\ar{r}&\Pic(\calX_0),\end{tikzcd}\] then there exists a
    regeneration of $C_0$ after passing to a finite extension of $S$
\end{Lemma}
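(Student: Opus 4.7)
The plan is to realise $D_\eta$ as the image of the generic fibre of a one-parameter deformation of the normalisation $\nu:\widehat{C_0} \to \calX_0$ of $C_0$, viewed as a stable map of arithmetic genus $g:=p_g(C_0)$. Let $\calL$ denote a line bundle on $\calX$ restricting to $\OO_{\calX_0}(C_0)$ on the special fibre; then $[\nu]$ defines a point of $\overline{\mathcal{M}}_g(\calX/S,\calL)$. The first step is to invoke Corollary \ref{cor:def}, using the expected-dimension hypothesis inherited from Theorem \ref{thm: regeneration}: every irreducible component $M'\subset\overline{\mathcal{M}}_g(\calX/S,\calL)$ through $[\nu]$ has dimension exactly $g+1$ and surjects onto $S$. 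Fix such a component $M'$.

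Next, I would consider the open subscheme $U\subset M'$ parametrising stable maps with smooth connected (hence irreducible) source that are generically an embedding, using the substack $\overline{\mathbf{M}}_g^0$ recalled after Proposition \ref{prop:defexpdim}. Since $\widehat{C_0}$ is smooth and $\nu$ is an embedding away from the preimage of the singular locus of $C_0$, one has $[\nu]\in U$. Because $M'$ is irreducible and surjects onto $S$, the generic fibre $M'_\eta$ is a non-empty open of $M'$, and since $U$ is a non-empty open of the irreducible $M'$ it is dense, hence $U\cap M'_\eta \neq \emptyset$. A standard argument (dominating the local ring $\OO_{M',[\nu]}$ by a DVR $R$ whose generic point lands in $U\cap M'_\eta$) then produces, after a finite extension of the base DVR, a morphism $\Spec R\to M'$ sending the closed point to $[\nu]$ and the generic point into $U$. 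This corresponds to a family $\Phi:\widehat{\mathcal{D}}\to\calX_R$ of stable maps with $\Phi_0=\nu$ and $[\Phi_\eta]\in U$.

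Setting $D_\eta:=\Phi_\eta(\widehat{\mathcal{D}}_\eta)$ with its reduced structure, the facts that $\widehat{\mathcal{D}}_\eta$ is smooth irreducible of arithmetic genus $g$ and that $\Phi_\eta$ is birational onto $D_\eta$ (being generically an embedding on an irreducible source) yield that $D_\eta$ is geometrically integral of geometric genus $g$ with $\OO_{\calX_\eta}(D_\eta)=\calL_\eta$. For the specialisation statement of part (2), the scheme-theoretic closure of $D_\eta$ in $\calX_R$ is a flat effective divisor in class $\calL$ whose special fibre is the pushforward cycle $\nu_\ast\widehat{C_0}=C_0$; hence the specialisation of $D_\eta$ equals $C_0$ and $\OO_{\calX_\eta}(D_\eta)$ specialises to $\OO_{\calX_0}(C_0)$. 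The main obstacle in this plan is ensuring the generic image is integral of geometric genus exactly $g$ rather than a divisor in $|\calL_\eta|$ of strictly smaller geometric genus (for instance from a stable map with reducible source or from a non-birational map onto the image); this is resolved by working inside the open locus $U$, which contains $[\nu]$ and whose intersection with $M'_\eta$ is therefore automatically non-empty by irreducibility of $M'$.
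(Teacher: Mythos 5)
Your proposal is correct and follows essentially the same route as the paper: view the normalisation of $C_0$ as a stable map in the class of the extended line bundle $\calL$, apply Corollary \ref{cor:def} to deform it over $S$, and take the image of the generic member. The only cosmetic difference is that you control integrality and the geometric genus of the generic image via openness of the generically-an-embedding locus with smooth source, whereas the paper deduces irreducibility of the lift directly from irreducibility of the image of $h$ (a cycle-specialisation argument); both arguments also rely, as the paper does implicitly, on the expected-dimension hypothesis from Theorem \ref{thm: regeneration} in order to invoke Corollary \ref{cor:def}.
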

\begin{proof}
If the class of $C_0$ in $\Pic(\calX_0)$ extends to $\Pic(\calX_\eta)$, then as $\Pic(\calX)\cong \Pic(\calX_\eta)$,
there exists a relative line bundle $\calL$ on the family $f:\calX\to\Spec S$ such that the restriction $\calL_0$
of $\calL$ to $\calX_0$ is isomorphic to $\OO_{\calX_0}(C_0)$. Let $\widetilde{C}_0\to C_0$ be the normalisation of
$C_0$. Composing with the embedding $C_0\to\calX_0$, we obtain a stable map $h:\widetilde{C}_0\to\calX_0$ of genus
$g$.

By Corollary \ref{cor:def}, the moduli space of stable maps $\overline{\mathcal{M}}_g(\calX,\calL)\to\Spec S$ (resp.,
$\overline{\mathcal{M}}_g(\calX_0,\calL_0)\to\Spec k$) is of dimension $g+1$ (resp., $g$) for every component through
$[h]$.  This implies that we can extend $h$ as a stable map of genus $g$ from $\calX_0$ to $\calX_\eta$.  Since the
image of $h$ is irreducible, so is every lift to $X_\eta$. In particular, there exists a geometrically integral curve
$D_\eta$ on $\calX_\eta$, whose specialisation to $\calX_0$ is equal to $C_0$. By construction,
$\OO_{\calX_\eta}(D_\eta)$ is isomorphic to the restriction of $\calL$ to the generic fibre $\calX_\eta$ and thus, the
class of $D_\eta$ in $\Pic(\calX_\eta)$ specialises to the class of $C_0$ in $\Pic(\calX_0)$.
\end{proof}

We will now need the following reduction step.

\begin{Lemma}
  \label{lem: a reduction}
   In order to prove Theorem \ref{thm: regeneration}, we may choose a primitive ample line bundle $\calH_\eta$ on $\calX_\eta$
   whose specialisation $\calH_0$ to $\calX_0$ is big and nef.
\end{Lemma}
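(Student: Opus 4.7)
\begin{specialproof}
The plan is to replace the original family by one in which the desired polarisation structure is built-in, and then argue that this substitution is harmless for proving the theorem. First I would pick any ample line bundle $H_0$ on $\calX_0$; in particular $H_0$ is big and nef. Applying Construction \ref{construction:defspace} to the pair $(\calX_0, H_0)$ gives, after finite extension and algebraisation in the category of algebraic spaces, a family $\calZ\to T(H_0)$ of K3 surfaces equipped with a relatively big and nef line bundle $\calH_\calZ$ extending $H_0$. Openness of ampleness on the base ensures that $\calH_{\calZ,\eta}$ is ample on the geometric generic fibre. Restricting to a 1-parameter slice of $T(H_0)$ through the origin produces a family $\calY\to S'$ over a DVR with $\calY_0 \cong \calX_0$ equipped with an ample line bundle on $\calY_\eta$ specialising to the big and nef $H_0$ on $\calX_0$, which is exactly the polarisation structure we seek.

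Next I would construct a 2-parameter family $\widetilde{\calX}\to\widetilde{T}$ over a 2-dimensional regular base containing both $\calX\to S$ and $\calY\to S'$ as 1-parameter slices through the closed point. Formally this amounts to picking a 2-dimensional regular formal subscheme of $\operatorname{Def}(\calX_0/A)$ containing the arcs corresponding to $S$ and $S'$; algebraisation, after a possible finite base change, proceeds as in Construction \ref{construction:defspace}, using the presence of the big and nef class on the $\calY$-slice.

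To conclude the reduction I would argue that a regeneration $D_\eta\subset \calY_\eta$ of $C_0$ produced in the $\calY$-family yields a regeneration on $\calX_\eta$. The associated stable map determines a point of the moduli space $\overline{\mathcal{M}}_g(\widetilde{\calX}/\widetilde{T}, \beta)$, where $\beta$ is the induced class in the local system $R^{2}\widetilde{f}_\ast\ZZ_\ell$. Since $C_0$ deforms in the expected dimension on $\calX_0 \cong \calY_0$, Corollary \ref{cor:def} forces this point to lie in a component of dimension $g+2$ which dominates $\widetilde{T}$. Restricting to the arc $S \subset \widetilde{T}$ produces a geometrically integral curve on $\calX_\eta$ of geometric genus $g$ specialising to $C_0$ together with a union of rational curves, i.e.\ the desired regeneration.

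The main obstacle is the algebraisation of the 2-dimensional family in the absence of a global polarisation, since $H_0$ deforms only along the Cartier divisor $\operatorname{Def}(\calX_0/A, H_0) \subset \operatorname{Def}(\calX_0/A)$; the existence of the big and nef class on the $\calY$-slice, together with the flexibility of working with algebraic spaces, is what permits the construction to go through.
\end{specialproof}
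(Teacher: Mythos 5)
Your strategy is fundamentally different from the paper's and, as written, it has a fatal gap in the transfer step. The paper keeps the original generic fibre $\calX_\eta$ and its ample bundle $\calH_\eta$ fixed and instead modifies the \emph{special} fibre: if $\calH_0.C_0<0$ then $C_0$ is a $(-2)$-curve and Theorem \ref{thm: bogomolov mumford} applied to $|\calH_\eta|$ already produces a regeneration directly; if $\calH_0.C_0\ge 0$ one flops (Koll\'ar, Liedtke--Matsumoto) the finitely many $(-2)$-curves on which $\calH_0$ is negative over the Henselisation, obtaining a new model $\calX^{\rm h+}$ of the \emph{same} generic fibre on which $\calH_0$ becomes big and nef, and transports the regeneration back through the flop, which is an isomorphism away from those curves and hence does not touch $C_0$. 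You instead replace the whole family by a different one $\calY\to S'$ sharing only the special fibre, and then must move a regeneration from $\calY_\eta$ to $\calX_\eta$.

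That transfer is where the argument breaks. The class $\beta$ of the regeneration $D_\eta\subset\calY_\eta$ is in general \emph{not} induced by a line bundle on the two-parameter family $\widetilde{\calX}\to\widetilde{T}$: it stays algebraic only on the Noether--Lefschetz divisor of $\widetilde{T}$ through the arc $S'$, and there is no reason for that divisor to contain the arc $S$. Corollary \ref{cor:def} (and Theorem \ref{theo:deffam} behind it) is stated for a line bundle $\calH$ on the total space; applied to a mere cohomology class that is algebraic only over a divisor $T_\beta\subset\widetilde{T}$, the relevant component of the moduli space of stable maps is supported over $T_\beta$, has dimension at least $g+\dim T_\beta=g+1$ rather than $g+2$, and does not dominate $\widetilde{T}$ --- so it never reaches the fibre $\calX_\eta$. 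This is precisely the obstruction that the actual proof of Theorem \ref{thm: regeneration} circumvents by attaching a rational curve so that the \emph{total} class becomes a multiple $n\calH$ of a polarisation extending over the whole base; you cannot import that conclusion here without essentially redoing that proof, which defeats the purpose of a reduction lemma. The algebraisation of $\widetilde{\calX}\to\widetilde{T}$ is also genuinely problematic: in the only nontrivial case no ample class on $\calX_0$ extends along $S$ (otherwise the lemma is immediate by openness of ampleness), so no big and nef class lives on the whole two-parameter formal family and Grothendieck existence does not apply; ``flexibility of algebraic spaces'' does not supply the missing polarisation. But this is secondary to the class issue above.
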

\begin{proof}
Choose an ample line bundle $\calH_\eta$ on $\calX_\eta$ and assume that its specialisation $\calH_0$ to $\calX_0$ is
not big and nef. We distinguish two cases.

If $\calH_0.C_0<0$, then $C_0$ is necessarily a smooth rational curve on $\calX_0$, i.e., a $(-2)$-curve. After replacing
$S$ by some finite extension, Theorem \ref{thm: bogomolov mumford} gives an effective divisor $\sum_i n_iD_i \in
|\calH_\eta|$ so that every $D_i$ is a geometrically integral rational curve. Let $D_i'\subset\mathcal{X}_0$ be the specialisation
of $D_i$. Since $\calH_0.C_0<0$, there exists some $i$, say $i=1$, so that $D_1'.C_0<0$, which means that $D_1'$ is the union
of rational curves one of whose components is $C_0$. This implies that $D_1$ is a regeneration of $C_0$ and we are done.

Assume now that $\calH_0.C_0\geq0$. Let $S^{\rm h}\to S$ be the Henselisation of $S$ and let $\calX^{\rm
h}:=\calX\times_SS^{\rm h}\to S^{\rm h}$ be the base change. Then, from \cite{kollarflops} over the complex numbers and
\cite[Proposition 4.5]{liedtkematsumoto} for positive and mixed characteristic, there exists a birational map of smooth
and proper algebraic spaces over $S^{\rm h}$
\[
   \begin{tikzcd}\varphi : \calX^{\rm h} \ar[dashed]{r} & \calX^{\rm h+},\end{tikzcd}
\]
such that the specialisation of $\calH_\eta$ to the special fibre $\calX^{\rm h+}_0$ is big and nef. More precisely,
$\varphi$ is a finite sequence of flops in $(-2)$-curves, that is, there exists a union of finitely many $(-2)$-curves
$Z\subset\calX_0^{\rm h}$ such that the restriction of $\varphi$ to $\calX^{\rm h}\backslash Z$ is an isomorphism onto
its image. Moreover, the irreducible components of $Z$ are precisely the curves that intersect $\calH_0$ negatively.
As $\calH_0.C_0\geq0$, the curve $C_0^{\rm h}$ is not contained in $Z$, so by assumption, Theorem \ref{thm:
regeneration} is true for $\calX^{\rm h+}$ and thus, there exists a regeneration $D_\eta^{\rm h+}$ on $\calX^{\rm
h+}_\eta$, such that $C_0^{\rm h+}:=\overline{\varphi(C_0^{\rm h}\backslash Z)}$ is contained in the specialisation to
$\calX_0^{\rm h+}$. If we set $D_\eta^{\rm h}:=\varphi_\eta^{-1}(D_\eta^{\rm h+})$, then it is a regeneration of
$C_0^{\rm h}$.  Since it is the Henselisation (rather than a completion), the extension $S^{\rm h}\to S$ is algebraic.
Therefore, the curve $D_\eta^{\rm h}$ is already defined after a finite field extension $L/\kappa(\eta)$. If $S'$
denotes the integral closure of $S$ in $L$ and $\calX':=\calX\times_SS'$, then the curve $C_0\subset\calX_0$ admits a
regeneration on $\calX'$. This establishes Theorem \ref{thm: regeneration} in this case.
\end{proof}

\begin{specialproof}[Proof of Theorem \ref{thm: regeneration}]
From Lemma \ref{lem: a reduction} we may assume that there is a line bundle $\calH$ on $\calX$ which is ample and
primitive on $\calX_\eta$, say of degree $\calH_\eta^2=2d$, and big and nef on $\calX_0$. Also, from Lemma \ref{lem:
assertion two}, we may assume that $\OO(C_0)$ does not lift to $\Pic(\calX_\eta)$.

Like in Construction \ref{construction:defspace}, let $K=\overline{k(0)}$ and $A$ be either $K$ or $W(K)$, and denote by
$\widehat{T}(\calH_0):=\rm{Def}(\calX_0/A, \calH_0)$ and $\widehat{T}(\OO(C_0)):=\rm{Def}(\calX_0/A, \OO(C_0))$ the loci in $\rm{Def}(\calX_0/A)$
where the classes $\OO(\calH_0), \OO(C_0)$ deform respectively. As they are formal Cartier divisors, flat over $A$, and
they meet non-trivially in the closed fibre (which corresponds to $\calX_0$), their intersection
$\widehat{T}(\calH_0,\OO(C_0)):=\widehat{T}(\calH_0)\cap \widehat{T}(\OO(C_0))$ must be of codimension two in $\rm{Def}(\calX_0/A)$, and from
flatness their corresponding generic fibres also intersect in codimension two over the characteristic zero generic point
of $\rm{Def}(\calX_0/A)$. Note though that $\calX_\eta$ is not a point in $\widehat{T}(\calH_0,\OO(C_0))$ as we have assumed that the
class of $C_0$ does not extend over the whole of $S$.

As $\calH_0$ is big and nef, if $\calH_0.C_0=0$, then $C_0$ necessarily is a $(-2)$-curve from the Hodge Index Theorem.
The formal families over $\widehat{T}(\calH_0), \widehat{T}(\calH_0,\OO(C_0))$ are algebraisable (using $\calH_0$ on the
special fibre) from the discussion in Construction \ref{construction:defspace}.  Hence after passing to a finite
extension we obtain a family $\mathcal{Y}_1\to T_1$ (resp., $\mathcal{Y}\to T$ for $\widehat{T}(\calH_0)$) of K3
surfaces. Denote by $\calH, \calC$ the corresponding line bundles on $\mathcal{Y}, \mathcal{Y}_1$. A general fibre of
$\mathcal{Y}_1\to T_1$ is a characteristic zero projective K3 surface $(X,H,C)$, deformation equivalent to
$(\calX_0,\calH_0,C_0)$, where $H^2>0$, $C$ is an integral curve in $X$, and $H.C=\calH_0.C_0\geq0$. From \cite[Corollary
2.1.5]{huybrechts}, $H$ will in fact be big and nef since the contrary would force the existence of a $(-2)$-curve on $X$
which would increase the Picard rank of $X$ by one and from Proposition \ref{prop: lattice polarised stack smooth} such
surfaces lie in higher codimension.

As there are no $(-2)$-curves on $X$ other than possibly $C$, by the argument at the end of the previous paragraph, we
may even choose $n$ so that $nH-C$ is also ample. From Theorem \ref{thm: appendix} (see also the paragraph directly
after the statement of Theorem \ref{thm: nodal curves}) there is an integral rational curve $R\in|nH-C|$ with unramified
normalisation for all $n$ large enough.  Then the curve $R+C\in|nH|$ is a connected curve, even though the intersection
of $C$ and $R$ may not be transverse. We now build a stable map $h:D\to X$ constructed by gluing the normalisations
$\widehat{R},\widehat{C}$ of $R$ and $C$ at one point, and whose image is $C\cup R\in|nH|$. Note that $h$ specialises to
a stable curve $h_0$ mapping to $\calX_0$ whose image is $C_0$ and a union $R_0$ of rational curves, possibly with
multiplicities. Observe also that as $R$ is a rational curve in a K3 surface in characteristic zero, deformations of $h$
in $\overline{\mathcal{M}}_g(X,nH)$ either come from deformations of $\widehat{C}\to X$, or will necessarily smooth the
unique node of $D$. In either case, the expected dimension will be $g=g(\widehat{C})$ from Proposition
\ref{prop:defexpdim}. Hence from Corollary \ref{cor:def}, any irreducible component of
$\overline{\mathcal{M}}_g(\mathcal{Y}/T,n\calH)$ containing $h$ has dimension exactly $\dim T+g$ and surjects onto $T$.

To simplify the setup slightly, we may put (like in the proof of Theorem \ref{thm: elliptic curve non-primitive
class}) $\calX\to S$ and $(X,H,C)$ in a family $\mathcal{Y}\to U$ as follows:
\begin{itemize}
    \item $T\subset U$ the spectrum of a two-dimensional normal local ring,
    \item $\mathcal{Y}\to U$ is a family of K3 surfaces, 
    \item $\calX_\eta, \calX_0$ and $X$ are fibres of $\mathcal{Y}\to U$,
    \item the geometric generic fibre $\mathcal{Y}_{\overline\xi}$ has Picard rank 1 generated by $\calH$.
\end{itemize}
From the argument above, any irreducible component $M\subset \overline{\mathcal{M}}_g(\mathcal{Y}/U,n\calH)$ that also
contains the stable morphism $h$ above the point $X\in U$ surjects onto $U$. We would like to conclude by taking a
general point $(Y, h_Y)$ of $M$, where $Y$ is a general fibre of $\mathcal{Y}\to U$ and $h_Y:D_Y\to Y$ is a general
deformation of $h$ contained in $M$, and specialising it to a stable morphism $h_\eta: D_\eta\to \calX_\eta$. Indeed,
some irreducible component of the image of $h_\eta$ is now a candidate for a regeneration of $C_0$ as $h_\eta$
specialises to a stable map to $\calX_0$, but it is not clear that this is $h_0$ from above, nor that its
image even contains $C_0$. In fact, by taking the Stein factorisation of $M\to U$ and restricting to the component which
contains $(\calX_0, h_0)$ we may assume that the fibres of $M\to U$ are connected, so we have only proved that we can
choose $h_\eta$ whose specialisation to $\calX_0$ is deformation equivalent to $h_0$ from above, i.e., $h_\eta$ deforms
to a genus $g$ stable morphism to $\calX_0$ whose image is linearly equivalent to $C_0+R_0$, since the fibre in $M$
above $\calX_0$ could be positive dimensional (see Remark \ref{rem:pointsissue} and the analogous argument at the end of
the proof of \cite[Theorem 4.3]{liliedtke}).

Since $\calX_0$ is not uniruled, rational curves in $\calX_0$ are rigid so we can resolve this issue by fixing
general points on $C_0$ as follows. Let $g$ be the geometric genus of $C_0$ and choose $g$ general points $p_1, \ldots,
p_g\in C_0$. As the points are general and $C_0$ deforms in the expected dimension, $C_0$ is the only geometric genus
$g$ deformation of $C_0$ which passes through those points.  Since $\mathcal{Y}\to U$ is smooth, after an \'etale base
change we can spread the points $p_i$ now to sections $\sigma_1,\ldots, \sigma_g$ of $\mathcal{Y}\to U$ which also meet
the fibres $X, Y, \calX_\eta$ of $\mathcal{Y}\to U$ at $g$ general points.  Consider the moduli space with base
points (see \cite[Theorem 50]{ak}) \[\overline{\mathcal{M}}_g(\mathcal{Y}/U,n\calH, \sigma_1,\ldots,\sigma_g)\] whose
points parametrise stable genus $g$ curves mapping to fibres of $\mathcal{Y}\to U$ whose images meet all the $\sigma_i$.
Since deformations of $C_0$ deform in the expected dimension on all fibres of $\mathcal{Y}\to U$, there is an
irreducible component $M'$ of this space containing the point $h_0$, and by taking the Stein factorisation of $M'$ over
$U$ we may even assume that $M'\to U$ has connected fibres. Note now that the fibre above the point $\calX_0$ might
still be positive dimensional, e.g., if the specialisation $R_0$ of $R$ to $\calX_0$ contains multiple components, but
the images of all these stable morphisms must contain $R_0$ since $R_0$ does not deform in $\calX_0$ as it is not
uniruled. As we have fixed points, the images also contain $C_0$ by construction. In particular, some irreducible
component of the image of a stable morphism $h_\eta:D_\eta\to \calX_\eta$ in the fibre of $M'\to U$ over $\calX_\eta$
gives a regeneration of $C_0$.
\end{specialproof}

\begin{Remark}
As a consequence of the proof, we can even bound the degree of a regeneration $D_\eta$ of $C_0$. To this aim, let
$a=\calH_0^2$, $c=C^2_0$ and $b=\calH_0.C_0$, and let $(X,H,C)$ be a general deformation of $(\calX_0,\calH_0,C_0)$. Then $nH-C$
will be ample as soon as $(nH-C)^2>0$. From the Hodge Index Theorem, $b^2-ac>0$ so for \[n_0
>\frac{b+\sqrt{b^2-ac}}{a}\] $n_0H-C$ will be ample. Then Theorem \ref{thm: appendix} will produce rational
curves in $|(n_0+2)H-C|$. The regeneration will then be an irreducible component of a section of $(n_0+2)\calH_\eta$.
\end{Remark}

\begin{Remark}\label{rem:pointsissue}
At the end of the above proof, a technical argument was required so as to guarantee that it is indeed the curve $C_0$
that we are regenerating, as opposed to some deformation of $C_0$ plus a sum of rational curves. We give here an example
justifying the extra argument. Let $f:\calX_0\to\PP^1$ be a supersingular K3 surface with quasielliptic fibration whose
central fibre is the union of three $\PP^1$'s meeting at one point, and whose generic fibre is a cuspidal arithmetic
genus 1 curve. If $C_0=\PP^1$ is an irreducible component of the central fibre then it is rigid, yet it could be that
the nodal rational curve $R$ of the proof specialises in $\calX_0$ to the other two irreducible components of the
central fibre, so that the sum with $C_0$ moves in $\calX_0$. In this case, the fibre in $M$ above $\calX_0$ is
1-dimensional. Although in this case $\calH_0^2=0$ so that $\calH_0$ is not big, we could not rule out the type of
behavior exhibited in this example.
\end{Remark}

\begin{Remark}\label{rem:multiple}
In the proof we did not have to assume that multiples of the classes of $C_0$ and $\calH_0$ are independent in
the Picard group. In fact, it can happen in families of supersingular K3 surfaces over a DVR
that the specialisation morphism on Picard groups has non-trivial cokernel.
\end{Remark}

\section{Applications of the regeneration technique}
\label{sec: applications}

\subsection{To the existence of rational curves}

Although we will establish in Theorem \ref{thm: curves on complex k3} that there are infinitely many rational curves on
every algebraic K3 surface over an algebraically closed field, the proof is a somewhat technical reduction to various
cases as explained in the introduction. For posterity, we note that the following application of the regeneration
technique should hopefully lead to a cleaner proof of Conjecture \ref{conj: inf many rational curves}.

\begin{Corollary}
\label{cor: one reduction}
    Let $\calX\to S$ be a family of K3 surfaces over the spectrum of a DVR with algebraically closed residue field.
    If $\calX_0$ is not uniruled and satisfies Conjecture \ref{conj: inf many rational curves}, then so does the geometric
    generic fibre $\overline{\calX_\eta}$.
\end{Corollary}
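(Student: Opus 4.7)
The plan is to apply the Regeneration Theorem \ref{thm: regeneration} to each of the infinitely many rational curves on $\calX_0$, and then argue that the resulting regenerations on $\overline{\calX_\eta}$ are themselves infinite in number.

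First I would verify that the hypotheses of Theorem \ref{thm: regeneration} are met for every rational curve $C_0\subset\calX_0$. The assumption that $\calX_0$ is not uniruled is given. The requirement that $C_0$ deforms in the expected dimension follows from Proposition \ref{prop:defexpdim}(3), since $C_0$ has geometric genus zero and $\calX_0$ is non-uniruled. Hence, for each rational curve $C_0\subset\calX_0$ we obtain, possibly after a finite extension of $S$ depending on $C_0$, a geometrically integral regeneration $D_\eta\subset\calX_\eta$, and since the geometric genus is preserved, $D_\eta$ is a rational curve. Base-changing to the geometric generic fibre, we obtain for each such $C_0$ a rational curve $\overline{D_\eta}\subset\overline{\calX_\eta}$.

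The remaining and key point, which I view as the main (though mild) obstacle, is to show that the assignment $C_0\mapsto\overline{D_\eta}$ produces infinitely many distinct rational curves on $\overline{\calX_\eta}$. By the definition of a regeneration, the specialisation of $\overline{D_\eta}$ to $\calX_0$ is the union of $C_0$ and a (possibly empty) set of rational curves; in particular it is a curve of finite type, so contains only finitely many irreducible components. Consequently, for any fixed rational curve $\overline{D_\eta}$ on $\overline{\calX_\eta}$, only finitely many rational curves $C_0\subset\calX_0$ can have $\overline{D_\eta}$ as their regeneration, namely those that appear as components of its specialisation. Since by hypothesis there are infinitely many rational curves $C_0\subset\calX_0$, the set of regenerations $\{\overline{D_\eta}\}$ must itself be infinite, which establishes the corollary.

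The only subtlety to keep in mind is that the field extensions required in Theorem \ref{thm: regeneration} may vary with $C_0$, but this causes no problem once we pass to $\overline{\calX_\eta}$, over which every such regeneration is defined. No additional control of intersection numbers or degrees is needed, since the conclusion of Conjecture \ref{conj: inf many rational curves} merely asserts infinitude of rational curves.
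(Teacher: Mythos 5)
Your proof is correct and follows essentially the same route as the paper: verify the hypotheses of Theorem \ref{thm: regeneration} via Proposition \ref{prop:defexpdim} and regenerate each rational curve, then argue the resulting collection on $\overline{\calX_\eta}$ is infinite. The only (harmless) difference is in the final bookkeeping: the paper first passes to curves with distinct Picard classes (since each linear system contains only finitely many rigid rational curves), whereas you observe directly that each regeneration can arise from only finitely many curves on $\calX_0$ because its specialisation has finitely many components --- both arguments are valid.
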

\begin{proof}
Since any linear system only contains finitely many rigid rational curves, we may
assume that the infinitely many rational curves $R_i$ are all distinct classes in the Picard group. Since $\calX_0$ is not
uniruled, each of them can
be regenerated to $\overline{\calX_\eta}$ by Theorem \ref{thm: regeneration}, which gives the result in this case.
\end{proof}

As mentioned, the above would give a quicker proof of Conjecture \ref{conj: inf many rational curves} assuming the following
question has a positive answer (cf.\ \cite{costatschinkel, charles, elsenhansjahnel, cej}).
\begin{Question}
    Let $X$ be a K3 surface defined over $\overline{\QQ}$. Does there exist a prime $\idealp$ so that the
    reduction is smooth, satisfies $\rho(X_\idealp)\geq5$, and is not supersingular?
\end{Question}

\begin{Example}
\label{ex: CM}
    Let $X$ be a K3 surface with complex multiplication. Then $X$ is defined over $\overline{\QQ}$ and there exists a
    prime ideal $\idealp$ such that $X$ has good and supersingular reduction at $\idealp$ by \cite{Ito}. We refer to
    \cite{charles} for further discussion. Similarly, in \cite{tayouetal} it is proved that a K3 over a number field all
    of whose reductions are smooth has infinitely many reductions where the Picard rank jumps.
\end{Example}

In particular, we obtain further reduction steps, the first of which was already shown in \cite[Theorem 3]{bht}.

\begin{Corollary}
 \label{cor: reduction}
 There are the following reductions.
 \begin{enumerate}
  \item In order to prove Conjecture \ref{conj: inf many rational curves}
    for K3 surfaces in characteristic zero, it suffices to prove it for K3 surfaces that can be defined over number fields.
  \item In order to prove Conjecture \ref{conj: inf many rational curves} in general,
   it suffices to prove it for K3 surfaces that can be defined over finite fields.
 \end{enumerate}
\end{Corollary}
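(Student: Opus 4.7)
The plan is to prove both parts by iterating Corollary \ref{cor: one reduction} along chains of specialisations, exploiting that non-uniruledness of the special fibre --- the only non-trivial hypothesis of that corollary --- is automatic in characteristic zero and generic in positive characteristic.

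For \textbf{Part (1)}, let $X$ be a K3 surface over an algebraically closed field $k$ of characteristic zero. Then $X$ descends to a K3 surface $X_0$ over some finitely generated subfield $K\subseteq k$, and since integral rational curves remain such under base change between algebraically closed fields, it suffices to prove the conjecture for $X_0\otimes_K \overline{K}$. I would proceed by induction on $n = \operatorname{trdeg}(K/\mathbb{Q})$: the base case $n=0$ is exactly the assumption. For $n\ge 1$, I would spread $X_0$ to a smooth projective family over a smooth integral $\mathbb{Q}$-variety $U$ with function field $K$, choose a smooth prime divisor $D\subset U$, let $R$ be the strict Henselisation of $\OO_{U,\eta_D}$ (a DVR with algebraically closed residue field $\overline{k(D)}$), and pull the family back to $\Spec R$. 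The resulting family of K3 surfaces has special fibre a K3 in characteristic zero of transcendence degree $n-1$ over $\mathbb{Q}$, hence not uniruled, and satisfying the conjecture by induction; Corollary \ref{cor: one reduction} then transports the conclusion to the geometric generic fibre, which contains $X_0 \otimes_K \overline{K}$.

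For \textbf{Part (2)}, in positive characteristic $p$ the same induction works with $\mathbb{Q}$ replaced by $\mathbb{F}_p$, the base case being a K3 over $\overline{\mathbb{F}_p}$, equivalently over a finite field. To apply Corollary \ref{cor: one reduction} I would choose the divisor $D\subset U$ so that its generic point avoids the uniruled locus of the moduli map; since uniruled K3 surfaces are supersingular and form a very special locus, this is possible unless $X$ itself is uniruled, in which case Proposition \ref{prop: supersingular OK} already establishes the conjecture. To bridge characteristic zero to positive characteristic, I would first apply Part (1) to reduce to a K3 $X$ over a number field $L$, then spread $X$ over an open subset of $\Spec \OO_L$ and specialise at a prime $\mathfrak{p}$ of good non-uniruled reduction, applying Corollary \ref{cor: one reduction} one last time to the corresponding strictly Henselian DVR of mixed characteristic.

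The hard part will be the last step, namely guaranteeing the existence of a prime of good, non-uniruled reduction for an arbitrary K3 surface over a number field. Generically such primes are abundant, but a K3 surface with complex multiplication can have supersingular reduction at every prime; in this exceptional case one either appeals to the direct treatment of CM K3 surfaces in \cite{charles}, or applies Proposition \ref{prop: supersingular OK} on the (supersingular) reduction together with a refinement of the regeneration argument that tolerates uniruled special fibres.
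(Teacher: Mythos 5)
Your argument is essentially the one the paper intends: Corollary \ref{cor: reduction} is stated as a direct consequence of Corollary \ref{cor: one reduction}, obtained by iterated specialisation along a transcendence basis, and your induction on transcendence degree --- with non-uniruledness of the special fibre automatic in characteristic zero and arranged in positive characteristic by choosing the divisor $D$ so that its generic point avoids the (proper, closed) supersingular locus, the supersingular case being covered by Proposition \ref{prop: supersingular OK} --- fills in the details correctly.

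The one place you go astray is the final ``hard part''. A K3 surface over a number field cannot have supersingular reduction at every prime: by Bogomolov--Zarhin \cite{BZ}, which the paper invokes in the proof of Proposition \ref{prop: elliptic curve K3 char p reduction}, the set of primes of good, non-supersingular (indeed ordinary) reduction has density one, so a prime of good non-uniruled reduction always exists and the mixed-characteristic application of Corollary \ref{cor: one reduction} goes through without exception. The result of Ito quoted in Example \ref{ex: CM} says only that a CM K3 surface has \emph{some} prime of supersingular reduction, not that all primes are supersingular. Your second fallback --- ``a refinement of the regeneration argument that tolerates uniruled special fibres'' --- is not available: Example \ref{ex:supersingular} shows that regeneration genuinely fails from a unirational special fibre (it carries uncountably many rational curves while the geometric generic fibre has only countably many), and the proof of Theorem \ref{thm: regeneration} uses non-uniruledness essentially, not merely the expected-dimension hypothesis. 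Drop the caveats, cite \cite{BZ}, and the rest of your proof stands.
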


As another application of regeneration, we obtain a cleaner proof of the main result of \cite{liliedtke}.

\begin{Theorem}\label{thm: ll}
    Let $X$ be a K3 surface of odd Picard rank over an algebraically closed field. Then $X$ contains infinitely many
    rational curves.
\end{Theorem}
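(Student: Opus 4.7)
The plan is to apply the regeneration theorem (Theorem \ref{thm: regeneration}) to families whose special fibres carry rigidifier $(-2)$-curves, following the original strategy of Li and the third named author. Write $\Lambda = \Pic(X)$ of odd rank $\rho$, and let $T(X) = \Lambda^\perp \subset \HH^2(X,\ZZ)$ denote the transcendental lattice, of odd rank $22-\rho$ and indefinite signature $(2, 20-\rho)$.

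First, using the odd-rank assumption and lattice theory---Meyer's theorem supplies classes of self-intersection $-2$ in $T(X)$ when $\rank T(X) \ge 5$, i.e.\ $\rho \le 17$, with a tailored analysis for $\rho = 19$---I produce infinitely many primitive classes $v_i \in T(X)$ with $v_i^2 = -2$ such that $\Lambda \oplus \langle v_i \rangle$ embeds primitively into the K3 lattice, together with infinitely many primitive ample classes $H_i \in \Lambda$, chosen so that their rays in $\Pic(X)\otimes\RR$ fill out the ample cone. For each pair $(v_i, H_i)$, Deligne's theorem applied as in Construction \ref{construction:defspace} yields, after algebraisation along a formal curve in the relevant Noether--Lefschetz locus of $\operatorname{Def}(X)$, a family $\calX_i \to S_i$ over a DVR whose special fibre $Y_i$ carries a $(-2)$-curve $R_i \in |v_i|$ with $\Pic(Y_i) \supseteq \Lambda \oplus \langle v_i\rangle$, and whose geometric generic fibre $\overline{\calX_{i,\eta}}$ lies in the same moduli component $M_\Lambda$ as $X$.

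Next, Theorem \ref{thm: regeneration} applied to the rigid $(-2)$-curve $R_i$ on the non-uniruled $Y_i$ produces a geometrically integral rational regeneration $D_i \subset \overline{\calX_{i,\eta}}$. From the proof of Theorem \ref{thm: regeneration}, which builds the regeneration via Theorem \ref{thm: nodal curves} applied to the rank-$2$ sublattice $\langle H_i, v_i\rangle$ of $\Pic(Y_i)$, the class $[D_i] \in \Lambda$ is a positive multiple of $H_i$. I then invoke Corollary \ref{cor:def} applied to the universal family over the connected component of $M_\Lambda$ containing $[X]$ and all $[\overline{\calX_{i,\eta}}]$: the irreducible component of the space of stable rational maps in class $[D_i]$ containing $D_i$ surjects onto this component, yielding a stable rational map on $X$ in each class $[D_i]$. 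Since $H_i$ ranges over infinitely many rays of the ample cone of $\Lambda$, which is round of signature $(1,\rho-1)$ and not contained in any finitely generated polyhedral subcone, the classes $[D_i]$ cannot all be non-negative integer combinations of the classes of a fixed finite collection of integral rational curves on $X$; consequently $X$ must contain infinitely many such curves.

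The main obstacle will be the lattice-theoretic step of producing the compatible pairs $(v_i, H_i)$ with the primitive-embedding condition, particularly in the boundary case $\rho = 19$ where Meyer's theorem does not apply and a direct analysis of indefinite rank-3 lattices is required. In positive characteristic, the supersingular case needs to be handled separately via Proposition \ref{prop: supersingular OK}, and one must ensure that the special fibres $Y_i$ are not uniruled.
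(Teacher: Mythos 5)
Your construction runs the degeneration in the wrong direction, and this creates a genuine gap at the final step. After regenerating $R_i$ you obtain an integral rational curve $D_i$ on the \emph{geometric generic fibre} $\overline{\calX_{i,\eta}}$, i.e.\ on a general member of $M_\Lambda$ --- not on $X$. To move it to $X$ you invoke Corollary \ref{cor:def}, but that corollary only produces a stable genus-$0$ map to $X$ in the class $[D_i]$, whose image is a connected union of rational curves that may well be reducible; this is exactly the difficulty the paper isolates (``infinitely many rational curves on a very general K3 surface could all specialise to finitely many on one of Picard rank $2$'') and which requires the marked point trick, with its primitivity and generality hypotheses, to overcome. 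Your concluding cone argument does not repair this: in the central case $\rho=1$ the ample cone is a single ray, every $[D_i]$ is a multiple of the generator, and a single integral rational curve $C_1$ on $X$ already makes all sufficiently divisible classes non-negative combinations of $[C_1]$, so no contradiction arises. Even for $\rho\ge 3$ the claim that $[D_i]$ is a positive multiple of $H_i$ is unjustified (the regeneration is merely an irreducible component of a member of $|nH_i|$, and $\Pic(\overline{\calX_{i,\eta}})=\Lambda$ has rank $\rho>1$), and the existence of the required $(-2)$-classes $v_i$ in $T(X)$ with primitive embeddings is asserted rather than proved. Note also that since $v_i\in\Lambda^\perp$ one has $H_i\cdot R_i=0$, so $H_i$ is not ample on $Y_i$; this is survivable via Lemma \ref{lem: a reduction} but signals that the setup is not doing what you intend.

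The paper's proof arranges matters so that $X$ itself is the \emph{generic} fibre, which is what makes regeneration decisive: one reduces to $X$ defined over a global field (it cannot live over $\overline{\FF}_p$ by the Tate conjecture, since its Picard rank is odd), observes that at every place of good reduction the geometric Picard rank jumps to an even number, uses \cite[Theorem 4.1]{liliedtke} to get infinitely many non-supersingular reductions, applies Theorem \ref{thm: bogomolov mumford} to produce on each such reduction a rational curve whose class is \emph{not} a specialisation from $\Pic(X)$, and then regenerates these curves directly onto $X$ via Theorem \ref{thm: regeneration}. Integrality of the resulting curves on $X$ is built into the definition of regeneration, and their distinctness follows from the Hilbert scheme argument because their specialisations contain the characteristic-$p$ curves, which have unbounded degree. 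If you want to keep an equal-characteristic argument that degenerates curves from a general member of $M_\Lambda$ down to $X$, you must supply the integrality of the limit, which is the content of Theorem \ref{thm: curves on complex k3 picard rank r} and is considerably harder than Corollary \ref{cor:def}.
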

\begin{proof}
    By the Tate Conjecture, such an $X$ cannot be defined over a finite field as such a K3 has even Picard rank. By
    successive specialisations, we may assume that $X$ is defined over a global field $K$. Let $\OO_K$ be the ring of
    integers in $K$. Hence for any place $\idealp$ of good reduction, the geometric Picard rank of $X_{\idealp}$ jumps.
    From \cite[Theorem 4.1]{liliedtke} we may even assume that there are infinitely many specialisations which are not
    supersingular. For these, Theorem \ref{thm: bogomolov mumford} implies that there exists a rational curve whose class is
    not a specialisation of a class from $X$. Now let $\calX\to S\subset\Spec\OO_K$ be the smooth family all of whose
    fibres are not supersingular. For each $\idealp\in S$, Theorem \ref{thm: regeneration} gives a regeneration of this
    curve. We must now argue that these curves are distinct in $X_{\bar{K}}$, following a Hilbert scheme argument
    taken from \cite[Proof of Proposition 4.2]{liliedtke}. 

    If the degree of these rational curves with respect to some relatively ample divisor on $\calX$ is bounded by $N$,
    then the finite type Hilbert scheme $\Hom^{\leq N}(\PP^1, \calX)$ has a $\overline{\OO_K/\idealp}$-point for infinitely many
    $\idealp\in S$, hence by Chevalley's theorem also one over the generic point, meaning there is a rational curve in
    $X_{\bar{K}}$ which specialises to the aforementioned rational curves for those $\idealp\in $S. This gives the
    required contradiction since their classes were assumed to not be in the image of $\Pic(X_{\bar{K}})$ and hence the
    degrees are unbounded. We thus obtain infinitely many rational curves in $X$ which are distinct.
\end{proof}

\subsection{Generalisations}

In this section, we show that the assumptions of Theorem \ref{thm: regeneration} cannot be relaxed in various
situations. In the following examples, we show that the regeneration theorem does not always hold if the special fibre
is a uniruled K3 surface and the curve we want to regenerate is a rational curve that moves, and that it also does not hold for
families of non-algebraic K3 surfaces nor for Abelian surfaces.

\begin{Example}[Characteristic $p$]
\label{ex:supersingular}
 Let ${\calX}\to S$ be a family of K3 surfaces over the spectrum of a  DVR
 with algebraically closed residue field $k$ such that
\begin{enumerate}
    \item $\calX_0$ is unirational,
    \item the geometric generic fibre $\overline{\calX_\eta}$ is not unirational, and
    \item $k$ is uncountable.
\end{enumerate}
Since $\calX_0$ is unirational, it contains moving families of rational
curves.  In particular, since $k$ is uncountable, $\calX_0$ contains uncountably many rational curves.  On
the other hand, since the geometric generic fibre $\overline{\calX_\eta}$ is not
unirational, every linear system contains only a finite number of rational curves by a usual Hilbert scheme
argument.  Hence $\overline{\calX_\eta}$ contains only countably many rational curves.  In particular, there do exist
rational curves on $\calX_0$ that are neither specialisations nor components of specialisations of rational
curves on $\overline{\calX_\eta}$.

Such families do exist: explicit examples of a unirational K3 surface $X$
in characteristic $p\geq3$ can be found in \cite{ShiodaExample, ShiodaKummer}.
After possibly replacing $k$ by an uncountable and algebraically closed extension,
we choose a smooth projective family $\calX\to S$ of K3 surfaces,
where $S$ is a finite extension of $W(k)$, with
special fibre isomorphic to $X$, which exists by \cite{deligne}.
Clearly, the geometric generic fibre is not unirational as it is a K3 surface in characteristic zero.
\end{Example}

\begin{Example}[Non-algebraic K3 surfaces]
 Let $\Delta:=\{z\in\CC: |z|<1\}$ be the open disk
 and let $f:\calX\to\Delta$ be a complex analytic family of
 K3 surfaces.
 Assume that
 \begin{enumerate}
  \item $\calX_0:=f^{-1}(0)$ is algebraic and satisfies Conjecture \ref{conj: inf many rational curves}, and
  \item the algebraic dimension (transcendence degree of
  the field of meromorphic functions)
  of the general member of this family is zero.
 \end{enumerate}
  Then, $\calX_0$ contains infinitely many smooth curves,
 whereas the general member of $f$ contains
 only finitely many curves (see, e.g., \cite[Theorem IV.8.2]{BHPV}).
 In particular, regeneration fails for all but finitely many curves
 on $\calX_0$.

  Such families do exist:
  we will see in Theorem \ref{thm: curves on complex k3}
 that every algebraic K3 surface satisfies Conjecture \ref{conj: inf many rational curves}, but for the time
 being, we may take one of the surfaces from Theorem \ref{thm: bt tayou}
 or Theorem \ref{thm: ll}.
 Moreover, we note that a very general deformation of any
 algebraic K3 surface inside its Kuranishi space
 satisfies the second assumption.
\end{Example}

\begin{Example}(Abelian surfaces)
    Let ${\calX}\to S$ be a family of Abelian surfaces over the spectrum of a DVR with
    algebraically closed residue field $k$, such that
    \begin{enumerate}
    \item $\calX_0$ is the product of two elliptic curves $E_1\times E_2$ and
    \item the geometric generic fibre $\overline{\calX_\eta}$ is an Abelian surface
      that is not isogenous to the product of two elliptic curves.
    \end{enumerate}
    Let $h:E_1\to {\cal X}_0$ be the immersion corresponding to $E_1\times\{O\}\subset\calX_0$ and
    $O\in E_2$.
    Then, there exists no
    regeneration of $E_1$ to $\overline{\calX_\eta}$: a regeneration would be a curve $D\subset\overline{\calX_\eta}$,
    whose normalisation $\widehat{D}\to D$ is a smooth curve of genus one.
    But then, we obtain a morphism of Abelian
    varieties from the elliptic curve $\widehat{D}$ to $\overline{\calX_\eta}$, which implies that
    $\overline{\calX_\eta}$ is isogenous to the product of two elliptic curves, contrary to our assumptions.

    Such families do exist: a very general deformation of the product of two elliptic curves inside
    the moduli space $\mathcal{A}_2$
    of principally polarised Abelian surfaces has this property as a very general surface in $\mathcal{A}_2$ is the Jacobian
    of some genus two curve and the Jacobian of a genus two curve $C$ is isogenous to the product of two elliptic curves
    if and only if $C$ admits a degree two morphism onto an elliptic curve. However, such genus two curves are not very
    general.

    Note that the analogue of the first named author's result \cite{chen} concerning existence of nodal curves of admissible
    genus $g\geq2$ has recently been solved in the case of abelian surfaces in \cite{klcm, klc}, and it would be
    interesting to know if a regeneration-type theorem exists using higher genus curves instead of rational ones in this
    case.
\end{Example}

\subsection{Enriques surfaces}
Next, we show that for families of Enriques surfaces, Theorem \ref{thm: regeneration} partly holds true in genus zero,
i.e., for rational curves.

First, we want to show that part (2) of Theorem \ref{thm: regeneration} does not hold for families of Enriques surfaces.  To
state it, let us recall that an Enriques surface $X$ over an algebraically closed field is said to be {\em nodal}
(resp.,
{\em unnodal}) if $X$ contains at least one smooth rational curve (resp., does not contain smooth rational curves),
see also \cite{cdl, dk}.

\begin{Example}(Enriques surfaces)
  Let ${\calX}\to S$ be a family of Enriques surfaces, where $S$ is the spectrum of a DVR
  with algebraically closed residue field $k$ of characteristic $\neq2$, such that
\begin{enumerate}
    \item $\calX_0$ is nodal and
    \item the geometric generic fibre $\overline{\calX_\eta}$ is unnodal.
 \end{enumerate}
 If $R_0$ is a smooth rational curve on ${\calX}_0$, then the class of $R_0$ in $\Pic({\calX}_0)$ lifts to
 $\overline{{\calX}_\eta}$ since $h^2(\calX_0,\OO_{\calX_0})=0$ and by deformation theory of line bundles.
 However, $R_0$ cannot be deformed to $\overline{{\calX}_\eta}$ since it is unnodal.

Such families do exist: if $\chara(k)\neq 2$, then Enriques surfaces
form a $10$-dimensional moduli spaces over $k$ and the locus of nodal Enriques surfaces is of codimension one (see \cite{cdl, dk}).
Thus, if $Y$ is a nodal Enriques surface over $k$, then a general deformation of $Y$ over $S$ yields a family
${\calX}\to S$ as above.
\end{Example}

Second, part (1) of Theorem \ref{thm: regeneration} does hold true for families of Enriques surfaces and for rational
curves. Thus, despite the different ways rational curves behave and deform on K3 surfaces and Enriques surfaces, we
have the following analog of Theorem \ref{thm: regeneration}.
\begin{Theorem}
\label{thm: enriques}
Let ${\calX}\to S$ be a family of Enriques surfaces, where $S$ is the spectrum of a DVR with algebraically closed
residue field $k$ of characteristic $\neq2$, such that $\calX_0$ is not uniruled, and let $C_0\subset\calX_0$ be an
integral curve of geometric genus $g\leq1$. After possibly replacing $S$ by a finite extension, there exists a
geometrically integral curve $D_\eta\subset\calX_\eta$ of geometric genus $g$, such that $C_0$ is a component of the
specialisation of $D_\eta$.
\end{Theorem}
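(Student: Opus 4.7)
The strategy is to reduce to the K3 regeneration theorem (Theorem \ref{thm: regeneration}) by passing to the canonical K3 double cover of the family. Since $\rm{char}(k)\ne 2$, the relative canonical sheaf $\omega_{\calX/S}$ is a non-trivial 2-torsion line bundle: the square of its restriction to $\calX_0$ is trivial, and the obstruction to lifting this trivialisation to all of $\calX$ lies in $H^2(\calX_0,\OO_{\calX_0})=0$. The associated $\mu_2$-torsor $\pi:\calY\to\calX$ is a finite \'etale double cover, so $\calY\to S$ is a smooth proper family of algebraic spaces whose geometric fibres are K3 surfaces. Moreover, $\calY_0$ is not uniruled, since otherwise its image under the finite surjection $\pi_0$ would exhibit $\calX_0$ as uniruled.

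Let $C_0'$ be any irreducible component of the pullback $\pi_0^{-1}(C_0)$. Since $\pi_0$ is \'etale, the induced map of normalisations $\widehat{C_0'}\to \widehat{C_0}$ is an \'etale cover of degree $1$ or $2$. In the degree-two case, one would need a connected \'etale double cover of $\widehat{C_0}$; for $g(C_0)=0$ this is impossible as $\PP^1$ is simply connected, and for $g(C_0)=1$ such a cover is again elliptic. Hence $g(C_0')=g$ in all cases. Because $g(C_0')\le 1$ and $\calY_0$ is not uniruled, $C_0'$ deforms in the expected dimension by Proposition \ref{prop:defexpdim}(3), so Theorem \ref{thm: regeneration} applied to $C_0'\subset\calY_0$ yields, after a finite extension of $S$, a regeneration $D_\eta'\subset\calY_\eta$: a geometrically integral curve of geometric genus $g$ whose specialisation to $\calY_0$ is $C_0'$ together with a (possibly empty) union of rational curves.

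Now set $D_\eta:=\pi_\eta(D_\eta')_{\rm red}\subset\calX_\eta$; this is geometrically integral as the image of a geometrically integral curve under the finite morphism $\pi_\eta$. The induced morphism $D_\eta'\to D_\eta$ has degree $1$ or $2$, and in the degree-two case its restriction to normalisations is an \'etale double cover. Thus Riemann--Hurwitz gives $g(D_\eta')=2g(D_\eta)-1$ when the degree is two. For $g=0$ the degree-two case is ruled out (no connected \'etale double cover of $\PP^1$), while for $g=1$ it forces $g(D_\eta)=1$; either way, $g(D_\eta)=g$. Finally, by properness of $\pi$, the specialisation of $D_\eta$ to $\calX_0$ equals the image under $\pi_0$ of the specialisation of $D_\eta'$, and therefore contains $\pi_0(C_0')=C_0$ as a component (the remaining components, being images of rational curves under $\pi_0$, are again rational). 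This proves the theorem.

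The main obstacles are the geometric case analysis of how $\pi$ behaves on the curve $C_0$ and on the regenerated curve $D_\eta'$, together with the initial step of spreading the $\mu_2$-torsor from the special fibre out to the whole family $\calX\to S$; everything else reduces formally to the K3 regeneration theorem, once one notes that \'etale double covers preserve geometric genus in the regimes $g\in\{0,1\}$.
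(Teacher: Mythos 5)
Your proof takes essentially the same route as the paper's: pass to the canonical K3 double cover of the family, lift $C_0$ to a component of its preimage (which has the same geometric genus because $g\le 1$), regenerate there via Theorem \ref{thm: regeneration}, and push the regenerated curve back down. The only cosmetic differences are that the paper constructs the cover directly as $\Spec(\OO_{\calX}\oplus\omega_{\calX})$ rather than spreading out a $\mu_2$-torsor, and deduces $g(D_\eta)=g$ from semicontinuity of the geometric genus under specialisation instead of your Riemann--Hurwitz computation; both versions are correct.
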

\begin{proof}
Let
\[
    \begin{tikzcd}\widetilde{{\calX}}
    := \Spec (\OO_{{\calX}}\oplus \omega_{{\calX}})
    \ar{r}{\pi}& \calX\ar{r}& S\end{tikzcd}
\]
be a canonical double cover of the family ${\calX}\to S$.
By our assumptions on the characteristic of $k$, it
follows that $\widetilde{{\calX}}\to S$ is a family of K3 surfaces.

Let $\widetilde{C}_0$ be a component of $\pi_0^{-1}(C_0)$. Since $\pi_0$ is \'etale and $g\leq1$, it is not difficult to
see that $\widetilde{C}_0$ is a curve of the same geometric genus as $C_0$ or the disjoint union of two such curves.
After possibly replacing $S$ by a finite cover, Theorem \ref{thm: regeneration} provides us with a geometrically integral
curve $\widetilde{D}_\eta\subset\widetilde{\calX_\eta}$ of genus $g$ such that one component of
$\widetilde{C}_0$ is a component of the specialisation of $\widetilde{D}_\eta$.
Then, $D_\eta:=\pi_\eta(\widetilde{D}_\eta)\subset\overline{\calX_\eta}$ is a geometrically integral curve of genus $\leq g$
such that $C_0$ is a component of its
specialisation.
Since $C_0$ has geometric genus $g$, the geometric genus of $D_\eta$ is at least $g$ and thus, equal to $g$.
\end{proof}

\section{The main theorem: Reductions}
\label{sec: main theorem 1}

In this and the next section, we are going to prove the following.

\begin{Theorem}\label{thm: curves on complex k3}
Let $X$ be a projective K3 surface over an algebraically closed field $k$ of characteristic zero, and let $g\geq0$ be an
integer. Then there exists an infinite sequence of integral curves $C_n$ of
geometric genus $g$ on $X$ such that for any ample divisor $H$ on $X$, 
\[
\lim_{n\to\infty} \deg_H C_n = \lim_{n\to\infty} H C_n = \infty.
\]
\end{Theorem}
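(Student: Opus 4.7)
The proof has two stages: first handle $g=0$ and $g=1$ by reducing to specific Picard lattices and applying the marked point trick; then bootstrap from $g\le 1$ to arbitrary $g\ge 2$ via deformation of stable maps.

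For $g\le 1$, by Li--Liedtke (Theorem \ref{thm: ll}) we dispense with the odd Picard rank cases, and by Bogomolov--Tschinkel--Tayou (Theorem \ref{thm: bt tayou}) with elliptic K3 surfaces and those with infinite automorphism group. The lattice classifications of Vinberg and Nikulin then leave K3 surfaces whose Picard lattice is one of infinitely many rank-2 lattices or one of the two rank-4 lattices from Theorem \ref{thm: curves on complex k3 picard rank 4}. On each such $X$ the marked point trick (Theorem C) transfers the existence of an integral rational (resp.\ geometric genus $1$) curve in a class $L$ from a general $\Lambda$-polarised K3 in the moduli component of $X$ to $X$ itself, and on a general $\Lambda$-polarised K3 such curves are produced by Theorems \ref{thm: nodal curves} (rank 2) and \ref{thm: curves on complex k3 picard rank 4} (rank 4) for every sufficiently large $L$; running $L$ through a suitable sequence of classes yields infinitely many curves of unbounded $H$-degree. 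For $g=1$, Theorem \ref{thm: elliptic curve non-primitive class} further guarantees that genuinely new classes are produced on $X$ by supplying integral genus-$1$ curves whose classes avoid any fixed primitive sublattice.

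For the bootstrap to $g\ge 2$, fix an integral rational curve $C_0\subset X$ and the sequence $\{R_n\}$ of integral nodal rational curves of unbounded $H$-degree produced above. For each sufficiently large $n$ (so that $C_0\cdot R_n\ge g+1$), I form the stable map $h_n\colon D_n\to X$ of arithmetic genus $g$ whose source $D_n = \PP^1\cup_{g+1}\PP^1$ is obtained by gluing the normalisations of $C_0$ and $R_n$ at $g+1$ generic intersection points and whose image is $C_0\cup R_n$. All components of $D_n$ are smooth and $h_n$ is unramified (since both $C_0$ and $R_n$ are nodal rational), so Theorem \ref{thm:expdimdef} together with Corollary \ref{cor:def} places $[h_n]$ in a component of $\overline{\mathcal{M}}_g(X,[C_0]+[R_n])$ of the expected dimension $g$. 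A general member of this component has smooth source, smooths all $g+1$ nodes, and maps to an integral curve in class $[C_0]+[R_n]$; choosing $R_n$ so that this class is not a proper multiple of another class in $\Pic(X)$ forces the map to be birational onto its image, so the image has geometric genus exactly $g$. Since $H\cdot R_n\to\infty$, these integral genus-$g$ curves have $H$-degrees tending to infinity.

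The main obstacle is the marked point trick itself (Theorem C), which hinges on showing that the divisor cut out by pulling $\partial\overline{\mathcal{M}}_{0,4}\subset\overline{\mathcal{M}}_{0,4}\cong\PP^1$ back along the moduli map $V\to\overline{\mathcal{M}}_{0,4}$ is nef on $V$, hence cannot be contracted by the generically finite projection $V\to U$ onto the moduli of $\Lambda$-polarised K3 surfaces; combined with the Noether--Lefschetz structure of the moduli space, this transfers the existence of integral curves across the moduli component. A secondary but nontrivial technical point, in the bootstrap step, is verifying that the general deformation in the relevant component of $\overline{\mathcal{M}}_g(X,[C_0]+[R_n])$ actually smooths all nodes and yields a birational map onto its image; the dimension count from Theorem \ref{thm:expdimdef} together with the primitivity condition on $[C_0]+[R_n]$ is what resolves this.
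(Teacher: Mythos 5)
Your first stage ($g\le 1$) is essentially the paper's argument: odd Picard rank via Li--Liedtke and the characteristic-$p$ reduction, elliptic surfaces and infinite automorphism groups via Theorem \ref{thm: bt tayou}, and the residual rank $2$ and rank $4$ lattices via the marked point trick fed by Theorems \ref{thm: nodal curves} and \ref{thm: curves on complex k3 picard rank 4}. The bootstrap to $g\ge 2$, however, has a genuine gap. You glue the normalisations of two \emph{rigid} rational curves $C_0$ and $R_n$ at $g+1$ ``generic intersection points'' and assert that the resulting arithmetic genus $g$ stable map is unramified with smooth components. Since both curves are rigid in characteristic zero, neither can be moved, so there is no mechanism forcing $C_0$ and $R_n$ to meet transversally at $g+1$ distinct points that are smooth on both curves: all of $C_0\cdot R_n$ could a priori be concentrated at a few points of high tangency or at singular points of one of the curves. (Note also that the curves produced by the marked point trick on an arbitrary $X$ are only integral, not nodal.) Without $g+1$ distinct transverse intersection points you cannot even form the prestable source $\PP^1\cup_{g+1}\PP^1$ mapping onto $C_0\cup R_n$, let alone an \emph{unramified} one, and Theorem \ref{thm:expdimdef} --- which you need for the upper bound $\dim M\le g$, without which excess components containing contracted genus $g$ tails, of dimension $3g-2>g$, cannot be excluded --- does not apply. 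A secondary unjustified step is the claim that $C_0\cdot R_n\ge g+1$ for $n$ large: a fixed rational curve need not be nef, and the $R_n$ could all be disjoint from $C_0$.

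This is precisely why the paper's reduction (Lemma \ref{lem: severi K3} and Corollary \ref{cor: gleq1 suffices}) attaches a rational curve to a \emph{moving} geometric genus $1$ curve rather than to a second rational curve. Lemma \ref{lem: genus 1 transverse} uses the one-parameter family through $E$ (and the Arbarello--Cornalba unramifiedness of its general member) to guarantee that a general deformation of $E$ meets any fixed integral curve $C$ transversally in exactly $C\cdot E$ distinct points, which is what makes the glued genus $g$ stable map unramified with smooth components; and it uses \emph{two} genus $1$ curves $E_1,E_2$ of different $H$-degrees, so that $E_1+E_2$ is big and nef and hence all but finitely many of the rational curves $C_n$ satisfy $C_n\cdot E_i\ge g$ for some $i$. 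If you replace your pair $(C_0,R_n)$ by $(C_n,E_i)$ with $E_i$ a general member of its one-dimensional family of genus $1$ curves, your dimension count (reducible deformations would force $C_n$ or $E_i$ to move too much) goes through and you recover the paper's proof.
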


This, of course, implies the existence of infinitely many rational curves on every projective K3 surface in
characteristic zero and settles the remaining cases from \cite{liliedtke, btrational}, and so on (see Section \ref{sec:
existence}). Note that for $g>0$ and $X$ a general K3, the above was also obtained by Nishinou \cite{nishinou}.

\begin{specialproof}[Proof of Theorem \ref{thm: curves on complex k3}]
We outline the proof and refer to the sections where each step is proved.

\begin{enumerate}
    \item In Corollary \ref{cor: gleq1 suffices}, we reduce the theorem to $g\le 1$. 
    \item In Corollary \ref{cor:odd rank} we prove the theorem for $g\le 1$ and odd Picard rank $\rho(X)$. When $g=0$, this was proved in
    \cite{liliedtke} (or Theorem \ref{thm: ll} using regeneration). Here we carry over their argument to $g=1$. This is
    done in Proposition \ref{prop: elliptic curve K3 char p reduction} by applying Theorem \ref{thm: elliptic curve
    non-primitive class}.
    \item We note that Theorem \ref{thm: bt tayou} has already dealt with elliptic K3 surfaces and K3 surfaces
     with infinite automorphism groups.
    \item In Section \ref{sec: main theorem 2}, we deal with the remaining cases where $\rho(X) = 2, 4$ and $X$ is neither elliptic nor has
    infinitely many automorphisms. This step is quite involved and has two main ingredients:
    \begin{enumerate}
        \item a method (the {\em marked point trick}) to show that the degeneration of certain integral curves on generic
        K3 surfaces to a special K3 surface remains integral,
        \item the existence of rational curves on generic K3 surfaces of rank $2$ given by Theorem \ref{thm: appendix} and the
        existence of rational curves on generic K3 surfaces with certain rank $4$ Picard lattices given
        by Theorem \ref{thm: curves on complex k3 picard rank 4}.
    \end{enumerate}
\end{enumerate}
\end{specialproof}

\begin{Remark}
    We note that we do not prove in general that there is a sequence of curves $C_n$ of geometric genus $g$ so
    that $C_n^2$ tends to infinity (which would imply $\lim_n H.C_n=\infty$ by the Hodge Index Theorem), so it could be that
    the arithmetic genus is actually bounded. Nevertheless, in some cases, e.g., $\rho\leq2$ and $g=0$, the proof does
    give this, and it would be interesting to establish whether this is true in general. 
\end{Remark}

\subsection{Curves of high genus}\label{subsec: higher genus}

We start with the easiest step that reduces the theorem to $g\leq1$. For a smooth and proper variety $Y$ over an
algebraically closed field and a divisor $D$ on $Y$, we denote by $V_{D,g}$ the \textit{Severi variety} parametrising
integral curves in the linear system $|D|$ of geometric genus $g$, which is a locally closed subset of the projective
space $|D|=\PP(\HH^0(Y,\OO_Y(D)))$. We denote by $\overline{V}_{D,g}$ the closure of $V_{D,g}$ inside $|D|$. The
following is an application of well-known results on the deformation theory of curves on K3 surfaces.

\begin{Lemma}\label{lem: genus 1 transverse}
Let $X$ be a K3 surface over an algebraically closed field of characteristic zero. Let $C,E\subset X$ be integral
curves where $E$ has geometric genus 1. Then $E$ deforms in a 1-dimensional family in $X$ and if
$v:\widehat{E}_t\to X$ is the composition with the normalisation of a general member of this family, then $v^*C$ consists
of $CE$ distinct points.
\end{Lemma}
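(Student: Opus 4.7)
The plan is to handle the two assertions separately.

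For assertion (1), since $X$ is a K3 surface in characteristic zero it is not uniruled, so Proposition~\ref{prop:defexpdim}(3) applies to the normalisation $h\colon\widehat{E}\to X$: every irreducible component of $\overline{\mathcal{M}}_1(X,\mathcal{O}_X(E))$ through $[h]$ has dimension at most $g=1$. The matching lower bound $\ge 1$ comes from Theorem~\ref{theo:deffam} applied with $S$ a point. Since $h$ is generically injective it admits no non-trivial automorphisms as a stable map, so this one-parameter family of stable maps descends to a one-parameter family of distinct integral curves $\{E_t\}_{t\in B}$ in $X$ containing $E$.

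For (2), after a finite base change we arrange that $B$ is a smooth curve and a simultaneous normalisation provides a smooth projective family $\pi\colon\mathcal{E}\to B$ of elliptic curves together with a morphism $v\colon\mathcal{E}\to X$ whose restriction to each fibre $\mathcal{E}_t$ is the map $\widehat{E}_t\to X$ appearing in the statement. Since $\dim\mathcal{E}=\dim X=2$ and the $E_t$ are distinct, $v$ is generically finite and dominant. The preimage $Z:=v^{-1}(C)$ is a Cartier divisor on the smooth surface $\mathcal{E}$, and by the projection formula
\[
\deg\bigl(v^{*}C|_{\mathcal{E}_t}\bigr)\,=\,C\cdot v_{*}[\mathcal{E}_t]\,=\,C\cdot E_t\,=\,CE,
\]
using that $v$ maps $\mathcal{E}_t$ birationally onto $E_t$ and that the class of $E_t$ in $\mathop{\mathrm{Pic}}(X)$ is constant along the connected family.

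It remains to show that $Z\cap\mathcal{E}_t$ consists of $CE$ distinct reduced points for general $t$. The only vertical components of $Z$ are fibres $\mathcal{E}_{t_0}$ with $v(\mathcal{E}_{t_0})\subseteq C$, i.e.\ with $E_{t_0}=C$; there are at most finitely many such, so a general $t$ avoids them. For each horizontal component $Z_i\subseteq Z$ the induced morphism $Z_i\to B$ is a dominant map of curves in characteristic zero, hence generically étale by generic smoothness, so its general fibre is reduced. Moreover two distinct horizontal components meet in finitely many points of $\mathcal{E}$, whose images in $B$ form a finite set, so for $t$ outside this finite set the contributions of different $Z_i$ to $\mathcal{E}_t$ are disjoint. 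Combined, for general $t$ the divisor $v^{*}C|_{\widehat{E}_t}$ is a reduced $0$-cycle of degree $CE$, which is the desired claim.

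The one technical point deserving care is the construction of the family $\pi\colon\mathcal{E}\to B$ with smooth fibres equal to the normalisations $\widehat{E}_t$: when $E$ is singular of geometric genus one, general members of the deformation family remain singular with the same $\delta$-invariant, and one must invoke constancy of $\delta$ in an equinormalisable family to guarantee that simultaneous normalisation commutes with base change. This is standard in characteristic zero, after which the remainder is a straightforward Bertini/generic-smoothness check.
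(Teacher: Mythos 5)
Your treatment of the first assertion is fine and agrees with the paper's. In the second assertion, however, there is a genuine gap at the decisive step. You reduce the claim to showing that $Z\cap\mathcal{E}_t$ is reduced for general $t$ and argue component by component: each horizontal component $Z_i$ of $v^{-1}(C)$ has generically reduced fibres over $B$ by generic smoothness. But this only controls the scheme-theoretic intersection of the \emph{reduced} curve $Z_i$ with the fibre; it says nothing about the coefficient with which $Z_i$ occurs in the Cartier divisor $v^{*}C$. If $v$ is ramified at the generic point of a horizontal component $Z_i$ of $v^{-1}(C)$ (equivalently, if the general $\widehat{E}_t\to X$ fails to be an immersion at its points over $C$, or its branches meet $C$ non-transversally along a moving point), then $v^{*}C\ge 2Z_i$ and $v^{*}C|_{\widehat{E}_t}$ is non-reduced at $Z_i\cap\mathcal{E}_t$ even though that intersection is a single reduced point of $Z_i$. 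This is exactly the failure mode in the quasi-elliptic example in the remark following the lemma: there the relevant component of $v^{-1}(C)$ is a section of $\mathcal{E}\to B$, so its fibres over $B$ are certainly reduced, yet $v^{*}C$ has multiplicity at least $2$ along it. So your criterion is insufficient as stated, and you have invoked characteristic zero in a place where it does no work.

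The missing ingredient is precisely what both of the paper's proofs supply, and your setup is already that of the paper's second proof. Since $v:\mathcal{E}\to X$ is separable, dominant and generically finite between smooth surfaces and $K_X=0$, one has $K_{\mathcal{E}}=v^{*}K_X+R=R$ for an effective ramification divisor $R$, and adjunction for the genus-one fibres gives $R\cdot\mathcal{E}_t=K_{\mathcal{E}}\cdot\mathcal{E}_t=0$. Hence $R$ has no horizontal components, so $v$ is unramified at the generic point of every horizontal curve and every horizontal component of $v^{-1}(C)$ occurs in $v^{*}C$ with coefficient one; the same computation rules out horizontal curves contracted by $v$, i.e.\ base points of the family. (The paper's first proof reaches the same conclusion via the Arbarello--Cornalba immersion statement together with $\mathcal{N}_\nu\cong\mathcal{O}_{\widehat{E}_t}$, so that an order-$n$ tangency at a moving point would be governed by the negative-degree sheaf $\mathcal{N}_\nu(-(n-1)p)$.) Inserting either argument fills the gap; combined with your correct handling of the reducedness and disjointness of the fibres of the $Z_i$ over $B$, the proof then goes through.
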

\begin{proof}
From Proposition \ref{prop:defexpdim}, $E$ moves in a one-parameter family of genus 1 curves. Then by the
Arbarello--Cornalba Lemma used for example in Proposition \ref{prop:defexpdim} (in particular \cite[Theorem
2.5(i)]{dedieusernesi}), we see that composition with the normalisation $\nu: \widehat{E}_t\to X$ of $E_t$ is an immersion
into $X$ in the sense that it is an unramified morphism. 

We claim now that $\nu^* C$ consists of $m = CE$ distinct points. Note first that since $\nu$ is unramified, the normal
sheaf $\calN_\nu$ of $\nu$ is locally free (see Section \ref{subsec: stable maps}), and hence is $\OO_{\widehat{E}_t}$.
If $p\in X$ is a base point of the family $E_t$, then the proper transform $E'_t$ of $E_t$ on $X'$ the blow-up of
$X$ at $p$ has normalisation $\nu':\widehat{E}_t'\to X'$ which is also unramified, hence the normal sheaf $\calN_{\nu'}$
is also locally free but has degree $-1$ and hence no sections, contradicting the fact that $E'_t$ also moves in a one
dimensional family. Hence the family has no base points and so $E_t$ only meets $C$ at smooth points of $C$. Assume now that
the divisor $\nu^*C$ vanishes at a point $p$ with multiplicity $n\geq2$. As $E_t$ is general, this means that the space
of deformations of $E_t$ with tangency conditions at this point is at least one dimensional, but such deformations are
controlled by $\calN_\nu(-(n-1)p)$ (see \cite[Proposition 2.1.3]{harris}), which has negative degree. This proves the
claim.
\end{proof}

\begin{Remark}
The characteristic zero assumption in this lemma is crucial, as the following example (albeit for geometric genus 0
curves) shows:
let $E$ be a fibre of a quasi-elliptic fibration, that is, $|E|$ is a 1-dimensional
linear system, whose generic member has a cusp singularity.  
If $C$ is the ``curve of cusps'', that is, the closure of the non-smooth points of 
the general members of $|E|$, then every deformation of $E$ fails the above statement. 
\end{Remark}

To highlight what exactly goes wrong in positive characteristic, we give another proof of the above lemma. 

\begin{proof}[Second proof of Lemma \ref{lem: genus 1 transverse}]
From Theorem
\ref{theo:deffam}, $E$ deforms in a family of genus $1$ curves that cover $X$. So there exist dominant morphisms $\pi:
Y\to B$ and $f: Y\to X$, where $Y$ is a smooth projective surface, $B$ is a smooth projective curve, $f$ is separable,
and $f: Y_b\to X$ is the normalisation of $E$ for some $b\in B$. We may choose $E$ to be a general member of this
family. That is, $b$ is a general point of $B$. We claim that $f^* C$ and $Y_b$ meet transversely. 
Since $f$ is a {\em separable} generically finite map, we have $K_Y = f^* K_X + R = R$ for some effective divisor $R$ on $Y$ and adjunction
gives $R Y_b = K_Y Y_b = 0$.  Note that if there exists a multi-section $M$ of $\pi$ such that $f_* M = 0$, then
$M\subset {\mathop{\mathrm{supp}}\nolimits}(R)$, which contradicts the fact $RY_b = 0$.  Suppose that $Y_b$ and $f^* C$
meet at a point $p$ with multiplicity $\ge 2$. Then $f^* C$ contains a multi-section $P$ of $\pi$ with multiplicity $\ge
2$. Since $f$ is finite at a general point of $P$, $f$ must be ramified along $P$.  Therefore, $P\subset
{\mathop{\mathrm{supp}}\nolimits}(R)$, which again contradicts $RY_b = 0$.
\end{proof}

\begin{Lemma}\label{lem: severi K3}
Let $X$ be a K3 surface over an algebraically closed field of characteristic $0$, and let $D$ be a divisor on $X$.
\begin{enumerate}
    \item Let $C$ and $E$ be integral curves on $X$ of geometric genus $0$ and $1$, respectively, so that $C+E\in |D|$. If
    $g$ is an integer so that $2\le g \le CE$, then $C\cup E \in \overline{V}_{D,g}$.
    \item If $C\in |D|$ is an integral rational curve which has (at least) two branches at a singularity $q\in C$, then
    $C\in \overline{V}_{D,1}$.
\end{enumerate}
\end{Lemma}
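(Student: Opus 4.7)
The plan is to prove both parts using moduli of stable maps: in each case I will exhibit the given curve as the image of a stable map $h\colon D_0 \to X$ of arithmetic genus $g$ (respectively $1$) with nodal source, then use the deformation theory of Section \ref{sec:deformations} to deform $h$ within $\overline{\mathcal{M}}_g(X, \OO_X(D))$ to a stable map whose source is smooth.

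For part (1), I first apply Lemma \ref{lem: genus 1 transverse} to replace $E$ by a general member $E_t$ of its one-parameter deformation family, so that $E_t$ meets $C$ transversely at $CE$ distinct points; by a further genericity argument (using that $C$ has finitely many singular points) I may arrange that all these intersection points lie on the smooth locus of $C$. Choosing any $g$ of the intersection points, I construct $D_0$ by gluing the normalizations $\widehat{C}$ and $\widehat{E_t}$ at the preimages of the chosen $g$ points; then $D_0$ is nodal with smooth components, of arithmetic genus $0 + 1 + g - 1 = g$, and admits a natural stable map $h\colon D_0 \to X$ with image $C\cup E_t \in |D|$. For part (2), I choose $p_1, p_2 \in \widehat{C} = \PP^1$ in $\nu^{-1}(q)$ and set $D_0 = \widehat{C}/(p_1 \sim p_2)$, a nodal curve of arithmetic genus $1$; the composite $\widehat{C} \to C \subset X$ factors through $D_0$, giving a stable map $h\colon D_0 \to X$ with image $C \in |D|$.

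Next I will invoke the deformation theory developed earlier. Theorem \ref{theo:deffam} gives that every irreducible component $M \subseteq \overline{\mathcal{M}}_g(X,\OO_X(D))$ containing $[h]$ has dimension at least $g$; in characteristic zero, the Arbarello--Cornalba argument invoked in the proof of Proposition \ref{prop:defexpdim} shows that a general member of $M$ is represented by an unramified stable map with smooth components, and Theorem \ref{thm:expdimdef} then forces $\dim M \le g$, so $\dim M = g$. The source $D_0$ has exactly $g$ (resp.\ $1$) nodes; each node contributes a one-dimensional smoothing direction to the tangent space $H^0(D_0, \mathcal{N}_h)$, and since this space has total dimension $g$ (matching the number of nodes), a general deformation of $[h]$ in $M$ smooths all nodes simultaneously. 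The resulting stable map has a smooth, connected source of genus $g$, whose image lies in $|D|$ and is therefore an integral curve of geometric genus $g$: the map remains birational onto its image under small deformation, since this is an open condition.

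The hard part will be justifying the final smoothing step, specifically verifying that the $g$ node-smoothing directions are linearly independent in $H^0(\mathcal{N}_h)$ and that the corresponding first-order deformations are unobstructed. The linear independence is a local computation at each node using the local structure of $\mathcal{N}_h$ as a coherent sheaf on $D_0$, while the unobstructedness is a standard consequence of the twistor-family / semi-regularity argument encoded in Theorem \ref{theo:deffam}, which asserts that every component of $\overline{\mathcal{M}}_g(X,L)$ attains exactly the expected dimension on a K3 surface and hence that the $g$ smoothing parameters all extend to genuine deformations.
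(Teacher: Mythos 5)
Your setup — gluing the normalisations $\widehat{C}$ and $\widehat{E}_t$ at $g$ of the transverse intersection points supplied by Lemma \ref{lem: genus 1 transverse} (resp.\ identifying two preimages of $q$ in $\widehat{C}$), and then deforming the resulting arithmetic genus $g$ (resp.\ $1$) stable map inside $\overline{\mathcal{M}}_g(X,\OO_X(D))$ using the lower bound of Theorem \ref{theo:deffam} — is exactly the paper's. The gap is in your final smoothing step, and it is a real one. First, the claim that $h^0(D_0,\calN_h)=g$ is not available: Lemma \ref{lem:unramified} requires $h$ to be \emph{unramified}, and nothing in the hypotheses prevents $C$ (or $E$) from having a cuspidal branch, in which case $\widehat{C}\to X$ is ramified, $\calN_h$ acquires torsion, and the tangent space can exceed $g$. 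Second, even granting $h^0(\calN_h)=g$, the inference ``$g$ nodes, $g$-dimensional tangent space, hence a general deformation smooths all nodes'' is not valid: what is needed is surjectivity of the map from $T_{[h]}M$ to the product of the local deformation spaces of the $g$ nodes, and a dimension match does not give this. Note also that partial smoothing does not suffice — if only $k<g$ of the connecting nodes are smoothed, the source becomes irreducible of geometric genus $k$, so the image has geometric genus at most $k<g$ — so you genuinely need all $g$ smoothing parameters to be hit, which for a $g$-dimensional $M$ is a strong local statement that you flag but do not prove.

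The paper avoids this local analysis entirely with a global dimension count. By Theorem \ref{theo:deffam} the component $M\ni[h]$ has $\dim M\ge g$. In characteristic zero, Proposition \ref{prop:defexpdim} shows that the rational curve $C$ is rigid and that any integral curve of geometric genus $g'$ deforms in dimension at most $g'$; consequently the locus in $M$ of stable maps whose image is a union of a deformation of $C$ and a deformation of $E$ has dimension at most $0+1=1<g$, and more generally the locus of maps with reducible image, or with integral image of geometric genus $g'<g$, has dimension $<g$ (``otherwise $C$ or $E$ would deform too much''). Hence the general member of $M$ has integral image of geometric genus exactly $g$, which is the desired conclusion; the same rigidity argument disposes of part (2). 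If you want to salvage your route, you would need to either restrict to the unramified case and prove the surjectivity onto the node-smoothing parameters by a local computation, or simply replace the smoothing argument by this counting argument.
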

\begin{proof}
From Lemma \ref{lem: genus 1 transverse} we can consider now the stable map $f: \widehat{C}\cup \widehat{E}\to X$ of
genus $g$ such that $f_* (\widehat{C}\cup \widehat{E}) = C\cup E$, $\widehat{C}$ and $\widehat{E}$ are the
normalisations of $C$ and $E$, respectively, and $\widehat{C}$ and $\widehat{E}$ meet transversely at $g$ points. Then
$f$ deforms in a family of dimension at least $g$ from Theorem \ref{theo:deffam}. Hence $C\cup E$ can be deformed to an
integral curve of geometric genus $g$, since otherwise $C$ or $E$ would deform too much. That is, $C\cup E\in
\overline{V}_{D,g}$.

For the second statement, let $\nu: \widehat{C} \to X$ be the normalisation of $C$.  Since $C$ has at least two branches
at $q$, $\nu$ factors through a curve $\overline{C}$ as in the diagram
\[
\begin{tikzcd}
\widehat{C} \ar{r} \ar[bend left=40]{rr}{\nu} & \overline{C} \ar{r}{f} & X
\end{tikzcd}
\]
where $\overline{C}$ is an integral curve birational to $C$ with one node $p$ satisfying $f(p) = q$ as the only
singularity. So $f:\overline{C}\to X$ is a stable map of arithmetic genus $1$ and it deforms in dimension at least $1$.
Consequently, $C = f(\overline{C})$ deforms to an integral curve of geometric genus $1$. Namely, $C\in
\overline{V}_{D,1}$.
\end{proof}

\begin{Corollary}\label{cor: gleq1 suffices}
To prove Theorem \ref{thm: curves on complex k3} in characteristic zero, it suffices to prove it for $g=0,1$.
\end{Corollary}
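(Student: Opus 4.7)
The plan is to combine the hypothesised cases $g\in\{0,1\}$ with Lemma \ref{lem: severi K3}(1). That lemma asserts that if $C\subset X$ is an integral rational curve and $E\subset X$ an integral elliptic curve with $2\le g\le C\cdot E$, then $C\cup E$ lies in $\overline{V}_{C+E,g}$, so $|C+E|$ contains an integral curve of geometric genus $g$. Fixing $g\ge 2$, the hypothesis furnishes sequences $\{C_n\}$ of rational and $\{E_m\}$ of elliptic integral curves with $H\cdot C_n, H\cdot E_m\to\infty$, each realising infinitely many distinct numerical classes on $X$. It therefore suffices to exhibit one integral elliptic curve $E\subset X$ and a subsequence $\{C_{n_k}\}$ with $E\cdot C_{n_k}\to\infty$: Lemma \ref{lem: severi K3}(1) will then produce integral curves $D_k\in|C_{n_k}+E|$ of geometric genus $g$, all in pairwise distinct classes (since the $[C_{n_k}]$ are), with $H\cdot D_k=H\cdot C_{n_k}+H\cdot E\to\infty$.

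The technical core is the construction of a big and nef divisor $F$ on $X$ realised as a finite positive sum of integral elliptic curves. If some $E_m$ satisfies $E_m^2>0$, it is automatically nef (it meets itself in $E_m^2\ge 0$ and any other integral curve nonnegatively) and big, so I take $F:=E_m$. Otherwise every $E_m$ satisfies $E_m^2=0$ and is therefore smooth (geometric and arithmetic genus agree) and a fibre of an elliptic fibration on $X$. Pick two such $E_{m_1},E_{m_2}$ in distinct numerical classes, possible since $H\cdot E_m\to\infty$. As distinct primitive isotropic vectors in the hyperbolic lattice $\NS(X)$, they cannot span a totally isotropic plane, so $E_{m_1}\cdot E_{m_2}>0$; then $F:=E_{m_1}+E_{m_2}$ is a sum of nef divisors with $F^2=2E_{m_1}\cdot E_{m_2}>0$, hence big and nef.

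Writing $F=\sum_i E'_i$, Kodaira's lemma gives $F=A+N$ with $A$ an ample and $N$ an effective $\QQ$-divisor; since $dA-H$ is ample for $d\gg 0$, one has $A\cdot C\ge (H\cdot C)/d$ for every integral curve $C\subset X$. As $N$ has finitely many components, all but finitely many $C_n$ satisfy $N\cdot C_n\ge 0$, so for the remaining ones $F\cdot C_n\ge (H\cdot C_n)/d\to\infty$. A pigeonhole over the finitely many summands $E'_i$ of $F$ then yields a single integral elliptic curve $E$ (one of the $E'_i$) and a subsequence $\{C_{n_k}\}$ with $E\cdot C_{n_k}\to\infty$, in particular $E\cdot C_{n_k}\ge g$ for all large $k$, at which point the argument concludes via Lemma \ref{lem: severi K3}(1) as described above. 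The only real obstacle is producing $F$ of positive self-intersection when $X$ carries no integral elliptic curve of positive square; this is resolved by combining two fibre classes of distinct elliptic fibrations, where the hyperbolic signature of $\NS(X)$ forces their mutual intersection to be strictly positive.
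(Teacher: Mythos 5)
Your proposal is correct and follows essentially the same route as the paper's proof: both produce a big and nef divisor supported on one or two of the integral elliptic curves supplied by the $g=1$ case (using the signature of $\NS(X)$ to force $E_1\cdot E_2>0$ when both are isotropic), write it as an ample plus an effective divisor to conclude that all but finitely many of the rational curves from the $g=0$ case meet some fixed elliptic curve $E$ in at least $g$ points, and then apply Lemma \ref{lem: severi K3}(1) to $C_n\cup E$. The only cosmetic differences are your preliminary case split on whether a single elliptic curve is already big and your passage to a subsequence with $E\cdot C_{n_k}\to\infty$ rather than merely $\ge g$.
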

\begin{proof}
Suppose that the theorem holds for $g=0,1$. Let $E_1$ and $E_2$ be two elliptic curves on $X$ such that $\deg_H E_1 \ne
\deg_H E_2$ with respect to an ample divisor $H$. We claim that $E_1 + E_2$ is big and nef.

Since both $E_i$'s are nef, $E_1 + E_2$ is big and nef as long as $(E_1+E_2)^2 > 0$. If one of $E_i$ is big, we are done.
Otherwise, $E_1^2 = E_2^2 = 0$. Since $\deg_H E_1 \ne \deg_H E_2$, $E_1$ and $E_2$ are linearly independent in
$\Pic_\QQ(X)$. Therefore, $E_1 E_2 > 0$ and hence $E_1 + E_2$ is big and nef.

Fixing $g\ge 2$, there are only finitely many rational curves $C\subset X$
meeting an ample divisor $A$ such that $AC<2g$. In particular, since $E_1+E_2$ is big and nef and can be written as a
sum of an ample divisor and an effective one, we get that there are finitely many rational curves $C$ such that
$(E_1+E_2) C < 2g$. Therefore, there exist infinitely many rational curves $C_n$ on $X$ such that $C_n E \ge g$ for
some $E\in \{E_1, E_2\}$. By the above lemma, $C_n\cup E$ deforms to an integral curve of geometric genus $g$ for all
$n$.
\end{proof}

\begin{Remark}\label{rem:char p AC}
Lemma \ref{lem: severi K3} and Corollary \ref{cor: gleq1 suffices} hold more generally over arbitrary algebraically
closed field, assuming $C,E$ satisfy the statement of Lemma \ref{lem: genus 1 transverse} and moreover both deform in
the expected dimensions 0, 1 respectively (the final assumption can be replaced by the assumption that $X$ is not
uniruled). Hence, to reduce Theorem \ref{thm: curves on complex k3} to the case of $g=0,1$ in positive
characteristic, one need only prove that there are infinitely many pairs $(C_n, E_n)$ satisfying these conditions.
\end{Remark}

\subsection{Odd Picard rank via characteristic $p$ reduction}

Next, let us prove Theorem \ref{thm: curves on complex k3} for $g=1$ and $\rho(X)$ odd. As commented before, we
just have to carry out the same characteristic $p$ reduction argument in \cite{bht, liliedtke} for $g=1$. It comes down to
proving the statement of \cite[Proposition 4.2]{liliedtke} for genus 1 curves.

\begin{Proposition}
\label{prop: elliptic curve K3 char p reduction}
Let $(X,H)$ be a polarised K3 surface over a number field $K$, such
that $\Pic(X) = \Pic(X_{\overline{\QQ}})$ has odd rank and $X_{\overline{\QQ}}$ is not elliptic. Then, there is a finite
extension $L/K$ such that for every $N \ge 0$ there exists a set $S_N$ of places of $L$ of
density $1$ such that for all $\mathfrak{q} \in S_N$,
\begin{enumerate}
\item
the reduction $(X_L)_\mathfrak{q}$ is a smooth and non-supersingular K3 surface,
\item
the reduction $H_\mathfrak{q}$ is ample,
\item
there exists an integral geometric genus 1 curve $D_\mathfrak{q}$
on $((X_L)_\mathfrak{q})_{\overline{\mathbf{F}}_p}$ so that
\begin{enumerate}
\item
the class of $D_\mathfrak{q}$ does not lie in $\Pic(X)\otimes_\ZZ \QQ$, where we view $\Pic(X)
\subset\Pic(((X_L)_\mathfrak{q})_{\overline{\mathbf{F}}_p})$ via the specialisation homomorphism, and
\item
$D_\mathfrak{q} . H_\mathfrak{q} \ge N$.
\end{enumerate}
\end{enumerate}
\end{Proposition}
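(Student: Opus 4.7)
The plan is to adapt \cite[Proposition 4.2]{liliedtke} from rational to genus one curves, substituting our Theorem \ref{thm: elliptic curve non-primitive class} for the use of Bogomolov--Mumford there. First, take $L/K$ a finite extension over which every element of $\Pic(X_{\overline{\QQ}})$ is defined. By the Tate conjecture for K3 surfaces (Charles, Madapusi Pera, Maulik), $\rho(X_\mathfrak{q})$ is even at every place $\mathfrak{q}$ of $\OO_L$ of good reduction, and since $\rho(X_L)$ is odd by hypothesis, the specialisation map $\Pic(X_L)\hookrightarrow \Pic(X_\mathfrak{q})$ has image of strictly smaller rank at every such place. By \cite[Theorem 4.1]{liliedtke} a density one subset of places yields non-supersingular, and hence non-uniruled, reductions (cf.\ the discussion preceding Proposition \ref{prop: supersingular OK}); removing the finite set of places where $H$ fails to reduce to an ample class yields the candidate $S_N$, on which conclusions (1) and (2) are automatic.

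For (3), fix $\mathfrak{q}\in S_N$ and set $B:=\max\bigl(N,\ 4H_\mathfrak{q}^2+\sqrt{17(H_\mathfrak{q}^2-2)H_\mathfrak{q}^2}\bigr)$. Let $\Lambda_\mathfrak{q}\subset \Pic(X_\mathfrak{q})$ be the saturation of the image of $\Pic(X_L)$ together with all $(-2)$-curves and all minimal nef classes on $X_\mathfrak{q}$ of $H_\mathfrak{q}$-degree strictly less than $B$; only finitely many such classes occur, as $(-2)$-curves are rigid and the nef cone contains only finitely many lattice points of bounded $H_\mathfrak{q}$-degree. Assuming that $\Lambda_\mathfrak{q}\subsetneq \Pic(X_\mathfrak{q})$ and that $\Lambda_\mathfrak{q}$ admits no nonzero isotropic class, Theorem \ref{thm: elliptic curve non-primitive class} applied with this $\Lambda_\mathfrak{q}$ and $A=H_\mathfrak{q}$ produces an integral geometric genus one curve $D_\mathfrak{q}\subset X_\mathfrak{q}$ whose class does not lie in $\Lambda_\mathfrak{q}\otimes \QQ\supseteq \Pic(X)\otimes \QQ$; the degree inequality hypothesis on $(-2)$-curves outside $\Lambda_\mathfrak{q}$ holds by construction. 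Inspection of the proof of Theorem \ref{thm: elliptic curve non-primitive class} shows that the class of $D_\mathfrak{q}$ is either a minimal nef divisor outside $\Lambda_\mathfrak{q}$, or of the form $D+R$ with $D$ minimal nef in $\Lambda_\mathfrak{q}$ and $R\notin\Lambda_\mathfrak{q}$ a $(-2)$-curve, so in either case $H_\mathfrak{q}\cdot D_\mathfrak{q}\ge B\ge N$.

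The main technical obstacle is establishing that the two auxiliary hypotheses on $\Lambda_\mathfrak{q}$ hold on a density one subset of $S$. Non-ellipticity of $X_{\overline{\QQ}}$ gives non-isotropy of $\Pic(X_L)$, but saturating by small-degree $(-2)$-classes could a priori introduce isotropic combinations, which would correspond to $X_\mathfrak{q}$ being elliptic; on such places Theorem \ref{thm: bt tayou} directly produces genus one multisections of unbounded degree on $X_\mathfrak{q}$, all but finitely many of which have classes outside $\Pic(X)\otimes \QQ$, and so this case is handled separately. The pathology $\Lambda_\mathfrak{q}=\Pic(X_\mathfrak{q})$ corresponds to the Picard rank jump at $\mathfrak{q}$ being fully realised by small-degree classes; ruling this out on a density one subset should follow from a Chebotarev-style equidistribution argument on the Galois action on the transcendental cohomology of $X_L$, in the spirit of the proof of \cite[Theorem 4.1]{liliedtke}. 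Carrying out this density analysis cleanly, in combination with the case split on ellipticity of $X_\mathfrak{q}$, is the most delicate part; the remainder of the argument is a direct transplantation of the rational curve proof of \cite{liliedtke}.
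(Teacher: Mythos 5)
There is a genuine gap, and it is created by your choice of lattice. You define $\Lambda_\mathfrak{q}$ by saturating the image of $\Pic(X_L)$ together with all $(-2)$-curves and minimal nef classes of $H_\mathfrak{q}$-degree below the threshold $B$. This enlargement is what forces you to verify, separately, that $\Lambda_\mathfrak{q}\subsetneq\Pic(X_\mathfrak{q})$ and that $\Lambda_\mathfrak{q}$ contains no isotropic class -- and you leave both unresolved (a ``Chebotarev-style equidistribution argument'' that is not carried out, and a case split on ellipticity of $X_\mathfrak{q}$ in which the claim that the genus one multisections from Theorem \ref{thm: bt tayou} have class outside $\Pic(X)\otimes\QQ$ is itself not justified). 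The paper's proof never enlarges the lattice: it takes $\Lambda$ to be just the smallest primitive sublattice of $\Pic(((X_L)_\mathfrak{q})_{\overline{\mathbf{F}}_p})$ containing $\Pic(X)$. Then $\Lambda\subsetneq\Pic(((X_L)_\mathfrak{q})_{\overline{\mathbf{F}}_p})$ is automatic from the parity of Picard ranks (odd versus even, by the Tate conjecture), and $\Lambda$ has no isotropic classes because it has the same rational span as $\Pic(X_{\overline{\QQ}})$ and $X_{\overline{\QQ}}$ is not elliptic. Both of your problematic hypotheses hold for free.

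The idea you are missing is how the hypothesis \eqref{eq: elliptic curve non-primitive class 03} is then verified for this small $\Lambda$: it is the spreading-out/lifting argument of \cite[Proposition 4.2]{liliedtke}. For fixed $N$, the $(-2)$-curves $R_\mathfrak{q}$ with $H_\mathfrak{q}\cdot R_\mathfrak{q}<N$ across all places form a bounded family; any component of the relative Hilbert scheme that meets infinitely many places dominates the base, so the class lifts to $R\in\Pic(X_{\overline L})=\Pic(X)\subset\Lambda$. Hence outside finitely many places every $(-2)$-curve not in $\Lambda$ satisfies the degree bound, and Theorem \ref{thm: elliptic curve non-primitive class} applies directly (note that it only requires $\Lambda$ itself, not all of $\Pic(X_\mathfrak{q})$, to be free of isotropic classes, so elliptic reductions cause no difficulty). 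The same lifting argument, applied to the resulting genus one curves $D_\mathfrak{q}$, shows $H_\mathfrak{q}\cdot D_\mathfrak{q}<N$ at only finitely many places, which replaces your per-place ``inspection of the proof'' degree estimate. Your first paragraph and the overall strategy of substituting Theorem \ref{thm: elliptic curve non-primitive class} for Bogomolov--Mumford are correct; the proof becomes complete once you drop the lattice enlargement and use the lifting argument in both places where you currently rely on it.
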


\begin{proof}
The first two statements are standard results (cf.\ \cite{BZ, NO}). By \cite{NO}, we also know that every K3 surface
over $\overline{\FF}_p$ has even Picard rank.  Let $\Lambda$ be the smallest primitive sublattice of
$\Pic(((X_L)_\idealq)_{\overline{\FF}_p})$ containing $\Pic(X)$. Since $\rho(X)$ is odd and
$\rho(((X_L)_\idealq)_{\overline{\FF}_p})$ is even,
\[
\Lambda\subsetneq\Pic(((X_L)_\idealq)_{\overline{\FF}_p}).
\]
Also, $\Lambda$ does not contain isotropic classes since $X_{\overline{\QQ}}$ is not elliptic.  For every $N$, there are
at most finitely many places $\idealq$ such that there is a $(-2)$-curve $R_\mathfrak{q}\subset
((X_L)_\mathfrak{q})_{\overline{\mathbf{F}}_p}$ satisfying $H_\mathfrak{q} . R_\mathfrak{q} < N$ and
$R_\mathfrak{q}\not\in \Lambda$. Otherwise, we could lift $R_\idealq$ to $R\subset X_{\overline{L}}$ so that then
$R\in\Pic(X_{\overline{L}})=\Pic(X)$ and hence $R_\idealq\in \Lambda$ for all but finitely many $\idealq$, which is a
contradiction.

Thus, outside of finitely many places $\idealq$, $H_\idealq$ and $R_\idealq$ satisfy \eqref{eq: elliptic curve
non-primitive class 03} with $A = H_\idealq$ and $R = R_\idealq$ for all $(-2)$-curves $R_\idealq\subset
((X_L)_\idealq)_{\overline{\FF}_p}$ and $R_\idealq\not\in \Lambda$. By Theorem \ref{thm: elliptic curve
non-primitive class}, there exists an integral genus 1 curve $D_\idealq$ on $((X_L)_\idealq)_{\overline{\FF}_p}$ whose
divisor class does not lie in $\Lambda$. If we fix an $L$-rational point $P$ on $X$, we can choose $D_\mathfrak{q}$ such
that $D_\idealq$ passes through $P_\idealq$. Again, by the lifting argument in the proof of \cite[Proposition 4.2]{liliedtke}
(see also the proof of Theorem \ref{thm: ll}), we see that $H_\idealq .
D_\idealq < N$ only for finitely many places $\idealq$.
\end{proof}

\begin{Corollary}\label{cor:odd rank}
  Let $X$ be a K3 surface over an algebraically closed field $k$ in characteristic zero such that $X$ has odd Picard rank $\rho(X)$. Then Theorem
  \ref{thm: curves on complex k3} holds true for $X$.
\end{Corollary}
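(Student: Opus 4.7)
The plan is to run the strategy of Theorem \ref{thm: ll} simultaneously in both relevant genera. By Corollary \ref{cor: gleq1 suffices}, it suffices to produce the required sequence of integral curves $C_n$ for $g=0$ and $g=1$ separately. The $g=0$ case is precisely Theorem \ref{thm: ll}. So the new content is genus one, where I would mimic the proof of Theorem \ref{thm: ll}, substituting the new Proposition \ref{prop: elliptic curve K3 char p reduction} in place of the Bogomolov--Mumford theorem and keeping regeneration (Theorem \ref{thm: regeneration}) as the lifting device.

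In detail, first dispose of the easy subcases. If $X_{\overline{\QQ}}$ is elliptic or has infinite automorphism group, Theorem \ref{thm: bt tayou} already supplies infinitely many integral curves of geometric genus $g\in\{0,1\}$ with unbounded $H$-degree. Otherwise, by Corollary \ref{cor: reduction}(1), after a sequence of specialisations which do not change the Picard rank (and hence preserve the oddness hypothesis), we may assume that $X$ is defined over a number field $K$, and in particular that no $(-2)$-curve appears on $X$ outside the fixed sublattice corresponding to a general member of the moduli component of $X$.

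Fix an ample polarisation $H$ on $X$. For every $N\ge 0$, Proposition \ref{prop: elliptic curve K3 char p reduction} produces a density one set of places $\mathfrak{q}$ of $L/K$ such that the reduction $X_\mathfrak{q}:=((X_L)_\mathfrak{q})_{\overline{\FF}_p}$ is smooth, non-supersingular and hence not uniruled, and so that $X_\mathfrak{q}$ contains an integral geometric genus $1$ curve $D_\mathfrak{q}$ of $H_\mathfrak{q}$-degree at least $N$ whose class does not lie in the image of the specialisation $\Pic(X_{\overline{K}})\hookrightarrow\Pic(X_\mathfrak{q})$. Since $X_\mathfrak{q}$ is not uniruled and $D_\mathfrak{q}$ has geometric genus $1$, Proposition \ref{prop:defexpdim}(3) guarantees that $D_\mathfrak{q}$ deforms in the expected dimension, so the hypotheses of Theorem \ref{thm: regeneration} are met for the smooth family $\calX\to\Spec\OO_{L_\mathfrak{q}}$ with special fibre $X_\mathfrak{q}$. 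The theorem then yields a geometrically integral genus $1$ regeneration $\widetilde{D}_\mathfrak{q}$ on the geometric generic fibre; finite-typeness of the relative Hilbert scheme of $X/K$ lets us descend $\widetilde{D}_\mathfrak{q}$ to a genus $1$ curve on $X_{\overline{K}}$.

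The final step is to extract an infinite sequence with unbounded $H$-degree. Letting $N\to\infty$, each $\widetilde{D}_\mathfrak{q}$ satisfies $H\widetilde{D}_\mathfrak{q}\ge N$, and distinctness in $\Pic(X_{\overline K})$ follows from the standard Hilbert scheme argument of \cite[Proposition 4.2]{liliedtke}: if infinitely many $\widetilde{D}_\mathfrak{q}$ coincided (or accumulated in a linear system) on $X_{\overline K}$, their common class would lie in $\Pic(X_{\overline K})$ and specialise at almost all $\mathfrak{q}$ to contain $D_\mathfrak{q}$, forcing $[D_\mathfrak{q}]$ into the image of $\Pic(X_{\overline K})$ and contradicting Proposition \ref{prop: elliptic curve K3 char p reduction}(3)(a). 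There is no real obstacle here since the heavy lifting has already been done: the only thing to verify carefully is that the non-supersingularity in Proposition \ref{prop: elliptic curve K3 char p reduction} feeds into the non-uniruledness hypothesis of Theorem \ref{thm: regeneration}, which is exactly what allows Proposition \ref{prop:defexpdim}(3) to apply to the genus one reduction curves.
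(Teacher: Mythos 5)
Your proposal is correct and follows essentially the same route as the paper: reduce to $\overline{\QQ}$ by spreading out, reduce to $g\le 1$ via Corollary \ref{cor: gleq1 suffices}, dispose of the elliptic case by Theorem \ref{thm: bt tayou}, quote Theorem \ref{thm: ll} for $g=0$, and for $g=1$ combine Proposition \ref{prop: elliptic curve K3 char p reduction} with the Regeneration Theorem \ref{thm: regeneration}, using that non-supersingular reductions are not uniruled so that Proposition \ref{prop:defexpdim}(3) supplies the expected-dimension hypothesis. The only difference is that you spell out details the paper leaves implicit; the unbounded-degree conclusion is, as you note, forced by $H\widetilde{D}_\mathfrak{q}\ge H_\mathfrak{q}D_\mathfrak{q}\ge N$.
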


\begin{proof}
Like in Theorem \ref{thm: ll}, one reduces first to the case that the ground field is $\overline{\QQ}$ by the standard
spreading out argument with Noether-Lefschetz loci as in \cite{bht} and \cite[Theorem 13.3.1]{huybrechts}. By Corollary
\ref{cor: gleq1 suffices}, it suffices to treat the case of geometric genus $g\leq1$.  Next, we may assume that $X$ is
not elliptic since these have already been treated in Theorem \ref{thm: bt tayou}.

The case $g=0$ is the main result of \cite{liliedtke} (see also Proposition \ref{thm: ll} and its proof). The case
$g=1$ follows from Proposition \ref{prop: elliptic curve K3 char p reduction} combined with the Regeneration Theorem
\ref{thm: regeneration} and the fact that genus 1 curves always deform in dimension one on a non-uniruled K3
surface from Proposition \ref{prop:defexpdim}.
\end{proof}

\section{The main theorem: The marked point trick} \label{sec: main theorem 2}

By the Hasse--Minkowski Theorem, projective K3 surfaces with Picard rank $\ge 5$ are elliptic, so from
Theorem \ref{thm: bt tayou} and Corollary \ref{cor:odd rank} the only remaining cases of Theorem \ref{thm: curves on complex k3} are K3 surfaces with
Picard rank $2$ and $4$. From \cite[p661]{nikulin} (an unpublished result of Vinberg), there are only the two rank four
lattices $\Lambda$, namely those of Theorem \ref{thm: curves on complex k3 picard rank 4}, such that a K3 surface $X$
with $\Pic(X) = \Lambda$ is not elliptic and has a finite automorphism group.  

It remains to prove Theorem \ref{thm: curves on complex k3} in the case where the Picard rank is 2 or 4 and $g=0,1$.
One of the main ingredients to do this will be the following, saying that in some situations, we can make
sure that a degeneration of a rational (resp., geometric genus 1) curve on a general K3 surface (e.g., those from Theorem
\ref{thm: appendix}) to a special K3 surface of the same Picard lattice remains integral. Actually, we have already
used a similar argument in the proof of Theorem \ref{thm: elliptic curve non-primitive class} by exploiting the
discriminant locus of a family of stable maps. A more sophisticated version of this argument is given by Propositions
\ref{prop: marked point trick g=0} and \ref{prop: marked point trick g=1}, which use a method that we call the {\em
marked point trick}.

\begin{Theorem}\label{thm: curves on complex k3 picard rank r}
    Let $X$ be a nonsupersingular projective K3 surface over an algebraically closed field $k$ with $\Pic(X) = \Lambda$
    for a lattice $\Lambda$ of rank $r\ge 2$, and let $L\in \Lambda$ be a primitive ample divisor on $X$. Let $M\subseteq
    M_{\Lambda,k}$ be an irreducible component of the moduli space of $\Lambda$-polarised K3 surfaces over $k$
    containing the point representing $X$. Suppose that for a general $Y\in M$, there is an integral rational (resp.,
    geometric genus 1) curve in $|L|$ on $Y$. Then the same holds for $X$, i.e., there is an integral rational (resp.,
    geometric genus 1) curve in $|L|$ on $X$.
\end{Theorem}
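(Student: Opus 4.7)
The plan is to propagate the existence of an integral geometric genus $g\in\{0,1\}$ curve in $|\calL|$ from the general point of $M$ down to $[X]$ by combining the marked point trick (Propositions \ref{prop: marked point trick g=0} and \ref{prop: marked point trick g=1}) with Noether--Lefschetz theory. Since $X$ is non-superspecial, Proposition \ref{prop: lattice polarised stack smooth} gives that $[X]$ is a smooth point of $M$, so after passing to a smooth atlas we have a smooth proper family $f:\calX\to M$ together with a relative line bundle $\calL$ restricting to $L$ on the fibre over $[X]$. By hypothesis, there is an irreducible component $N\subset\overline{\mathcal{M}}_g(\calX/M,\calL)$ dominating $M$ whose general fibre consists of stable maps with integral image of geometric genus $g$ in $|\calL|$.

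Suppose for contradiction that every stable map in the fibre of $N$ over $[X]$ has reducible image in $X$. I would then slice down to the surface setting: choose a generic smooth two-dimensional subvariety $U\subset M$ through $[X]$, pull back $\calX$ to $U$, and resolve/normalise the restriction of $N$ to $U$ (in the $g=1$ case after fixing a general marked point section on $\calX_U\to U$ to bring the dimension of the relevant component of stable maps down to that of $U$). This produces a smooth projective surface $V$ with a generically finite map $\pi:V\to U$ and a universal stable map $F:\mathcal{C}\to\calX_U$, exactly as in the diagram of the introduction. By assumption, every stable map in $\pi^{-1}([X])$ has reducible image. Applying Proposition \ref{prop: marked point trick g=0} (respectively \ref{prop: marked point trick g=1}) then yields that the locus in $U$ over which every stable map in the fibre has reducible image is divisorial, hence contains a divisor $B\subset U$ through $[X]$.

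To conclude, I would rule out $B$ via Noether--Lefschetz theory. Reducibility of a curve in $|L|$ on a K3 $Y\in M$ forces the class $\calL|_Y$ to decompose into effective pieces with at least one class not in $\Lambda$, so $Y$ must lie on a proper Noether--Lefschetz sub-locus of $M$. The reducibility locus $W\subset M$ is thus a countable union of closed NL sub-loci, each a proper subvariety of $M$ by the hypothesis that the general $Y\in M$ carries an integral curve in $|L|$. By smoothness of $M$ at $[X]$ and general choice of $U$, I can arrange that NL sub-loci not containing $[X]$ do not pass through $[X]$ in $U$, and that NL sub-loci containing $[X]$ meet $U$ in their expected codimension; in particular $B$ must be contained in an actual NL divisor $M_{\Lambda',k}\subsetneq M$ through $[X]$, where $\Lambda'\supsetneq\Lambda$ has rank $r+1$. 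But then $N$ still dominates $M_{\Lambda'}$, and since $N$ is irreducible with generically integral image, the stable maps parametrised by $N$ over a general point of $M_{\Lambda'}$ should also have integral images, contradicting that every fibre over $B\subset M_{\Lambda'}$ is reducible. The main obstacle is precisely this last step: making rigorous the NL-theoretic genericity argument, ruling out that $[X]$ lies on pathological configurations of NL loci, and tracking the restriction of the irreducible component $N$ to any NL divisor through $[X]$. The smoothness and dimension formulas of Proposition \ref{prop: lattice polarised stack smooth} together with the fact that $N$ is a single irreducible component are what make this work.
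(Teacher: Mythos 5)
Your first half follows the paper's strategy (spread the curves out over the moduli space, cut down to a surface $U$, and apply the marked point trick to produce a curve in $U$ through $[X]$ over which the fibres stay reducible), but the concluding Noether--Lefschetz step has a genuine gap, and the gap traces back to the choice of where to slice. You take $U$ \emph{inside} $M_{\Lambda}$. Then your claim that reducibility of a member of $|L|$ on $Y$ forces a component class outside $\Lambda$ is false for $r\ge 2$: the primitive class $L$ may decompose as a sum of two effective classes both lying in $\Lambda$, so the reducibility locus of your family $N$ need not be contained in any Noether--Lefschetz sublocus of $M$, and the curve $B$ produced by the marked point trick need not sit inside any $M_{\Lambda'}$. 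Worse, even if it did, your final assertion --- that because $N$ is irreducible with generically integral image, its fibres over a \emph{general point of the divisor} $M_{\Lambda'}$ are still integral --- is precisely the kind of statement the theorem is trying to prove (integrality survives specialisation to a Noether--Lefschetz locus) and does not follow from irreducibility of $N$: the locus where the fibre of $N$ becomes reducible is a closed subset that could perfectly well contain all of $M_{\Lambda'}$.

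The paper resolves this by slicing \emph{transversally} rather than internally. It first fixes a flat limit $C=\sum \mu_i\Gamma_i$ of the integral curves, then chooses a corank-one primitive sublattice $\Sigma\subset\Lambda$ containing $L$ but containing no partial sum $0<D<C$ (possible because $L$ is primitive, so each such $D$ cuts out a proper closed condition in the $\PP^{r-2}_\QQ$ of candidate $\Sigma$'s), and takes $U$ to be a general surface in $M_\Sigma$ meeting $M_\Lambda$ in a single irreducible curve $B$ through $[X]$. Now the very general point of $U$ has Picard lattice $\Sigma$, so reducibility of the fibre over the curve $F$ produced by the marked point trick forces a component class outside $\Sigma$, hence forces $\varphi(F)=B$; but over a general point of $B$ the fibre is integral by hypothesis, which is the contradiction. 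This transversal setup is the missing ingredient; without it the marked point trick alone does not close the proof.
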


\begin{Remark}
Since integrality is an open condition, the above actually implies that if there is an integral rational (resp.,
geometric genus 1) curve in $|L|$ for a primitive ample divisor $L$ on a K3 surface of Picard lattice $\Lambda$, the
same holds for every K3 surface with Picard lattice $\Lambda$.
\end{Remark}

Note in particular that Theorem \ref{thm: curves on complex k3 picard rank r} and Theorems \ref{thm: appendix} and
\ref{thm: curves on complex k3 picard rank 4} combined imply the remaining cases of \ref{thm: curves on complex k3},
since on a K3 surface $X$ of Picard rank at least two, we may choose two $\QQ$-linearly independent primitive (i.e.,
indivisible) ample classes $L_1,L_2 \in \Pic(X)$ and then note that $nL_1+L_2$ is ample and primitive for all $n\geq0$.
In fact we get something stronger in Picard rank 2 and for the two special rank four lattices above.

\begin{Corollary}\label{cor: curves on complex k3 picard rank 2}
Let $X$ be a projective K3 surface over an algebraically closed field of characteristic $0$ and assume that $X$ has
Picard rank $\rho(X)= 2$. Then for every primitive ample divisor $L$ on $X$ satisfying one of A1-A3 in Theorem
\ref{thm: appendix}, there is an integral rational (resp., geometric genus 1) curve in $|L|$.
\end{Corollary}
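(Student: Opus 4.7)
The plan is to combine Theorems \ref{thm: curves on complex k3 picard rank r} and \ref{thm: nodal curves} directly, with a short extra step for the genus $1$ statement. First, by the standard spreading-out argument using Noether--Lefschetz loci (cf.\ \cite{bht}, \cite[Theorem 13.3.1]{huybrechts}), I may assume the ground field is $\CC$. Put $\Lambda := \Pic(X)$. Since $\Lambda$ has rank two and contains the ample class $L$ with $L^2 > 0$, it has signature $(1,1)$ by the Hodge Index Theorem, so $\det \Lambda < 0$, and $\Lambda$ has an intersection matrix of the form \eqref{K3lattice-2m} with $4bd - a^2 < 0$. By hypothesis $L \in \Lambda$ satisfies one of A1--A3, so Theorem \ref{thm: nodal curves} produces a Zariski-open dense subset $U \subseteq M_{\Lambda,\CC}$ such that for every $Y \in U$ the linear system $|L|$ contains an integral nodal rational curve on $Y$.

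For the rational curve assertion, let $M \subseteq M_{\Lambda,\CC}$ be an irreducible component containing the moduli point $[X]$. Then $U \cap M$ is a dense open subset of $M$, so a general $Y \in M$ carries an integral rational curve in $|L|$. Since $X$ is automatically non-superspecial in characteristic zero, Theorem \ref{thm: curves on complex k3 picard rank r} transports this existence from the general member of $M$ to $X$ itself.

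For the geometric genus $1$ assertion, I would first upgrade the conclusion on a general $Y \in U \cap M$. By adjunction an integral nodal rational curve $R \in |L|$ has arithmetic genus $p_a(R) = 1 + L^2/2 \geq 2$ and geometric genus zero, and hence at least $p_a(R) \geq 2$ nodes. Exactly as in the proof of Proposition \ref{prop: integral curves in min nef divisor} (and \cite[Proposition 13.2.1]{huybrechts}), each node can be smoothed independently in a one-parameter family inside $|L|$ while keeping the curve integral; smoothing a single node of $R$ yields an integral curve in $|L|$ of geometric genus exactly $1$ on $Y$. Applying Theorem \ref{thm: curves on complex k3 picard rank r} again then transports an integral geometric genus $1$ curve in $|L|$ from a general $Y \in M$ to $X$.

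The genuine difficulties sit inside the two theorems being invoked: Theorem \ref{thm: nodal curves} is where the existence of integral nodal rational curves on a generic $\Lambda$-polarised K3 is established (via \cite{generic}), and Theorem \ref{thm: curves on complex k3 picard rank r} is exactly where the marked point trick is used to ensure that the limit on $X$ of such an integral curve does not break up into several components. Given both of these, the corollary itself is just a matter of verifying that the lattice hypotheses apply and of performing the one-node smoothing to pass from genus $0$ to genus $1$.
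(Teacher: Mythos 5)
Your proposal is correct and is essentially the paper's own argument: the corollary is presented there as the direct combination of Theorem \ref{thm: nodal curves} (integral nodal rational curves in $|L|$ on a general $\Lambda$-polarised K3) with Theorem \ref{thm: curves on complex k3 picard rank r} (the marked point trick transporting integrality to the special member $X$). Your extra step for genus $1$ --- smoothing a single node of the nodal rational curve while keeping the curve integral --- is exactly the mechanism the paper uses elsewhere (cf.\ Proposition \ref{prop: integral curves in min nef divisor}) and correctly fills in the ``resp.\ geometric genus 1'' clause.
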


The above statement can be thought of as a step, in the rank two case, in the direction of \cite[Conjecture
13.0.2]{huybrechts} stating that for any K3, there are infinitely many rational curves in multiples of some fixed
polarisation.

\subsection{The marked point trick}

We now come to the proof of Theorem \ref{thm: curves on complex k3 picard rank r}. The technical heart of it is the
following genus 0 marked point trick.

\begin{Proposition}\label{prop: marked point trick g=0}
Let
\begin{equation}\label{eq: marked point trick g=0 00}
\begin{tikzcd}
\mathscr{C}\ar{d}[left]{f} \ar{r} & V\ar{d}{\varphi} & E\ar[hook']{l}\ar{d}\\
\calX \ar{r}{\pi} & U & \{o\}\ar[hook']{l}
\end{tikzcd}
\end{equation}
be a commutative diagram of quasi-projective varieties over an algebraically closed field,
where
\begin{itemize}
    \item all morphisms are projective,
    \item $\varphi: V\to U$ is a dominant map between two smooth quasi-projective surfaces $V$ and $U$,
    \item $o\in U$ is a point of $U$, $E$ is a connected component of $\varphi^{-1}(o)$, and
    \item $f: \mathscr{C}/V\to \calX$ is a family of stable maps of genus $0$ to $\calX$ over $V$ whose general fibres $\mathscr{C}_b$ are smooth for $b\in V$.
\end{itemize}
Suppose that there is a closed subscheme $Y$ of $\mathscr{C}_E = \mathscr{C}\times_V E$
satisfying that $Y$ is flat over $E$ of pure dimension $\dim E+1$,
\begin{equation}\label{eq: marked point trick g=0 01}
f(Y_s) = f(Y) \text{ for all }s\in E,
\end{equation}
and there are a point $e\in E$ and two distinct irreducible components
$G_1$ and $G_2$ of $Y_e$ with $f_* G_i \ne 0$ for $i=1,2$.
Then there exists an integral curve $F\subset V$ such that $\varphi_* F \ne 0$, $E\cap F \ne \emptyset$,
and $\mathscr{C}_p$ is singular for all $p\in F$.
\end{Proposition}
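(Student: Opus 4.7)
The plan is the following. After replacing $V$ by an \'etale cover $\pi':V'\to V$ if necessary, choose four sections $\sigma_1,\dots,\sigma_4$ of the pulled-back family $\mathscr{C}':=\mathscr{C}\times_V V'\to V'$ so that, at a chosen preimage $e'$ of $e$, the points $\sigma_1(e'),\sigma_2(e')$ are distinct smooth points of $G_1\subset Y_e$ and $\sigma_3(e'),\sigma_4(e')$ are distinct smooth points of $G_2$. This is possible because $f_*G_i\ne 0$ forces each $G_i$ to be a one-dimensional component of $\mathscr{C}_e$ (with infinitely many smooth points), and the family $\mathscr{C}'\to V'$ is smooth near any smooth point of a fibre, allowing local sections that extend after a suitable base change. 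We furthermore arrange that the sections miss the nodes of $\mathscr{C}_e$ and are disjoint on every fibre.

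After stabilising the resulting $4$-pointed family (contracting any $\PP^1$-component with fewer than three special points among markings plus nodes), one obtains a family of $4$-pointed genus-$0$ stable curves over $V'$, hence a moduli morphism $\mu:V'\to\overline{\mathcal{M}}_{0,4}\cong\PP^1$. By construction, the stabilisation of the fibre at $e'$ is the nodal $\PP^1\cup\PP^1$ with markings $\{1,2\}$ on one component and $\{3,4\}$ on the other, so $\mu(e')$ equals the boundary point $b=[\{1,2\}\,|\,\{3,4\}]$. If $\mu$ were constant, every fibre $\mathscr{C}_v$ would already be reducible and any integral curve through $e$ not contracted by $\varphi$ (which exists since $\varphi$ is a dominant morphism between surfaces) would give the desired $F$. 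Otherwise set $D:=\mu^*(b)$; this is a nonzero effective divisor containing $e'$, nef (since $b$ is ample on $\PP^1$), and satisfying $D^2=0$. Crucially, for every $p\in D$ the stabilised fibre is reducible, which forces $\mathscr{C}'_p$ itself to be singular.

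It remains to find a component of $D$ meeting $E':=$ the connected component of $\psi^{-1}(o)$ containing $e'$ (where $\psi:=\varphi\circ\pi'$) that is not contracted by $\psi$. Assume for contradiction that every component of $D$ meeting $E'$ is contracted by $\psi$. Let $A$ be the sum of these components and $B:=D-A$. Each component of $A$ is connected, contracted by $\psi$, and meets the connected set $E'$, hence is contained in $E'$; by definition $B\cap E'=\emptyset$, so $A\cap B=\emptyset$ and thus $A\cdot B=0$. On the other hand, every component $F$ of $D$ lies in the fibre $\mu^{-1}(b)$ and so satisfies $D\cdot F=0$; summing gives $D\cdot A=0$, which combined with $A\cdot B=0$ yields $A^2=0$. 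This contradicts the standard Zariski--Mumford negativity lemma (applied to the Stein factorisation of a projective compactification of $\psi$), which asserts that the intersection form on the components of the connected fibre $E'$ is negative definite, while $A\ne 0$ because the component of $D$ through $e'$ is in $A$. Therefore some component $F'$ of $D$ meeting $E'$ is not contracted by $\psi$, and $F:=\pi'(F')\subset V$ is the required integral curve.

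The main obstacle is executing the negativity step cleanly in the quasi-projective setting: one must compactify $V$ and $V'$ to smooth projective surfaces on which (resolutions of) $\psi$ and $\mu$ are well defined so that the intersection numbers $D^2$, $A\cdot B$, and the negative definiteness of the fibre components of $\psi$ genuinely apply. The construction of the four marked sections through prescribed smooth points on the reducible fibre at $e$ also needs some care to ensure the étale local model really produces globally defined sections inducing a moduli map to $\overline{\mathcal{M}}_{0,4}$.
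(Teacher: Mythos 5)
There is a genuine gap, and it is located exactly at the point your proposal glosses over: the claim that the four sections can be ``arranged to be disjoint on every fibre,'' together with the fact that your argument never uses the hypothesis \eqref{eq: marked point trick g=0 01}. Four sections of $\mathscr{C}'\to V'$ over a two-dimensional base cannot in general be made pairwise disjoint: the locus $\{\sigma_i=\sigma_j\}$ is the preimage of the relative diagonal, which has expected codimension one in $V'$, so it is typically a nonempty divisor. Worse, the collision locus $\{\sigma_1=\sigma_2\}$ is contained precisely in $\mu^{-1}(b)$ for your boundary point $b=[\{1,2\}\mid\{3,4\}]$, so the divisor $D=\mu^*(b)$ is exactly where one expects components along which two marked points collide while the fibre $\mathscr{C}'_p$ stays smooth. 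On such a component the stabilised $4$-pointed curve is nodal only because a bubble carrying the two colliding markings gets inserted, and your inference ``stabilised fibre reducible $\Rightarrow$ $\mathscr{C}'_p$ singular'' fails. Since the whole point is to produce a curve $F$ along which the \emph{geometric} fibre is reducible, this is not a technicality but the crux of the matter.

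The paper's proof avoids this by making a very specific choice of marked points: they are the components $P_1,\dots,P_N$ of $f^*D$ for a sufficiently ample divisor $D$ on the target $\calX$, chosen so that $P_1,P_2$ meet $G_1$ and $P_3,P_4$ meet $G_2$. Hypothesis \eqref{eq: marked point trick g=0 01} — that $f(Y_s)=f(Y)$ is a \emph{fixed} curve as $s$ varies in the connected set $E$ — then forces the images $f(P_i\cap\mathscr{C}_s)=q_i$ to be four fixed, pairwise distinct points of $\calX$ for all $s\in E$. This is what rescues the final step: if over a general $p\in F$ the fibre had only one component not contracted by $\widehat f$, the two markings $\widehat P_a,\widehat P_b$ sitting on a contracted bubble $J$ would satisfy $f(\mathscr{C}_p\cap P_a)=f(\mathscr{C}_p\cap P_b)=\widehat f(J)$ for all $p\in F$, contradicting $q_a\ne q_b$ at the points of $E\cap F$. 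In other words, the constancy and distinctness of the images $q_i$ in $\calX$ is what rules out both the ``markings collide on a smooth fibre'' and the ``markings escape onto a contracted bubble'' degenerations — your arbitrary sections through smooth points of $G_1,G_2$ at the single fibre over $e'$ carry no such information, and the hypothesis on $Y$ goes unused. Your negativity argument for locating a component $F$ of $\mu^{-1}(b)$ meeting $E'$ but not contracted by $\psi$ is essentially sound and parallel to the paper's (which uses nefness of $\sigma^*(\partial\overline{\mathcal M}_{0,4})$ against negative definiteness of the contracted configuration), but it operates on a divisor $D$ whose points need not parametrise singular fibres, so the conclusion does not follow.
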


\begin{proof}
We complete $U, V, \mathscr{C}$ and $\calX$ to projective varieties such that $U$ and $V$ remain smooth and $f:
\mathscr{C}/V\to \calX$ is still a family of stable maps to $\calX$ by the projectivity of the moduli space of stable
maps. Let us choose a general member $D$ of the complete linear series of a sufficiently ample divisor on $\calX$ such
that
\begin{itemize}
    \item $\dim D\cap f(\mathscr{C}_s) = 0$ for all $s\in V$,
    \item $f^* D$ and $\mathscr{C}_e$ meet transversely,
    \item $f(G_i) . D \ge 4$ for $i=1,2$,
and
    \item $f^* D$ and $\mathscr{C}_b$ meet transversely for every point $b\in E$ that lies on two distinct irreducible components of $E$.
\end{itemize}
Note that $E$ is either a point or a connected curve.
Hence there are at most finitely many points $b$ lying on two distinct components of $E$ and we can always choose $D$ general enough so that $f^* D$ and $\mathscr{C}_b$ meet transversely for all such $b$.

Since $D$ meets $f(\mathscr{C}_s)$ properly for all $s\in V$, every irreducible component of $f^*D$ dominates $V$.
After a generically finite base change of $V$, we may assume
\[
f^*D \,=\, P_1 + P_2 + \ldots + P_N,
\]
where $P_1,P_2,\ldots,P_N$ are $N=(f_* \mathscr{C}_s).D$ distinct rational sections of $\mathscr{C}/V$.
Since $f^* D$ and $\mathscr{C}_e$ meet transversely, $P_i\cap \mathscr{C}_e$ are $N$ distinct points on $\mathscr{C}_e$.
Also, as $f^* D$ and $\mathscr{C}_b$ meet transversely for every point $b\in E$ that lies on two distinct irreducible
components of $E$, we have that
$P_i \cap Y_s \ne \emptyset$ for all $s\in E$ if and only if $P_i\cap Y_e \ne \emptyset$. We will now exhibit four
sections which satisfy this equivalence.
By our choice of $D$, we can find four $P_i$'s, say
$P_1, P_2, P_3, P_4$, such that
\[
\begin{aligned}
P_1\cap G_1 &\ne\emptyset,\ P_2\cap G_1 \ne\emptyset,\ P_3\cap G_2 \ne\emptyset,\ P_4\cap G_2 \ne \emptyset
\text{ and }
\\
f(P_i\cap \mathscr{C}_e) &\ne f(P_j\cap \mathscr{C}_e) \text{ for all } 1\le i < j\le 4.
\end{aligned}
\]
As pointed out above, we have
$P_i\cap Y_s \ne\emptyset$
for all $1\le i\le 4$ and $s\in E$.
Since $D$ meets $f(\mathscr{C}_s)$ properly, $f(P_i\cap \mathscr{C}_s)$ is a point for every $s\in V$. Hence
\[
f(P_i\cap \mathscr{C}_s) = f(P_i\cap Y_s)
\]
for all $1\le i\le 4$ and $s\in E$. On the other hand,
by \eqref{eq: marked point trick g=0 01}, $f(Y_s) = f(Y)$ is a fixed curve for all $s\in E$. Therefore, $D\cap f(Y)$ is a set of finitely many points and hence
\[
f(P_i\cap \mathscr{C}_s) \,=\, f(P_i\cap Y_s) \,\subset\, D\cap f(Y)
\]
must be constant as $s$ moves in $E$ for $1\le i\le 4$ since $E$ is connected. Thus,
\[
f(P_i\cap \mathscr{C}_s) = q_i = f(P_i\cap \mathscr{C}_e)
\]
for all $1\le i\le 4$ and $s\in E$, where $q_i$ are four distinct points.
From Proposition \ref{prop:ss reduction}, after replacing $V$ by a generically finite base change and $E$ by a connected
component of the inverse image of $E$ here, we get a family
\begin{equation}\label{eq: marked point trick g=0 02}
\begin{tikzcd}
\widehat{\mathscr{C}}\ar{r}{\alpha} \ar{rd}[below left]{\widehat{f}} & \mathscr{C}\ar{d}[left]{f} \ar{r} & V\ar{d}{\varphi} & E\ar[hook']{l}\ar{d}\\
& \calX \ar{r}{\pi} & U & \{0\}\ar[hook']{l} 
\end{tikzcd}
\end{equation}
where $\widehat{f}: (\widehat{\mathscr{C}}, \widehat{P}_1,\widehat{P}_2,\widehat{P}_3,\widehat{P}_4)
\to \calX$
is a family of stable maps of genus $0$ with $4$ marked points over $V$, with $\widehat{P}_i$ being the inverse images of $P_i$ under $\alpha$.

We use the notation $\overline{\mathcal M}_{g,n}$, ${\mathcal M}_{g,n}$ and $\partial \overline{\mathcal M}_{g,n}$
to denote the moduli space of $n$-pointed stable curves of genus $g$, its open part parametrising smooth curves of genus
$g$ with $n$ distinct marked points and the boundary divisor $\overline{\mathcal M}_{g,n} \backslash {\mathcal
M}_{g,n}$, respectively.  There is a natural morphism (namely the \textit{stabilisation morphism}) $\sigma: V\to
\overline{\mathcal M}_{0,4}$ sending
\[
\sigma(s) = (\widehat{\mathscr{C}}_s, \widehat{\mathscr{C}}_s\cap \widehat{P}_1, \widehat{\mathscr{C}}_s\cap\widehat{P}_2,\widehat{\mathscr{C}}_s\cap\widehat{P}_3,\widehat{\mathscr{C}}_s\cap\widehat{P}_4),
\]
where it is understood that some components of $\widehat{\mathscr{C}}_s$
are contracted.

Let us consider the pullback $\sigma^*(\partial \overline{\mathcal M}_{0,4})$.
Since $\partial \overline{\mathcal M}_{0,4}$ consists of three points in
$\overline{\mathcal M}_{0,4} \cong \PP^1$, it is an ample divisor.
Therefore, $\sigma^*(\partial \overline{\mathcal M}_{0,4})$ is nef on $V$. Also we have $e\in \sigma^{-1}(\partial \overline{\mathcal M}_{0,4})$ by our choice of $P_i$. In summary, we have
\[
\sigma^*(\partial \overline{\mathcal M}_{0,4}) \text{ is nef \quad and\quad }
E\cap \sigma^{-1}(\partial \overline{\mathcal M}_{0,4}) \ne \emptyset.
\]
We claim that there is an irreducible component $F$ of $\sigma^{-1}(\partial \overline{\mathcal M}_{0,4})$ such that
\begin{equation}
\label{eq: marked point trick g=0 03}
E\cap F\ne\emptyset \text{ \quad and\quad } F\not\subset E.
\end{equation}
Otherwise, for every irreducible component $F$ of $\sigma^{-1}(\partial \overline{\mathcal M}_{0,4})$, either $F\cap E =
\emptyset$ or $F\subset E$. Since $E\cap \sigma^{-1}(\partial \overline{\mathcal M}_{0,4}) \ne \emptyset$, there is
a connected component $\Sigma$ of $\sigma^{-1}(\partial \overline{\mathcal M}_{0,4})$ such that $\Sigma\subset E$. Since
$\varphi_* E = 0$, $E$ is supported on a union of integral curves with negative definite self-intersection matrix; the
same holds for $\Sigma$. This contradicts the fact that $\sigma^*(\partial \overline{\mathcal M}_{0,4})$ is nef.

Therefore, there is an integral curve $F\subset \sigma^{-1}(\partial \overline{\mathcal M}_{0,4})$ satisfying \eqref{eq:
marked point trick g=0 03}. Clearly, $\varphi_* F \ne 0$. Otherwise, $\varphi(F) = o$ since $E\cap F\ne\emptyset$; then
this implies that $F\subset E$ since $E$ is a connected component of $\varphi^{-1}(o)$, which is a contradiction.
Therefore, $\varphi_* F \ne 0$.

For a general point $p\in F$, if $\widehat{\mathscr{C}}_p$ contains (at least) two irreducible components $\Gamma_1\ne
\Gamma_2$ such that $\widehat{f}_* \Gamma_i\ne 0$ for $i=1,2$, then the same must hold for $\mathscr{C}_p$; hence
$\mathscr{C}_p$ must be singular and we are done.

Assume for a contradiction that this is not the case. Then there is a unique component $\Gamma$ of
$\widehat{\mathscr{C}}_p$ such that $\widehat{f}_* \Gamma\ne 0$. Since $p\in \sigma^{-1}(\partial
\overline{\mathcal M}_{0,4})$, there exists a connected union $J$ of components of $\widehat{\mathscr{C}}_p$ such that
$\Gamma$ and $J$ meet at a single point and $J$ meets two out of the four sections
$\widehat{P}_1,\widehat{P}_2,\widehat{P}_3,\widehat{P}_4$, say
\[
J \cap \widehat{P}_a \ne \emptyset \text{ and } J\cap \widehat{P}_b \ne\emptyset
\]
for some $1\le a < b\le 4$. Since $\Gamma$ and $J$ meet at a single point and $\Gamma$ is the only component of
$\widehat{\mathscr{C}}_p$ not contractible under $\widehat{f}$, we see that $\widehat{f}$ contracts $J$ to a point.
Therefore,
\[
f(\mathscr{C}_p \cap P_a) = \widehat{f}(\widehat{\mathscr{C}}_p\cap \widehat{P}_a)
= \widehat{f}(\widehat{\mathscr{C}}_p\cap \widehat{P}_b)
= f(\mathscr{C}_p\cap P_b) = \widehat{f}(J)
\]
for a general point $p\in F$ and hence also for all $p\in F$. But $E\cap F \ne\emptyset$ and $f(P_i\cap \mathscr{C}_s) =
q_i$ are four distinct points for all $s\in E$, which is a contradiction.
\end{proof}

We have a similar statement for stable maps of genus $1$.

\begin{Proposition}\label{prop: marked point trick g=1}
Consider the commutative diagram \eqref{eq: marked point trick g=0 00} of quasi-projective varieties over an
algebraically closed field, where
\begin{itemize}
    \item all morphisms are projective,
    \item $\varphi: V\to U$ is a dominant map between two smooth quasi-projective surfaces $V$ and $U$,
    \item $o\in U$ is a point of $U$, $E$ is a connected component of $\varphi^{-1}(o)$, and
    \item $f: \mathscr{C}/V\to \calX$ is a flat family of stable maps of genus $1$ to $\calX$ over $V$ whose general
    fibres $\mathscr{C}_b$ are smooth for $b\in V$.
\end{itemize}
Suppose that there is a closed subscheme $Y$ of $\mathscr{C}_E = \mathscr{C}\times_V E$
such that $Y$ is flat over $E$ of pure dimension $\dim E+1$ satisfying \eqref{eq: marked point trick g=0 01}
and there are a point $e\in E$ and an irreducible component
$G$ of $Y_e$ which has geometric genus $0$ and satisfies $f_* G \ne 0$.
Then there exists an integral curve $F\subset V$ such that $\varphi_* F \ne 0$, $E\cap F \ne \emptyset$,
and $\mathscr{C}_p$ is singular for all $p\in F$.
\end{Proposition}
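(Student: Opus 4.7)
The plan is to mirror the genus~$0$ marked point trick of Proposition~\ref{prop: marked point trick g=0}. After completing the diagram \eqref{eq: marked point trick g=0 00} to projective varieties and choosing a sufficiently ample divisor $D$ on $\calX$ meeting $f(G)$ transversely in enough distinct smooth points, I would perform a generically finite base change of $V$ so that $f^*D = P_1 + \cdots + P_N$ is a sum of rational sections of $\mathscr{C}/V$. Since $f_*G\ne 0$ and $G$ has geometric genus $0$, two such sections, say $P_1, P_2$, can be chosen so that $P_i\cap \mathscr{C}_e$ lies on $G$ at two distinct smooth points; as in the genus~$0$ case, the hypothesis \eqref{eq: marked point trick g=0 01} forces $f(P_i\cap \mathscr{C}_s) = q_i$ to be a fixed point of $D\cap f(Y)$ for all $s\in E$, $i=1,2$.

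Using $(P_1, P_2)$ as marking sections and blowing up as in \eqref{eq: marked point trick g=0 02} to separate them, we obtain a family of $2$-pointed stable maps of genus~$1$ and hence a stabilisation morphism $\sigma: V \to \overline{\mathcal{M}}_{1,2}$. The crucial point is that $\sigma(e) \in \partial\overline{\mathcal{M}}_{1,2}$: since $\mathscr{C}_e$ has arithmetic genus~$1$ but its component $G$ has geometric genus~$0$, $G$ necessarily carries a node (either a self-node if $G$ is the whole of $\mathscr{C}_e$, or the node where $G$ meets another component). With two marked points added, $G$ has at least three special points on its normalisation, so $G$ is not contracted in the stabilisation, the node involving $G$ is preserved, and the stable $2$-pointed model remains singular.

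To conclude we need a nef effective divisor class on $\overline{\mathcal{M}}_{1,2}$ supported on the boundary and containing $\sigma(e)$. For this I would use the Hodge class $\lambda$, which is nef on $\overline{\mathcal{M}}_{1,n}$ and admits an effective representative supported on the boundary (on $\overline{\mathcal{M}}_{1,1}$ one has $12\lambda\equiv \delta_{\mathrm{irr}}$, and $12\lambda$ equals an analogous non-negative combination of boundary divisors on $\overline{\mathcal{M}}_{1,2}$). Then $\sigma^*\lambda$ is an effective nef divisor on $V$ supported in $\sigma^{-1}(\partial\overline{\mathcal{M}}_{1,2})$, which is contained in the discriminant locus $\{p\in V: \mathscr{C}_p \text{ is singular}\}$, and it passes through $e$. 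Exactly as at the end of the genus~$0$ proof, since $E$ is contracted by $\varphi$ its self-intersection matrix is negative definite, so not every component of $\sigma^*\lambda$ through $e$ can be supported in $E$; any integral component $F$ with $e\in F$ and $F\not\subset E$ then satisfies $\varphi_*F\ne 0$, $E\cap F\ne\emptyset$, and $\mathscr{C}_p$ singular for all $p\in F$, as required.

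The main obstacle is to pin down the right nef boundary class on $\overline{\mathcal{M}}_{1,2}$: while $\lambda$ is nef, a direct check that an effective representative of $\lambda$ has support containing $\sigma(e)$ requires tautological calculations, and one must exclude pathologies of the stabilisation in degenerate configurations of $\mathscr{C}_e$. If any subtlety arises, one can instead mark more sections on $G$ and work in $\overline{\mathcal{M}}_{1,n}$ for $n\ge 3$, where the richer relations among $\lambda$, the $\psi_i$, and the boundary divisors give more room to construct a nef class supported on the boundary and containing $\sigma(e)$.
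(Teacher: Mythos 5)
Your overall architecture coincides with the paper's: complete to projective varieties, choose an ample $D$ with $f(G)\cdot D\ge 2$, base change so that $f^*D$ splits into rational sections, pick two sections meeting $G$ with distinct constant images $q_1\ne q_2$ along $E$, pass to the stabilisation morphism $\sigma\colon V\to\overline{\mathcal M}_{1,2}$, and pull back a nef effective divisor supported on $\partial\overline{\mathcal M}_{1,2}$. However, the one place where you deviate --- the choice of that nef class --- is precisely where the argument breaks, and you have correctly sensed this yourself. On $\overline{\mathcal M}_{1,2}$ the relation is $12\lambda=\delta_{\mathrm{irr}}$ with coefficient \emph{zero} on the other boundary divisor $\delta_{0,\{1,2\}}$ (the locus where the two marked points sit on a rational tail attached to a genus $1$ component). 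Since $\lambda$ is pulled back from $\overline{\mathcal M}_{1,1}$, every effective representative of a multiple of $\lambda$ is a pullback of a divisor on the $j$-line, and the only one supported on the boundary is (a multiple of) $\delta_{\mathrm{irr}}$. Now the degenerate fibre you must catch is exactly of the excluded type: in the intended application $\mathscr{C}_e$ is typically a smooth elliptic component $H$ (forced by the condition that $f(\mathscr{C}_s)$ pass through a general point of $X$) together with the rational component $G$ carrying both marked points, so $\sigma(e)\in\delta_{0,\{1,2\}}\setminus\delta_{\mathrm{irr}}$ and $\sigma^*(12\lambda)=\sigma^*\delta_{\mathrm{irr}}$ need not pass through $e$ at all. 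Marking more points does not obviously help, since $\lambda$ remains a pullback from $\overline{\mathcal M}_{1,1}$ and the same support problem persists.

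The paper's fix is simpler and avoids any tautological computation: the open locus $\mathcal M_{1,2}$ is affine, so by Goodman's theorem the complement $\partial\overline{\mathcal M}_{1,2}$ in the projective variety $\overline{\mathcal M}_{1,2}$ is the support of an \emph{ample} effective divisor $A$. Then $\sigma^*A$ is nef and its support is all of $\sigma^{-1}(\partial\overline{\mathcal M}_{1,2})$, hence contains $e$ no matter which boundary component $\sigma(e)$ lands in. From there the negative-definiteness of the components of $E$ rules out a connected component of $\sigma^{-1}(\partial\overline{\mathcal M}_{1,2})$ being contained in $E$, producing the curve $F$ with $\varphi_*F\ne 0$, and the final step (a general $p\in F$ with $\mathscr{C}_p$ smooth would force $f(P_1\cap\mathscr{C}_p)=f(P_2\cap\mathscr{C}_p)$, contradicting $q_1\ne q_2$ on $E\cap F$) is identical to the genus $0$ case, as you say. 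So you should replace the Hodge class by the ample boundary divisor coming from affineness of $\mathcal M_{1,2}$; with that substitution your proof matches the paper's.
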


\begin{proof}
The proof is very similar to that of Proposition \ref{prop: marked point trick g=0}, which is why we only sketch it.
Again, let us first complete $U, V, \mathscr{C}$ and $\calX$ to projective varieties.
We choose a general member $D$ of the complete linear series of a sufficiently ample divisor on $\calX$ such that
\begin{itemize}
    \item $\dim D\cap f(\mathscr{C}_s) = 0$ for all $s\in V$,
    \item $f^* D$ and $\mathscr{C}_e$ meet transversely,
    \item $f(G) . D \ge 2$, and
    \item $f^* D$ and $\mathscr{C}_b$ meet transversely for every point $b\in E$ that lies on two distinct irreducible components of $E$.
\end{itemize}
Again, after a generically finite base change of $V$, we may assume
\[
f^*D \,=\, P_1 + P_2 + \ldots + P_N
\]
where $P_1,P_2,\ldots,P_N$ are $N=(f_* \mathscr{C}_s).D$ distinct rational sections of $\mathscr{C}/V$.
We can find two $P_i$'s, say $P_1, P_2$, such that
\[
 P_1\cap G \,\ne\,\emptyset,\quad
 P_2\cap G \,\ne\,\emptyset,\text{ \quad and\quad }
 f(P_1\cap \mathscr{C}_e) \,\ne\, f(P_2\cap \mathscr{C}_e).
\]
As before, we can prove
$
f(P_i\cap \mathscr{C}_s) = q_i = f(P_i\cap \mathscr{C}_e)
$
for all $1\le i\le 2$ and $s\in E$, where $q_1$ and $q_2$ are two distinct points.

After a generically finite base change of $V$, we can find a morphism $\alpha: \widehat{\mathscr{C}}\to \mathscr{C}$ with the diagram \eqref{eq: marked point trick g=0 02}
such that $\widehat{f}: (\widehat{\mathscr{C}}, \widehat{P}_1,\widehat{P}_2)
\to \calX$
is a family of stable maps of genus $1$ with $2$ marked points over $V$ with $\widehat{P}_i$ being the proper transforms of $P_i$ under $\alpha$.

Again, we consider the natural morphism $\sigma: V\to \overline{\mathcal M}_{1,2}$ sending
\[
\sigma(s) = (\widehat{\mathscr{C}}_s, \widehat{\mathscr{C}}_s\cap \widehat{P}_1, \widehat{\mathscr{C}}_s\cap\widehat{P}_2).
\]
We know that ${\mathcal M}_{1,2}$ is affine. So there is an ample effective divisor $A$ of $\overline{\mathcal M}_{1,2}$ supported on $\partial \overline{\mathcal M}_{1,2}$.
By considering the pullback $\sigma^*(A)$, we can again show that
there is an irreducible component $F$ of $\sigma^{-1}(\partial \overline{\mathcal M}_{1,2})$ satisfying
\eqref{eq: marked point trick g=0 03}.
The rest of the proof is identical to that of Proposition \ref{prop: marked point trick g=0}.
\end{proof}

\subsection{Proof of Theorem \ref{thm: curves on complex k3 picard rank r}}
\label{subsec: proof of mpt thm}

Equipped with the propositions of the previous subsection, we prove first the case where $\rho(X) = 2$ so as to
elucidate the method.
\medskip

\begin{specialproof}[Proof of Theorem \ref{thm: curves on complex k3 picard rank r} when $\rho(X) = 2$]
We consider $(X,L)\in M_{2d}$ as a point in (a smooth atlas of) the moduli space of K3 surfaces polarised by a primitive
ample line bundle where $L^2=2d$. As $X$ is not supersingular, from Proposition \ref{prop: lattice polarised stack
smooth}, we may take $U\subset M$ a general smooth, irreducible, affine surface passing through $(X,L)$ in this moduli
space (more correctly, in this smooth atlas). Since $M_{\Lambda,\ZZ}$ is smooth around $X$, $U$ can be chosen so that there
is precisely one irreducible curve $B\subset U$ passing through $X$ and parametrising K3 surfaces whose Picard lattices
contain $\Lambda$, i.e., $U\cap M=B$. To summarise, if $\calX\to U$ is the corresponding family of K3 surfaces, then we have
that $\Pic(\calX_s) = \ZZ \calL_s$ for $s\in U$ very general and $\calL\in \Pic(\calX)$, $\Pic(\calX_b) = \Lambda$ for
$b\in B$ very general, $\calX_0 = X$ and $\calL_0 = L$ for a point $0\in B$.

From the assumptions there exists an integral rational (resp., geometric genus 1) curve in $|\calL_b|$ on $\calX_b$ for
$b\in B$ general. ``Spreading'' these curves over $U$ by Corollary \ref{cor:def}, taking a closure containing $0$ and
finally applying stable reduction from Proposition \ref{prop:ss reduction}, we arrive at a diagram like \eqref{eq:
marked point trick g=0 00}:
\[
\begin{tikzcd}
\mathscr{C}\ar{d}[left]{f} \ar{r} & V\ar{d}{\varphi} & & E\ar[hook']{ll}\ar{d}\\
\calX \ar{r}{\pi} & U & B\ar[hook']{l} & \{0\}\ar[hook']{l}
\end{tikzcd}
\]
where
\begin{itemize}
    \item all morphisms are projective,
    \item $V$ is a smooth quasi-projective surface,
    surjective and generically finite over $U$ via $\varphi$,
    \item $E$ is a connected component of $\varphi^{-1}(0)$,
    \item $f: \mathscr{C}/V\to \calX$ is a flat family of stable maps of genus $g=0$ or 1 to $\calX$ over $V$,
    \item $\mathscr{C}_s$ is smooth for $\varphi(s)\in B$ general, and
    \item if $g=1$, we require $f(\mathscr{C}_s)$ to pass through a fixed general point $q\in X$ for all $s\in E$.
\end{itemize}
Clearly, the image $f(\mathscr{C}_s)$ is rigid for $s\in E$;
otherwise we obtain a contradiction to Proposition \ref{prop:defexpdim}.
That is,
\[
f(\mathscr{C}_s) = f(\mathscr{C}_E) \text{ for all } s\in E
\]
where $\mathscr{C}_E = \mathscr{C} \times_V E$.
We claim that $\mathscr{C}_s$ is smooth for all $s\in E$. Otherwise, suppose that $\mathscr{C}_e$ is singular for some $e\in E$.

When $g = 0$, from stability of the model, it is obvious that there exist two irreducible components $G_1\ne G_2$ of
$\mathscr{C}_e$ with $f_* G_i \ne 0$ for $i=1,2$.  When $g=1$, $\mathscr{C}_e$ obviously contains a component $G$ such
that $G$ is rational and $f_* G\ne 0$. Consequently, by Proposition \ref{prop: marked point trick g=0} or \ref{prop:
marked point trick g=1} with the same notation as above and $Y=\mathscr{C}_E$, we get that there exists an
integral curve $F\subset V$ such that $\varphi_* F\ne 0$, $E\cap F \ne \emptyset$ and $\mathscr{C}_p$ is singular for
all $p\in F$.

We claim that $\mathscr{C}_p$ is reducible for all $p\in F$. This is obvious when $g = 0$. When $g=1$,
$f(\mathscr{C}_s)$ passes through a general point $q\in X$ for all $s\in E$ so $f(\mathscr{C}_s)$ contains a genus 1
curve. Therefore, $\mathscr{C}_s$ has a component of geometric genus $1$ not contracted by $f$ for all $s\in E$. 
Since $E\cap F \ne \emptyset$, the same holds for $\mathscr{C}_p$ over a general point $p\in F$. It follows that
$\mathscr{C}_p$ is reducible for all $p\in F$.

Consequently, we may write $\mathscr{C}_p = M_p + N_p$ for $p\in F$, where $M_p$ and $N_p$ are unions of components of
$\mathscr{C}_p$ with $f_* M_p \ne 0$ and $f_* N_p \ne 0$. When $p\in E\cap F$, $f_* M_p + f_* N_p \in |L|$ on $X$.
Since $L$ is primitive, $f_* M_p$ and $f_* N_p$ must generate $\Lambda$ over $\QQ$. Therefore,
$\Pic(\calX_{\varphi(p)})$ contains $\Lambda$ as a sublattice for all $p\in F$. This implies that $B = \varphi(F)$. But
$\mathscr{C}_p$ is smooth for $\varphi(p)\in B$ general, which is a contradiction.

Therefore, $\mathscr{C}_s$ is smooth for all $s\in E$. Then $f_* \mathscr{C}_s\in |L|$ is supported on an integral curve
on $X$ for $s\in E$. Since $L$ is primitive, it must be reduced and hence an integral curve of genus $g$ in $|L|$.
\end{specialproof}

The general case is now similar, and we identify the parts that need to be altered.\medskip

\begin{specialproof}[Proof of Theorem \ref{thm: curves on complex k3 picard rank r}]
Let $B\subset M_\Lambda$ be a general smooth affine curve passing through the point $0$ representing $X$ (again, in some
suitable smooth atlas). By our hypotheses, there is an integral rational (resp., geometric genus 1) curve in $|L|$ on a
general $Y\in B$. Let $C\subset X$ be a flat limit of these curves. For genus 1 curves, we may choose $C$ to pass
through a general point on $X$.
Suppose that
\[
C = \sum_{i=1}^n \mu_i \Gamma_i
\]
where $\mu_i\in \ZZ^+$ and $\Gamma_i$ are irreducible components of $C$ for $i=1,2,\ldots,n$. We can always find a primitive sublattice
$\Sigma$ of $\Lambda$ satisfying that
\begin{itemize}
\item $\rank_\ZZ \Sigma = r - 1$,
\item $L\in \Sigma$, and
\item $D\not\in \Sigma$ for all $0 < D < C$, i.e.,
\[
\Sigma \cap \Big\{
\sum_{i=1}^n a_i \Gamma_i: 0\le a_i\le \mu_i,\ a_i\in \ZZ,\
0 < \sum_{i=1}^n a_i < \sum_{i=1}^n \mu_i
\Big\} = \emptyset.
\]
\end{itemize}
Such $\Sigma$ always exists since the primitive sublattices $\Sigma\subset \Lambda$ of rank $r-1$ and containing $L$ are
parametrised by the Grassmannian ${\mathop\text{Gr}}_\QQ(r-2,r-1)\cong \PP_\QQ^{r-2}$. Since $L$ is primitive, $L$
and $D$ are linearly independent over $\QQ$ for all $0 < D < C$. Therefore, the locus of $D\in \Sigma$ has codimension
one in $\PP_\QQ^{r-2}$ for each $0<D<C$. Since there are only finitely many such $D$'s, we can always find $\Sigma \in
\PP_\QQ^{r-2}$ that does not contain any $0<D<C$.

Let $M_\Sigma$ be the component of the moduli space of $\Sigma$-polarised K3 surfaces containing $M_\Lambda$. As before,
we choose a general smooth affine surface $U\subset M_\Sigma$ containing $B$ and let $\calX\to U$ be the corresponding
family of K3 surfaces such that $\Pic(\calX_s) = \Sigma$ for $s\in U$ very general, $\Pic(\calX_b) = \Lambda$ for $b\in
B$ very general, $\calX_0 = X$ and $\calL_0 = L$ for $\calL\in \Pic(\calX)$.

Again, by spreading integral rational (resp., geometric genus 1) curves in $|\calL|$ on $\calX_b$ for $b\in B$ over $U$
and applying stable reduction, we arrive at a diagram like (\ref{eq: marked point trick g=0 00}). The rest of the proof is
the same as above.
\end{specialproof}

\section{Positive characteristic}\label{sec: pos char}

Theorems \ref{thm: bt tayou}, \ref{thm: curves on complex k3 picard rank r}, Corollary \ref{cor: reduction} and
Proposition \ref{prop: supersingular OK} imply that in order to obtain Theorem A in the cases $g=0,1$ (and hence Conjecture \ref{conj: inf
many rational curves}) in positive characteristic, the cases that remain are the following, all over
$\overline{\FF}_p$.
\begin{enumerate}
    \item $X$ is a general point of $M_{\Lambda,\overline{\FF}_p}$ where $\Pic(X)=\Lambda$ is of rank 2.
    \item $X$ is a general point of $M_{\Lambda,\overline{\FF}_p}$ for $\Lambda$ one of the lattices in Theorem \ref{thm:
    curves on complex k3 picard rank 4}.
    \item $X$ is isotrivially elliptic, $p\leq3$ and $g=0$.
    \item $X$ is uniruled, $p\leq3$ and $g=1$.
\end{enumerate}
In this section we will prove that there is a mixed characteristic version of Theorem \ref{thm: curves on complex k3
picard rank r} which produces integral rational curves in primitive classes on any K3 surface of lattice from (1) or (2)
from the list above, by specialising them from characteristic zero.

\begin{Remark}
Note that to generalise the higher genus claims of Theorem \ref{thm: curves on complex k3} to arbitrary characteristic,
one would need a version of Lemma \ref{lem: severi K3}, which ultimately relies on extending the lemma of
Arbarello--Cornalba (cf.\ Lemma \ref{prop:defexpdim}); see Remarks \ref{rem:char p AC} and \ref{rem: char p AC}.
\end{Remark}

We begin with a version of stable reduction over an arbitrary base scheme which will be used in the proof. 

\begin{Proposition}\label{prop:ss reduction}
    Let $\calX\to U$ be a smooth proper family of varieties over an integral scheme, let $\pi:\calC\to U$ be a flat proper
    morphism whose generic fibre is a geometrically integral curve of geometric genus $g\geq0$, and let $f:\calC\to\calX$ be a
    $U$-morphism which does not contract the generic fibre. Then there is a commutative diagram
    \[
    \begin{tikzcd}
        \calC'\ar{d}[left]{\pi'}\ar[bend left]{rr}{f'}\ar{r} & \calC\ar{d}[left]{\pi}\ar{r}[below]{f} & \calX\ar{dl} \\
        V\ar{r}{\gamma} & U & 
    \end{tikzcd}
    \]
    where $V$ is an integral scheme, $\gamma$ is a proper surjective generically finite morphism, and $\pi'$ is a stable
    family of genus $g$ curves with smooth generic fibre.
\end{Proposition}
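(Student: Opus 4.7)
The plan is to invoke properness of the Kontsevich moduli stack of stable maps discussed in Section \ref{subsec: stable maps}. Let $\eta$ denote the generic point of $U$ and, after harmlessly shrinking $U$, choose a relatively ample line bundle $\calH$ on $\calX/U$ and set $d := f_{\eta,*}[\calC_\eta] \cdot \calH_\eta$. Let $\nu : \widetilde{\calC_\eta}\to \calC_\eta$ be the normalisation of the generic fibre, which is a smooth, geometrically integral curve of genus $g$ by hypothesis. The composition $h_\eta := f_\eta \circ \nu$ is then a stable map of genus $g$ and degree $d$ from a smooth irreducible source, hence defines a $k(\eta)$-point of the Deligne--Mumford stack $\overline{\mathbf{M}}_g(\calX/U, d)$, which is proper over $U$.

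Next, I would take $Z$ to be the scheme-theoretic closure of the image of this $k(\eta)$-point. Then $Z$ is an integral closed substack of $\overline{\mathbf{M}}_g(\calX/U, d)$ whose projection to $U$ is proper and generically an isomorphism onto $U$. Passing to a scheme atlas of the normalisation of $Z$, and if necessary to a further generically finite cover to rigidify automorphisms of the universal stable map (for instance by adjoining marked sections pulled back from sufficiently ample auxiliary divisors on $\calX$), one obtains an integral scheme $V$ equipped with a proper, surjective, generically finite morphism $g : V\to U$ and a morphism $V\to \overline{\mathbf{M}}_g(\calX/U, d)$ extending $h_\eta$.

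Pulling back the universal stable map along this morphism yields a flat family $\pi' : \calC'\to V$ of nodal curves of arithmetic genus $g$ together with a morphism $f' : \calC'\to \calX$ whose restriction to the generic fibre of $V\to U$ recovers $h_\eta$; in particular $\pi'$ has smooth generic fibre. To produce the required morphism $\calC'\to \calC$ closing the diagram, I would use that $f'$ factors through the closed image of $f:\calC\to \calX$, since this holds over the generic point by construction of $h_\eta$ and therefore everywhere by closedness; in the typical case where $f$ is a closed embedding this immediately gives the factorisation $\calC'\to \calC$, and more generally one lifts through $f$ using its properness, at the cost of a further harmless generically finite base change.

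The hard part will be the bookkeeping around low-genus automorphisms in the second step: for $g\in\{0,1\}$ stable maps can have non-trivial automorphisms, so the universal family only exists over the stack and descent to a scheme $V$ requires the rigidification above. Apart from this technicality, the argument is essentially standard stable reduction, relying only on properness of $\overline{\mathbf{M}}_g(\calX/U, d)$ over $U$ established in Section \ref{subsec: stable maps}.
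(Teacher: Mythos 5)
Your overall strategy is the one the paper actually follows (and which it attributes to Benoist): normalise the generic fibre to get a stable map of genus $g$, view it as a point of the proper Deligne--Mumford stack $\overline{\mathbf{M}}_g(\calX/U,\,\cdot\,)$, take the closure of its image, and descend to a scheme, handling automorphisms in genus $0$ and $1$ by adding marked points (the paper uses $\overline{\mathbf{M}}_{0,4}(\calX/U,P_1,\dots,P_4,\beta)$ and $\overline{\mathbf{M}}_{1,1}(\calX/U,P_1,\beta)$, exactly the rigidification you propose). However, there is one genuine gap in your passage from the closed substack $Z\subset\overline{\mathbf{M}}_g(\calX/U,d)$ to the scheme $V$: you propose to take ``a scheme atlas of the normalisation of $Z$'', but an atlas of a Deligne--Mumford stack is \'etale over the stack and in particular is essentially never proper over it, so the composite $V\to Z\to U$ obtained this way need not be proper. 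Properness of $g:V\to U$ is not a cosmetic requirement here --- it is used crucially in the application in Section \ref{sec: pos char}, where surjectivity of $V_p\to\overline{\mathcal M}_{0,4,k}$ is deduced from properness of $V_p$. A further ``generically finite cover'' does not repair this, since the defect is properness, not generic finiteness. The correct tool, and the one the paper invokes, is Chow's Lemma for Deligne--Mumford stacks (equivalently, the existence of a \emph{finite} surjective morphism $Z'\to\mathbf{M}$ from a scheme carrying a semistable family); with that replacement your argument goes through, taking $\widetilde U$ to be the normalisation of $U$ in a component of $\eta_1\times_{\mathbf M}Z'$ and resolving the indeterminacy of $\widetilde U\dashrightarrow Z'$.

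Two smaller remarks. First, you should not shrink $U$ to find a relatively ample line bundle: the conclusion requires $g:V\to U$ proper and surjective over all of $U$, and shrinking forfeits this; either use that the morphisms are assumed projective (so a relatively ample bundle exists globally) or, as the paper does, index the stack by the cohomology class $\beta$ rather than by the $\calH$-degree $d$. Second, your discussion of how to close the square with a morphism $\calC'\to\calC$ is reasonable and is in fact more explicit than the paper, whose proof is silent on this point.
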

\begin{proof}
    The proof follows that of \cite[Th\'eor\`eme 1.5]{benoistss} and we thank its author for referring us to it. Let
    $\eta$ be the generic point of $U$. Let $\widehat{\calC}_{\eta}$ be the normalisation of $\calC_\eta$ which, after passing
    to a finite extension $k(\eta_1)/k(\eta)$, we may assume is a smooth projective curve of genus $g$ over $k(\eta_1)$.
    If $g\geq2$, consider the induced moduli map
    \[
    \eta_1\to \mathbf{M}:=\overline{\mathbf{M}}_g(\calX/U, \beta)
    \]
    where $\beta$ the cohomology class of the images of $f$ and $\mathbf{M}$ the Kontsevich moduli stack (see, e.g.,
    \cite[Theorem 50]{ak}, \cite[Theorem 13.3.5]{olsson} for this level of generality) which is Deligne--Mumford.
    If $g=0$, consider four general closed points $P_1,\ldots,P_4\in\widehat{\calC}_{\eta_1}(k(\eta_1))$ and take
    instead the induced morphism $\eta_1\to \mathbf{M}:=\overline{\mathbf{M}}_{0,4}(\calX/U,P_1,\ldots,P_4,\beta)$ (where $P_i$,
    with slight abuse of notation, are again the images of $P_i$ in $\calX_{\eta}$), and analogously if $g=1$, take one point
    $P_1$ and $\mathbf{M}:=\overline{\mathbf{M}}_{1,1}(\calX/U, P_1, \beta)$.

    By Chow's Lemma for Deligne--Mumford stacks, there is a finite surjective morphism $Z\to \mathbf{M}$ from a scheme
    $Z$ and a semistable family $C\to Z$ of genus $g$ (resp., marked) curves over it. Now let $\eta'$ be an irreducible component of
    $\eta_1\times_{\mathbf{M}} Z$, and take $\widetilde{U}$ the normalisation of $U$ in $\eta'$ and $V\to\widetilde{U}$
    the proper birational morphism resolving the indeterminacies of $\widetilde{U}\dashrightarrow Z$. Pulling back $C$
    to $V$ gives the required family $\calC'\to V$.
\end{proof}

\begin{Remark}
    Although $\calX$ appears auxilliary in the above statement, its existence is used in the proof.
\end{Remark}

The goal of the rest of this section is to prove the following.

\begin{Theorem}
    Let $X$ be a non-supersingular smooth projective K3 surface with Picard rank $\rho\geq2$ over an algebraically closed
    field $k$ with $\chara(k)=p>0$, and let $L$ be a primitive ample class in $\Lambda=\Pic(X)$. Let $M$ be an
    irreducible component of $M_{\Lambda,\ZZ}$ containing $X$, and assume that the general point $Y\in M$ has an
    integral rational (resp., geometric genus 1) curve in $|L|$. Then there is an integral rational (resp., geometric
    genus 1) curve in $|L|$ on $X$.
\end{Theorem}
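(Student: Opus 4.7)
The plan is to mimic the characteristic zero proof of Theorem \ref{thm: curves on complex k3 picard rank r}, replacing the complex moduli surface $U \subset M_{\Sigma,\CC}$ used there with a mixed characteristic analogue obtained by lifting $X$. The non-superspecialness hypothesis enters through Proposition \ref{prop: lattice polarised stack smooth}, which provides lifts of $X$ to characteristic zero inside the relevant moduli stacks of lattice-polarised K3 surfaces.

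First I would lift $(X,L)$ to characteristic zero. By Proposition \ref{prop: lattice polarised stack smooth}, $(X,L)$ is a smooth point of $\mathbf{M}_\Lambda$ over $\ZZ[1/\Delta_\Lambda]$. Applying Construction \ref{construction:defspace} with $A = W(k)$, after possibly a finite extension, I obtain a one-dimensional family $\calX_B \to B$ over the spectrum of a DVR inside $M_\Lambda$, with special fibre $X$ and geometric generic fibre $Y_{\overline{\eta}}$ a characteristic zero $\Lambda$-polarised K3 surface. By hypothesis there is an integral rational (resp.\ geometric genus $1$) curve in $|L_{Y_{\overline{\eta}}}|$; spreading it out via Corollary \ref{cor:def} and passing to the flat limit over the closed point yields a divisor $C = \sum_i \mu_i \Gamma_i \in |L|$ on $X$.

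Next I would construct the two-dimensional base. As in the characteristic zero argument, choose a primitive sublattice $\Sigma \subsetneq \Lambda$ of rank $r-1$ containing $L$ such that no $0 < D < C$ lies in $\Sigma$; the existence of such $\Sigma$ follows from the same Grassmannian dimension count in ${\rm Gr}_\QQ(r-2, r-1)$ since $L$ and any such $D$ are $\QQ$-linearly independent. Applying Proposition \ref{prop: lattice polarised stack smooth} once more, now to $\mathbf{M}_\Sigma$ at the non-superspecial point $X$, I can thicken $B$ to a smooth two-dimensional subscheme $U \subset M_{\Sigma, \ZZ[1/\Delta_\Sigma]}$ containing $B$, where the generic fibre has Picard lattice $\Sigma$ and the Picard lattice jumps to $\Lambda$ precisely along $B$. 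Denote by $\calX \to U$ the corresponding family of K3 surfaces and by $\calL$ the $\Sigma$-line bundle extending $L$.

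Finally I would run the marked point trick as in Section \ref{subsec: proof of mpt thm}. Spreading the integral rational (resp.\ genus 1) curves on fibres over $B$ via Corollary \ref{cor:def} and applying Proposition \ref{prop:ss reduction} produces a stable family $\mathscr{C} \to V \xrightarrow{\varphi} U$ with $V$ generically finite over $U$, together with $f: \mathscr{C} \to \calX$; taking a connected component $E \subset \varphi^{-1}(0)$ through the point representing $X$, the key rigidity $f(\mathscr{C}_s) = f(\mathscr{C}_E)$ for $s \in E$ follows from Proposition \ref{prop:defexpdim}(3), assuming $X$ is not uniruled. The main obstacle lies here: non-superspecialness does not imply non-supersingularity, but supersingular K3s are uniruled, so one must first reduce to the non-supersingular (hence non-uniruled) case, which I would do by appealing to Proposition \ref{prop: supersingular OK} in the supersingular case to produce rational curves directly, or by observing that the remaining cases of interest from the list at the start of Section \ref{sec: pos char} are all non-supersingular by construction (items (1) and (2) concern general points of $M_\Lambda$). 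Granted non-uniruledness, Propositions \ref{prop: marked point trick g=0} and \ref{prop: marked point trick g=1} supply a curve $F \subset V$ with $\varphi_\ast F \neq 0$, $E \cap F \neq \emptyset$, and $\mathscr{C}_p$ reducible for all $p \in F$; writing $\mathscr{C}_p = M_p + N_p$ with $f_\ast M_p, f_\ast N_p \neq 0$ and $f_\ast M_p + f_\ast N_p \in |L|$ on $X$ for $p \in E \cap F$, primitivity of $L$ forces $f_\ast M_p$ and $f_\ast N_p$ to generate $\Lambda$ over $\QQ$, so $\Pic(\calX_{\varphi(p)}) \supseteq \Lambda$ for all $p \in F$, hence $\varphi(F) \subseteq B$; but $\mathscr{C}_p$ is smooth for $\varphi(p) \in B$ general, contradicting the singularity conclusion. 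Therefore $\mathscr{C}_s$ is smooth for every $s \in E$ and its image in $X$, being in the primitive class $|L|$, is an integral rational (resp.\ geometric genus 1) curve as required.
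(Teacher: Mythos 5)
Your overall strategy coincides with the paper's: lift $(X,L)$ to characteristic zero, specialise the integral curve from the generic characteristic-zero fibre along the arithmetic Noether--Lefschetz curve $B$, and rule out reducibility of the limit by a marked point argument on a two-dimensional base containing $B$. However, there is a genuine gap at the step where you invoke Propositions \ref{prop: marked point trick g=0} and \ref{prop: marked point trick g=1} directly. Those propositions are stated and proved for surfaces $V\to U$ over an algebraically closed field, and their proof hinges on an intersection-theoretic fact on a proper surface over a field: the pullback $\sigma^*(\partial\overline{\mathcal M}_{0,4})$ is nef, while the $\varphi$-contracted locus $E$ is supported on a configuration with negative definite self-intersection matrix, so no connected component of the nef divisor can be contained in $E$. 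Your $V$ and $U$ are two-dimensional schemes flat over $W(k)$ (resp.\ $\ZZ[1/\Delta_\Sigma]$), i.e., arithmetic surfaces, and this nefness-versus-negative-definiteness argument does not transfer verbatim. The paper's proof in this setting replaces it by a different mechanism: one checks that the stabilisation morphism $\phi\colon V\to\overline{\mathcal M}_{0,4,W}\cong\PP^1_W$ is surjective on \emph{both} the special fibre $V_p$ (because the general member over $U_p$ is irreducible while the member over $X$ is reducible, and $V_p$ is proper) and the generic fibre $V_{\overline\eta}$ (because the marked sections are general), so that $F=\phi^*(\partial\overline{\mathcal M}_{0,4,W})$ is a divisor flat over $W$; flatness forces $F_{\overline\eta}\ne\emptyset$, and the contradiction is then extracted at the characteristic-zero fibre, where the $\Lambda$-Noether--Lefschetz locus of $U_{\overline\eta}$ is the single point $X_{\overline\eta}$ carrying the irreducible curve $R$. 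This relative-over-$W$ adaptation is the actual new content of the positive-characteristic argument, and your proposal treats it as automatic.

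Two smaller points. First, your detour through supersingularity is unnecessary: since $\Pic(X)=\Lambda$ must embed primitively into the K3 lattice for $M_{\Lambda,\ZZ}$ to be defined, one has $\rho\le 20<22$, so $X$ is automatically non-supersingular, hence non-uniruled, and the rigidity input from Proposition \ref{prop:defexpdim} is available without further argument. Moreover, the fallback you suggest via Proposition \ref{prop: supersingular OK} would not prove the statement anyway, as it produces rational curves of unbounded degree but not an integral curve in the prescribed primitive class $|L|$. Second, your conclusion ``$\Pic(\calX_{\varphi(p)})\supseteq\Lambda$ for all $p\in F$, hence $\varphi(F)\subseteq B$'' needs the additional observation that a component of $F$ could a priori lie in the special fibre $V_p$; one must use the genericity of $U_p$ (its $\Lambda$-Noether--Lefschetz locus is finite) or, as the paper does, the flatness of $F$ over $W$, to ensure that $F$ dominates $B$ and the contradiction is reached at the characteristic-zero point.
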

\begin{proof}
We give a proof only in the case of geometric genus 0 and where the rank of $\Lambda$ is $\rho=2$, and refer to
Subsection \ref{subsec: proof of mpt thm} for the changes that need to be made for the higher rank case and for $g=1$.

Let $L^2=2d$, $W=\Spec W(k)$ the ring of Witt vectors of $k$ with generic and special points $\eta, p$ respectively, and
denote by $M_{2d, W}$ (a smooth atlas of) the moduli space of $2d$-polarised K3 surfaces over $W$ as in Section
\ref{subsec:mod of k3s}. Consider now a diagram
\[
\begin{tikzcd}
    \cal{X}\ar{r} & U\ar[hook]{r}\ar{dr} & M_{2d,W}\ar{d} \\
    & & W 
\end{tikzcd}
\]
where $U$ is a quasi-projective subvariety of $M_{2d, W}$ containing $X$, smooth and of relative dimension 1 over $W$
(with special and generic fibre $U_p$ and $U_\eta$ respectively) and $\calX\to U$ is the corresponding family of K3
surfaces, in such a way that:
\begin{enumerate}
    \item $U_p$ is a general curve in $M_{2d,k}$ containing the point $X$, i.e., its generic\footnote{If
    $k=\overline{\FF}_p$, then every closed point necessarily has even Picard rank.} point corresponds to a K3 surface
    with geometric Picard group equal to $\ZZ L$,

    \item $U_{\overline\eta}$ is a general curve in $M_{2d, \overline{\eta}}$ which contains a lift $X_{\overline\eta}$
    of $X$ such that $X_{\overline\eta}$ is a general K3 with $\Pic X_{\overline\eta}=\Lambda$ (whose existence is
    guaranteed by \cite{lieblichmaulik}), and as above the very general point of $U_{\overline\eta}$ corresponds to a
    characteristic zero K3 of Picard group $\ZZ L$,
    
    \item there is a DVR $B$, flat over $W$, embedded in $U$ with image containing $X_{\overline\eta}, X$, in such a way
    that $M_{\Lambda, W}\cap U = B$, i.e., $B$ is the $\Lambda$-Noether--Lefschetz locus in $U$.
\end{enumerate}

From the assumption there is an integral rational curve $R\in |L|$ on $X_{\overline\eta}$. We spread the normalisation
morphism $\widehat{R}\to R$ out to a family of genus zero curves $\mathcal{T}\to U$, whose generic member is integral.
By Nagata compactification \cite[\href{https://stacks.math.columbia.edu/tag/0F41}{Tag 0F41}]{stacks}, we may assume that
$U$ and $\mathcal{T}$ are proper over $W$. From Proposition \ref{prop:ss reduction}, there is a generically finite and
proper morphism $f:V\to U$ and a generically smooth family 
\[
\begin{tikzcd}
    \calC\ar{d}\ar{r} & \calX\ar{d} \\
    V\ar{r}{f} & U
\end{tikzcd}
\]
of stable curves of genus 0 mapping to $\calX$ with images of class $L$. By the usual flatness criterion over one
dimensional bases, $V$ is flat and proper over $W$. If $0\in U_p$ is the point corresponding to $X$, we let $E\subset V$
be an irreducible component of $f^{-1}(0)$.

If the fibre of $\calC\to V$ over some point of $E$ is smooth, then we are done as its image will be an integral curve
in $|L|$ on $X$. If not, then it can be written as a sum of at least two components like in the proof of Theorem
\ref{thm: curves on complex k3 picard rank r}. We proceed now like in the beginning of the proof of Proposition
\ref{prop: marked point trick g=0}. After base changing $V$, we may assume that we have four rational sections $P_1,\ldots,
P_4$ of $\calC\to V$, two on each of two adjacent irreducible components of $\calC\times_V E$, neither of which gets
contracted to $\calX$, exactly like in the original proof, so that these sections meet general points on the generic
fibre of $\calC$. We get an induced moduli (stabilisation) $W$-morphism
\[
\begin{tikzcd}
V\ar{rr}{\phi}\ar{dr} & & \overline{\mathcal{M}}_{0,4,W}\cong\PP^1_W\ar{dl} \\
& W &
\end{tikzcd}
\]
Note that the fibre $\calC_p\to V_p$ over $p\in W$ is a generically smooth morphism since the general K3 parametrised by
$U_p$ has Picard rank one generated by the primitive class $L$, hence the corresponding fibre of $\calC$ is an irreducible
stable genus 0 curve. Over the point $X$, there is at least one fibre of $\calC$ which is reducible, so $\phi(V_p)$ meets the
boundary of $\overline{\mathcal{M}}_{0,4,k}$, implying that the induced morphism $V_p\to \overline{\mathcal{M}}_{0,4,k}$
is surjective as $V_p$ is proper. Similarly, since the sections $P_i$ were chosen general, $\phi$ cannot contract the
generic fibre $V_{\overline\eta}$, so $\phi$ is surjective as $V$ is proper over $W$. In particular, we may pull back the relatively
ample $W$-divisor $\partial\overline{\mathcal{M}}_{0,4, W}$ to a divisor $F\subset V$, flat over $W$.

As $f$ is surjective, the image of $F$ in $U$ must again meet both the generic and the central fibres $U_{\overline\eta}, U_p$
respectively. Note though that $f(F)_{\overline\eta}$ must be the point $X_{\overline\eta}$, since its points
correspond to K3s whose corresponding curves in $\calC$ are reducible yet of primitive class $L$, hence with components rationally spanning $\Pic
X_{\overline\eta}\otimes\QQ$. In other words, $f(F)_{\overline\eta}\subset B_{\overline\eta}$ is in the
$\Lambda$-Noether--Lefschetz locus of $U_{\overline\eta}$, yet $X_{\overline\eta}$ is the only such point. This
contradicts the irreducibility of $R$ though, proving that the specialisation of $R$ to $X$ must also be irreducible.
\end{proof}

The above, along with Theorems \ref{thm: appendix} and \ref{thm: curves on complex k3 picard rank 4}, now implies the following.

\begin{Corollary}\label{cor:char p cor}
Let $X$ be a projective K3 surface over an algebraically closed field $k$ of characteristic $p\geq0$. Then $X$ contains
infinitely many rational curves, with the only possible exception if $k=\overline{\FF}_p$ with $p=2,3$ and $X$ is
isotrivially elliptic.
\end{Corollary}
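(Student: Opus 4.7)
The plan is to combine the case enumeration opening this section with the mixed characteristic specialisation theorem just proved. In characteristic zero the statement follows from Theorem \ref{thm: curves on complex k3} applied with $g=0$. Assume therefore $p>0$. By Corollary \ref{cor: reduction} (via specialisation followed by regeneration as in Theorem \ref{thm: ll}) I may reduce to the case $k=\overline{\FF}_p$. If $X$ is supersingular, Proposition \ref{prop: supersingular OK} provides infinitely many smooth rational curves of unbounded degree, so assume $X$ is not supersingular, hence not superspecial, placing it in the smooth locus of $M_{\Lambda,\ZZ}$ by Proposition \ref{prop: lattice polarised stack smooth}. Theorem \ref{thm: bt tayou} disposes of the cases where $X$ has infinite automorphism group, is non-isotrivially elliptic, or is elliptic with $p>3$; the remaining elliptic case, $X$ isotrivially elliptic with $p\in\{2,3\}$, is precisely the exception in the statement.

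I am thus reduced to $X$ non-elliptic with finite automorphism group over $\overline{\FF}_p$. By Hasse--Minkowski, projective K3 surfaces with $\rho\ge 5$ are elliptic, and over $\overline{\FF}_p$ non-supersingular K3 surfaces have even Picard rank, so $\rho(X)\in\{2,4\}$. In the rank $4$ case, Vinberg's classification (cf.\ the opening of Section \ref{sec: main theorem 2}) forces $\Pic(X)$ to be one of the two lattices of Theorem \ref{thm: curves on complex k3 picard rank 4}; in the rank $2$ case, $\Pic(X)=\Lambda$ is an arbitrary rank $2$ lattice without isotropic vectors. In either case, for each primitive ample class $L\in\Lambda$ satisfying one of the numerical conditions A1, A2 or A3 of Theorem \ref{thm: nodal curves} (respectively condition \eqref{eq: curves on complex k3 picard rank 4 00} in the rank $4$ case), these theorems produce an integral rational curve in $|L|$ on a very general $\Lambda$-polarised complex K3 surface, and the mixed characteristic theorem proven above specialises it to an integral rational curve in $|L|$ on $X$.

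To conclude it suffices to exhibit infinitely many such primitive ample classes on $X$. Fixing an ample $A\in\Lambda$ and a fixed admissible decomposition in the sense of A1--A3 or \eqref{eq: curves on complex k3 picard rank 4 00}, adding to it an arbitrarily large multiple of $A$ yields a sequence of ample classes that can be arranged to remain primitive and continue to satisfy the decomposition hypotheses; this gives infinitely many pairwise non-proportional primitive ample classes, and hence infinitely many distinct integral rational curves on $X$.

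The main obstacle in the argument is this last lattice-theoretic bookkeeping: producing infinitely many admissible primitive classes in $\Lambda$. It is routine in the rank $2$ case (condition A2 is satisfied by $(N-2)L$ for any big and nef $L$ with $N$ large enough, while A1 handles the even determinant case outright), but for rank $4$ it requires a direct check for each of the two specific lattices of Theorem \ref{thm: curves on complex k3 picard rank 4}, amounting to exhibiting one decomposition $A=L_1+L_2+L_3$ with $L_iA>0$ and $L_i^2>0$ and showing this persists under replacing $A$ by $A+nH$ for any ample $H\in\Lambda$ and $n\gg 0$.
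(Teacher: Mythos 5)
Your argument is correct and follows essentially the same route as the paper: the corollary is deduced from the case enumeration opening Section \ref{sec: pos char} (Theorem \ref{thm: bt tayou}, Corollary \ref{cor: reduction}, Proposition \ref{prop: supersingular OK}, Hasse--Minkowski plus even Picard rank over $\overline{\FF}_p$ plus Vinberg) combined with the mixed characteristic specialisation theorem applied to the primitive ample classes supplied by Theorems \ref{thm: nodal curves} and \ref{thm: curves on complex k3 picard rank 4}. Your final paragraph on exhibiting infinitely many admissible primitive ample classes makes explicit a bookkeeping step the paper leaves implicit, but the proof is the same.
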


The analogous statement for genus 1 curves holds, with the only possible exception uniruled K3 surfaces in
characteristic $p\leq3$. Needless to say, we do not expect avoiding these cases is necessary.

\appendix

\numberwithin{equation}{section}

\section{Rational Curves on Generic K3s with Picard Rank $2$}\label{appendix}

In this appendix, we will give an alternative proof of the existence of rational curves on generic K3 surfaces with
Picard rank $2$ \cite{generic}.

\begin{Theorem} \label{thm: appendix}
Let $\Lambda$ be a K3 lattice of rank two, let $M_{\Lambda,\CC}$ be the moduli space of
$\Lambda$-polarised complex K3 surfaces, and let $L\in \Lambda$ be a big and nef line bundle on a general K3 surface
$X\in M_{\Lambda,\CC}$. Then there exists an open and dense subset $U$ of $M_{\Lambda,\CC}$ (with respect to the Zariski
topology), depending on $L$, such that on every K3 surface $X\in U$, the complete linear series $|L|$ contains an
integral rational curve with unramified normalisation if one of the following holds:
\begin{enumerate}
\item[A1.] $\det(\Lambda)$ is even;
\item[A2.] $L = L_1 + L_2 + L_3$ for some $L_i\in \Lambda$ satisfying that
$L L_i > 0$ and $L_i^2 > 0$ for $i=1,2,3$;
\item[A3.] $L = L_1 + L_2$ for some $L_i\in \Lambda$ satisfying that
$L L_i > 0$ for $i=1,2$, $L_1^2 > 0$, $L_2^2 = -2$ and
$L_1 - L_2\not\in n\Lambda$ for all $n\in\ZZ$ and $n\ge 2$.
\end{enumerate}
\end{Theorem}

Compared with Theorem \ref{thm: nodal curves}, instead of the existence of nodal rational curves, we obtain rational
curves $C$ on $X$ whose normalisation $\nu: C^\nu \to X$ is immersive (i.e.,\ unramified). This is weaker but sufficient
for the purposes of this paper. On the other hand, condition A3 is slightly simpler than that in Theorem \ref{thm: nodal curves}.

The first step of our proof of Theorem \ref{thm: appendix} is the same as that of Theorem \ref{thm: nodal curves} in
\cite{generic}: we specialize a general K3 of Picard rank $2$ to a K3 of higher Picard rank. More precisely we recall
the following from \cite[Lemma 3.3, 3.6, 3.8]{generic}.

\begin{Proposition}\label{prop: rank two embedding}
Let $\Lambda$ be a K3 lattice of rank $2$, i.e., an even lattice of rank $2$ with signature $(1,1)$. Then there exists a
primitive embedding $\sigma: \Lambda \hookrightarrow \Sigma$, where $\Sigma$ is a lattice with intersection matrix
\begin{equation}
\label{K3RATNOTESE007}
\begin{bmatrix}
2\\
& -2\\
&& \ddots\\
&&& -2
\end{bmatrix}_{7\times 7}
\end{equation}
if $\det(\Lambda)$ is even and
\begin{equation}
\label{K3RATNOTESE019}
\begin{bmatrix}
0 & 1\\
1 & -2\\
&& -2\\
&&& \ddots\\
&&&& -2
\end{bmatrix}_{6\times 6}
\end{equation}
if $\det(\Lambda)$ is odd. In addition, for a fixed $L\in \Lambda$ with $L^2 > 0$, there exists a K3 surface $X$ such
that $\Pic(X)\cong \Sigma$ and $\sigma(L)$ is big and nef on $X$. 
\end{Proposition}

We refer the reader to \cite{generic} for the proof of the above proposition, which involves some basic lattice theory.

Our second step differs from that in \cite{generic}. Instead of further degenerating K3 surfaces of lattices
\eqref{K3RATNOTESE007} and \eqref{K3RATNOTESE019} to unions of rational surfaces, we show that there are some ``basic''
nodal rational curves including $(-2)$-curves meeting transversely on these K3 surfaces.  Then we can show that there
exist unions of these curves that can be deformed to rational curves in the desired divisor classes on a general K3
surface of Picard rank $2$. This approach will eliminate the technical difficulty of producing log rational curves on a
log K3 surface \cite[Theorem 3.10, Proposition 3.12]{generic}. On the other hand, we cannot guarantee that the resulting
rational curves are nodal. So Theorem \ref{thm: appendix} is weaker than the main theorem in \cite{generic}.
Nevertheless, it is sufficient for the purposes of this paper, as explained in Section \ref{subsect:generic}.

In what follows, we will repeatedly make use of the following set which is the set of indivisible effective
isotropic classes in $\Pic(X)$ with intersection matrix \eqref{K3RATNOTESE007}:
\begin{equation}\label{CGLAPDE011}
\begin{aligned}
\Pi &= \big\{ A - E_a: 1\le a \le 6\big\}
\cup \big\{ 3A - E_a - \sum E_j: 1\le a \le 6\big\}
\\
&\quad \cup \big\{ 2A - \sum_{j\ne a, b} E_j: 1\le a<b\le 6 \big\}.
\end{aligned}
\end{equation}

\begin{Proposition}\label{prop: (-2)-curves even lattice}
Let $X$ be a general K3 surface whose Picard lattice is generated by effective divisors $A, E_1,E_2,...,E_6$ with intersection matrix \eqref{K3RATNOTESE007}. Then
\begin{enumerate}
\item the sum $\sum C_i$ of $(-2)$-curves on $X$ has simple normal crossings;
\item for every sequence of classes $D_1,D_2,...,D_m\in\Pi$,
there are nodal rational curves $F_j\in |D_j|$ on $X$ such that
\[
F_j + F_{j+1} + \sum C_i
\]
has normal crossings for $j=1,2,...,m$ (set $D_{m+1}=0$ and $F_{m+1} = 0$);
\item for every sequence of classes $D_1,D_2,...,D_m\in \Pi$, there are nodal rational curves $F_j\in |D_j|$ on $X$ such that
\[
\begin{aligned}
&F_j = F_{j+1} \hspace{12pt}\text{ if } D_j = D_{j+1}\hspace{12pt} \text{ and }\\
&F_j + F_{j+1} + \sum C_i \hspace{12pt} \text{ has normal crossings if } D_j\ne D_{j+1}
\end{aligned}
\]
for $j=1,2,...,m$.
\end{enumerate}
\end{Proposition}
\begin{proof}
Such a K3 surface $X$ can be realised as a double cover $\varphi: X\to S$ of a del Pezzo surface $S$ of degree $3$
ramified over a general curve $R\in |-2K_S|$ \cite[Remark 3.4]{generic}. Every $(-2)$-curve on $X$ is the preimage of a
$(-1)$-curve on $S$ under $\varphi$. Thus, the union of $(-2)$-curves on $X$ has simple normal crossings if and
only if $R + I$ has simple normal crossings, where $I = \sum I_j$ is the union of $(-1)$-curves on $S$. So it suffices to show that
\begin{itemize}
\item no three among $R, I_1,I_2,..., I_{27}$ meet at one point;
\item any two curves among $R, I_1,I_2,..., I_{27}$ meet transversely.
\end{itemize}
It is easy to see that these hold for a general del Pezzo surface $S$ of degree $3$. So the sum $\sum C_i$ of $(-2)$-curves on $X$ has simple normal crossings. This proves (1).

Now let us degenerate a K3 surface of Picard lattice \eqref{K3RATNOTESE007} to a K3 surface with Picard lattice
\begin{equation}
\label{CGLAPDE000}
\begin{bmatrix}
2\\
& -2\\
&& \ddots\\
&&& -2
\end{bmatrix}_{9\times 9}
\end{equation}
To set this up, we let $X/B$ be a family of K3 surfaces over a DVR whose central fibre $X_0$ is a K3 surface with Picard
lattice $\Pic(X_0)$ given by \eqref{CGLAPDE000} and generated by effective divisors $A, E_1, E_2,..., E_6, E_7, E_8$,
where $A, E_1,E_2,...,E_6$ extend over $B$ and generate $\Pic(X_\eta)$.

The central fibre $X_0$ is a double cover $\varphi: X_0\to S_0$ of a del Pezzo surface $S_0$ of degree $1$ ramified over
a smooth curve in $|-2K_{S_0}|$. By the same argument as before, we see that the sum of $(-2)$-curves on $X_0$ has simple normal crossings
for a general choice of $X_0$.

We can write each $D_j$ as
\[
\begin{aligned}
D_j &= E_7 + (D_j - E_7) = \Gamma_{j1} + \Gamma_{j2}
\hspace{12pt} \text{if $j$ is odd,}\\ 
D_j &= E_8 + (D_j - E_8) = \Gamma_{j1} + \Gamma_{j2}
\hspace{12pt} \text{if $j$ is even}.
\end{aligned}
\]
Since $\Gamma_{j1}$ and $\Gamma_{j2}$ are $(-2)$-curves on $X_0$ meeting transversely, $\Gamma_{j1} \cup \Gamma_{j2}$
deforms to a nodal rational curve $F_j\in |D_j|$ on $X_\eta$.  By our choice of $\Gamma_{j1}$ and $\Gamma_{j2}$, it is
clear that
\[
\Gamma_{j1} + \Gamma_{j2} + \Gamma_{j+1,1} + \Gamma_{j+1,2} + \sum C_i
\]
has normal crossings for $j=1,2,...,m-1$. So the same holds on $X_\eta$ too. This proves (2).

The last statement (3) follows easily from (2) by applying it to
$D_1', D_2', ..., D_k'$ given by
\[
\begin{aligned}
D_1' &= D_1 = D_2 = ... = D_{i_1}\\
\ne D_2' &= D_{i_1 + 1} = D_{i_1+2} = ... = D_{i_2} \ne ...\\
\ne D_k' &= D_{i_{k-1} + 1} = D_{i_{k-1} + 2} = ... = D_m.
\end{aligned}
\]
\end{proof}

\begin{Lemma}\label{lem: nefeven}
Let $X$ be a general K3 surface whose Picard lattice is generated by effective divisors $A, E_1, E_2,...,E_6$ with intersection matrix \eqref{K3RATNOTESE007}. Then for every nef divisor $D$ on $X$, there exists a set $M$ of $5$ disjoint $(-2)$-curves on $X$ such that for every $I\in M$, 
there exist $d_1, d_2, ..., d_m\in \ZZ_{\ge 0}$ and $D_1,D_2,...,D_m\in \Pi$ satisfying $D_1 I \ge 2$ and
\begin{equation}
\label{K3RATNOTESE900}
D = d_1 (I + D_1) + d_2 D_2 + d_3 D_3 + ... + d_m D_m
\end{equation}
in $\Pic(X)$, where $\Pi$ is defined by \eqref{CGLAPDE011}.
\end{Lemma}

\begin{proof}
Let $\Aut(\Sigma)$ be the lattice automorphism group of $\Sigma = \Pic(X)$, and let $\Aut(\Sigma)^+$ be the subgroup of
$\Aut(\Sigma)$ preserving the nef cone of $\Pic(X)$. We know that $\Aut(\Sigma)^+$ is generated by the permutations on
$\{E_i\}$ and the Cremona transformations of $\Pic(S)$ for the double cover $\varphi: X\to S$ over a del Pezzo surface
$S$ of degree $3$ \cite[Remark 3.4]{generic}. So for any two sets $T_1$ and $T_2$ of $k$ disjoint $(-2)$-curves, there
exists $\sigma\in \Aut(\Sigma)^+$ such that $\sigma(T_1) = T_2$.

A divisor $D$ on $X$ is nef if and only if $D I \geq 0$ for all $(-2)$-curves $I\subset X$. For $D$ nef, let $T$ be the set
of $(-2)$-curves $I$ such that $D I = 0$. If there exist $I_1, I_2\in T$ such that $I_1 I_2 > 0$, then $(I_1 + I_2)^2 =
D (I_1+I_2) = 0$. And since $D$ is nef, we necessarily have
\[
D = d(I_1 + I_2)
\]
for some $d \in \ZZ_{\ge 0}$, where $I_1 + I_2\in \Pi$. We are done.

Otherwise, $I_1 I_2 = 0$ for all $I_1 \ne I_2\in T$. Then there exists $\sigma\in \Aut(\Sigma)^+$ such that $\sigma(T) = \{E_{k+1}, E_{k+2}, ..., E_6\}$. If $k = 0$, then we let
\[
M = \big\{E_1,E_2,...,E_5\big\}
\]
and for every $I = E_i\in M$, 
\[
\sigma(D) = d A = d (E_i + (A - E_i))
\]
for some $d\in \ZZ_{\ge 0}$ and we are done.

Otherwise, 
\[
\sigma(D) = dA - m_1 E_1 - m_2 E_2 - ... - m_k E_k
\]
for some positive integers $d, m_1,m_2,..., m_k$. Without loss of generality, let us assume that
\[
m_1 \le m_2 \le ... \le m_k.
\]
Note that $D I > 0$ for all $(-2)$-curves $I\not\in T$. So
\[
\begin{aligned}
d &> m_k\\
d &> m_{k-1} + m_k\hspace{12pt} \text{if } k \ge 2\\
2d &>m_{k-4} + m_{k-3} + m_{k-2} + m_{k-1} + m_k \hspace{12pt} \text{if } k\ge 5.
\end{aligned}
\]
It is easy to see that $\sigma(D) - (A - E_k)$ is nef if $k \le 5$. Then the lemma follows from induction on $-K_X D$.

Suppose that $k=6$. Then $\sigma(D) - (2A - E_3-E_4-E_5-E_6)$ is nef unless $d=3$ and $m_i = 1$ for $i=1,2,...,6$.

If $\sigma(D) - (2A - E_3-E_4-E_5-E_6)$ is nef, then it follows from induction. Otherwise,
\[
\sigma(D) = 3A - \sum_{i=1}^6 E_i.
\]
We let
\[
M = \big\{A - E_a - E_6: a = 1,2,...,5\big\}.
\]
For each $I = A - E_a - E_6\in M$,
\[
\sigma(D) = (A - E_a - E_6) + (2A - \sum_{i\ne a,6} E_i)
\]
and we are done.
\end{proof}

Now we are ready to prove Theorem \ref{thm: appendix} for $\det(\Lambda)$ even, except for the following two cases:
\begin{equation}
\label{K3RATNOTESE901}
\Lambda = \ZZ A \oplus \ZZ F,\
\begin{bmatrix}
A^2 & AF\\
AF & F^2
\end{bmatrix}
= \begin{bmatrix}
2 & 2\\
2 & 0
\end{bmatrix}
\text{ and } L = A + d F \text{ for } d>0,
\end{equation}
\begin{equation}
\label{K3RATNOTESE902}
\Lambda = \ZZ A \oplus \ZZ F,\
\begin{bmatrix}
A^2 & AF\\
AF & F^2
\end{bmatrix}
= \begin{bmatrix}
0 & 2\\
2 & 0
\end{bmatrix}
\text{ and } L = A + d F \text{ for } d>0.
\end{equation}
We will deal with the above two cases separately later.

\begin{proof}[Proof of Theorem \ref{thm: appendix} for $\det(\Lambda)$ even except for cases \eqref{K3RATNOTESE901} and \eqref{K3RATNOTESE902}]
Let $X$ be a general K3 surface whose Picard lattice $\Sigma$ is generated by effective divisors $A,E_1,E_2,...,E_6$
with intersection matrix \eqref{K3RATNOTESE007}. By Proposition \ref{prop: rank two embedding}, for every $L\in \Lambda$
satisfying $L^2 > 0$, we can embed $\Lambda$ as a primitive sublattice of $\Sigma$ such that $L$ is big and nef on $X$.
We will construct a rational stable map $f: C\to X$ such that $f_* C \in |L|$ and $f$ can be deformed to an immersive
map $f': \PP^1 \to X'$ to a K3 surface $X'$ with $\Pic(X') = \Lambda$.

By Lemma \ref{lem: nefeven}, there exists a set $M$ of $5$ disjoint $(-2)$ curves on $X$ such that for every $I\in M$, there
exist nonnegative integers $d_1,d_2,...,d_m$ and $D_1, D_2,...,D_m\in \Pi$ such that
\[
L = d_1(I+D_1) + d_2 D_2 + ... + d_m D_m.
\]
Since $\Lambda$ has rank $2$ and signature $(1,1)$, $\Lambda \cap M$ contains at most one $(-2)$-curve in $M$. So we may choose $I$ such that $I\not\in \Lambda$.
Let us also assume that $d_2,...,d_m > 0$ and $D_2, D_3,...,D_m$ are distinct.

We will argue in the following four cases:
\begin{equation}\label{CGLAPDE001}
\text{Either } I + D_1\not\in \Lambda \text{ and } d_1 > 0
\text{ or }
D_i\not\in \Lambda \text{ for some } 2\le i \le m.
\end{equation}
If this fails, we will end up in one of the next three cases.
\begin{equation}\label{CGLAPDE002}
L = d_1(I + D_1) + d_2 D_2,\ d_1 > 0
\text{ and } I+D_1, D_2\in \Lambda,
\end{equation}
\begin{equation}\label{CGLAPDE003}
L = d_2 D_2 + d_3 D_3 \text{ and } D_2, D_3\in \Lambda,
\end{equation}
\begin{equation}\label{CGLAPDE004}
L = d_1 (I + D_1),\ d_1 > 0 \text{ and } I + D_1\in \Lambda.
\end{equation}
Note that we cannot have $L = d_i D_i$ since $L$ is big.

Suppose that \eqref{CGLAPDE001} holds. Then we can find 
$W\in \Lambda^\perp$ such that $W(I + D_1) \ne 0$ if $I+D_1\not\in \Lambda$ and $d_1 > 0$ or $W D_i \ne 0$ if $D_i\not\in \Lambda$, where $\Lambda^\perp$ is the orthogonal complement of $\Lambda$ in $\Sigma$.
Let us write
\[
\begin{aligned}
L &= d_1(I+D_1) + d_2 D_2 + ... + d_m D_m\\
&= B_1 + B_2 + ... + B_d
\end{aligned}
\]
where $d = d_1 + d_2 + ... + d_m$,
each $B_i$ is one of $I+D_1, D_2, D_3, ..., D_m$, and we arrange $B_i$ in the order of
\begin{equation}\label{CGLAPDE006}
W B_1 \ge WB_2 \ge ... \ge WB_d.
\end{equation}
Note that $W L = 0$. By our choice of $W$, $W B_i \ne 0$ for at least one $i$. So we conclude that
\begin{equation}\label{CGLAPDE005}
W(B_1 + B_2 + ... + B_i) > 0 \Rightarrow
B_1 + B_2 + ... + B_i \not\in \Lambda
\end{equation}
for all $i=1,2,...,d-1$.

Let $G_1,G_2,...,G_d$ be the classes given by
\[
G_i = \begin{cases}
D_j & \text{if } B_i = D_j\\
D_1 & \text{if } B_i = I + D_1
\end{cases}
\]
for $i=1,2,...,d$.

By Proposition \ref{prop: (-2)-curves even lattice}, we can find nodal rational curves $F_i\in |G_i|$ such that
\[
F_i = F_{i+1} \text{ if } G_i = G_{i+1} \text{ and }
F_i + F_{i+1} + \sum I_j \text{ has normal crossings if } G_i\ne G_{i+1}
\]
for $i=1,2,...,d$ (set $F_{d+1} = G_{d+1} = 0$), where $\sum I_j$ is the sum of all $(-2)$-curves on $X$.

Note that $F_i F_{i+1} \ge 2$ if $F_i\ne F_{i+1}$. If $F_{i-1}\ne F_i$ and $F_i\ne F_{i+1}$, we can find two distinct points $q_1$ and $q_2$ on $F_i$ such that $q_1\in F_{i-1}\cap F_i$ and $q_2\in F_i\cap F_{i+1}$.

Let $f: C\to X$ be the rational stable map given as follows:
\begin{itemize}
\item $C = C_1 \cup C_2 \cup ... \cup C_d$ and $f_* C_i \in |B_i|$
for $i=1,2,...,d$.
\item If $B_i \in \{ D_j\}$, then $f: C_i\cong \PP^1 \to X$ is the normalisation of $F_i$.
\item If $B_i = I + D_1$, then $C_i = C_{i,1} \cup C_{i,2}$, where
\[
\begin{aligned}
C_{i,1}, C_{i,2} &\cong \PP^1,\ f_* C_{i,1} = F_i,\ f_* C_{i,2} = I,\\
C_{i,1}\cap C_{i,2} &\ne \emptyset \text{ and } C_{i,2}\cap C_k = \emptyset
\text{ for all } k\ne i.
\end{aligned}
\]
\item $C_i$ and $C_{i+1}$ meet at one point $p_i$ for
$i=1,2,...,d-1$.
\item If $F_i = F_{i+1}$, then $f(p_i) = q_i$ is the node of $F_i$
and $f: C_i\cup C_{i+1}\to F_i$ is a local isomorphism at $p_i$, i.e., $f$ maps $C_i$ and $C_{i+1}$ locally at $p_i$ to the two branches of $F_i$ at $q_i$.  
\end{itemize}
Roughly speaking, $C$ is a chain of smooth rational curves
$C_{1,1}\cup C_{2,1}\cup ... \cup C_{d,1}$ with $C_{i,2}$ attached to $C_{i,1}$, where we let $C_{i,1} = C_i$ and $C_{i,2} = \emptyset$ if $f_* C_i = F_i$. The dual graph of such $f$ is illustrated in Figure \ref{CGLFIG000}.

\begin{figure}[!htb]
\begin{tikzpicture}[scale=1.5]
\node[below] at (0,0) {\small $C_{1,1}$};
\node[below] at (1,0) {\small $C_{2,1}$};
\node[below] at (2,0) {\small $C_{i,1}$};
\node[below] at (3,0) {\small $C_{d-1,1}$};
\node[below] at (4,0) {\small $C_{d,1}$};
\node[above] at (2,1) {\small $C_{i,2}$};

\draw[thick] (0,0) -- (1,0);
\draw[thick, dashed] (1,0) -- (2,0);
\draw[thick, dashed] (2,0) -- (3,0);
\draw[thick] (3,0) -- (4,0);
\draw[thick] (2,1) -- (2,0);

\draw[fill, gray!50] (0,0) circle(0.05);
\draw[fill, gray!50] (1,0) circle(0.05);
\draw[fill, gray!50] (2,0) circle(0.05);
\draw[fill, gray!50] (3,0) circle(0.05);
\draw[fill, gray!50] (4,0) circle(0.05);
\draw[fill, gray!50] (2,1) circle(0.05);

\node[below] at (2,-0.5) {\small $f_* C_i = F_i+I$};
\end{tikzpicture}
\caption{Dual graph of $f:C\to X$ in \eqref{CGLAPDE001}}\label{CGLFIG000}
\end{figure}

Clearly, $f_* C \in |L|$ and
$f$ is an isomorphism onto its image locally at every point of $C$.
So $f$ is rigid and it can be deformed to a rational stable map
$f': C' \to X'$ over a K3 surface $X'$ with $\Pic(X') = \Lambda$.

If $C'$ is reducible, then $C$ ``splits'' as
\[
C = \Gamma_1 + \Gamma_2
\]
for some connected curves $\Gamma_1, \Gamma_2\ne \emptyset$ such that
$f_* \Gamma_1, f_*\Gamma_2 \in \Lambda$. There are two possibilities
for $\Gamma_1$ and $\Gamma_2$:
\begin{itemize}
\item One of $\Gamma_1$ and $\Gamma_2$ is $C_{i,2}$ for some $i$. That is, $C$ splits as
\[
C = (C_1 + ... + C_{i-1} + C_{i,1} + C_{i+1} + ... + C_d) + C_{i,2}.
\]
But $f_* C_{i,2} = I\not\in \Lambda$.
\item One of $\Gamma_1$ and $\Gamma_2$ is $C_1 + C_2 + ... + C_i$. That is, $C$ splits as
\[
C = (C_1 + C_2 + ... + C_i) + (C_{i+1} + C_{i+2} + ... + C_d).
\]
for some $i=1,2,...,d-1$. But
\[
f_* (C_1 + C_2 + ... + C_i) = B_1 + B_2 + ... + B_i\not\in \Lambda
\]
by \eqref{CGLAPDE005}.
\end{itemize}
In conclusion, $C'$ is irreducible and hence $C'\cong \PP^1$. 
In addition, since $f$ is a local isomorphism, $f': \PP^1\to X'$ is unramified.
This proves the theorem in the case of \eqref{CGLAPDE001}. For the other cases, we just have to apply some tweaks of the above construction.

Suppose that \eqref{CGLAPDE002} holds. If $d_1 > 1$, we write
\[
\begin{aligned}
L &= d_1(I + D_1) + d_2 D_2\\
&= D_1 + (2I + D_1) + 
\underbrace{(I+D_1) + ... + (I+D_1)}_{d_1-2}\\
&\quad  + \underbrace{D_2 + ... + D_2}_{d_2}\\
&= B_1 + B_2 + ... + B_d
\end{aligned}
\]
where $d=d_1+d_2$, and each $B_i$ is one of $D_1, I+D_1, 2I+D_1, D_2$. Since $I + D_1\in \Lambda$ and $I\not\in \Lambda$, $D_1\not\in \Lambda$.
So we can find $W\in \Lambda^\perp$ such that $W D_1 \ne 0$. We arrange $B_i$ in the order of \eqref{CGLAPDE006} and we still have \eqref{CGLAPDE005}.

Let us fix two nodal rational curves $F_1\in |D_1|$ and $F_2\in |D_2|$ such that
\[
F_1 + F_2 + \sum I_j
\]
has normal crossings for $(-2)$-curves $I_j\subset X$. We construct a rational stable map
$f: C\to X$ similarly as before:
\begin{itemize}
\item $C = C_1 \cup C_2 \cup ... \cup C_d$ and $f_* C_i \in |B_i|$
for $i=1,2,...,d$.
\item If $B_i = D_j$, then $f: C_i\cong \PP^1 \to X$ is the normalisation $F_j$.
\item If $B_i = I + D_1$, then $C_i = C_{i,1} \cup C_{i,2}$, where
\[
\begin{aligned}
C_{i,1}, C_{i,2}&\cong \PP^1,\ f_* C_{i,1} = F_1,\ f_* C_{i,2} = I,\\
C_{i,1} \cap C_{i,2} &\ne \emptyset\text{ and }
C_{i,2}\cap C_k = \emptyset \text{ for all } k\ne i.
\end{aligned}
\]
\item If $B_i = 2I + D_1$, then $C_i = C_{i,1} \cup C_{i,2}\cup C_{i,3}$, where
\[
\begin{aligned}
C_{i,1}, C_{i,2}, C_{i,3}&\cong \PP^1,\ f_* C_{i,1} = F_1,\ f_* C_{i,2} = f_* C_{i,3} = I,\\ 
C_{i,1}\cap C_{i,2} &\ne \emptyset,\ C_{i,1}\cap C_{i,3} \ne \emptyset,
\text{ and}\\
(C_{i,2}\cup C_{i,3})\cap C_k &= \emptyset \text{ for all } k\ne i.
\end{aligned}
\]
Note that this is possible since $ID_1 \ge 2$.
\item $C_i$ and $C_{i+1}$ meet at one point $p_i$ for $i=1,2,...,d-1$.
\item If $F_j \subset f(C_i) \cap f(C_{i+1})$ for some $j=1,2$, then $f(p_i) = q_j$ is the node of $F_j$
and $f: C_i\cup C_{i+1}\to F_j$ is a local isomorphism at $p_i$, i.e., $f$ maps $C_i$ and $C_{i+1}$ locally at $p_i$ to the two branches of $F_j$ at $q_j$.  
\end{itemize}
By the same argument as before, we see that $f: C\to X$ deforms to a rational stable map $f': \PP^1\to X'$ over a K3 surface $X'$ with $\Pic(X') = \Lambda$. The dual graph of such $f$ is illustrated in Figure \ref{CGLFIG001}.

\begin{figure}[!htb]
\begin{tikzpicture}[scale=1.5]
\node[below] at (0,0) {\small $C_{1,1}$};
\node[below] at (1,0) {\small $C_{2,1}$};
\node[below] at (2,0) {\small $C_{i,1}$};
\node[below] at (3,0) {\small $C_{j,1}$};
\node[below] at (4,0) {\small $C_{d-1,1}$};
\node[below] at (5,0) {\small $C_{d,1}$};
\node[above] at (2,1) {\small $C_{i,2}$};
\node[above] at (2.5,1) {\small $C_{j,2}$};
\node[above] at (3.5,1) {\small $C_{j,3}$};

\draw[thick] (0,0) -- (1,0);
\draw[thick, dashed] (1,0) -- (2,0);
\draw[thick, dashed] (2,0) -- (3,0);
\draw[thick, dashed] (3,0) -- (4,0);
\draw[thick] (4,0) -- (5,0);
\draw[thick] (2,1) -- (2,0);
\draw[thick] (2.5,1) -- (3,0) -- (3.5,1);

\draw[fill, gray!50] (0,0) circle(0.05);
\draw[fill, gray!50] (1,0) circle(0.05);
\draw[fill, gray!50] (2,0) circle(0.05);
\draw[fill, gray!50] (3,0) circle(0.05);
\draw[fill, gray!50] (4,0) circle(0.05);
\draw[fill, gray!50] (5,0) circle(0.05);
\draw[fill, gray!50] (2,1) circle(0.05);
\draw[fill, gray!50] (2.5,1) circle(0.05);
\draw[fill, gray!50] (3.5,1) circle(0.05);

\node[below] at (2,-0.5) {\small $f_* C_i = F_1+I$, $f_* C_j = F_1 + 2I$};
\end{tikzpicture}
\caption{Dual graph of $f:C\to X$ in \eqref{CGLAPDE002} and $d_1 > 1$}\label{CGLFIG001}
\end{figure}

Suppose that $d_1 = 1$. If $ID_2 > 0$, we write
\[
\begin{aligned}
L &= (I+D_1) + d_2 D_2 = D_1 + (I+D_2) + 
\underbrace{D_2 + ... + D_2}_{d_2 - 1}\\
&= B_1 + B_2 + ... + B_d
\end{aligned}
\]
where $d = 1 + d_2$ and each $B_i$ is one of $D_1, D_2, I+D_2$. Since $D_1\not\in \Lambda$, we can find $W\in \Lambda^\perp$ such that
$WD_1 \ne 0$. Again we arrange $B_i$ in the order of \eqref{CGLAPDE006} and we still have \eqref{CGLAPDE005}.
The rational stable map $f:C\to X$ can be constructed similarly as before. We leave the details to the reader.

Suppose that $d_1 = 1$ and $ID_2 = 0$. If $D_1 D_2 > 2$, we can find two $(-2)$-curves $I_1$ and $I_2$ such that
\[
I_1,I_2\not\in \Lambda,\ I_1 D_2 > 0,\ I_2 D_2 > 0, \text{ and }
D_1 = I_1 + I_2.
\]
Without loss of generality, suppose that $II_1 > 0$. We write
\[
\begin{aligned}
L &= (I+D_1) + d_2 D_2 = (I + I_1) + (I_2+D_2) + 
\underbrace{D_2 + ... + D_2}_{d_2 - 1}\\
&= B_1 + B_2 + ... + B_d
\end{aligned}
\]
where $d = 1 + d_2$ and each $B_i$ is one of $I+I_1, D_2, I_2+D_2$. Since $D_2\in \Lambda$ and $I_2\not\in \Lambda$, $I_2+D_2\not\in \Lambda$. So we can find $W\in \Lambda^\perp$ such that
$W(I_2 + D_2) \ne 0$. Again, we arrange $B_i$ in the order of \eqref{CGLAPDE006} and we still have \eqref{CGLAPDE005}.
The construction of the rational stable map $f:C\to X$ is left to the reader.

Finally for case \eqref{CGLAPDE002}, we are left with $d_1 = 1$, $ID_2 = 0$, and $D_1 D_2 = 2$. It is easy to see that $ID_1 = 2$. So $\Lambda$ is generated by $I+D_1$ and $D_2$ with intersection matrix
\[
\begin{bmatrix}
2 & 2\\
2 & 0
\end{bmatrix}
\]
and $L = (I+D_1) + d_2 D_2$. This is the case \eqref{K3RATNOTESE901}
which we will treat later.

Suppose that \eqref{CGLAPDE003} holds. Without loss of generality, let us assume that $d_2 \le d_3$. We fix two nodal rational curves $F_2\in |D_2|$ and $F_3\in |D_3|$ such that $F_2 + F_3 + \sum I_j$ has normal crossings for $(-2)$-curves $I_j\subset X$.

If $D_2D_3 > 2$, then as before, we can find two $(-2)$-curves $I_1$ and $I_2$ such that
\[
I_1,I_2\not\in \Lambda,\ I_1 D_3 > 0,\ I_2 D_3 > 0, \text{ and }
D_2 = I_1 + I_2.
\]
If $d_3 = 1$, then we simply let
$f: C\to X$ be the rational stable map such that
\[
\begin{aligned}
C &= C_1 \cup C_2\cup  C_3,\ C_1,C_2,C_3\cong \PP^1\\
f_* C_1 &= I_1,\ f_* C_2 = F_3,\ f_* C_3 = I_2\\
C_1\cap C_2 &\ne \emptyset, \text{ and }
C_2\cap C_3\ne \emptyset.
\end{aligned}
\]
If $d_3 > 1$, then we write
\[
\begin{aligned}
L &= d_2 D_2 + d_3 D_3 = (I_1 + D_3) + (I_2 + D_3)\\
&\quad + 
\underbrace{D_2 + ... + D_2}_{d_2 - 1} + \underbrace{D_3 + ... + D_3}_{d_3 - 2}\\
&= B_1 + B_2 + ... + B_d
\end{aligned}
\]
where $d = d_1+d_2 - 1$ and each $B_i$ is one of $D_2, D_3, I_1 +D_3, I_2+D_3$. Since $D_3\in \Lambda$ and $I_1\not\in \Lambda$, $I_1 + D_3\not\in \Lambda$. So we can find $W\in \Lambda^\perp$ such that
\[
W(I_1 + D_3) \ne 0.
\]
Again, we arrange $B_i$ in the order of \eqref{CGLAPDE006} and we still have \eqref{CGLAPDE005}. The construction of the rational stable map $f: C\to X$ is left to the reader.

Suppose that $D_2D_3 = 2$ and $d_2 > 1$. We can find a $(-2)$-curve $I_1$ such that $I_1\not\in \Lambda$ and $I_1 D_2 = I_1 D_3 = 0$. Let
$I_2 = D_2 - I_1$.  
We write
\[
\begin{aligned}
L &= d_2 D_2 + d_3 D_3 = (I_2 + D_3) + (2I_1 + I_2 + D_3)\\
&\quad + 
\underbrace{D_2 + ... + D_2}_{d_2 - 2} + \underbrace{D_3 + ... + D_3}_{d_3 - 2}\\
&= B_1 + B_2 + ... + B_d
\end{aligned}
\]
where $d = d_1+d_2 - 2$ and each $B_i$ is one of $D_2, D_3, I_2+D_3, 2I_1 + I_2 + D_3$. Since $D_2, D_3\in \Lambda$ and $I_1\not\in \Lambda$, $I_2 + D_3\not\in \Lambda$. So we can find $W\in \Lambda^\perp$ such that
$W(I_2 + D_3) \ne 0$. Again, we arrange $B_i$ in the order of \eqref{CGLAPDE006} and we still have \eqref{CGLAPDE005}.
We can construct a rational stable map $f: C \to X$ with
\[
C = C_1\cup C_2\cup ...\cup C_d \text{ and } f_* C_i\in |B_i|
\]
as before. The only complication here is that when $B_i = 2I_1 + I_2 + D_3$, we let
\[
\begin{aligned}
C_i &= C_{i,1}\cup C_{i,2}\cup C_{i,3}\cup C_{i,4},\\ 
C_{i,1}, C_{i,2}, C_{i,3}, C_{i,4}&\cong \PP^1,\\
f_* C_{i,1} &= F_3,\ f_* C_{i,2} = I_2,\ f_* C_{i,3} = f_* C_{i,4} = I_1,\\
C_{i,1}\cap C_{i,2} &\ne\emptyset,\
C_{i,2}\cap C_{i,3} \ne \emptyset,\ 
C_{i,2}\cap C_{i,4} \ne\emptyset,\text{ and}\\
(C_{i,2}\cup C_{i,3}\cup C_{i,4}) \cap C_k &= \emptyset \text{ for all } k\ne i.
\end{aligned}
\]

For case \eqref{CGLAPDE003}, we are left with $d_2 = 1$ and $D_2D_3 = 2$. So $\Lambda$ is generated by $D_2$ and $D_3$ with intersection matrix
\[
\begin{bmatrix}
0 & 2\\
2 & 0
\end{bmatrix}
\]
and $L = D_2 + d_3 D_3$. This is the case \eqref{K3RATNOTESE902}
which we will treat later.

For the last case \eqref{CGLAPDE004}, we fix a nodal rational curve $F_1\in |D_1|$ such that $F_1 + \sum I_j$ has normal crossings for $(-2)$-curves $I_j\subset X$. If $d_1 = 1$, then we simply let
$f: C\to X$ be the rational stable map such that
\[
\begin{aligned}
C &= C_1 \cup C_2,\ C_1,C_2\cong \PP^1,\\
f_* C_1 &= F_1,\ f_* C_2 = I, \text{ and } C_1\cap C_2 \ne \emptyset.
\end{aligned}
\]
If $d_1 > 1$, then we write
\[
\begin{aligned}
L &= d_1 (I+D_1) = D_1 + (2I+ D_1) + 
\underbrace{(I+ D_1) + ... + (I + D_1)}_{d_1 - 2}\\
&= B_1 + B_2 + ... + B_d
\end{aligned}
\]
where $d = d_1$ and each $B_i$ is one of $D_1, I+D_1, 2I +D_1$. Since $I+D_1\in \Lambda$ and $I\not\in \Lambda$, $D_1\not\in \Lambda$. So we can find $W\in \Lambda^\perp$ such that
$W D_1 \ne 0$. Again, we arrange $B_i$ in the order of \eqref{CGLAPDE006} and we still have \eqref{CGLAPDE005}.
We once again leave the construction of the rational stable map $f:C\to X$ to the reader.
\end{proof}

Let us take care of the cases \eqref{K3RATNOTESE901}
and \eqref{K3RATNOTESE902}.

It is easy to check that general K3 surfaces with Picard lattices given in \eqref{K3RATNOTESE901} and \eqref{K3RATNOTESE902} can
be specialised to K3 surfaces with Picard lattices
\begin{equation}\label{CGLAPDE007}
\begin{bmatrix}
-2 & 3 & 1\\
3 & -2 & 1\\
1 & 1 & 0
\end{bmatrix}
\end{equation}
and
\begin{equation}\label{CGLAPDE008}
\begin{bmatrix}
-2 & 2 & 1\\
2 & -2 & 1\\
1 & 1 & 0
\end{bmatrix}
\end{equation}
respectively.

\begin{Proposition}\label{prop: even special cases}
Let $X$ be a general K3 surface with Picard lattice generated by effective divisors $A_1, A_2, F$ with intersection
matrix \eqref{CGLAPDE007} or \eqref{CGLAPDE008}. Then there exists a nodal rational curve $\Gamma\in |F|$ such that $A_1
+ A_2 + \Gamma$ has normal crossings.
\end{Proposition}
\begin{proof}
We will show in what follows that such a K3 surface $X$ can be realised as a special double cover of 
a rational ruled surface $S = \FF_a = \PP(\OO_{\PP^1}\oplus
\OO_{\PP^1}(a))$, where $a = 1$ if $\Pic(X)$ is given by \eqref{CGLAPDE007}
and $a = 0$ if $\Pic(X)$ is given by \eqref{CGLAPDE008}.

Suppose that $\Pic(S)$ is generated by effective divisors
$B$ and $G$ satisfying
\[
BG = 1,\ G^2 = 0,\text{ and } B^2 = a.
\]
We fix two smooth curves $C\in |B|$ and $H\in |G|$ and $a+3$ distinct points
$p_1,p_2,...,p_{a+2}\in C\backslash H$ and $p_{a+3}\in H\backslash C$. Then there exists a smooth curve $R\in |-2K_S|$ such that
$R$ is tangent to $C$ and $H$ at $p_1,p_2,...,p_{a+2}$ and $p_{a+3}$, respectively, and meets $H$ transversely at two other points outside of $p_{a+3}$.

Let $\varphi: X\to S$ be the double cover of $S$ ramified over $R$. Then
\[
\varphi^* C = A_1 + A_2\hspace{12pt}\text{and}\hspace{12pt}
\varphi^* H = \Gamma,
\]
where $A_1$ and $A_2$ are two $(-2)$-curves on $X$ with
\[
A_1 A_2 = a+2 \hspace{12pt}\text{and}\hspace{12pt}
A_1 F = A_2 F = 1 \text{ for }
F = \varphi^* G,
\]
and $\Gamma$ is a nodal rational curve in $|F|$. Obviously, $A_1$ and $A_2$ meet transversely at $a+2$ points $\varphi^{-1}(p_i)$ for $i=1,2,...,a+2$. So $A_1 + A_2 + \Gamma$ has normal crossing. For a general choice of $R$, $X$ is a K3 surface with Picard lattice \eqref{CGLAPDE007} or \eqref{CGLAPDE008}.
\end{proof}

\begin{proof}[Proof of Theorem \ref{thm: appendix} in the cases \eqref{K3RATNOTESE901} and \eqref{K3RATNOTESE902}]
Let $X$ be a general K3 surface with Picard lattice $\Sigma$ generated by effective divisors $A_1, A_2, F$ with intersection matrix \eqref{CGLAPDE007} or \eqref{CGLAPDE008}. We can embed $\Lambda$ into $\Sigma$ such that $\Lambda$ is generated by $A = A_1 + A_2$ and $F$ and
\[
L = A + dF = (A_1 + A_2) + dF.
\]
By Proposition \ref{prop: even special cases}, there exists a nodal rational curve $\Gamma\in |F|$ such that
\[
A_1 + A_2 + \Gamma
\]
has normal crossings.

Let $f: C\to X$ be the rational stable map given as follows:
\begin{itemize}
\item $C = C_0\cup C_1\cup ...\cup C_d \cup C_{d+1}$ and $C_i\cong \PP^1$ for $i=0,1,...,d$.
\item $C_i$ and $C_{i+1}$ meet at a point $p_i$ for $i=0,1,...,d$.
\item $f_* C_0 = A_1$, $f_* C_{d+1} = A_2$, and
$f_* C_i = \Gamma$ for $i=1,2,...,d$.
\item For $i=1,2,...,d-1$, $f(p_i) = q$ is the node of $\Gamma$
and $f: C_i\cup C_{i+1}\to \Gamma$ is a local isomorphism at $p_i$, i.e., $f$ maps $C_i$ and $C_{i+1}$ locally at $p_i$ to the two branches of $\Gamma$ at $q$.
\end{itemize}
It is easy to see that $f: C\to X$ deforms to a rational stable map
$f': \PP^1 \to X'$ to a K3 surface $X'$ with Picard lattice $\Pic(X') = \Lambda$. Since $f$ is immersive, the same holds for $f'$.
\end{proof}

This finishes the proof of Theorem \ref{thm: appendix} for $\det(\Lambda)$ even.

\begin{Proposition}\label{prop: (-2)-curves odd lattice}
Let $X$ be a general K3 surface whose Picard lattice is generated by effective divisors $A, F,E_1,E_2,E_3,E_4$ with intersection matrix \eqref{K3RATNOTESE019}. Then
there exist nodal rational curves
\begin{equation}\label{CGLAPDE010}
\Gamma_1 \in |A| \hspace{12pt}\text{and}\hspace{12pt} \Gamma_2\in |4A + 2F - E_1 - E_2 - E_3 - E_4|
\end{equation}
such that
\[
\Gamma_1 + \Gamma_2 + \sum C_i
\]
has normal crossings, where $\sum C_i$ is the sum of all $(-2)$-curves on $X$.
\end{Proposition}

\begin{proof}
It is easy to see that the $(-2)$-curves on $X$ are
\cite[Remark 3.7]{generic}
\[
F,\ E_i, \text{ and } A - E_i \text{ for } i = 1,2,3,4.
\]
To show that $\Gamma_1 + \Gamma_2 + \sum C_i$ has normal crossings, it suffices to show that
each of $\sum E_i + \sum (A - E_i)$, $\Gamma_2 + \sum E_i$ and $\Gamma_1 + \Gamma_2$
has normal crossings.

Instead of working with $X$, we specialize it to a K3 surface $Y$ with Picard lattice
\[
\begin{bmatrix}
0 & 1 & 1\\
1 & -2 & 0\\
1 & 0 & -2\\
&&& -2\\
&&&& \ddots\\
&&&&& -2
\end{bmatrix}_{7\times 7}
\]
Let $A, F, F', E_1,E_2,E_3,E_4$ be the effective generators of $\Pic(Y)$ with the above intersection matrix. We may regard $\Pic(X)$ as the sublattice of $\Pic(Y)$ generated by $A, F, E_1,E_2,E_3,E_4$. So it suffices to prove the statement on $Y$.

Such $Y$ can be constructed as follows.
Let $S$ be the blowup of the rational ruled surface $\FF_2$ at $4$ general points, and let $\varphi: Y\to S$ be a double cover of $S$ ramified over a general curve $R\in |-2K_S|$.

Suppose that $\Pic(S)$ is generated by effective divisors
$B, G, I_1, I_2, I_3, I_4$, where $B$ and $G$ are the pullback of the divisors on ${\mathbb F}_2$ satisfying $B^2 = 0$, $BG=1$, and $G^2 = -2$, and $I_j$ are the exceptional divisors of the blowup $S\to \FF_2$. Then
\[
\varphi^* B = A,\ \varphi^* G = F + F', \text{ and } \varphi^* I_j = E_j
\text{ for } j=1,2,3,4.
\]
Since $G + \sum I_j + \sum (B - I_j)$ has simple normal crossings, the same holds
for $\sum C_i$ on $Y$.

Let us consider the rational curves in $|8 B + 2G - 2\sum I_j|$. By deforming a union of two curves in $|4B + G - \sum I_j|$, we see that there is a nodal rational curve
\[
D \in |8B + 2G - 2\sum I_j| = |-2K_S - 2G|
\]
since $-K_S - G$ is very ample; in addition, a general such $D$ meets $I_j$ transversely for $j=1,2,3,4$.

We claim that there is a smooth curve $R\in |-2K_S|$ such that $R$ is tangent to $D$ at eight distinct points. Let $M$ be a general curve in $|-K_S|$ meeting $D$ transversely at eight distinct points $p_1,p_2,...,p_8$. It is easy to see that a general member $R$ of the pencil in $|-2K_S|$ spanned by $2M$ and $D + 2G$ is a smooth curve tangent to $D$ at $p_1,p_2,...,p_8$. If 
$\varphi: Y\to S$ is ramified over $R$, then
\[
\varphi^* D = \Gamma_2 + \Gamma_2'
\]
where $\Gamma_2$ and $\Gamma_2'$ are two nodal rational curves in
\[
\Gamma_2\in |4A + 2F - \sum E_j| \hspace{12pt}\text{and}
\hspace{12pt} \Gamma_2'\in |4A + 2F' - \sum E_j|,
\]
respectively. This produces a nodal rational curve $\Gamma_2\in |4A + 2F - \sum E_j|$. Since $D + \sum I_j$ has normal crossings, the same holds for $\Gamma_2 + \sum E_j$. It remains to find a nodal rational curve $\Gamma_1\in |A|$ such that $\Gamma_1 + \Gamma_2$ has normal crossings.

The above shows that for a general curve $R\in |-2K_S|$, there is a nodal rational curve $D\in |-2K_S - 2G|$ tangent to $R$ at eight distinct points. For a general curve $R\in |-2K_S|$, there are exactly $24$ smooth curves $J_1, J_2, ..., J_{24}$ in $|B|$ such that each $J_i$ meet $R$ at three distinct points, i.e., $R$ is tangent to $J_i$ at one of the three points in $R\cap J_i$. This follows from the standard argument by showing the incidence correspondence
\[
\Big\{(R, J): R\in |-2K_S| \text{ and } J\in |B|
\text{ are smooth and tangent to each other}
\Big\}
\]
is irreducible of dimension $h^0(-2K_S) - 1$. On the other hand, by examining the projection $D\subset \FF_2\to \PP^1$, we see that there are at most three curves in $|B|$ that fail to meet $D$ transversely. Therefore, there is $J\in \{J_1,J_2,....,J_{24}\}$ such that $J$ meets $D$ transversely and meets $R$ at three distinct points. Then
\[
\Gamma_1 = \varphi^* J
\]
is a nodal rational curve meeting $\Gamma_2$ transversely.
\end{proof}

\begin{proof}[Proof of Theorem \ref{thm: appendix} for $\det(\Lambda)$ odd]
Let $X$ be a general K3 surface whose Picard lattice $\Sigma$ 
is generated by effective divisors $A,F,E_1,E_2,E_3,E_4$ with intersection matrix 
\eqref{K3RATNOTESE019}. By Proposition \ref{prop: rank two embedding}, for every $L\in \Lambda$ satisfying $L^2 > 0$, we can embed
$\Lambda$ as a primitive sublattice of $\Sigma$ such that $L$ is big and nef on $X$.
We will construct a rational stable map $f: C\to X$ such that
$f_* C \in |L|$ and $f$ can be deformed to an immersive map $f': \PP^1 \to X'$ 
to a K3 surface $X'$ with $\Pic(X') = \Lambda$.

We claim that
\begin{equation}\label{CGLAPDE009}
\Lambda \cap (\ZZ E_1 \oplus \ZZ E_2 \oplus \ZZ E_3\oplus \ZZ E_4) = 0.
\end{equation}
Otherwise, $G = m_1 E_1 + m_2 E_2 + m_3 E_3 + m_4 E_4\in \Lambda$ for some coprime integers $m_1,m_2,m_3,m_4$. Then we can find $H\in \Lambda$ such that $G$ and $H$ generate the lattice $\Lambda$. Since $GD$ is even for all $D\in \Sigma$, $\det(\Lambda)$ must be even. This is a contradiction.

For every $D\in \Sigma$, there is at most one class in $\{D - E_i: i=1,2,3,4\}$ belonging to $\Lambda$. Therefore, for any three classes $D_1, D_2, D_3\in \Sigma$, there exists $E_i$ such that $D_j - E_i\not\in \Lambda$ for $j=1,2,3$.
Without loss of generality, let us assume that
\begin{equation}\label{CGLAPDE013}
A - E_1,\ A + F - E_1,\ 2A + F - E_1 \not\in \Lambda.
\end{equation}

By Proposition \ref{prop: (-2)-curves odd lattice},
we can find two nodal rational curves in \eqref{CGLAPDE010} such that
\[
\Gamma_1 + \Gamma_2 + F + \sum_{i=1}^4 E_i + \sum_{i=1}^4 (A-E_i)
\]
has normal crossings, where we use $A-E_i$ to denote the $(-2)$-curve in $|A-E_i|$.

Here is a sketch of our argument. As before, we will construct $f: C\to X$ by writing
\[
L = B_1 + B_2 + ... + B_d
\]
such that at least one $B_i\not\in \Lambda$ and each $B_i$ is rationally equivalent to a union of rational curves. More precisely, we will choose each $B_i$ to be one of
\begin{equation}\label{CGLAPDE014}
\begin{aligned}
& A,\ 2A + F - E_1,\ 4A + 2F - E_T \text{ for } T\subset \{1,2,3,4\} \text{ and}\\
& 4A + 2F + E_i - E_T \text{ for } i\not\in T \subset \{1,2,3,4\}
\end{aligned}
\end{equation}
where we use the notation
\[
E_T = \sum_{j\in T} E_j
\]
for a subset $T$ of $\{1,2,3,4\}$.

If $B_i\not\in \Lambda$ for some $i$, then we can find $W\in \Lambda^\perp$ such that $W B_i \ne 0$. We arrange $B_i$ in the order of \eqref{CGLAPDE006} and we have \eqref{CGLAPDE005}. Then we construct the rational stable map $f: C\to X$ as follows:
\begin{itemize}
\item $C = C_1 \cup C_2 \cup ... \cup C_d$ and $f_* C_i \in |B_i|$
for $i=1,2,...,d$.
\item If $B_i = A$, then $C_i\cong \PP^1$ and $f_* C_i = \Gamma_1$.
\item If $B_i = 2A + F - E_1$, then
\[
\begin{aligned}
C_i &= C_{i,1} \cup C_{i,2}\cup C_{i,3},\\
C_{i,1}, C_{i,2}, C_{i,3} &\cong \PP^1,\\
f_* C_{i,1} &= \Gamma_1,\ f_* C_{i,2} = F,\ f_* C_{i,3} = A - E_1,\\
C_{i,1} \cap C_{i,2} &\ne \emptyset,\ C_{i,2}\cap C_{i,3} \ne \emptyset\text{ and}\\
(C_{i,2}\cup C_{i,3})\cap C_k &= \emptyset \text{ for all } k\ne i.
\end{aligned}
\]
\item If $B_i = 4A + 2F - E_T$ for some $T\subset \{1,2,3,4\}$, then
\[
\begin{aligned}
C_i &= C_{i,1} \cup C_{i,2}\cup ... \cup C_{i,a} \text{ for } a = 5 - |T|,\\
C_{i,j}&\cong \PP^1,\ f_* C_{i,j} \ne 0 \text{ for } j=1,2,...,a,\\
f_* C_{i,1} &= \Gamma_2,\ \sum_{j=2}^a f_* C_{i,j} = \sum_{k\not\in T} E_k,\\
C_{i,1}\cap C_{i,j} &\ne \emptyset \text{ for } j=2,...,a,
\text{ and}\\
\bigcup_{j=2}^a C_{i,j}\cap C_k &= \emptyset \text{ for all } k\ne i.
\end{aligned}
\]
\item If $B_i = 4A + 2F + E_b - E_T$ for some $b\not\in T\subset \{1,2,3,4\}$, then
\[
\begin{aligned}
C_i &= C_{i,1} \cup C_{i,2}\cup ... \cup C_{i,a} \text{ for } a = 6 - |T|,\\
C_{i,j}&\cong \PP^1,\ f_* C_{i,j} \ne 0 \text{ for } j=1,2,...,a,\\
f_* C_{i,1} &= \Gamma_2,\ \sum_{j=2}^a f_* C_{i,j} = E_b + \sum_{k\not\in T} E_k,\\
C_{i,1}\cap C_{i,j} &\ne \emptyset \text{ for } j=2,...,a,
\text{ and}\\
\bigcup_{j=2}^a C_{i,j}\cap C_k &= \emptyset \text{ for all } k\ne i.
\end{aligned}
\]
\item $C_i$ and $C_{i+1}$ meet at one point $p_i$ for $i=1,2,...,d-1$.
\item If $\Gamma_j \subset f(C_i) \cap f(C_{i+1})$ for some $j=1,2$, then $f(p_i) = q_j$ is the node of $\Gamma_j$
and $f: C_i\cup C_{i+1}\to \Gamma_j$ is a local isomorphism at $p_i$, i.e., $f$ maps $C_i$ and $C_{i+1}$ locally at $p_i$ to the two branches of $\Gamma_j$ at $q_j$.  
\end{itemize}
By the same argument as before, we see that $f: C\to X$ deforms to a rational stable map $f': \PP^1\to X'$ over a K3 surface $X'$ with $\Pic(X') = \Lambda$. The dual graph of such $f$ is illustrated in Figure \ref{CGLFIG002}.

\begin{figure}[!htb]
\begin{tikzpicture}[scale=1.5]
\node[below] at (0,0) {\small $C_{1,1}$};
\node[below] at (1,0) {\small $C_{2,1}$};
\node[below] at (2,0) {\small $C_{i,1}$};
\node[below] at (4,0) {\small $C_{j,1}$};
\node[below] at (5,0) {\small $C_{d-1,1}$};
\node[below] at (6,0) {\small $C_{d,1}$};
\node[right] at (2,1) {\small $C_{i,2}$};
\node[right] at (2,2) {\small $C_{i,3}$};
\node[above] at (3.5,1) {\small $C_{j,2}$};
\node[above] at (4.5,1) {\small $C_{j,a}$};

\draw[thick] (0,0) -- (1,0);
\draw[thick, dashed] (1,0) -- (2,0);
\draw[thick, dashed] (2,0) -- (4,0);
\draw[thick, dashed] (4,0) -- (5,0);
\draw[thick] (5,0) -- (6,0);
\draw[thick] (2,2) -- (2,1) -- (2,0);
\draw[thick] (3.5,1) -- (4,0) -- (4.5,1);
\draw[thick] (4,1) -- (4,0);
\draw[thick, dotted] (3.5,1) -- (4.5,1);

\draw[fill, gray!50] (0,0) circle(0.05);
\draw[fill, gray!50] (1,0) circle(0.05);
\draw[fill, gray!50] (2,0) circle(0.05);
\draw[fill, gray!50] (4,0) circle(0.05);
\draw[fill, gray!50] (5,0) circle(0.05);
\draw[fill, gray!50] (6,0) circle(0.05);
\draw[fill, gray!50] (2,1) circle(0.05);
\draw[fill, gray!50] (2,2) circle(0.05);
\draw[fill, gray!50] (3.5,1) circle(0.05);
\draw[fill, gray!50] (4,1) circle(0.05);
\draw[fill, gray!50] (4.5,1) circle(0.05);

\node[below] at (3,-0.5) {\small $f_* C_i = \Gamma_1+F + (A-E_1)$, 
$f_* C_j = \Gamma_2 + \sum E_k$};
\end{tikzpicture}
\caption{Dual graph of $f:C\to X$ for $\det(\Lambda)$ odd}\label{CGLFIG002}
\end{figure}

We claim that it is possible to carry out the above construction when
\begin{equation}\label{CGLAPDE012}
\text{either}\hspace{12pt} AL > 2 \hspace{12pt}\text{or}\hspace{12pt}
\sum_{i=1}^4 LE_i > 0.
\end{equation} 
Let
\[
L = n A + c F - \sum_{i=1}^4 m_i E_i.
\]
We know that $L$ is nef if and only if
\[
n \ge 2c \ge 4m_i\ge 0
\]
for $i=1,2,3,4$ \cite[Remark 3.7]{generic}. Then \eqref{CGLAPDE012} is equivalent to saying that either $c > 2$ or one of $m_i$ is positive. Note that if it is the latter, $c \ge 2m_i \ge 2$.

Suppose that \eqref{CGLAPDE012} holds.
Let us write
\[
L = m(4A + 2F) - \sum_{i=1}^4 m_i E_i + (n-4m) A + (c-2m) F \hspace{12pt}\text{for }
m = \left\lfloor \frac{c}2\right\rfloor.
\]
Since $m \ge m_j$ for all $j$, we can write $L = \sum B_i$ such that each $B_i$ is one of
\[
A,\ 2A + F,\ 4 A + 2F - E_T \text{ for } T\subset \{1,2,3,4\}
\]
and there is at most one $B_i = 2A + F$ (when $c$ is odd).

We need to apply some tweaks to the choice of $B_i$. If $B_i = 2A + F$ for some $i$, this only happens when $c\ge 3$. So there exists $B_j = 4A + 2F - E_T$ for some $T$. In this case, we change $B_i$ and $B_j$ to
\[
B_i = 2A + F - E_1 \hspace{12pt}\text{and}\hspace{12pt} B_j = 4A + 2F + E_1 - E_T.
\]
So we have
\[
L = B_1 + B_2 + ... + B_d
\]
where each $B_i$ is one of the classes among \eqref{CGLAPDE014} and at most one $B_i$ is $2A + F - E_1$ (when $c$ is odd).

We need to apply one more tweak. If there exist $i\ne j$ such that
\[
B_i = B_j = 4A + 2F - E_T
\]
for some $T\subsetneq \{1,2,3,4\}$, we choose some $b\in \{1,2,3,4\}\backslash T$ and change $B_i$ and $B_j$ to
\[
B_i = 4A + 2F + E_b - E_T \hspace{12pt}\text{and}\hspace{12pt} 
B_j = 4A + 2F - E_b - E_T.
\]
In conclusion, we can write
\[
L = B_1 + B_2 + ... + B_d
\]
for $B_i$ with the following properties:
\begin{itemize}
\item Each $B_i$ is one of the classes among \eqref{CGLAPDE014}.
\item At most one $B_i$ is $2A + F - E_1$ (when $c$ is odd).
\item There do not exist $i\ne j$ such that
\[
B_i = B_j = 4A + 2F - E_T
\]
for some $T\subsetneq \{1,2,3,4\}$.
\end{itemize}

If $d = 1$, then the rational stable map $f:C\to X$ constructed above 
obviously deforms to a rational stable map $f': \PP^1\to X'$ over a K3 surface $X'$ with $\Pic(X') = \Lambda$.

If $d \ge 2$, we claim that there is at least one $B_i\not\in \Lambda$. 
Otherwise, suppose that $B_1, B_2, ..., B_d\in \Lambda$. 

Since $2A + F - E_1\not\in \Lambda$ by \eqref{CGLAPDE013}, $B_i\ne 2A + F - E_1$
for all $i$. It is also obvious that there cannot be two classes $B_i\ne B_j$ in
\[
\Big\{ 4A + 2F - E_T\Big\}\cup \Big\{ 4A + 2F + E_b - E_T\Big\}.
\]
Otherwise, $B_i - B_j \in \Lambda$ and $B_i - B_j\in \sum \ZZ E_k$, contradicting
\eqref{CGLAPDE009}. And since $B_1,B_2,...,B_d$ contain at least two distinct classes,
we have either
\[
\begin{aligned}
B_1,B_2,...,B_d &\in \big\{ A,\ 4A + 2F + E_b - E_T\big\}\subset \Lambda \text{ for some } b\not\in T\subset \{1,2,3,4\}\\
\text{or }
B_1,B_2,...,B_d &\in \big\{ A,\ 4A + 2F - E_T\big\}\subset \Lambda \text{ for some } T\subset \{1,2,3,4\}.
\end{aligned}
\]
If it is the former, then $A$ and $4A + 2F + E_b - E_T$ generate the lattice $\Lambda$ and $\det(\Lambda)$ is even. If it is latter, then the same argument shows that $\det(\Lambda)$ is even if $T\ne\emptyset$. Therefore, $T=\emptyset$. Hence $B_i = A$ or $4A + 2F$ for all $i$.
By our choice of $B_i$, there is only one $4A + 2F$ among $B_i$. Thus, $c = 2$. But $m_j = 0$ for all $j$, contradicting \eqref{CGLAPDE012}.

In conclusion, there is at least one $B_i\not\in \Lambda$. This proves the existence of a rational stable map $f: C\to X$ which can be deformed to a stable map $f':\PP^1\to X'$, when \eqref{CGLAPDE012} holds.

We can prove another simple case when
\begin{equation}\label{CGLAPDE015}
F \not\in \Lambda.
\end{equation}
Suppose that $c\le 2$, $m_1=m_2=m_3=m_4=0$ and $F\not\in \Lambda$. We write
\[
\begin{aligned}
L &= n A + cF = \underbrace{(A+F)  + ... + (A+F)}_c + \underbrace{A + ... + A}_{n-c}\\
&= B_1 + B_2 + ... + B_d
\end{aligned}
\]
where $d = n$ and each $B_i$ is one of $A$ and $A+F$. Then there is at least one $B_i\not\in \Lambda$. We can construct a rational stable map $f:C \to X$ accordingly.

This proves the existence of an integral rational curve in $|L|$ with unramified normalisation on a general K3 surface $X'$ with Picard lattice $\Lambda$, if \eqref{CGLAPDE012} or \eqref{CGLAPDE015} hold.
To finish the proof of the theorem, it remains to show that the conditions A2 and A3 imply one of the above.

When A2 holds, $L = L_1 + L_2 + L_3$ for some $L_i\in \Lambda$ such that $LL_i > 0$ and $L_i^2 > 0$. It is easy to see that $L_i$ are effective and $A L_i > 0$ for $i=1,2,3$. Therefore, $c = A L \ge 3$. We are done.

When A3 holds,
$L = L_1 + L_2$ for some $L_i\in \Lambda$ satisfying that
$L L_i > 0$, $L_1^2 > 0$, $L_2^2 = -2$, and
$L_1 - L_2\not\in n\Lambda$ for all integers $n\ge 2$. Clearly, $L_i$ are effective, $AL_1 > 0$ and $AL_2 \ge 0$.

If $A L \ge 3$ or $\sum L E_i >0$ or $F\not\in \Lambda$, then we are done. Otherwise, $c = AL \le 2$, $m_1 = m_2 = m_3 = m_4 = 0$, and $F\in \Lambda$. Then
\[
L_1 = (n - l) A + (c - b) F - \sum_{i=1}^4 a_i E_i \hspace{12pt}\text{and}\hspace{12pt} L_2 = lA + bF + \sum_{i=1}^4 a_i E_i
\]
for some integers $a_i$, $0\le b < c$ and $0\le l<n$.

Since $F\in \Lambda$, we necessarily have $a_i = 0$ for all $i$, $b=1$, $c=2$, and $l=0$. That is,
\[
L_1 = n A + F\hspace{12pt}\text{and}\hspace{12pt} L_2 = F.
\]
Then $L_1 - L_2\in n\Lambda$, which is a contradiction.
\end{proof}

\bibliographystyle{alpha}

\begin{thebibliography}{BHPV04}

\bibitem[AH11]{ah}
Dan Abramovich and Brendan Hassett.
\newblock Stable varieties with a twist.
\newblock In {\em Classification of algebraic varieties}, EMS Ser. Congr. Rep.,
  pages 1--38. Eur. Math. Soc., Z\"urich, 2011.

\bibitem[AV02]{abramovichvistoli}
Dan Abramovich and Angelo Vistoli.
\newblock {Compactifying the space of stable maps}.
\newblock {\em Journal of the American Mathematical Society}, 15(1):27--75,
  2002.

\bibitem[Ach20]{achter}
Jeffrey~D. Achter.
\newblock Arithmetic occult period maps.
\newblock {\em Algebr. Geom.}, 7(5):581--606, 2020.

\bibitem[AK03]{ak}
Carolina Araujo and J\'anos Koll\'ar.
\newblock Rational curves on varieties.
\newblock In {\em Higher dimensional varieties and rational points ({B}udapest,
  2001)}, volume~12 of {\em Bolyai Soc. Math. Stud.}, pages 13--68. Springer,
  Berlin, 2003.

\bibitem[AC81]{arbcorn}
Enrico Arbarello and Maurizio Cornalba.
\newblock Footnotes to a paper of {B}eniamino {S}egre: ``{O}n the modules of
  polygonal curves and on a complement to the {R}iemann existence theorem''
  ({I}talian) [{M}ath. {A}nn. {\bf 100} (1928), 537--551; {J}buch {\bf 54},
  685].
\newblock {\em Math. Ann.}, 256(3):341--362, 1981.
\newblock The number of $g^{1}_{d}$'s on a general $d$-gonal curve, and the
  unirationality of the {H}urwitz spaces of $4$-gonal and $5$-gonal curves.

\bibitem[ACG11]{acgh2}
Enrico Arbarello, Maurizio Cornalba, and Pillip~A. Griffiths.
\newblock {\em Geometry of algebraic curves. {V}olume {II}}, volume 268 of {\em
  Grundlehren der Mathematischen Wissenschaften [Fundamental Principles of
  Mathematical Sciences]}.
\newblock Springer, Heidelberg, 2011.
\newblock With a contribution by Joseph Daniel Harris.

\bibitem[Art74]{artinsupersingular}
Michael Artin.
\newblock Supersingular {K}3 surfaces.
\newblock {\em Ann. Sci. \'{E}cole Norm. Sup. (4)}, 7:543--567 (1975), 1974.

\bibitem[BHPV04]{BHPV}
Wolf~P. Barth, Klaus Hulek, Chris A.~M. Peters, and Antonius Van~de Ven.
\newblock {\em Compact complex surfaces}, volume~4 of {\em Ergebnisse der
  Mathematik und ihrer Grenzgebiete. 3. Folge. A Series of Modern Surveys in
  Mathematics [Results in Mathematics and Related Areas. 3rd Series. A Series
  of Modern Surveys in Mathematics]}.
\newblock Springer-Verlag, Berlin, second edition, 2004.

\bibitem[Bea04]{beauville}
Arnaud Beauville.
\newblock Fano threefolds and {K}3 surfaces.
\newblock In {\em The {F}ano {C}onference}, pages 175--184. Univ. Torino,
  Turin, 2004.

\bibitem[Ben15]{benoist}
Olivier Benoist.
\newblock Construction de courbes sur les surfaces {K}3 (d'apr\`es
  {B}ogomolov-{H}assett-{T}schinkel, {C}harles, {L}i-{L}iedtke, {M}adapusi
  {P}era, {M}aulik{$\ldots$}).
\newblock {\em Ast\'erisque}, (367-368):Exp. No. 1081, viii, 219--253, 2015.

\bibitem[Ben20]{benoistss}
Olivier Benoist.
\newblock R\'{e}duction stable en dimension sup\'{e}rieure [d'apr\`es
  {K}oll\'{a}r, {H}acon-{X}u, ...].
\newblock {\em Ast\'{e}risque}, (422, S\'{e}minaire Bourbaki. Vol. 2018/2019.
  Expos\'{e}s 1151--1165):291--326, 2020.

\bibitem[BHT11]{bht}
Fedor Bogomolov, Brendan Hassett, and Yuri Tschinkel.
\newblock {Constructing rational curves on {K}3 surfaces}.
\newblock {\em Duke Mathematical Journal}, 157(3):535--550, April 2011.

\bibitem[BT00]{btdensity}
Fedor Bogomolov and Yuri Tschinkel.
\newblock {Density of rational points on elliptic K3 surfaces}.
\newblock {\em Asian Journal of Mathematics}, 4(2):351--368, 2000.

\bibitem[BT05]{btrational}
Fedor Bogomolov and Yuri Tschinkel.
\newblock Rational curves and points on {K}3 surfaces.
\newblock {\em Amer. J. Math.}, 127(4):825--835, 2005.

\bibitem[BZ09]{BZ}
Fedor Bogomolov and Yuri Zarhin.
\newblock Ordinary reduction of {K}3 surfaces.
\newblock {\em Cent. Eur. J. Math.}, 7(2):206--213, 2009.

\bibitem[Cha14]{charles}
Francois Charles.
\newblock {On the Picard number of {K}3 surfaces over number fields}.
\newblock {\em Algebra {\&} Number Theory}, 8(1):1--17, 2014.

\bibitem[Che99]{chen}
Xi~Chen.
\newblock Rational curves on {K}3 surfaces.
\newblock {\em J. Algebraic Geom.}, 8(2):245--278, 1999.

\bibitem[CGL22]{generic}
Xi~Chen, Frank Gounelas, and Christian Liedtke.
\newblock Rational curves on lattice-polarised {K}3 surfaces.
\newblock {\em Algebr. Geom.}, 9(4):443--475, 2022.

\bibitem[CL13]{C-L}
Xi~Chen and James~D. Lewis.
\newblock Density of rational curves on {K}3 surfaces.
\newblock {\em Math. Ann.}, 356(1):331--354, 2013.


\bibitem[CDL]{cdl}
Fran\c{c}ois Cossec, Igor Dolgachev, and Christian Liedtke.
\newblock {\em {E}nriques Surfaces, Volume 1}.
\newblock book in preparation.

\bibitem[CT14]{costatschinkel}
Edgar Costa and Yuri Tschinkel.
\newblock Variation of {N}\'{e}ron-{S}everi ranks of reductions of {K}3
  surfaces.
\newblock {\em Exp. Math.}, 23(4):475--481, 2014.

\bibitem[CEJ20]{cej}
Edgar Costa, Andreas-Stephan Elsenhans, and J\"{o}rg Jahnel.
\newblock On the distribution of the {P}icard ranks of the reductions of a
  {$K3$} surface.
\newblock {\em Res. Number Theory}, 6(3):[Paper No. 27, 25 pp.], 2020.

\bibitem[DS17]{dedieusernesi}
Thomas Dedieu and Edoardo Sernesi.
\newblock Equigeneric and equisingular families of curves on surfaces.
\newblock {\em Publ. Mat.}, 61(1):175--212, 2017.

\bibitem[Del81]{deligne}
Pierre Deligne.
\newblock Rel\`evement des surfaces {K}3 en caract\'eristique nulle.
\newblock In {\em Algebraic surfaces ({O}rsay, 1976--78)}, volume 868 of {\em
  Lecture Notes in Math.}, pages 58--79. Springer, Berlin-New York, 1981.
\newblock Prepared for publication by Luc Illusie.

\bibitem[DK]{dk}
Igor Dolgachev and Shigeyuki Kond\={o}.
\newblock {\em Enriques surfaces, Volume 2}.
\newblock book in preparation.

\bibitem[Dol96]{dolgachev}
Igor Dolgachev.
\newblock Mirror symmetry for lattice polarized {K}3 surfaces.
\newblock {\em J. Math. Sci.}, 81(3):2599--2630, 1996.
\newblock Algebraic geometry, 4.

\bibitem[EJ11]{elsenhansjahnel}
Andreas~Stephan Elsenhans and J{\"{o}}rg Jahnel.
\newblock {The {P}icard group of a {K}3 surface and its reduction modulo $\rho$}.
\newblock {\em Algebra and Number Theory}, 5(8):1027--1040, 2011.

\bibitem[FGI{\etalchar{+}}05]{fga}
Barbara Fantechi, Lothar G\"ottsche, Luc Illusie, Steven~L. Kleiman, Nitin
  Nitsure, and Angelo Vistoli.
\newblock {\em Fundamental algebraic geometry}, volume 123 of {\em Mathematical
  Surveys and Monographs}.
\newblock American Mathematical Society, Providence, RI, 2005.
\newblock Grothendieck's FGA explained.

\bibitem[FP97]{fp}
William Fulton and Rahul Pandharipande.
\newblock Notes on stable maps and quantum cohomology.
\newblock In {\em Algebraic geometry---{S}anta {C}ruz 1995}, volume~62 of {\em
  Proc. Sympos. Pure Math.}, pages 45--96. Amer. Math. Soc., Providence, RI,
  1997.

\bibitem[vdGK00]{katsuravandergeer}
Gerard van~der Geer and Toshiyuki Katsura.
\newblock {On a stratification of the moduli of {K}3 surfaces}.
\newblock {\em Journal of the European Mathematical Society}, 2(3):259--290,
  aug 2000.

\bibitem[GHS03]{ghs}
Tom Graber, Joseph Harris, and Jason Starr.
\newblock {Families of rationally connected varieties}.
\newblock {\em Journal of the American Mathematical Society}, 16(1):57--67,
  2003.

\bibitem[Har86]{harris}
Joe Harris.
\newblock On the {S}everi problem.
\newblock {\em Invent. Math.}, 84(3):445--461, 1986.

\bibitem[HM98]{harrismorrison}
Joe Harris and Ian Morrison.
\newblock {\em Moduli of curves}, volume 187 of {\em Graduate Texts in
  Mathematics}.
\newblock Springer-Verlag, New York, 1998.

\bibitem[Has03]{hassett}
Brendan Hassett.
\newblock Potential density of rational points on algebraic varieties.
\newblock In {\em Higher dimensional varieties and rational points ({B}udapest,
  2001)}, volume~12 of {\em Bolyai Soc. Math. Stud.}, pages 223--282. Springer,
  Berlin, 2003.

\bibitem[Huy16]{huybrechts}
Daniel Huybrechts.
\newblock {\em Lectures on {K}3 surfaces}, volume 158 of {\em Cambridge Studies
  in Advanced Mathematics}.
\newblock Cambridge University Press, Cambridge, 2016.

\bibitem[HK13]{hk}
Daniel Huybrechts and Michael Kemeny.
\newblock {Stable maps and {C}how groups}.
\newblock {\em Documenta Mathematica}, 18:507--517, 2013.

\bibitem[Ito20]{Ito}
Kazuhiro Ito.
\newblock On the supersingular reduction of {$K3$} surfaces with complex
  multiplication.
\newblock {\em Int. Math. Res. Not. IMRN}, (20):7306--7346, 2020.

\bibitem[IIL20]{iil}
Kazuhiro Ito, Tetsushi Ito, and Christian Liedtke.
\newblock Deformations of rational curves in positive characteristic.
\newblock {\em J. Reine Angew. Math.}, 769:55--86, 2020.

\bibitem[Kem15]{kemenythesis}
Michael Kemeny.
\newblock {Stable maps and singular curves on {K}3 surfaces}.
\newblock {\em arXiv e-prints}, page arXiv:1507.00230, July 2015.

\bibitem[KLC22]{klc}
Andreas~Leopold Knutsen and Margherita Lelli-Chiesa.
\newblock Genus two curves on abelian surfaces.
\newblock {\em Ann. Sci. \'{E}c. Norm. Sup\'{e}r. (4)}, 55(4):905--918, 2022.

\bibitem[KLCM19]{klcm}
Andreas~Leopold Knutsen, Margherita Lelli-Chiesa, and Giovanni Mongardi.
\newblock Severi varieties and {B}rill-{N}oether theory of curves on abelian
  surfaces.
\newblock {\em J. Reine Angew. Math.}, 749:161--200, 2019.

\bibitem[Kol89]{kollarflops}
J\'{a}nos Koll\'{a}r.
\newblock Flops.
\newblock {\em Nagoya Math. J.}, 113:15--36, 1989.

\bibitem[Kol96]{kollar}
J\'anos Koll\'ar.
\newblock {\em Rational curves on algebraic varieties}, volume~32 of {\em
  Ergebnisse der Mathematik und ihrer Grenzgebiete. 3. Folge. A Series of
  Modern Surveys in Mathematics [Results in Mathematics and Related Areas. 3rd
  Series. A Series of Modern Surveys in Mathematics]}.
\newblock Springer-Verlag, Berlin, 1996.

\bibitem[KS14]{kondoshimada}
Shigeyuki Kondo and Ichiro Shimada.
\newblock On a certain duality of {N}\'{e}ron-{S}everi lattices of
  supersingular {K}3 surfaces.
\newblock {\em Algebr. Geom.}, 1(3):311--333, 2014.

\bibitem[Kon95]{kontsevich}
Maxim Kontsevich.
\newblock Enumeration of rational curves via torus actions.
\newblock In {\em The moduli space of curves ({T}exel {I}sland, 1994)}, volume
  129 of {\em Progr. Math.}, pages 335--368. Birkh\"{a}user Boston, Boston, MA,
  1995.

\bibitem[Kov13]{kovacsrevisited}
S{\'{a}}ndor~J. Kov{\'{a}}cs.
\newblock {The cone of curves of {K}3 surfaces revisited}.
\newblock {\em Birational Geometry, Rational Curves, and Arithmetic},
  1:163--169, 2013.

\bibitem[LL11]{liliedtke}
Jun Li and Christian Liedtke.
\newblock {Rational curves on {K}3 surfaces}.
\newblock {\em Inventiones mathematicae}, 188(3):713--727, October 2011.

\bibitem[LM18a]{lieblichmaulik}
Max Lieblich and Davesh Maulik.
\newblock A note on the cone conjecture for {K}3 surfaces in positive
  characteristic.
\newblock {\em Math. Res. Lett.}, 25(6):1879--1891, 2018.

\bibitem[LM18]{liedtkematsumoto}
Christian Liedtke and Yuya Matsumoto.
\newblock Good reduction of {K}3 surfaces.
\newblock {\em Compos. Math.}, 154(1):1--35, 2018.

\bibitem[MM83]{morimukai}
Shigefumi Mori and Shigeru Mukai.
\newblock The uniruledness of the moduli space of curves of genus {$11$}.
\newblock In {\em Algebraic geometry ({T}okyo/{K}yoto, 1982)}, volume 1016 of
  {\em Lecture Notes in Math.}, pages 334--353. Springer, Berlin, 1983.

\bibitem[Nik87]{nikulin}
Vyacheslav V. Nikulin.
\newblock Discrete reflection groups in {L}obachevsky spaces and algebraic
  surfaces.
\newblock In {\em Proceedings of the {I}nternational {C}ongress of
  {M}athematicians, {V}ol. 1, 2 ({B}erkeley, {C}alif., 1986)}, pages 654--671.
  Amer. Math. Soc., Providence, RI, 1987.

\bibitem[Nik14]{nikulinelliptic}
Viacheslav V. Nikulin.
\newblock Elliptic fibrations on {K}3 surfaces.
\newblock {\em Proc. Edinb. Math. Soc. (2)}, 57(1):253--267, 2014.

\bibitem[{Nis}19]{nishinou}
Takeo {Nishinou}.
\newblock {Obstructions to deforming maps from curves to surfaces}.
\newblock {\em arXiv e-prints}, page arXiv:1901.11239, January 2019.

\bibitem[NO85]{NO}
Niels Nygaard and Arthur Ogus.
\newblock Tate's conjecture for {K}3 surfaces of finite height.
\newblock {\em Ann. of Math. (2)}, 122(3):461--507, 1985.

\bibitem[Ogu79]{ogus}
Arthur Ogus.
\newblock Supersingular {K}3 crystals.
\newblock In {\em Journ\'{e}es de {G}\'{e}om\'{e}trie {A}lg\'{e}brique de
  {R}ennes ({R}ennes, 1978), {V}ol. {II}}, volume~64 of {\em Ast\'{e}risque},
  pages 3--86. Soc. Math. France, Paris, 1979.

\bibitem[Ols16]{olsson}
Martin Olsson.
\newblock {\em Algebraic spaces and stacks}, volume~62 of {\em American
  Mathematical Society Colloquium Publications}.
\newblock American Mathematical Society, Providence, RI, 2016.

\bibitem[Riz06]{rizov}
Jordan Rizov.
\newblock Moduli stacks of polarized {K}3 surfaces in mixed characteristic.
\newblock {\em Serdica Math. J.}, 32(2-3):131--178, 2006.

\bibitem[R{\v{S}}05]{rudakovshafarevich2}
Alexei N. Rudakov and  Igor R. {\v{S}}afarevi{\v{c}}.
\newblock {Supersingular K3 Surfaces Over Fields of Characteristic 2}.
\newblock {\em Mathematics of the USSR-Izvestiya}, 13(1):147--165, 2005.

\bibitem[Ser06]{sernesi}
Edoardo Sernesi.
\newblock {\em Deformations of algebraic schemes}, volume 334 of {\em
  Grundlehren der Mathematischen Wissenschaften [Fundamental Principles of
  Mathematical Sciences]}.
\newblock Springer-Verlag, Berlin, 2006.

\bibitem[SSTT22]{tayouetal}
Ananth~N. Shankar, Arul Shankar, Yunqing Tang, and Salim Tayou.
\newblock Exceptional jumps of {P}icard ranks of reductions of {K}3 surfaces
  over number fields.
\newblock {\em Forum Math. Pi}, 10:Paper No. e21, 2022.

\bibitem[Shi04]{shimada}
Ichiro Shimada.
\newblock {Supersingular {K}3 surfaces in odd characteristic and sextic double
  planes}.
\newblock {\em Mathematische Annalen}, 328(3):451--468, 2004.

\bibitem[Shi74]{ShiodaExample}
Tetsuji Shioda.
\newblock An example of unirational surfaces in characteristic {$p$}.
\newblock {\em Math. Ann.}, 211:233--236, 1974.

\bibitem[Shi77]{ShiodaKummer}
Tetsuji Shioda.
\newblock Some results on unirationality of algebraic surfaces.
\newblock {\em Math. Ann.}, 230(2):153--168, 1977.

\bibitem[{Sta}19]{stacks}
The {Stacks Project Authors}.
\newblock \textit{Stacks Project}.
\newblock \url{https://stacks.math.columbia.edu}, 2019.

\bibitem[Tay20]{Tayou}
Salim Tayou.
\newblock Rational curves on elliptic {K}3 surfaces.
\newblock {\em Math. Res. Lett.}, 27(4):1237--1248, 2020.

\bibitem[Tot17]{totaro}
Burt Totaro.
\newblock Recent progress on the {T}ate conjecture.
\newblock {\em Bull. Amer. Math. Soc. (N.S.)}, 54(4):575--590, 2017.

\end{thebibliography}
\newcommand{\etalchar}[1]{$^{#1}$}

\end{document}